\documentclass[12pt]{amsart}
\usepackage{fullpage,amssymb,hyperref,stmaryrd}
\frenchspacing

\numberwithin{equation}{subsection}

\newtheorem{theorem}[equation]{Theorem}
\newtheorem{lemma}[equation]{Lemma}
\newtheorem{cor}[equation]{Corollary}
\newtheorem{prop}[equation]{Proposition}
\newtheorem{conj}[equation]{Conjecture}
\theoremstyle{definition}
\newtheorem{hypothesis}[equation]{Hypothesis}
\newtheorem{defn}[equation]{Definition}
\newtheorem{remark}[equation]{Remark}
\newtheorem{example}[equation]{Example}

\newcommand{\be}{\mathbf{e}}
\newcommand{\bv}{\mathbf{v}}

\newcommand{\calC}{\mathcal{C}}
\newcommand{\calE}{\mathcal{E}}
\newcommand{\calF}{\mathcal{F}}
\newcommand{\calG}{\mathcal{G}}
\newcommand{\calH}{\mathcal{H}}
\newcommand{\calL}{\mathcal{L}}
\newcommand{\calO}{\mathcal{O}}
\newcommand{\calR}{\mathcal{R}}

\newcommand{\frako}{\mathfrak{o}}

\newcommand{\AAA}{\mathbb{A}}
\newcommand{\CC}{\mathbb{C}}

\newcommand{\GG}{\mathbb{G}}
\newcommand{\QQ}{\mathbb{Q}}
\newcommand{\RR}{\mathbb{R}}
\newcommand{\ZZ}{\mathbb{Z}}
\newcommand{\dual}{\vee}

\newcommand{\PP}{\mathbf{P}}

\DeclareMathOperator{\ana}{an}

\DeclareMathOperator{\FaFo}{FF}

\DeclareMathOperator{\Gal}{Gal}
\DeclareMathOperator{\GL}{GL}
\DeclareMathOperator{\inte}{int}
\DeclareMathOperator{\IR}{IR}
\DeclareMathOperator{\Lie}{Lie}

\DeclareMathOperator{\rank}{rank}
\DeclareMathOperator{\semis}{ss}
\DeclareMathOperator{\Spa}{Spa}
\DeclareMathOperator{\Spec}{Spec}

\begin{document}

\title{Monodromy representations of $p$-adic differential equations in families}
\author{Kiran S. Kedlaya}
\date{May 27, 2025}
\thanks{The author was supported by NSF (grant DMS-2053473) and UC San Diego (Warschawski Professorship). Thanks to Hansheng Diao, Koji Shimizu, Daxin Xu, Zijian Yao, and Gergely Z\'abr\'adi for helpful feedback.}

\begin{abstract}
We derive a relative version of the local monodromy theorem for ordinary differential equations on an annulus over a mixed-characteristic nonarchimedean field, and give several applications in $p$-adic cohomology and $p$-adic Hodge theory. These include a simplified proof of the semistable reduction theorem for overconvergent $F$-isocrystals, a relative version of Berger's theorem that de Rham representations are potentially semistable, and a multivariate version of the local monodromy theorem in the style of Drinfeld's lemma on fundamental groups.
\end{abstract}

\maketitle

Let $K$ be a nonarchimedean field (i.e., a field which is complete with respect to a specified nonarchimedean absolute value) of mixed characteristics $(0,p)$.
As presented in \cite{kedlaya-book}, there is a long-standing theory of \emph{ordinary differential equations} over $K$, by which we mean connections on vector bundles over a disc, annulus, or other smooth one-dimensional analytic space over $K$. A particularly important sector of this theory is the study of connections on an annulus which are ``solvable at a boundary''; such connections appear naturally in the study of coefficients in $p$-adic cohomology (namely \emph{overconvergent isocrystals}) and in $p$-adic Hodge theory (see below).
A central result is the \emph{$p$-adic local monodromy theorem}
\cite{andre-mono, kedlaya-locmono, mebkhout-mono}, which classifies solvable connections in terms of certain finite \'etale covers of the annulus; roughly speaking, these covers have ``wildly ramified reduction mod $p$'' which eliminates the ``irregularity'' of the connections in question.

The purpose of this paper is to gather some extensions of this theory to \emph{relative connections}, defined on the product of annulus with some base space, which are again (fiberwise) solvable at a boundary of the annulus. The main result is a relativization of the $p$-adic local monodromy theorem based on the form of the latter exposed in \cite[Chapter~22]{kedlaya-book}.
Compared to earlier incarnations of the $p$-adic local monodromy theorem (e.g., those cited above, or the presentation in the first edition of \cite{kedlaya-book}), the aforementioned version achieves two crucial improvements: it does not require $K$ to be discretely valued, and it does not require the connection to admit a Frobenius structure. This makes it feasible to 
carry out a relatively formal process to relativize the result using the quasicompactness of affinoid spaces. We anticipate a variety of applications of this result; here we limit ourselves to three general directions. (There is some potential to ``mix and match'' these directions, but we also eagerly expect some further applications in unforeseen directions.)

Our first application is a new and significantly simpler proof of the \emph{semistable reduction theorem} for overconvergent $F$-isocrystals \cite{kedlaya-semi4} (see Theorem~\ref{T:global semistable}, Remark~\ref{R:recover semistable reduction}). This result plays a foundational role in the study of \emph{rigid cohomology}, a Weil cohomology with $p$-adic coefficients introduced by Berthelot \cite{berthelot-mem}; in particular, the semistable reduction theorem is essential for the construction of a six functors formalism in which overconvergent $F$-isocrystals occur as the lisse coefficient objects \cite{abe-companion}.
The approach we take here is much more flexible than the original proof, and should therefore be more readily adaptable to variations of the semistable reduction theorem which may be needed in the future.

Our second application is a relative version of Berger's proof \cite{berger-mono} of Fontaine's conjecture $C_{\mathrm{st}}$: every $p$-adic Galois representation which is de Rham is also potentially semistable.
A relative version of this conjecture, for a de Rham $\ZZ_p$-local system on a smooth rigid analytic space over a finite extension of $\QQ_p$, has been proposed by Liu--Zhu \cite[Remark~1.4]{liu-zhu}. This was previously known locally in the neighborhood of a classical point
\cite{shimizu}; here, we obtain a similar statement locally around an arbitrary point (Theorem~\ref{T:relative de Rham}).

Our third application is a multivariate analogue of the $p$-adic local monodromy theorem for integrable connections on a product of annuli equipped with a full set of \emph{partial} Frobenius structures (Theorem~\ref{T:Drinfeld lemma form of local monodromy}). This can be thought of as a combination of the original local monodromy theorem with a local analogue of Drinfeld's lemma for $F$-isocrystals
\cite{kedlaya-dl-isocrystals, kedlaya-xu}. 
This statement can be further relativized (Theorem~\ref{T:Drinfeld lemma form of local monodromy relative}).

\section{Connections on annuli}

In this section, we collect and reformulate a number of results 
from \cite{kedlaya-book} concerning ordinary $p$-adic differential equations on an annulus.
We assume the reader either has a passing familiarity with \cite[Chapters~9--13]{kedlaya-book}
or has a copy of the text handy for easy reference.

We note in passing that the presentation in the aforementioned chapters of \cite{kedlaya-book} is much more algebraic in nature than the geometric perspective we will be adopting in subsequent sections.
For a discussion that bridges this gap, see \cite[Part~VII]{kedlaya-book}.

\subsection{Preliminaries}

\begin{defn}
Throughout this paper, let $K$ be a nonarchimedean field (i.e., a field complete with respect to a multiplicative absolute value, or equivalently a height-1 valuation) of mixed characteristics $(0,p)$.
Let $\kappa_K$ be the residue field of $K$. We use throughout Huber's model of analytic geometry in terms of \emph{adic spaces} over $K$ \cite{huber-book}, although much of our intuition will be derived from Berkovich's model \cite{berkovich1, berkovich2};
in particular, we will frequently and without comment use the fact that the topology on a complete Huber ring over $K$ can always be defined by some submultiplicative norm (see \cite[\S~1.5]{kedlaya-aws}). It would also be possible to use \emph{reified adic spaces} in the sense of \cite{kedlaya-reified}, but we ignore this point hereafter.

One consequence of our use of Huber's setting is that when we refer to an \emph{affinoid} adic space, we mean the space associated to a Huber pair over $K$ in which the underlying Huber ring need \emph{not} be a classical affinoid algebra in the sense of Tate; to obtain the latter, we add the condition \emph{tft} (topologically of finite type).

We further assume that the adic spaces we consider are always locally associated to Huber rings which are not merely \emph{sheafy} (i.e., the structure presheaf is a sheaf of rings)
but also \emph{strongly sheafy} in the sense of \cite{sousperfectoid}

For $X$ an adic space over $K$,
let $\overline{X}$ be the maximal Hausdorff quotient of $X$; open subsets of $\overline{X}$ correspond to \emph{partially proper} open subsets of $X$.
The projection map $X \to \overline{X}$ admits a canonical but discontinuous section $\overline{X} \to X$; the composition $X \to \overline{X} \to X$ takes each point to its unique height-1 generization.
For $x \in X$, let $\calH(x)$ denote the completed residue field of $x$; as the completion
is defined with respect to the topology defined by the valuation corresponding to $x$, which only depends on the underlying height-1 valuation, $\calH(x)$ depends only on the image of $X$ in $\overline{X}$. To recover $x$ from $\calH(x)$, we must also keep track of the induced valuation ring
$\calH(x)^+$ in $\calH(x)$; alternatively, $\calH(x)$ itself has a residue field $\kappa_{\calH(x)}$,
and $\calH(x)^+$ is the preimage of a certain valuation ring $\kappa_{\calH(x)}^+$ in $\kappa_{\calH(x)}$.

For $I$ an open or closed subinterval of $(0, \infty)$, write $A_K[I]$ for the annulus $|t| \in I$ over $K$. 
For $m$ a positive integer, let $A_K[I]^m$ be the $m$-fold product of $A_K[I]$ over $K$.
We omit the outer brackets when the interval is written out explicitly; for instance, if $I = [\epsilon,1)$, we expand the notation $A_K[I]$ as $A_K[\epsilon, 1)$ rather than $A_K[[\epsilon,1)]$.

For $\rho \in I$, let $\eta_\rho \in A_K[I]$ denote the \emph{Gauss point} of radius $\rho$,
i.e., the supremum norm over the circle $|t| = \rho$.
\end{defn}

\begin{defn}
Let $\calE$ be a \emph{connection  of rank $n$} on $A_K[I]$,
i.e., a vector bundle of rank $n$ equipped with an integrable connection.
(Since $A_K[I]$ is one-dimensional, the integrability condition is vacuous here.)
For each $\rho \in I$, let $\IR(\calE, \rho) \in (0,1]$ denote the \emph{intrinsic generic radius of convergence} of $\calE$ at $\rho$; here we use \cite[Definition~9.4.7]{kedlaya-book}
as the underlying definition, but thanks to  \cite[Theorem~11.9.2]{kedlaya-book} we may also interpret $\rho \IR(\calE, \rho)$ as the radius of the largest disc of $A_{\calH(\eta_\rho)}[I]$ centered at the canonical rational point over $\eta_\rho$ on which $\calE$ admits a basis of horizontal sections.
In particular, $\IR(\calE_1 \otimes \calE_2, \rho) \leq \max\{\IR(\calE_1, \rho), \IR(\calE_2, \rho)\}$ and $\IR(\calE, \rho) = \IR(\calE^\dual, \rho)$ (compare  \cite[Lemma~9.4.6]{kedlaya-book}); moreover, the definition of $\IR(\calE, \rho)$ is invariant under base extension on $K$ \cite[Remark~9.4.9]{kedlaya-book}.

For $n = \rank \calE$, we also define the \emph{intrinsic subsidiary radii of convergence}
$0 < s_1(\calE, \rho) \leq \cdots \leq s_n(\calE, \rho) \leq 1$ of $\calE$ at $\rho$ by the property that $\rho s_i(\calE, \rho)$ is the radius of the largest disc centered at $\eta_\rho$ on which $\calE$ admits $n-i+1$ linearly independent horizontal sections
\cite[Definition~9.8.1]{kedlaya-book}. In particular,  $s_1(\calE, \rho) = \IR(\calE, \rho)$.
\end{defn}

\begin{theorem} \label{T:convexity}
For $i=1,\dots,n$, the function $r \mapsto -\log s_1(\calE, e^{-r})- \cdots - \log s_i(\calE, e^{-r})$ is continuous, convex, and piecewise affine with slopes in $\frac{1}{n!} \ZZ$.
\end{theorem}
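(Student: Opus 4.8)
This statement is, for a single connection~$\calE$, essentially \cite[Chapter~11]{kedlaya-book}, and the plan is to reproduce that argument. All four conclusions are local on the interior of~$I$: a convex function on an open interval is automatically continuous, so continuity there is formal, and continuity at a closed endpoint of~$I$ will be evident from the construction below (which makes sense on the closed annulus). Since $\IR(\calE,\rho)$ and the $s_i(\calE,\rho)$ are invariant under base extension on $K$ \cite[Remark~9.4.9]{kedlaya-book}, we may enlarge~$K$ freely; we may also replace~$I$ by an arbitrarily small closed subinterval around a chosen point of its interior. Finally, the maximal subconnection on which $\IR(\cdot,\rho)=1$ for all~$\rho$ contributes~$0$ to every partial sum, so it may be set aside, leaving us to treat the case $\IR(\calE,\rho)<1$ throughout.

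The first real step is to reduce to the regime in which the subsidiary radii are \emph{visible}, i.e.\ computable directly from a twisted-polynomial presentation. By the Frobenius antecedent theorem \cite[Chapter~10]{kedlaya-book}, as long as the smallest subsidiary radius exceeds $\omega:=p^{-1/(p-1)}$, the connection descends along a map of degree~$p$ to a connection of the same rank whose subsidiary radii are the $p$-th powers of the original ones; over our compact subinterval, iterating this finitely often (a bound supplied by continuity and positivity of the radius functions) reaches the visible regime. The point is that such a descent changes each function $r\mapsto-\log s_i(\cdot,e^{-r})$ only by the affine substitution $r\mapsto pr$ followed by multiplication by~$p^{-1}$, which preserves convexity, piecewise-linearity, and the set of slopes; so it suffices to treat the descended connection.

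Assume now that all subsidiary radii are visible. Shrink~$I$ further so that $\calE$ admits a cyclic vector over $\calO(A_K[I])$ after inverting one nonzero $h\in\calO(A_K[I])$ --- which removes only the finitely many values of~$\rho$ for which $\eta_\rho$ lies in the zero locus of~$h$, and at which continuity of the (intrinsically defined) partial sums will be recovered at the end --- yielding $\calE\cong\calO(A_K[I])\{D\}/(P)$ with $P=D^n+\sum_{j<n}c_j(t)\,D^j$. By the Dwork--Robba theory of twisted-polynomial Newton polygons \cite[Chapter~6]{kedlaya-book}, in the visible regime the partial sums in question are read off the Newton polygon of~$P$ with respect to the $\eta_\rho$-Gauss norm and the weight $|D|=\omega/\rho$ (the spectral norm of $d/dt$ on $\calH(\eta_\rho)$): writing $h_\rho\colon[0,n]\to\RR$ for the lower convex hull of the points $\bigl(j,\ -\log|c_j|_\rho-j\log(\omega/\rho)\bigr)$ (with $c_n:=1$), one has
\[
\sum_{j\le i}\bigl(-\log s_j(\calE,\rho)\bigr)=\max_{0\le\ell\le i}\bigl(h_\rho(n)-h_\rho(n-\ell)\bigr),
\]
the maximum accounting for the truncation $s_j\le1$. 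Now for each fixed~$j$, writing $c_j=\sum_k c_{j,k}t^k$, the function $r\mapsto-\log|c_j|_{e^{-r}}=\min_k\bigl(-\log|c_{j,k}|+kr\bigr)$ is concave and piecewise affine with integer slopes, the minimum being attained by finitely many~$k$ on any compact interval; adding $-j\log(\omega/\rho)=-j\log\omega-jr$ preserves this. Consequently, for integer~$x$, the value $r\mapsto h_{e^{-r}}(x)$ --- a minimum over integers $0\le a\le x\le b\le n$ of rational convex combinations (denominators dividing $\mathrm{lcm}(1,\dots,n)$) of two such functions --- is concave, piecewise affine with slopes in $\tfrac1{n!}\ZZ$, with finitely many breakpoints on any compact interval; since $h_{e^{-r}}(n)$ is affine in~$r$, each $r\mapsto h_{e^{-r}}(n)-h_{e^{-r}}(n-\ell)$ is convex, piecewise affine with slopes in $\tfrac1{n!}\ZZ$, and so is their finite maximum. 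This yields all four assertions.

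The step I expect to be the main obstacle is the reduction to the visible regime together with the bookkeeping around it: verifying that finitely many Frobenius antecedents suffice over a compact subinterval, that each one acts on the slope data in exactly the controlled way used above, that the correct Newton polygon computes the visible subsidiary radii in full generality (not merely in the rank-one examples one checks by hand), and that the~$\rho$ excised by the cyclic-vector denominator --- together with the closed endpoints of~$I$ --- are harmless by continuity of the subsidiary radii. This is precisely what \cite[Chapters~6, 10 and 11]{kedlaya-book} are built to handle, so in practice I would organize the proof as an appeal to those chapters supplemented by the elementary Newton-polygon estimate above.
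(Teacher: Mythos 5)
The paper's own proof is a bare citation to \cite[Theorem~11.3.2]{kedlaya-book}, and your proposal correctly reconstructs the argument given there: Frobenius antecedents to reach the visible regime (with exactly the affine reparametrization $r \mapsto pr$, rescale by $1/p$, that you describe), then cyclic vectors and twisted Newton polygons to read off the partial sums of $-\log s_i$. This is the same approach. One point worth flagging: a single chain of Frobenius antecedents does not in general render all subsidiary radii simultaneously visible, since after a few iterations the smallest radius can drop below $\omega$ (disabling further antecedents) while the largest is still above it; the cited chapters handle this by first splitting the module by radius via the decomposition theorem and treating the pieces separately, a step your sketch elides but correctly flags as one of the bookkeeping points being deferred to the reference.
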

\begin{proof}
See \cite[Theorem~11.3.2]{kedlaya-book}.
\end{proof}

\begin{lemma} \label{L:unipotent to log}
For $I$ an open interval, $\calE$ is unipotent (i.e., a successive extension of trivial connections) if and only if $M_{\log} := \Gamma(A_K[I], \calE) \otimes_{\calO(A_K[I])} \calO(A_K[I])[\log t]$ admits a horizontal basis.
\end{lemma}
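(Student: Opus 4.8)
Write $R = \calO(A_K[I])$ and $R_{\log} = R[\log t]$, a free $R$-module, equipped with the derivation $D = t\frac{d}{dt}$ extending the usual one on $R$ by $D(\log t) = 1$; set $M = \Gamma(A_K[I],\calE)$, so that $M_{\log} = M \otimes_R R_{\log}$ carries the tensor-product connection. The whole argument rests on two elementary properties of the differential ring $R_{\log}$: its ring of constants $\ker D$ equals $K$, and $D\colon R_{\log}\to R_{\log}$ is surjective. The first follows by descending induction on the degree in $\log t$, using that $R^D = K$ and that a nonzero scalar is not in $D(R)$ (it has nonzero coefficient of $t^0$). The second --- the real point of the lemma --- amounts to the observation that any $g\in R$ can be written $g = D(h) + c$ with $h\in R$, $c\in K$ (integrate the Laurent expansion termwise, which converges since $I$ is an interval), whence $g = D(h + c\log t)$; the case of positive degree in $\log t$ follows by induction (integration by parts). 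In cohomological terms, $\log t$ supplies a primitive for the generator $\frac{dt}{t}$ of $H^1_{\dR}$ of the open annulus, so the first de Rham cohomology of the trivial connection becomes trivial after adjoining $\log t$.

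For the direction $(\Rightarrow)$, induct on the length of a filtration of $\calE$ with trivial graded pieces, the length $\leq 1$ case being clear. For the inductive step, the bottom of the filtration gives a short exact sequence $0 \to \calE_1 \to \calE \to \calE' \to 0$ with $\calE_1$ trivial and $\calE'$ unipotent of shorter length. Applying $\Gamma(A_K[I],-)\otimes_R R_{\log}$ --- exact since $R_{\log}$ is $R$-free --- yields a short exact sequence of differential modules over $R_{\log}$ whose outer terms both admit horizontal bases, the right one by the inductive hypothesis. Lift a horizontal basis of the right-hand term to $M_{\log}$; the resulting connection matrix has entries in the trivial module $(\calE_1)_{\log} \cong R_{\log}^{\oplus}$, and by surjectivity of $D$ these entries lie in the image of $D$, so one corrects the lift to be horizontal. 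This splits the sequence and produces a horizontal basis of $M_{\log}$.

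For the direction $(\Leftarrow)$, induct on $n = \rank\calE$. If $M_{\log}$ admits a horizontal basis, then (since $R_{\log}^D = K$) $M_{\log}^\nabla$ is an $n$-dimensional $K$-vector space generating $M_{\log}$ over $R_{\log}$; moreover, extracting the top-degree coefficient in $\log t$ of the equation $\nabla_D(v)=0$ for a nonzero $v\in M_{\log}^\nabla$ shows that $M^\nabla \neq 0$. Let $\calE_1 \subseteq \calE$ be the subbundle generated by $M^\nabla$ over $R$: it is nonzero and trivial, and it is saturated because $R$ admits no nonzero proper $D$-stable ideal (as $K$ has characteristic $0$ and $t$ is invertible, $D$ has order exactly one less than $f$ at any zero of a non-unit $f$). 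Hence $\calE' := \calE/\calE_1$ is again a connection, of rank $<n$. Tensoring $0 \to \calE_1 \to \calE \to \calE' \to 0$ with $R_{\log}$ and taking the long exact $\nabla$-cohomology sequence, the vanishing of $H^1_{\dR}$ of the trivial module $(\calE_1)_{\log}$ (immediate from surjectivity of $D$) shows $\dim_K (M')_{\log}^\nabla = \rank\calE'$; since these horizontal sections are then $R_{\log}$-linearly independent (the standard minimal-relation argument, using $R_{\log}^D = K$) and of full rank, and since $R_{\log}$ likewise has no nonzero proper $D$-stable ideal, they form an $R_{\log}$-basis, so $(M')_{\log}$ admits a horizontal basis. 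By induction $\calE'$ is unipotent, and an extension of a unipotent connection by a trivial one is unipotent (preimage-filter the filtration of $\calE'$); hence $\calE$ is unipotent.

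The one genuinely essential input is the surjectivity of $D$ on $R_{\log}$ --- equivalently, that $\frac{dt}{t}$ becomes exact after adjoining $\log t$ --- since this is the precise mechanism by which $\log t$ trivializes the cohomology responsible for the nontriviality of a unipotent connection. The remaining ingredients --- that $R_{\log}$ is a domain with constant ring $K$, that $R$ and $R_{\log}$ are differentially simple, that passage to differential sub-, quotient-, and extension-modules over $R_{\log}$ is well-behaved, and that $\calE_1$ is an honest subbundle (here one uses that $R$ is Bézout) so that $\calE'$ is a connection --- are routine, and most are already available in \cite{kedlaya-book}.
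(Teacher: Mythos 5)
Your proof is correct and follows essentially the same two-pronged strategy as the paper: the forward direction rests on the vanishing of $\operatorname{Ext}^1$ between trivial modules over $R[\log t]$, i.e.\ the surjectivity of the derivation on $R[\log t]$, and the reverse direction extracts the top-degree coefficient of $\log t$ from a horizontal section to produce an element of $H^0(\calE)$ and then inducts. One small point in your favor: your computation shows the top-degree coefficient $v_i$ already satisfies $\nabla(v_i)=0$ directly (the $\log t$-derivative only shifts degrees downward), whereas the paper's parenthetical claim that $v_i$ is killed by $\frac{d}{dt}+i$ and needs a twist by $t^{-i}$ appears to be a slip; your cleaner version is the right one.
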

\begin{proof}
For the ``only if'' direction, it suffices to identify the Yoneda extension group of two trivial rank-1 differential modules over $\calO(A_K[I])[\log t]$ with the cokernel of $\frac{d}{dt}$, and then to observe that the latter is trivial: the monomial $t^i (\log t)^j$ antidifferentiates to $\frac{1}{j+1} (\log t)^{j+1}$ if $i  = -1$, and otherwise to 
$\frac{1}{i+1} t^{i+1} (\log t)^j$ minus the antiderivative of $\frac{j}{i+1} t^i (\log t)^{j-1}$.

For the ``if'' direction, 
it is sufficient to verify that if $\calE \neq 0$ and $M_{\log}$ admits a horizontal basis, then $H^0(\calE) \neq 0$. For this, pick any nonzero horizontal section of $M_{\log}$ and let $i$ be the largest power of $\log t$ that occurs in this section. If we write this section as a polynomial in $\log t$ with coefficients in $\Gamma(A_K[I], \calE)$, then the coefficient of $(\log t)^i$ is killed by the action of $\frac{d}{dt} + i$; consequently, multiplying this coefficient by $t^{-i}$ yields a nonzero horizontal section of $\calE$.
(Compare \cite[Exercise~18.2]{kedlaya-book}.)
\end{proof}

\subsection{Regular connections}
\label{subsec:regular conn}

\begin{defn}
For $I$ an open interval, we say that a connection $\calE$
on $A_K[I]$ is \emph{regular} if $\IR(\calE, \rho) = 1$ for all $\rho \in I$.
This condition implies the existence of an action of $K^\times$ on $\calE$ via the substitutions $t \mapsto \lambda t$ (compare the discussion in \cite[Definition~13.5.2]{kedlaya-book}).

In the literature on $p$-adic differential equations, regular objects are also said to satify the \emph{Robba condition} \cite[Chapter~13]{kedlaya-book}.
Our terminology here is meant to convey an analogy between these objects and regular connections in the theory of ordinary differential equations over $\CC$ \cite[Chapter~7]{kedlaya-book}.
\end{defn}

\begin{defn}
We say that two tuples $A,B \in \ZZ_p^n$ are \emph{equivalent} if they belong to the same orbit under the wreath product $\ZZ \wr S_n$ (i.e., the group generated by diagonal integer shifts and permutation of indices).

For $x \in \QQ_p$, let $\langle x \rangle$ denote the smallest (for the usual absolute value) element of $\ZZ[p^{-1}]$ congruent to $x$ modulo $\ZZ_p$. (This is ambiguous when $p=2$ and $x \equiv \frac{1}{2} \pmod{\ZZ_p}$, in which case either choice is fine for what follows.)
Following \cite[Definition~13.4.2]{kedlaya-book},
we say that $A$ and $B$ are \emph{weakly equivalent} if there exist a constant $c > 0$
and a sequence $\sigma_1, \sigma_2, \dots$ of permutations of $\{1,\dots,n\}$ such that
\[
\left| p^m \left\langle \frac{A_i - B_{\sigma_m(i)}}{p^m} \right\rangle \right| \leq cm \qquad (i=1,\dots,n; m=1,2,\dots) 
\]
This holds if $A$ and $B$ are equivalent, but not conversely \cite[Example~13.4.6]{kedlaya-book}. See however Lemma~\ref{L:weakly equivalent to equivalent}.
\end{defn}

\begin{lemma} \label{L:weakly equivalent to equivalent}
Let $A \in \ZZ_p^n$ be a tuple with \emph{$p$-adic non-Liouville differences}, that is, the difference between any two elements of $A$ is not a $p$-adic Liouville number 
\cite[Definition~13.1.2]{kedlaya-book}. Then any $B \in \ZZ_p^n$ which is weakly equivalent to $A$ is also equivalent to $A$.
\end{lemma}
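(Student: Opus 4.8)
The plan is to reduce the notion of weak equivalence, which is an asymptotic $p$-adic condition on the sequences of permutations $\sigma_m$, to genuine equivalence under $\ZZ \wr S_n$, by exploiting the non-Liouville hypothesis to turn the inequality $|p^m \langle (A_i - B_{\sigma_m(i)})/p^m \rangle| \leq cm$ into an \emph{exact} divisibility statement. First I would recall \cite[Definition~13.1.2]{kedlaya-book}: a number $x \in \ZZ_p$ is non-Liouville precisely when the quantities $|p^m \langle x/p^m \rangle|$ do not shrink too fast, i.e.\ there is no subsequence along which $|p^m \langle x/p^m\rangle|^{1/m} \to 0$; equivalently (for $x \neq 0$) $\liminf_m |p^m\langle x/p^m\rangle|^{1/m} > 0$, so that a bound of the shape $\leq cm$ forces, for all large $m$, that $\langle x/p^m \rangle$ actually lies in $p^{-O(\log m)}\ZZ$, and in fact the combination of upper and lower bounds pins down $x$ modulo $p^m\ZZ$ up to a bounded correction. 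The key point is that $A_i - B_{\sigma_m(i)}$ for a \emph{fixed} pair $(i,j)$ with $j = \sigma_m(i)$ occurring infinitely often must then be forced to be $0$ once we know it is non-Liouville: if $A_i - B_j \neq 0$, writing $v$ for its $p$-adic valuation (possibly in $\ZZ[p^{-1}]$ after shifting, but the \emph{difference} argument only sees $\ZZ_p$ here so $v \geq 0$), the non-Liouville condition gives a lower bound $|p^m\langle (A_i - B_j)/p^m\rangle| \geq c' m^{-N}$ eventually for some $N$, which is compatible with the upper bound, so one must instead argue that the \emph{non-vanishing} is incompatible with simultaneous membership in a lattice of the right size.

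More carefully, the actual mechanism I expect is the following. Fix $i$. By pigeonhole, some index $j$ occurs as $\sigma_m(i)$ for infinitely many $m$; along that subsequence the hypothesis says $|p^m\langle (A_i - B_j)/p^m\rangle| \leq cm$. I claim this forces $A_i = B_j$. Indeed, set $d = A_i - B_j \in \ZZ_p$ and suppose $d \neq 0$. The element $\langle d/p^m \rangle \in \ZZ[p^{-1}]$ is the representative of $d/p^m$ in $\ZZ_p$ of minimal absolute value, so $p^m \langle d/p^m\rangle$ is an element of $\ZZ[p^{-1}]$ congruent to $d$ modulo $p^m \ZZ_p$; the bound $|p^m\langle d/p^m\rangle| \leq cm$ then says that $d$ is congruent modulo $p^m \ZZ_p$ to an element of $\ZZ[p^{-1}]$ of archimedean size at most $cm$ — and whose $p$-adic size is at most $1$ times something controlled, since $\langle\cdot\rangle \in \ZZ[p^{-1}]$ has $|\langle x\rangle|_p \leq 1$ always? (Here I would double-check the precise normalization in \cite[Definition~13.4.2]{kedlaya-book} — whether $\langle x \rangle$ is taken to have bounded $p$-adic denominator.) Granting that, $d$ is $p$-adically approximated to within $p^{-m}$ by rationals of height $\leq cm$ for infinitely many $m$, which is exactly the assertion that $d$ is a $p$-adic Liouville number unless $d = 0$; this contradicts the non-Liouville-differences hypothesis. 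Hence $A_i = B_j$.

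Having shown that for each $i$ there is some $j(i)$ with $A_i = B_{j(i)}$, and symmetrically (or by a counting/multiplicity argument, since weak equivalence is symmetric and the bound is two-sided in $A \leftrightarrow B$) that the multiset $\{B_j\}$ is carried into $\{A_i\}$, I would conclude that $\{A_1,\dots,A_n\}$ and $\{B_1,\dots,B_n\}$ coincide \emph{as multisets in $\ZZ_p$}. To upgrade ``equal as multisets'' to ``equivalent under $\ZZ\wr S_n$'' is then immediate: choose a single permutation $\tau$ with $A_i = B_{\tau(i)}$ for all $i$ (possible since the multisets agree), and this witnesses equivalence with all integer shifts equal to zero. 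The one subtlety is that the pigeonhole argument as stated produces, for each $i$ separately, a value $j(i)$ realizing $A_i = B_{j(i)}$, but does not a priori guarantee that $i \mapsto j(i)$ can be taken injective — however, since along any fixed subsequence $\sigma_m$ is a bijection, one can instead pigeonhole on the \emph{whole tuple}: there are only finitely many permutations of $\{1,\dots,n\}$, so some single permutation $\sigma$ occurs as $\sigma_m$ for infinitely many $m$, and then the bound gives $|p^m \langle (A_i - B_{\sigma(i)})/p^m\rangle| \leq cm$ for all $i$ simultaneously along that subsequence, whence $A_i = B_{\sigma(i)}$ for every $i$ by the Liouville argument above. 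That sidesteps the matching issue entirely.

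The main obstacle I anticipate is purely the first reduction: extracting from the inequality $|p^m\langle d/p^m\rangle| \leq cm$, valid along an infinite set of $m$, the statement that $d$ is a $p$-adic Liouville number (in the precise sense of \cite[Definition~13.1.2]{kedlaya-book}) whenever $d \neq 0$. This requires knowing the exact definition of ``$p$-adic Liouville number'' used in the book and checking that the quantity $p^m\langle d/p^m\rangle$ is exactly (a variant of) the standard measure of $p$-adic Liouville-ness — i.e.\ that $\langle d/p^m \rangle$ is the ``fractional part'' giving the best rational approximation to $d/p^m$ from $\ZZ_p$, so that $|d - p^m\langle d/p^m\rangle|_p \leq p^{-m}$ while $|p^m \langle d/p^m\rangle|$ measures the archimedean height. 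Once that dictionary is in place, the rest (pigeonhole on permutations, multiset equality, constructing $\tau$) is formal. I would also cross-reference \cite[Example~13.4.6]{kedlaya-book} to make sure the Liouville phenomenon there is exactly what the hypothesis is excluding.
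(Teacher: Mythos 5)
Your pigeonhole step (extracting a single permutation $\sigma$ occurring as $\sigma_m$ for infinitely many $m$) is a good start, but the way you then invoke the hypothesis is the crux of the problem: you apply the non-Liouville condition to $d = A_i - B_{\sigma(i)}$, whereas the hypothesis only concerns the differences $A_i - A_{i'}$ \emph{within} the tuple $A$. Nothing in the statement lets you assume the ``mixed'' difference $A_i - B_j$ is non-Liouville. The bound $|p^m\langle(A_i-B_{\sigma(i)})/p^m\rangle|\le cm$ along an infinite subsequence does force $A_i-B_{\sigma(i)}$ to be in $\ZZ$ or $p$-adic Liouville (note, incidentally, that ``$=0$'' in your write-up should be ``$\in\ZZ$'', since the balanced residues can stabilize at a nonzero integer and equivalence under $\ZZ\wr S_n$ only requires agreement modulo $\ZZ$). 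But ``Liouville'' is not a contradiction here: $A_i - B_j$ could genuinely be Liouville, and that is precisely the phenomenon behind \cite[Example~13.4.6]{kedlaya-book}, which the paper quotes to explain why weak equivalence does not imply equivalence in general.

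The hypothesis must be used more indirectly, and the ``for all $m$'' structure of weak equivalence (which your subsequence argument discards) is essential. Observe first that if $N_m(d) := |p^m\langle d/p^m\rangle|\le c'm$ for \emph{all} sufficiently large $m$, then $d\in\ZZ$ with no Liouville hypothesis at all: the balanced residues $k_m$ satisfy $k_{m+1}\equiv k_m\pmod{p^m}$ and $|k_m|,|k_{m+1}|\le c'(m+1)<p^m/2$, so $k_{m+1}=k_m$ eventually and $d$ equals the stable integer. The obstruction to applying this directly is that $\sigma_m$ varies with $m$. That is exactly where the non-Liouville hypothesis on $A$ comes in: comparing $\sigma_m$ and $\sigma_{m+1}$ at a common target $j$ gives $N_m\bigl(A_{\sigma_m^{-1}(j)}-A_{\sigma_{m+1}^{-1}(j)}\bigr)\le 3cm$; pigeonholing over the finitely many pairs of indices of $A$ and applying non-Liouville to those \emph{internal} differences shows that for $m$ large, $A_{\sigma_m^{-1}(j)}-A_{\sigma_{m+1}^{-1}(j)}\in\ZZ$ for every $j$. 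Telescoping gives a uniform integer bound on $A_{\sigma_{m_0}^{-1}(j)}-A_{\sigma_m^{-1}(j)}$, whence $N_m(A_{\sigma_{m_0}^{-1}(j)}-B_j)\le cm+O(1)$ for all large $m$, and the ``all $m$'' argument above yields $A_{\sigma_{m_0}^{-1}(j)}-B_j\in\ZZ$, i.e.\ equivalence via $\sigma_{m_0}$. In short: the non-Liouville hypothesis controls the variation of $\sigma_m$ with $m$, not the individual differences $A_i-B_j$ directly, and once $\sigma_m$ is effectively pinned down one no longer needs any Liouville input to conclude.
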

\begin{proof}
See \cite[Proposition~13.4.5]{kedlaya-book}.
\end{proof}

\begin{defn}
Let $I$ be an open interval, let $\calE$ be a regular connection of rank $n$ on $A_K[I]$,
and fix a closed subinterval $J$ of $I$.
We define an \emph{exponent} for $\calE$ on $J$ to be a tuple $A \in \ZZ_p^n$ as in \cite[Definition~13.5.2]{kedlaya-book}. 
If $A$ can be chosen uniformly over $J$, we say that it is an \emph{exponent for $\calE$}.
\end{defn}

\begin{remark}
In lieu of including more details of the definition of exponents, we recall some key properties.
\begin{itemize}
\item
If $A, B \in \ZZ_p^n$ are equivalent and $A$ is an exponent for $\calE$ on $J$, then so is $B$.
\item
If $A$ is an exponent for $\calE$ on $J$, then $A$ is also an exponent for $\calE$ on any closed subinterval of $J$.
\item
If $\calE$ is trivial, then the zero tuple is an exponent for $\calE$.
\item
If $0 \to \calE_1 \to \calE \to \calE_2 \to 0$ is an exact sequence of regular connections
and $A_i$ is an exponent for $\calE_i$ on $J$, then the concatenation of $A_1$ and $A_2$ is an exponent for $\calE$ on $J$. In particular, if $\calE$ is unipotent, then the zero tuple is an exponent for $\calE$. The converse also holds but is much deeper; see Theorem~\ref{T:CM}.
\end{itemize}
\end{remark}

\begin{theorem}[Christol--Mebkhout] \label{T:CM}
Let $I$ be an open interval, let $\calE$ be a regular connection of rank $n$ on $A_K[I]$,
and fix a closed subinterval $J$ of $I$.
\begin{enumerate}
\item[(a)]
There exists an exponent for $\calE$ on $J$. 
\item[(b)]
If $J$ is of positive length, then any two exponents for $\calE$ on $J$ are weakly equivalent
in the sense of \cite[Definition~13.4.2]{kedlaya-book}.
In particular, if there is an exponent with $p$-adic non-Liouville differences, then any other exponent is equivalent to it.
\item[(c)]
If the zero $n$-tuple is an exponent for $\calE$, then $\calE$ is unipotent.
\end{enumerate}
\end{theorem}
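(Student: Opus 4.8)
These three assertions are the core of the Christol--Mebkhout theory of exponents, and the plan is to reproduce that theory in the form developed in \cite[Chapter~13]{kedlaya-book}. For part~(a): since the notion of exponent (like $\IR$ itself) is insensitive to base extension on $K$, I would first enlarge $K$ as convenient, and since an exponent on $J$ restricts to an exponent on any closed subinterval, I would reduce to the case in which $J$ is short enough that the behaviour of $\calE$ near a single Gauss point $\eta_\rho$ governs all of $A_K[J]$. The key structural input is then the $p$-adic analogue of Fuchs's theorem: a regular connection over the (thin) Robba ring decomposes, after a further finite base extension if necessary, as a direct sum $\bigoplus_{\alpha \in \ZZ_p/\ZZ} \calE_\alpha$ in which, for any lift $a \in \ZZ_p$ of $\alpha$, the twist $t^{-a} \otimes \calE_\alpha$ carries an $\calO$-lattice on which $t\,d/dt$ acts with all eigenvalues in a prescribed small neighbourhood of $0$. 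Reading off from each $\calE_\alpha$ the multiset of those eigenvalues and concatenating yields a tuple $A \in \ZZ_p^n$; one then checks that $A$ is an exponent in the sense of \cite[Definition~13.5.2]{kedlaya-book}, that the tuples produced near different Gauss points of $J$ are equivalent on overlaps, and hence --- using that $J$ is quasicompact --- that a single $A$ works on all of $J$. Producing the decomposition $\bigoplus_\alpha \calE_\alpha$ is the technical crux: it rests on analysing the operator $t\,d/dt - a$ by means of Dwork's transfer principle and effective bounds on radii of convergence, and this is precisely where one must track the $p$-adic Liouville numbers among the relevant differences of eigenvalues.

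For part~(b): given two exponents $A, B$ for $\calE$ on a subinterval of positive length, the plan is to verify the weak-equivalence inequality directly. I would reduce to a statement asserting that a regular connection does not have ``too few'' horizontal sections: the connection $\calE \otimes \calE^\dual$ admits both $\{a_i - a_j\}_{i,j}$ and $\{b_i - b_j\}_{i,j}$ among its exponents (from the behaviour of exponents under tensor products and subquotients) and carries the canonical nonzero horizontal section $\mathrm{id}$; a more refined analysis of the twists $t^{-c} \otimes (\calE \otimes \calE^\dual)$ for $c$ near the various $a_i - b_j$ then forces $\left| p^m \langle (a_i - b_{\sigma_m(i)})/p^m \rangle \right|$ to grow no faster than linearly in $m$, since a larger growth rate would contradict the $p$-adic Liouville estimates of \cite[\S13.1--13.2]{kedlaya-book}. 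The concluding ``in particular'' is then immediate: if some exponent $A$ has $p$-adic non-Liouville differences, Lemma~\ref{L:weakly equivalent to equivalent} upgrades weak equivalence with $A$ to genuine equivalence.

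For part~(c): this is the deep converse to the elementary observation that a unipotent connection has the zero exponent, and I would argue by induction on $n = \rank \calE$, the case $n \le 1$ being clear. For the inductive step it suffices to produce a nonzero horizontal section of $\calE$ or of $\calE^\dual$: exponents add on short exact sequences, so the zero tuple being an exponent of $\calE$ forces the zero tuple to be an exponent both of a nonzero subobject and of the corresponding quotient, and the induction then closes. To produce such a section one exploits that the exponent being the zero tuple (as opposed to merely some tuple in $\ZZ_p^n$ congruent to $0$) leaves no room for genuine $t^a$-growth or for non-nilpotent log-monodromy, so that the $p$-adic Fuchs analysis shows $\calE$ is unipotent in a neighbourhood of the boundary; a descent/overconvergence argument then propagates unipotence to all of $A_K[I]$, and Lemma~\ref{L:unipotent to log} repackages the conclusion as a horizontal basis of $M_{\log}$ if that formulation is preferred.

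The main obstacle, common to all three parts and with no archimedean counterpart, is the precise control of how $\IR(\calE, \rho)$ behaves under a twist $\calE \rightsquigarrow t^{-a} \otimes \calE$ in terms of the $p$-adic arithmetic of $a$ --- in particular of how close $p^m \langle a/p^m \rangle$ stays to $0$ --- since it is this estimate that drives the decomposition in~(a), the weak-equivalence bound in~(b), and the convergence of formal solutions needed in~(c).
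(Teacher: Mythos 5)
The paper does not give a proof of this theorem: it cites \cite[Theorem~13.5.5, Theorem~13.5.6, Theorem~13.6.1]{kedlaya-book}, so the standard of comparison is whether your outline is a faithful sketch of the Christol--Mebkhout argument as presented there. Parts~(b) and~(c) are roughly in the right spirit, but your plan for part~(a) has a genuine logical gap: you propose to produce the exponent by first invoking a ``$p$-adic Fuchs decomposition'' $\calE \cong \bigoplus_{\alpha} \calE_\alpha$ in which each $\calE_\alpha$, after twisting by $t^{-a}$, carries a lattice stable for $t\,d/dt$ with controlled eigenvalues, and then reading off the exponent from the eigenvalues. This is circular. Such a Fuchs-type decomposition is a \emph{consequence} of the exponent theory (essentially Theorem~13.6.1 and its refinements), and moreover it requires the non-Liouville hypothesis on exponent differences. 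The whole content of~(a) is that an exponent exists \emph{without} any a priori arithmetic hypothesis on $\calE$, and without being handed such a decomposition. The actual proof in \cite[Theorem~13.5.5]{kedlaya-book} runs in the opposite direction: one defines the matrices $S_{m,A}$ over $\calO(A_K[J])$ for $A$ ranging over $(\ZZ/p^m\ZZ)^n$, exploits the telescoping identity $\det(S_{m+1,A}) = \sum_B \det(S_{m,B})$ and a compactness/pigeonhole argument over $\ZZ_p^n$ to select a coherent sequence with $\det(S_{m,A})$ bounded away from zero, and takes the limit to get $A \in \ZZ_p^n$. No decomposition is constructed at this stage. (This mechanism is exactly what the paper leans on in Lemma~\ref{L:projection to uniform}, which is itself a relativization of this pigeonhole step.)

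The downstream parts of your sketch also lean on the Fuchs decomposition in places where the cited proofs instead run the shearing/twisting machinery directly; for~(b), the uniqueness argument in \cite[Theorem~13.5.6]{kedlaya-book} compares exponents through $\calE^\dual \otimes \calE$ and controls $\det S_{m,A}$ for shifted tuples, rather than analyzing honest twists $t^{-c} \otimes (\calE \otimes \calE^\dual)$. For~(c), the heart of the matter is to turn the boundedness of $\det S_{m,0}$ into actual horizontal sections (the ``shearing'' construction), and then to descend unipotence from a neighborhood of the boundary of $J$ to all of $A_K[I]$; your sketch names the right ingredients but glosses over the extraction of the horizontal section, which is the substantive step. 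If you rewrite~(a) to follow the $S_{m,A}$-compactness argument rather than appealing to a Fuchs decomposition, the overall structure of your outline would line up with the cited theorems.
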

\begin{proof}
See \cite[Theorem~13.5.5, Theorem~13.5.6, Theorem~13.6.1]{kedlaya-book}.
\end{proof}

\begin{cor} \label{C:extend exponent}
Let $I$ be an open interval and let $\calE$ be a regular connection of rank $n$ on $A_K[I]$.
Let $A \in \ZZ_p^n$ be a tuple with $p$-adic non-Liouville differences. If there exists some closed subinterval $J$ of $I$ of positive length such that $A$ is an exponent for $\calE$ on $J$, then $A$ is an exponent for $\calE$.
\end{cor}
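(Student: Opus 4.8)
The plan is to deduce this from the Christol--Mebkhout theorem (Theorem~\ref{T:CM}) together with the rigidity of tuples with $p$-adic non-Liouville differences under weak equivalence (Lemma~\ref{L:weakly equivalent to equivalent}). Unwinding the definition, to say that $A$ is an exponent for $\calE$ amounts to saying that $A$ is an exponent for $\calE$ on $J'$ for \emph{every} closed subinterval $J' \subseteq I$; so I would fix such a $J'$ and show that $A$ is an exponent for $\calE$ on $J'$.

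The first step is to enlarge both intervals at once: let $J''$ be the convex hull of $J \cup J'$. Since $I$ is an interval and $J, J'$ are closed bounded subintervals of $I$, the endpoints of $J$ and of $J'$ lie in the interior of $I$, so $J''$ is again a closed subinterval of $I$, and by construction it contains both $J$ and $J'$. By Theorem~\ref{T:CM}(a) there is an exponent $A''$ for $\calE$ on $J''$, and by the restriction property for exponents (second bullet of the Remark preceding Theorem~\ref{T:CM}), $A''$ is then also an exponent for $\calE$ on $J$ and on $J'$.

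The second step compares $A$ with $A''$ over $J$: since $J$ has positive length and $A, A''$ are both exponents for $\calE$ on $J$, Theorem~\ref{T:CM}(b) shows that $A$ and $A''$ are weakly equivalent, and since $A$ has $p$-adic non-Liouville differences, Lemma~\ref{L:weakly equivalent to equivalent} upgrades this to a genuine equivalence of $A$ and $A''$. Finally, since $A''$ is an exponent for $\calE$ on $J'$ and $A$ is equivalent to $A''$, the equivalence-invariance of exponents (first bullet of the Remark) shows that $A$ is an exponent for $\calE$ on $J'$, completing the argument.

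I do not expect a serious obstacle here; the proof is essentially bookkeeping with the cited results. The two points that need a little attention are: making sure the convex hull $J''$ stays inside the \emph{open} interval $I$ (which we verify using that the endpoints of $J$ and $J'$ are interior to $I$), and tracking where positive length is actually required — Theorem~\ref{T:CM}(b) is applied on $J$, which is assumed of positive length, and not on $J'$, which is allowed to degenerate to a single point.
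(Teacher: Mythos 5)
Your proposal is correct and takes essentially the same approach as the paper: the paper first reduces to the case $J' \supseteq J$ and then produces an auxiliary exponent $B$ on $J'$, restricts it to $J$, applies Theorem~\ref{T:CM}(b) and Lemma~\ref{L:weakly equivalent to equivalent} to conclude $A \sim B$, and deduces $A$ is an exponent on $J'$; your convex-hull step replaces the initial reduction but the rest is word-for-word the same bookkeeping.
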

\begin{proof}
It suffices to check that $A$ is an exponent for $\calE$ on any closed subinterval $J'$ containing $J$, as we may then restrict to any closed subinterval of $J'$ and thereby cover all closed subintervals of $I$.
By Theorem~\ref{T:CM}(a), there exists an exponent $B \in \ZZ_p^n$ for $\calE$ on $J'$, which is then also an exponent for $\calE$ on $J$. By Theorem~\ref{T:CM}(b), $A$ and $B$ are weakly equivalent, and hence equivalent by our condition on (a) plus Lemma~\ref{L:weakly equivalent to equivalent}. Hence $A$ is also an exponent for $\calE$ on $J'$, as claimed.
\end{proof}

\subsection{Solvable connections}

\begin{defn}
For $I = [\epsilon,1)$ or $I = (\epsilon, 1)$, we say that a connection $\calE$
on $A_K[I]$ is \emph{solvable (at $1$)}
if $\IR(\calE, \rho) \to 1$ as $\rho \to 1^-$.
For $\calE$ solvable of rank $n$,
for $i=1,\dots,n$, let $b_i(\calE)$ be the slope of the function $r \mapsto -\log s_i(\calE, e^{-r})$
as $r \to 0^+$; we sometimes write $b(\calE)$ instead of $b_1(\calE)$.
By Theorem~\ref{T:convexity}, $b_i(\calE)$ is well-defined and belongs to $\frac{1}{n!} \ZZ_{\geq 0}$.
\end{defn}

\begin{defn}
Let $\calC_K$ denote the 2-colimit of the categories of solvable connections on $A_K(\epsilon,1)$
(as full subcategories of the categories of arbitrary connections)
over all $\epsilon \in (0,1)$, or equivalently on $A_K[\epsilon,1)$ over all $\epsilon \in (0,1)$.

For $\calE \in \calC_K$ of rank $n$, the intrinsic subsidiary radii of convergence of $\calE$ at $\rho$ are not well-defined as functions on any specific interval $(\epsilon, 1)$, but they are well-defined 
as germs at $1^-$. In particular, 
the quantities $b_1(\calE), \dots, b_n(\calE)$ are well-defined and belong to $\frac{1}{n!}\ZZ_{\geq 0}$.
\end{defn}

\begin{lemma} \label{L:criterion for big slope}
Let $\calE$ be a solvable connection of rank $n$ on $A_K[\epsilon,1)$ for some $\epsilon > 0$.
For $c \in \frac{1}{n!} \ZZ$, $i \in \{1,\dots,n\}$,
$b_1(\calE) + \cdots + b_i(\calE) > c$ if and only if
\begin{equation} \label{eq:criterion for big slope}
s_1(\calE, \rho) \cdots s_i(\calE, \rho) \leq \rho^{c + 1/n!} \qquad (\rho \in [\epsilon, 1)).
\end{equation}
\end{lemma}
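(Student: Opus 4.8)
The plan is to pass from the radii to the convex partial-sum functions of Theorem~\ref{T:convexity}. Put $r = -\log\rho$, so that $\rho \in [\epsilon,1)$ corresponds to $r \in (0, -\log\epsilon]$, set $F_i(r) := -\log s_1(\calE, e^{-r}) - \cdots - \log s_i(\calE, e^{-r})$, and write $B_i := b_1(\calE) + \cdots + b_i(\calE)$. Since $s_1(\calE,\rho) \cdots s_i(\calE,\rho) = e^{-F_i(r)}$ and $\rho^{c + 1/n!} = e^{-(c + 1/n!)r}$, the inequality \eqref{eq:criterion for big slope} is exactly the statement that $F_i(r) \geq (c + \tfrac{1}{n!})\,r$ for all $r \in (0, -\log\epsilon]$. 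So the content of the lemma is the equivalence of $B_i > c$ with this lower bound on $F_i$.

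Next I would assemble the properties of $F_i$. It is nonnegative since each $s_i(\calE,\rho) \in (0,1]$; by Theorem~\ref{T:convexity} it is continuous, convex, and piecewise affine with slopes in $\tfrac{1}{n!}\ZZ$; and, because $\calE$ is solvable, $s_1(\calE,\rho) = \IR(\calE,\rho) \to 1$ as $\rho \to 1^-$, which together with $s_1 \leq \cdots \leq s_n \leq 1$ forces $s_i(\calE,\rho) \to 1$ for every $i$, hence $F_i(r) \to 0$ as $r \to 0^+$. Convexity makes the right-hand slope of $F_i$ nondecreasing in $r$; as it is also $\geq 0$ near $0$ (being at least the secant slope $F_i(r)/r \geq 0$) and takes values in the discrete set $\tfrac{1}{n!}\ZZ$, it must be constant on some initial interval $(0,\delta)$, with constant value equal to $B_i$ by the definition of the $b_j(\calE)$. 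Combining this with $F_i(0^+) = 0$ gives $F_i(r) = B_i r$ on $(0,\delta)$, and since the slope is $\geq B_i$ throughout, $F_i(r) \geq B_i r$ for all $r \in (0, -\log\epsilon]$.

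The two implications are then immediate. If $B_i > c$, then, as $B_i \in \tfrac{1}{n!}\ZZ_{\geq 0}$ and $c \in \tfrac{1}{n!}\ZZ$, in fact $B_i \geq c + \tfrac{1}{n!}$, so $F_i(r) \geq B_i r \geq (c + \tfrac{1}{n!})r$ for all $r$ in range, i.e.\ \eqref{eq:criterion for big slope} holds. Conversely, arguing by contraposition, if $B_i \leq c$ then $B_i < c + \tfrac{1}{n!}$, so on $(0,\delta)$ we have $F_i(r) = B_i r < (c + \tfrac{1}{n!})r$, and \eqref{eq:criterion for big slope} fails for $\rho = e^{-r}$ with $r$ small.

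The one point requiring genuine care is the behaviour of $F_i$ near the open endpoint $r = 0$: one must ensure that convexity plus $F_i(0^+) = 0$ plus ``piecewise affine'' really does pin $F_i$ down to the line $B_i r$ on an initial interval, rather than permitting breakpoints accumulating at $0$. This is exactly where the discreteness of the slope set $\tfrac{1}{n!}\ZZ$ (and the monotonicity of the slopes) is used, and it is also where the shift by $\tfrac{1}{n!}$ in the exponent of $\rho$ in \eqref{eq:criterion for big slope} earns its keep: it converts the strict inequality $B_i > c$ into a comparison of slopes that is stable under the limit $r \to 0^+$.
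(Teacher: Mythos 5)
Your proof is correct and follows essentially the same route as the paper's: translate \eqref{eq:criterion for big slope} into a lower bound $F_i(r) \geq (c + \tfrac{1}{n!})r$ for the convex partial-sum function $F_i$, then use Theorem~\ref{T:convexity} (convexity, $F_i(0^+)=0$, slopes in $\tfrac{1}{n!}\ZZ$) to compare the slope of $F_i$ at $0^+$, namely $B_i$, against $c + \tfrac{1}{n!}$. The only cosmetic difference is that you argue the ``if'' direction by contraposition via the initial-interval equality $F_i(r)=B_ir$, whereas the paper just reads off $s_1\cdots s_i = \rho^{B_i}$ for $\rho$ near $1$ and compares exponents; these are the same observation.
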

\begin{proof}
For the ``if'' direction, note that $s_1(\calE, \rho) \cdots s_i(\calE, \rho) = \rho^{b_1(\calE) + \cdots + b_i(\calE)}$ for $\rho$ sufficiently close to 1.
For the ``only if'' direction, suppose that $b_1(\calE_x) + \cdots + b_i(\calE_x) > c$; then
the function $r \mapsto -\log s_1(\calE_x, e^{-r}) - \cdots - \log s_i(\calE_x, e^{-r})$ 
on $(0, -\log \epsilon]$ is convex, tends to 0 as $r \to 0^+$,
and by Theorem~\ref{T:convexity} is convex and piecewise affine with slope at 0 at least
$c + \frac{1}{n!}$. It follows that the function is bounded below by $(c + \frac{1}{n!})r$ for all $r$, proving the claim.
\end{proof}

\begin{theorem}[Christol--Mebkhout] \label{T:christol-mebkhout}
For $\calE \in \calC_K$, there exists a unique direct sum decomposition 
(called the \emph{slope decomposition})
\[
\calE = \bigoplus_{s \in \QQ_{\geq 0}} \calE_s
\]
such that for $s \in \QQ_{\geq 0}$, $i \in \{1,\dots,\rank(\calE_s)\}$ we have
$b_i(\calE_s) = s$.
\end{theorem}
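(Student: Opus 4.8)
The plan is to reduce the statement to the local structure theory already available for solvable connections on an annulus, working at a single boundary Gauss point and then propagating the decomposition along the annulus. First I would fix a representative $\calE$ on some $A_K[\epsilon,1)$ and recall from Theorem~\ref{T:convexity} that each partial sum $r \mapsto -\log s_1(\calE, e^{-r}) - \cdots - \log s_i(\calE, e^{-r})$ is convex and piecewise affine, so the slopes $b_i(\calE)$ are well-defined as germs at $1^-$ and the multiset $\{b_1(\calE), \dots, b_n(\calE)\}$ records a ``Newton polygon at the boundary.'' The target decomposition must be compatible with this: $\calE_s$ should be exactly the subobject whose boundary slopes are all equal to $s$, and its rank is the multiplicity of $s$ among the $b_i(\calE)$.

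The main construction I would carry out is the following. For each slope $s$ appearing, I would produce the ``$\leq s$'' part as a saturated subbundle-with-connection cut out by a rank condition on subsidiary radii, using Lemma~\ref{L:criterion for big slope} to translate ``all slopes $\le s$'' (resp. ``$> s$'') into the inequalities \eqref{eq:criterion for big slope} valid uniformly on $[\epsilon,1)$ after shrinking $\epsilon$. Concretely, I expect to invoke the standard fact (from \cite[Chapter~12]{kedlaya-book}, the ``decomposition by subsidiary radii'') that when the Newton polygon of subsidiary radii has a vertex of a given height over a subinterval, the connection splits correspondingly over that subinterval; since $\calE$ is solvable, the polygon is ``pure at the boundary'' in the sense that after shrinking $\epsilon$ each $b_i$ is realized by a genuine affine piece on all of $(\epsilon',1)$, so the vertex persists and the splitting is defined on a whole one-sided neighborhood of $1$. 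Iterating over the distinct slopes $s_{(1)} < s_{(2)} < \cdots$ yields the direct sum $\calE = \bigoplus_s \calE_s$ with $b_i(\calE_s) = s$ for all $i$; functoriality and the behavior of subsidiary radii under $\otimes$ and $\dual$ recalled in the Preliminaries show the pieces $\calE_s$ are intrinsic.

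For uniqueness: if $\calE = \bigoplus_s \calE_s = \bigoplus_s \calE'_s$ are two such decompositions, then any morphism $\calE_s \to \calE'_{s'}$ with $s \neq s'$ must vanish, because a nonzero horizontal section of the internal Hom $\calE_s^\dual \otimes \calE'_{s'}$ would force that Hom to have $0$ among its boundary slopes, whereas the slope of a tensor product is controlled by $\IR(\calE_1 \otimes \calE_2, \rho) \le \max\{\IR(\calE_1,\rho), \IR(\calE_2,\rho)\}$ together with a matching lower bound when the two slopes differ, giving all boundary slopes of $\calE_s^\dual \otimes \calE'_{s'}$ equal to $\max\{s,s'\} > 0$; hence $H^0 = 0$ and there is no such nonzero map. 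Therefore the two decompositions coincide. The step I expect to be the genuine obstacle is making the passage from ``vertex of the subsidiary-radii polygon over an interval'' to ``direct sum decomposition of $\calE$ as a connection'' fully rigorous in the present generality — in particular checking that the decomposition obtained over nested subintervals glues to a single decomposition of $\calE$ as an object of $\calC_K$, and that solvability indeed guarantees the relevant vertex is present on an entire one-sided neighborhood of $1$ rather than only asymptotically; this is exactly where one must lean on \cite[Theorem~12.4.1 and Theorem~13.5.5]{kedlaya-book} rather than reprove the hard Christol--Mebkhout input from scratch.
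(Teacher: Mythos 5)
The paper's proof of this theorem is a bare citation to \cite[Theorem~12.6.4]{kedlaya-book}, and your sketch is a faithful reconstruction of the argument given there: decompose over a closed subinterval where the Newton polygon of subsidiary radii has a vertex (the Chapter~12 decomposition theorems), use solvability plus the piecewise-affine structure to see that the relevant vertices persist on an entire one-sided neighborhood of $1$, glue, and prove uniqueness by showing that $\mathrm{Hom}(\calE_s,\calE'_{s'})$ has all boundary slopes equal to $\max\{s,s'\}>0$ when $s\neq s'$ and hence no nonzero horizontal sections. So this is essentially the same route, correctly executed.

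Two small quibbles. First, your final sentence names \cite[Theorem~13.5.5]{kedlaya-book} as a place to lean on; that result concerns the existence of $p$-adic exponents for regular connections and is not used in the slope decomposition, which rests on the Chapter~12 factorization results (around \cite[Theorem~12.4.1]{kedlaya-book}) together with \cite[Theorem~12.6.4]{kedlaya-book} itself. Second, the inequality you quote, $\IR(\calE_1\otimes\calE_2,\rho)\le\max\{\IR(\calE_1,\rho),\IR(\calE_2,\rho)\}$, is stated with the wrong direction (it appears this way in the paper's preliminaries too); the correct statement, and the one your argument actually uses, is $\IR(\calE_1\otimes\calE_2,\rho)\ge\min\{\IR(\calE_1,\rho),\IR(\calE_2,\rho)\}$ with equality when the two radii differ, which after taking $-\log$ gives exactly the ``slopes of the tensor are the max'' assertion you need. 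Your subsequent reasoning and conclusion are correct despite the misquoted inequality.
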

\begin{proof}
See \cite[Theorem~12.6.4]{kedlaya-book}.
\end{proof}

\begin{defn}
For $\calE \in \calC_K$ of rank $n$, we define the \emph{slopes} of $\calE$ as the multisubset of $\QQ_{\geq 0}$ of cardinality $n$ containing $s$ with multiplicity equal to $\rank \calE_s$.
We refer to $\calE_s$ as the \emph{slope-$s$ component} of $\calE$; in case $\calE = \calE_s \neq 0$, we say that $\calE$ is a \emph{slope-$s$ object} of $\calC_K$.
We also refer to slope-$0$ components and objects as \emph{regular} components and objects.
\end{defn}

\subsection{Quasiconstant connections}

\begin{defn}
For $\alpha \in I$, let $\left|\bullet\right|_\alpha$ denote the Gauss norm on the ring $\calO(A_K[I])$.
The elements of $\calO(A_K[I])$ correspond to power series $x = \sum_{i \in \ZZ} x_i t^i$ with $x_i \in K$ subject to the appropriate convergence condition, and we have $|x|_\alpha = \sup_i \{|x_i| \alpha^i\}$.

Let $\calR_K$ be the colimit of the rings $\calO(A_K[\epsilon,1))$ over all $\epsilon \in (0,1)$.
Let $\calR_K^{\inte}$ be the subring of $\calR_K$ consisting of series $x = \sum_{i \in \ZZ} x_i t^i$ for which $|x_i| \leq 1$ for all $i$; equivalently, these are the series $x$ for which
$\limsup_{\alpha \to 1^-} |x|_\alpha \leq 1$.
\end{defn}

\begin{remark}
The ring $\calR_K^{\inte}$ is local with residue field $\kappa_K((\overline{t}))$.
Every $x \in \calR_K^{\inte}$ with $|x|_1 < 1$ belongs to the maximal ideal,
but not conversely unless $K$ is discretely valued.
\end{remark}

\begin{lemma} \label{L:reduce mod to norm}
For $x \in \calR_K^{\inte}$, the following statements hold.
\begin{enumerate}
\item[(a)]
If $x$ belongs to the preimage of $\kappa_K \llbracket \overline{t} \rrbracket$, then there exists
$\alpha_0 \in (0,1)$ such that $|x|_\alpha \leq 1$ for all $\alpha \in (\alpha_0, 1)$.
\item[(b)]
If $x$ belongs to the maximal ideal of $\calR_K^{\inte}$, there exists
$\alpha_0 \in (0,1)$ such that $|x|_\alpha < 1$ for all $\alpha \in (\alpha_0, 1)$.
\end{enumerate}
\end{lemma}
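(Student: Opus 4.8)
The plan is to translate each hypothesis into a condition on the Laurent coefficients $x_i$ and then estimate the Gauss norm $|x|_\alpha = \sup_{i \in \ZZ} |x_i|\alpha^i$ directly. Recall that the reduction $\calR_K^{\inte} \to \kappa_K((\overline{t}))$ is coefficientwise, $\sum_i x_i t^i \mapsto \sum_i \overline{x_i}\,\overline{t}^{\,i}$; the target is a genuine Laurent series because $|x_i| \to 0$ as $i \to -\infty$ (indeed $x$ converges on some $A_K[\epsilon,1)$, so for any $\rho \in [\epsilon,1)$ one has $|x_i|\rho^i \to 0$, and $\rho^i \geq 1$ for $i<0$). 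Thus the hypothesis of (a) says $|x_i| < 1$ for all $i < 0$, and the hypothesis of (b) says $|x_i| < 1$ for all $i \in \ZZ$. I would then split $x = x^+ + x^-$ with $x^+ = \sum_{i \geq 0} x_i t^i$ and $x^- = \sum_{i<0} x_i t^i$; since these have disjoint support, $|x|_\alpha = \max\{|x^+|_\alpha, |x^-|_\alpha\}$ for every $\alpha \in (0,1)$.

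The positive part is controlled uniformly: since $\alpha^i \leq 1$ for $i \geq 0$, we get $|x^+|_\alpha \leq \sup_{i\geq0}|x_i| \leq 1$, which is all that (a) requires there; for (b), the sharper estimates $|x_i|\alpha^i \leq \alpha^i \leq \alpha$ for $i \geq 1$ together with $|x_0| < 1$ give $|x^+|_\alpha \leq \max\{|x_0|,\alpha\}$, hence $|x^+|_\alpha < 1$ as soon as $\alpha > |x_0|$. The heart of the matter is the negative part $|x^-|_\alpha = \sup_{i<0}|x_i|\alpha^{-|i|}$, where the factors $\alpha^{-|i|}$ exceed $1$ and blow up with $|i|$, so the bound $|x_i|<1$ must be played off against the decay of $|x_i|$. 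Here I would fix a radius $\rho_1 \in (0,1)$ at which $x$ converges; then $|x_i|\rho_1^{-|i|}\to 0$, so there is $N$ with $|x_i| \leq \rho_1^{|i|}$ for all $i \leq -N$, and for $\alpha \in (\rho_1,1)$ and such $i$ we get $|x_i|\alpha^{-|i|} \leq (\rho_1/\alpha)^{|i|} \leq \rho_1/\alpha < 1$. For the finitely many indices $-N < i < 0$, the function $\alpha \mapsto |x_i|\alpha^{-|i|}$ is continuous with limit $|x_i| < 1$ as $\alpha \to 1^-$, so there is $\alpha_0 \geq \rho_1$ and a constant $c_0 < 1$ with $|x_i|\alpha^{-|i|} \leq c_0$ for all these $i$ and all $\alpha \in (\alpha_0,1)$. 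Hence $|x^-|_\alpha \leq \max\{\rho_1/\alpha, c_0\} < 1$ on $(\alpha_0,1)$, in both cases.

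Assembling the pieces: in case (a), for $\alpha \in (\alpha_0,1)$ we get $|x|_\alpha = \max\{|x^+|_\alpha,|x^-|_\alpha\} \leq 1$ (one should not expect strict inequality, e.g.\ when $x_0$ is a unit); in case (b), after enlarging $\alpha_0$ to also exceed $|x_0|$, we get $|x|_\alpha \leq \max\{\alpha, \rho_1/\alpha, c_0\} < 1$. The only genuinely nonformal ingredient is the decay $|x_i| \to 0$ as $i \to -\infty$, which forces $\max_{i<0}|x_i|$ to be attained and therefore $< 1$; when $K$ is not discretely valued this is exactly what lets $|x_i|<1$ win the competition against $\alpha^{-|i|}\to\infty$. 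I expect the only real work to be stating the split at $-N$ and the continuity argument on the finite block carefully; there should be no hidden difficulty beyond that.
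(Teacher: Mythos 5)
Your proof is correct and follows essentially the same approach as the paper's: translate membership into the hypotheses $|x_i|<1$ on the relevant coefficients, split off the nonnegative-index part (trivially bounded since $|x_i|\leq 1$ and $\alpha^i\leq 1$), and control the negative-index tail using convergence at a fixed inner radius $\rho_1$ plus a finite-set continuity argument as $\alpha\to 1^-$. The only organizational difference is that the paper proves (b) first and then deduces (a) by writing $x = (x-y)+y$ with $y=\sum_{i\geq 0}x_it^i$, whereas you run the two cases in parallel over the same split $x=x^++x^-$; also your extra constraint ``$\alpha>|x_0|$'' is harmless but unnecessary, since $\max\{|x_0|,\alpha\}<1$ already holds for every $\alpha\in(0,1)$.
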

\begin{proof}
Write $x = \sum_{i \in \ZZ} x_{i} t^i$. 
We first treat (b).
By hypothesis, we have
$|x_{i}| < 1$ for all $i$; consequently, for every $\alpha \in (0,1)$ we have $|x_{i}| \alpha^i < 1$ for all $i \geq 0$. On the other hand, for some $\alpha_1 \in (0,1)$, the quantities
$|x_{i}| \alpha_1^i$ are uniformly bounded over all $i < 0$. By replacing $\alpha_1$ with any value $\alpha_2 \in (\alpha_1,1)$, we achieve that $|x_{i}| \alpha_2^i \to 0$ as $i \to -\infty$, and so
the set $S$ of $i<0$ for which for which $|x_{i}| \alpha_2^i \geq 1$ is finite. We may thus choose $\alpha_0 \in (\alpha_2,1)$ so that $|x_{1}| \alpha_0^i < 1$ for each $i \in S$, and this does the job.

To treat (a), put $y = \sum_{i \geq 0} x_i t^i$; then $x-y$ belongs to the maximal ideal of $\calR_K^{\inte}$, so by (b) there exists $\alpha_0 \in (0,1)$ such that $|x-y|_\alpha < 1$
for all $\alpha \in (\alpha_0, 1)$. On the other hand, since $|x_i| \leq 1$ for all $i$, it is evident that $|y|_\alpha \leq 1$ and hence $|x|_\alpha \leq 1$.
\end{proof}

\begin{lemma} \label{L:henselian}
The local ring $\calR_K^{\inte}$ is henselian.
Consequently, base extension induces an equivalence of categories between finite \'etale algebras
over $\calR_K^{\inte}$ and over $\kappa_K((\overline{t}))$.
\end{lemma}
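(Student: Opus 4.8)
The plan is to show that $\calR_K^{\inte}$ is henselian by checking the standard lifting criterion for idempotents, or equivalently for simple roots of polynomials, and then to invoke the topological-invariance of the \'etale site. First I would recall that $\calR_K^{\inte}$ is the ring of integral elements in the Robba ring $\calR_K$ and that, by the preceding remark, it is local with maximal ideal $\frakm$ and residue field $\kappa_K((\overline{t}))$. The key structural input is that $\calR_K^{\inte}$ is a \emph{filtered colimit}: it is the increasing union over $\epsilon \to 1^-$ of the rings $\calO(A_K[\epsilon,1))^{\inte}$ of power series $\sum_i x_i t^i$ with $|x_i| \le 1$ and convergence on $[\epsilon,1)$. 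Rather than working with these directly, the cleanest route is to compare $\calR_K^{\inte}$ with a ring already known to be henselian: the $\frakm$-adic-type completion, or better, the bounded Robba ring's relation to $\calO_K\llbracket \overline t \rrbracket$-type rings as developed in \cite[Chapter~2 and Chapter~8]{kedlaya-book}.

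The core step is Hensel's lemma: given a monic $P \in \calR_K^{\inte}[x]$ and a simple root $\bar a \in \kappa_K((\overline t))$ of $\bar P$, produce $a \in \calR_K^{\inte}$ with $P(a) = 0$ reducing to $\bar a$. I would lift $\bar a$ to some $a_0 \in \calR_K^{\inte}$ arbitrarily (possible since the reduction map is surjective by the description of the residue field) and run Newton iteration $a_{j+1} = a_j - P(a_j)/P'(a_j)$. Since $\bar a$ is simple, $P'(a_0)$ is a unit in $\calR_K^{\inte}$ (its reduction $\bar P'(\bar a) \ne 0$, and $\calR_K^{\inte}$ is local), so the iteration makes sense. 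The subtle point — and I expect this to be the main obstacle — is proving \emph{convergence}: one must show the $a_j$ converge in a sense strong enough that the limit lies in $\calR_K^{\inte}$ and not merely in some larger completion. This is exactly where Lemma~\ref{L:reduce mod to norm} does the work: $P(a_0) \in \frakm$, so by part (b) there is $\alpha_0$ with $|P(a_0)|_\alpha < 1$ for $\alpha \in (\alpha_0,1)$; one then shows by induction that $|a_{j+1} - a_j|_\alpha \le |P(a_0)|_\alpha^{2^j}$ uniformly on a fixed subinterval $(\alpha_0,1)$, so the sequence is Cauchy for each such Gauss norm with a uniform rate. The limit $a$ automatically satisfies $|a|_\alpha \le 1$ for all $\alpha$ near $1$, and lies in $\calO(A_K[\epsilon,1))$ for a suitable $\epsilon$ because the defining series converges on $[\epsilon,1)$ — hence $a \in \calR_K^{\inte}$. (One should be slightly careful that $P'(a_j)$ stays a unit and its Gauss norms stay bounded below throughout the iteration; this follows from $|a_j - a_0|_\alpha < 1$ and continuity.)

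Granting Hensel's lemma, the "consequently" is the topological invariance of the finite \'etale site under a henselian pair $(\calR_K^{\inte}, \frakm)$ with residue field $\kappa_K((\overline t))$: base change along $\calR_K^{\inte} \to \calR_K^{\inte}/\frakm = \kappa_K((\overline t))$ induces an equivalence between finite \'etale $\calR_K^{\inte}$-algebras and finite \'etale $\kappa_K((\overline t))$-algebras. This is standard — see \stacktag{09ZL} or \stacktag{04GG} — once henselianness is in hand, so I would simply cite it. An alternative to the explicit Newton iteration above would be to exhibit $\calR_K^{\inte}$ as a filtered colimit of complete (hence henselian) local subrings along local homomorphisms and use that a filtered colimit of henselian local rings along local maps is henselian; but since the individual $\calO(A_K[\epsilon,1))^{\inte}$ are not themselves henselian, this variant requires first completing, and it seems cleaner to argue directly. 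The only place I anticipate needing genuine care, beyond the convergence estimate, is the non-discretely-valued case: the maximal ideal $\frakm$ is strictly larger than $\{|x|_1 < 1\}$, so one cannot reduce mod $\frakm$ by a naive truncation of Gauss norms — Lemma~\ref{L:reduce mod to norm}(b) is precisely the tool that circumvents this, which is why it was stated first.
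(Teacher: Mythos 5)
Your overall strategy coincides with the paper's: prove Henselianity via Newton iteration applied to a monic polynomial with a simple residue root, using Lemma~\ref{L:reduce mod to norm} to control Gauss norms, and then invoke the standard equivalence of finite \'etale sites for henselian pairs. The structure is right, and the parenthetical worry about keeping $P'(a_j)$ a unit with controlled norm is the correct thing to worry about.

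However, there is a genuine gap in the convergence argument. You obtain $|P(a_0)|_\alpha < 1$ from Lemma~\ref{L:reduce mod to norm}(b), but the Newton iteration requires the strictly stronger inequality $|P(a_0)|_\alpha < |P'(a_0)|_\alpha^2$, and this does \emph{not} follow from $|P(a_0)|_\alpha < 1$ alone. The reason is that $P'(a_0)$ being a unit of $\calR_K^{\inte}$ does not force $|P'(a_0)|_\alpha = 1$: if $\overline{P}'(\bar a)$ has a zero of order $m > 0$ at $\bar t = 0$, then $|P'(a_0)|_\alpha \approx \alpha^m < 1$ for $\alpha$ near $1$ (even though $|P'(a_0)|_\alpha \to 1$ as $\alpha \to 1^-$). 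In that regime $|P'(a_0)|_\alpha^2 \approx \alpha^{2m}$, and $|P(a_0)|_\alpha < 1$ gives no information about whether $|P(a_0)|_\alpha < \alpha^{2m}$; one can cook up $P(a_0) \in \frakm$ for which this fails. Your asserted induction $|a_{j+1}-a_j|_\alpha \le |P(a_0)|_\alpha^{2^j}$ is thus only valid when $|P'(a_0)|_\alpha = 1$; in general the estimate must carry the factor $|P'(a_0)|_\alpha$ and, more importantly, the prerequisite $|P(a_0)|_\alpha < |P'(a_0)|_\alpha^2$ has to be \emph{established}, not assumed.

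This is exactly what the paper's normalization is for. The paper first reduces, by a variable substitution, to $\bar r = 0$, $\overline{P} \in \kappa_K\llbracket \bar t\rrbracket[x]$, and $\overline{P}'(0) = \bar t^m$. With that form in hand, the missing inequality comes from applying Lemma~\ref{L:reduce mod to norm}(b) not to $c_0 = P(0)$ but to $t^{-2m} c_0$ (which still lies in $\frakm$, since multiplying by a power of $t$ does not affect the coefficient norms); multiplicativity of the Gauss norm then yields $|c_0|_\alpha < \alpha^{2m} = |P'(0)|_\alpha^2$. With that, the Newton iteration in $\{y \in \calO(A_K[\alpha,1)) : |y|_\beta \le 1 \text{ for } \beta \in [\alpha,1)\}$ converges and the limit lies in $\calR_K^{\inte}$. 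So you should (i) first normalize so that $\overline{P}'(\bar r)$ is an explicit power of $\bar t$, (ii) apply Lemma~\ref{L:reduce mod to norm}(b) to the rescaled quantity $t^{-2m} P(a_0)$ to get the quadratic bound, and (iii) state the Newton estimate as $|a_{j+1}-a_j|_\alpha \le c^{2^j} |P'(a_0)|_\alpha$ with $c = |P(a_0)|_\alpha / |P'(a_0)|_\alpha^2 < 1$. The "consequently" via topological invariance of the finite \'etale site of a henselian local ring is fine as you stated it.
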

\begin{proof}
This is asserted in \cite[Lemma~22.1.2(b)]{kedlaya-book}, but the proof is only explained in detail when $K$ is discretely valued (see \cite[Lemma~15.1.3(c)]{kedlaya-book}). 
We give a more detailed explanation here.

It suffices to check that for any monic polynomial $P(x) \in \calR_K^{\inte}[x]$ and any simple root $\overline{r} \in \kappa_K((\overline{t}))$ of $\overline{P}(x) \in \kappa_K((\overline{t}))[x]$, there exists a  (necessarily unique) root $r \in \calR_K^{\inte}$ of $P(x)$ lifting $\overline{r}$.
By a variable substitution on $P(x)$, we may reduce to the case where
$\overline{r} = 0$, $\overline{P}(x) \in \kappa_K \llbracket \overline{t} \rrbracket[x]$,
and $\overline{P}'(\overline{r}) = t^m$ for some $m \in \ZZ$.

Write $P(x) = \sum_{i=0}^n c_i x^i$ with $c_n = 1$, $|c_1-t^m| < 1$, and $|c_0| < 1$.
For $\alpha \in (0,1)$ sufficiently large, the coefficients of $P(x)$ belong to $\calO(A_K[\alpha,1))$; moreover, by Lemma~\ref{L:reduce mod to norm}, for suitable $\alpha$ we also have 
$|c_i|_\alpha \leq 1$ for all $i$, $|c_0/t^{2m}|_\alpha < 1$, and $|t^m - c_1|_\alpha < 1$.
In particular we have $|P(0)|_\alpha < |P'(0)|_\alpha^2$, so we may compute the Newton--Raphson iteration starting from $0$ in the ring of $y \in \calO(A_K[\alpha, 1))$ for which $|y|_\beta \leq 1$ for all $ \beta \in [\alpha,1)$; the limit is the desired root $r$.
\end{proof}

\begin{defn}
For $\epsilon \in (0,1)$, a finite \'etale covering $Y \to A_K(\epsilon,1)$ is \emph{eligible} if it is induced by a finite \'etale algebra over $\calR^{\inte}_K$; this algebra can then be identified with
\[
\bigcup_{\epsilon' \in [\epsilon,1)} \{f \in \calO(Y \times_{A_K(\epsilon,1)} A_K(\epsilon',1))\colon
\limsup_{\alpha \to 1^-} |f|_\alpha \leq 1\}.
\]
Suppose that $\kappa_K$ is perfect. Then every finite separable extension $L$ of $\kappa_K((\overline{t}))$ can be written as $\kappa'((\overline{u}))$ for some  finite (separable) extension $\kappa'$ of $\kappa_K$ and some $\overline{u}$. 
Consequently, the corresponding connected eligible covering of $A_K(\epsilon,1)$ can (after increasing $\epsilon$) be itself viewed as an annulus over the unramified extension of $K$ with residue field $\kappa'$. We refer to a pullback along such a covering as a \emph{pullback along $L$}.

If $\kappa_K$ is not perfect, then we may apply similar logic if we allow ourselves to first make a base change from $K$ to a suitable finite extension $K'$ (depending on $L$), in order to enforce that the residue field of any component of $L \otimes_{\kappa_K((t))} \kappa_{K'}((t))$ is separable over $\kappa_{K'}$.
\end{defn}

\begin{defn}
For $\calE \in \calC_K$, we say that $\calE$ is \emph{quasiconstant} if there exists some finite separable extension $L$ of $\kappa_K((\overline{t}))$ such that,
after replacing $K$ with some extension field $K'$ for which the pullback of $\calE$ along $L$ is defined, this pullback is a trivial connection. Since the triviality of a connection can be checked after a base extension, the choice of $K'$ does not matter.
\end{defn}

\begin{defn} \label{D:wild mono1}
Suppose that $\calE \in \calC_K$ is quasiconstant.
If $\kappa_K$ is perfect, then we may choose a finite Galois extension $L$ of $\kappa_K((\overline{t}))$ such that the pullback of $\calE$ along $L$
is a trivial connection.
The Galois group $G$ of this extension acts on the space of horizontal sections of the pullback of $\calE$ along $L$.
For $I$ the inertia subgroup of $G$, we obtain a representation $\tilde{\rho}\colon I \to \GL_n(K')$ for $n = \rank \calE$, where $K'$ is some finite (unramified) extension of $K$. This then restricts to a representation $\rho\colon I_{\kappa_K((\overline{t}))} \to \GL_n(\overline{K})$
for $I_{\kappa_K((\overline{t}))}$ the inertia subgroup of $G_{\kappa_K((\overline{t}))}$;
we call this the \emph{monodromy representation} associated to $\calE$.
Let $W_{\kappa_K((\overline{t}))}$ denote the wild inertia subgroup of $I_{\kappa_K((\overline{t}))}$; we call the restriction of $\rho$ to $W_{\kappa_K((\overline{t}))}$ the \emph{wild monodromy representation} associated to $\calE$.

The definition of the monodromy representation and the wild monodromy representation commute with extension of the base field. We may thus extend them via base extension to the case where
$\kappa_K$ is not perfect.
\end{defn}

\begin{theorem}[Matsuda] \label{T:matsuda}
Suppose that $\kappa_K$ is perfect.
For $\calE \in \calC_K$ quasiconstant, let $\rho$ be the wild monodromy representation of $\calE$.
Then for $s \in \QQ$, the rank of the slope-$s$ component of $\calE$ equals
the multiplicity of $s$ as a ramification break of $\rho$.
In particular, $\calE$ is regular if and only if the pullback of $\calE$ along some tame extension of $\kappa_K((\overline{t}))$ is trivial.
\end{theorem}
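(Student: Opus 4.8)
The plan is to read the first assertion as an equality between two additive invariants of quasiconstant objects, reduce it by Brauer induction to the rank-one case plus a pushforward formula, and then deduce the ``in particular'' clause formally. First I would set up the dictionary. For $\calE \in \calC_K$ quasiconstant, Definition~\ref{D:wild mono1} produces a continuous representation $\rho$ of $I_{\kappa_K((\overline{t}))}$ on an $n$-dimensional $\overline{K}$-vector space with open kernel, and I would record that $\calE \mapsto \rho$ is an equivalence between the category of quasiconstant objects of $\calC_K$ and the category of such representations: a quasiconstant object is exactly a connection on an annulus-germ trivialized by an eligible covering, and descent along the covering recovers it from its monodromy. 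This target category is semisimple, since $\rho$ has finite image and $\overline{K}$ has characteristic $0$. Under the equivalence, pullback along a finite separable extension $M$ of $\kappa_K((\overline{t}))$ corresponds to restriction of $\rho$ to $I_M$, and pushforward along the associated eligible covering corresponds to $\mathrm{Ind}_{I_M}^{I_{\kappa_K((\overline{t}))}}$.

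Next I would reduce to $K$-theory. Let $\mathrm{sl}(\calE)$ be the multiset of slopes of $\calE$ and $\mathrm{br}(\rho)$ the multiset of ramification breaks of $\rho$, both regarded in the free abelian group $\Lambda$ on the set $\QQ_{\geq 0}$. By the uniqueness, hence functoriality and exactness, of the slope decomposition of Theorem~\ref{T:christol-mebkhout}, the assignment $\calE \mapsto \mathrm{sl}(\calE)$ is additive on short exact sequences; by functoriality of the break decomposition of Galois representations, so is $\rho \mapsto \mathrm{br}(\rho)$. Hence both descend to group homomorphisms from $K_0$ of the category of quasiconstant objects, equivalently $K_0$ of the category of finite-image representations of $I_{\kappa_K((\overline{t}))}$, to $\Lambda$, and it suffices to show they coincide. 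By Brauer's induction theorem, this $K_0$ is generated over $\ZZ$ by the classes of representations induced from $1$-dimensional characters of elementary subgroups; on the differential side these are the classes $[f_*\calL]$ with $f$ a finite étale eligible covering and $\calL$ a rank-$1$ quasiconstant object.

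It then remains to prove two points. (i) \emph{Rank one:} a rank-$1$ quasiconstant object $\calL$ has a single slope $b(\calL)$, equal to the unique ramification break — the Swan conductor — of the associated character $W_{\kappa_K((\overline{t}))} \to \overline{K}^\times$. I would obtain this from Robba's explicit classification of rank-$1$ solvable differential modules over $\calR_K$, matching a presentation $d + g\,\frac{dt}{t}$ of $\calL$ against the Artin--Schreier--Witt description of its character and computing both invariants directly. (ii) \emph{Pushforward:} for a finite étale eligible covering $f$ corresponding to $H = I_M$, there is a formula expressing $\mathrm{sl}(f_*\calL)$ in terms of $b(\calL)$ and the ramification filtration of $M/\kappa_K((\overline{t}))$, and it is the \emph{same} conductor--discriminant (Herbrand-function) formula that computes $\mathrm{br}\big(\mathrm{Ind}_{H}^{I_{\kappa_K((\overline{t}))}}\rho_\calL\big)$ from the break of $\rho_\calL$; proving it amounts to controlling the subsidiary radii of convergence, and the convex piecewise-affine function of Theorem~\ref{T:convexity}, under the covering map $f$ as $\rho \to 1^-$. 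Combining (i) and (ii) with the additivity above yields $\mathrm{sl}(\calE) = \mathrm{br}(\rho)$ for every quasiconstant $\calE$, which is the first assertion. The ``in particular'' clause follows: $\calE$ is regular exactly when $\mathrm{sl}(\calE)$ is $\{0,\dots,0\}$, hence (by the first assertion) exactly when $\mathrm{br}(\rho) = \{0,\dots,0\}$, i.e.\ when $\rho$ is tame; and $\rho$ is tame exactly when $\calE$ is trivialized by the extension of $\kappa_K((\overline{t}))$ cut out by $\ker(\rho)\cdot W_{\kappa_K((\overline{t}))}$, which contains $W_{\kappa_K((\overline{t}))}$ and is therefore a tame extension.

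The expected main obstacle is step (ii), with the rank-one computation (i) as its base case: showing that the differential irregularity and the classical Swan conductor transform by identical formulas under pushforward along a ramified covering. Everything in the first two paragraphs, and the deduction of the ``in particular'' clause, is formal reduction; by contrast (ii) is exactly where the Christol--Mebkhout theory of radii of convergence must be pushed hardest, and it is in effect equivalent to the hard content of the theorem.
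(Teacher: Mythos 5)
Your proposal is a genuine re-derivation of Matsuda's theorem from first principles, while the paper does something much more economical: it observes that a quasiconstant $\calE$ can always be realized over a discretely valued coefficient field (because the trivializing eligible cover, the descent data, and hence $\calE$ itself are all defined over such a subfield), and then simply cites the theorem as already proved in \cite[Theorem~19.4.1]{kedlaya-book} and the references therein. So the paper's proof is a reduction plus a citation, not a new argument. Your Brauer-induction strategy is correct in outline and would yield a self-contained proof: both the slope multiset and the break multiset descend to group homomorphisms from $K_0$ of finite-image representations of $I_{\kappa_K((\overline t))}$ to $\Lambda$, Brauer's theorem reduces the comparison to characters of elementary subgroups, and you correctly reduce the remaining content to (i) Robba's rank-one classification matching slope to Swan conductor and (ii) the conductor--discriminant formula for $f_*\calL$. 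But note that step (ii), which you flag as the main obstacle, is essentially all of Matsuda's theorem: it is precisely the statement that subsidiary radii transform under pushforward along a wildly ramified eligible cover by the same Herbrand-function recipe that governs induced representations, and the known proofs of this (Matsuda, Tsuzuki, Chapter~19 of \cite{kedlaya-book}) require serious work with Frobenius descendants, Artin--Schreier--Witt reduction, and the convexity/piecewise-linearity of Theorem~\ref{T:convexity}. So your plan is a valid, conceptually clean alternative route, but it does not avoid the hard analysis the paper is outsourcing; it repackages it. One small point worth making explicit in your write-up: your argument does not itself need the reduction to discretely valued $K$ that the paper highlights, provided you check that the ingredients for (i) and (ii) -- Robba's classification and the radius-of-convergence estimates for pushforwards -- are available without a discreteness hypothesis, which they are in \cite{kedlaya-book}.
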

\begin{proof}
Note that $\calE$ can always be realized using a discretely valued coefficient field.
Consequently, we may apply \cite[Theorem~19.4.1]{kedlaya-book} and references therein.
\end{proof}

\subsection{The \texorpdfstring{$p$-adic}{p-adic} local monodromy theorem}
\label{sec:p-adic mono}

We are now ready to formulate the $p$-adic local monodromy theorem. 
Throughout \S\ref{sec:p-adic mono}, assume that $\kappa_K$ is perfect.
\begin{theorem} \label{T:p-adic mono}
For $\calE \in \calC_K$,
there exists a finite separable extension $L$ of $\kappa_K((\overline{t}))$
(depending on $\calE$) such that the pullback of $\calE$ along $L$ is regular.
\end{theorem}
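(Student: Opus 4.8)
The plan is to reduce the general statement to the key special case handled by Matsuda's theorem (Theorem~\ref{T:matsuda}), namely that of a quasiconstant connection, and then to establish quasiconstancy by an induction on the generic radius of convergence combined with the slope decomposition. First I would apply the Christol--Mebkhout slope decomposition (Theorem~\ref{T:christol-mebkhout}) to write $\calE = \bigoplus_{s} \calE_s$; since a finite \'etale pullback commutes with this decomposition and with direct sums, it suffices to regularize each slope component separately, and then take a common refinement of the extensions $L$ obtained. Thus one reduces to the case where $\calE = \calE_s$ is a pure slope-$s$ object for a single $s \in \QQ_{\geq 0}$. If $s = 0$ there is nothing to do, so assume $s > 0$.

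The heart of the matter is then to show that a pure slope-$s$ object, after a suitable finite separable pullback, becomes \emph{regular} — equivalently, that its slope drops. The mechanism is the same ``wildly ramified reduction mod $p$'' alluded to in the introduction: one wants to find a finite separable extension $L = \kappa'((\overline u))$ of $\kappa_K((\overline t))$, of ramification large enough relative to $s$ and the rank $n$, such that pulling $\calE$ back along the corresponding eligible covering (viewed as an annulus over an unramified extension of $K$ as in the definition of pullback along $L$) strictly decreases the quantity $b_1(\calE) + \cdots + b_n(\calE) \in \frac{1}{n!}\ZZ_{\geq 0}$. Since this quantity is a nonnegative element of the discrete set $\frac{1}{n!}\ZZ_{\geq 0}$, iterating the construction terminates after finitely many steps, at which point all slopes are $0$ and $\calE$ is regular; composing the successive extensions (again refining) yields the desired single $L$. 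Concretely, the effect of pulling back along the degree-$d$ tame-type covering $\overline u \mapsto \overline t$, $u \mapsto t^{1/d}$ on subsidiary radii is controlled by the behavior of $\IR$ under finite pullback as developed in \cite[Chapters~10--11]{kedlaya-book}: passing to the covering $t = u^d$ rescales the relevant interval and, for an object which is \emph{not} already quasiconstant of the expected shape, forces an honest drop in slope; the quantitative bookkeeping is exactly Lemma~\ref{L:criterion for big slope}, which translates the slope inequality into the pointwise estimate \eqref{eq:criterion for big slope} that can be checked after pullback.

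The main obstacle I anticipate is precisely this quantitative slope-decrease step: one must verify that there exists a \emph{single} choice of finite separable $L$ — not depending on $\rho$ — whose pullback strictly lowers $\sum_i b_i$, and that no pathology (e.g.\ a Liouville-type phenomenon in the exponents of the regular part produced along the way) obstructs termination. Here the hypotheses emphasized in the introduction are essential: because $K$ need not be discretely valued and no Frobenius structure is assumed, one cannot simply quote the discretely valued form of the local monodromy theorem directly, but must instead work with the slope theory and the structure of eligible coverings over a general $\calR_K^{\inte}$, using Lemma~\ref{L:henselian} to manipulate finite \'etale algebras and the Christol--Mebkhout exponent theory (Theorem~\ref{T:CM}, Corollary~\ref{C:extend exponent}) to control the regular pieces. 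Once the slope has been driven to $0$, the resulting object is regular by definition and we are done; invoking Matsuda's theorem (Theorem~\ref{T:matsuda}) then retroactively identifies the slopes with ramification breaks of the wild monodromy representation, but it is not logically needed for the bare statement of Theorem~\ref{T:p-adic mono}.

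Finally, I would remark that although the argument is phrased under the standing assumption that $\kappa_K$ is perfect (so that every finite separable extension of $\kappa_K((\overline t))$ has the form $\kappa'((\overline u))$ with $\kappa'/\kappa_K$ separable, making the notion of pullback along $L$ and the monodromy representations of Definition~\ref{D:wild mono1} available), each individual step — slope decomposition, the pointwise radius estimates, and the henselian property of $\calR_K^{\inte}$ — is insensitive to base change on $K$, so the perfection hypothesis is used only to package the conclusion in terms of an honest annulus over an unramified extension.
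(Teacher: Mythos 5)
Your high-level outline (slope decomposition, reduce to a pure slope-$s$ component, iterate a slope-decreasing pullback until the slope hits $0$, then cite discreteness of $\frac{1}{n!}\ZZ_{\geq 0}$ for termination) matches the broad shape of the argument in the reference the paper cites. But the slope-decreasing step, which is the heart of the matter and the only nontrivial part, is wrong as you have stated it. You claim that pulling back along the degree-$d$ tame cover $t = u^d$ ``forces an honest drop in slope'' for a non-quasiconstant object; in fact tame covers do the opposite. If $\calE$ has slope $b$ at $1^-$ in the $t$-coordinate, then since $-\log |u| = \tfrac{1}{d}(-\log|t|)$, the pullback along $t=u^d$ has slope $db$ in the $u$-coordinate — the break is \emph{multiplied} by $d$, not reduced. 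So iterating tame covers cannot terminate; the quantity $b_1 + \cdots + b_n$ only grows.

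The mechanism that actually decreases the slope involves \emph{wild} covers. Roughly: one first uses a tame cover to clear the denominator of the slope (raising it to an integer), and then one produces a rank-$1$ object $\calL$ corresponding to an Artin--Schreier-type cover $z^p - z = f(t^{-1})$ such that $\calL^\dual \otimes \calE$ has strictly smaller break. Identifying the correct $\calL$ — equivalently, extracting the ``leading term'' of the irregularity — is where the real content lies, and it relies on the theory of refined decompositions and the potential-Turrittin structure developed in \cite[Chapters~12, 20, 22]{kedlaya-book}; Lemma~\ref{L:criterion for big slope} only translates a slope inequality into a radius inequality and does not by itself produce the twist. Note also that the paper deliberately does not give an independent proof here — it cites \cite[Theorem~22.1.4]{kedlaya-book}, which rests on a substantial amount of preliminary theory — so any self-contained sketch would have to reproduce or substitute for that machinery. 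The preliminary reductions in your write-up (slope decomposition, refinement of extensions, insensitivity to base change on $K$ and perfection of $\kappa_K$) are all fine and would survive into a corrected argument; it is the claimed behavior of tame covers and the absence of the wild-twist step that constitute the gap.
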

\begin{proof}
See \cite[Theorem~22.1.4]{kedlaya-book}.
Note that crucially, the formulation of this theorem in the second edition of \cite{kedlaya-book} does not require $K$ to be discretely valued, in contrast with older versions of the theorem (such as in the first edition of \cite{kedlaya-book}).
\end{proof}

\begin{cor} \label{C:quasiconstant from cover}
For $\calE \in \calC_K$, suppose that there exists a (not necessarily eligible) finite \'etale cover $Y$ of $A_K(\epsilon,1)$ such that the pullback of $\calE$ to $Y$ is trivial. Then $\calE$ is quasiconstant.
\end{cor}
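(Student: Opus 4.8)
The plan is to reduce to the regular case via the $p$-adic local monodromy theorem and then to pin down the Christol--Mebkhout exponents from the structure of the trivializing cover near the boundary. Two elementary observations will be used throughout: (i) if $\calG$ is a connection of rank $n$ trivialized by a finite \'etale cover $\pi$, then $\calG$ is a direct summand of $(\pi_*\calO)^{\oplus n}$ (the unit $\calG \to \pi_*\pi^*\calG$ is split by the normalized trace, since $K$ has characteristic $0$), so in particular a \emph{regular} such $\calG$ is a summand of the slope-$0$ part of $(\pi_*\calO)^{\oplus n}$; and (ii) a nontrivial successive extension of trivial connections on $A_K(\epsilon,1)$ is not trivialized by any finite \'etale cover, because it contains a rank-$2$ subquotient whose nonzero class in $H^1_{\dR}(A_K(\epsilon,1)) = K\cdot[dt/t]$ would die after pullback, contradicting the injectivity of $H^1_{\dR}$-pullback (again via the normalized trace).

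First I would apply Theorem~\ref{T:p-adic mono} to obtain a finite separable extension $L_0$ of $\kappa_K((\overline{t}))$ such that the pullback $\calF$ of $\calE$ along $L_0$ is regular; $\calF$ remains trivialized by the base change of the given cover. Since a composite of two eligible coverings is eligible (a composite of finite separable extensions of $\kappa_K((\overline{t}))$ being again finite separable), it suffices to prove $\calF$ quasiconstant, so we may assume $\calE$ is regular. Passing to a connected component of the cover lying over $A_K(\epsilon,1)$ and then shrinking $\epsilon$ and enlarging $K$, we may also assume the cover is connected and totally ramified at the boundary of a thin annulus, with boundary ramification index $e$.

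Now I would reduce to the tame case and settle it. If $p\nmid e$, then after enlarging $K$ the cover near the boundary is the Kummer cover $u\mapsto u^e=t$, which is eligible, so $\calE$ is quasiconstant. In general write $e=p^k e'$ with $p\nmid e'$, and pull $\calE$ back along the maximal tame subcovering $Z$ (the Kummer cover $u\mapsto u^{e'}=t$, after enlarging $K$): this gives a regular connection $\calE_Z$ whose exponent is $e'A$, where $A\in\ZZ_p^n$ is an exponent of $\calE$ on some subinterval (Theorem~\ref{T:CM}(a)) and one uses the standard behaviour of exponents under pullback (scaling by the ramification index for tame coverings, unchanged for separably wild ones); moreover $\calE_Z$ is trivialized by a connected totally wildly ramified covering $\pi$ of $p$-power degree. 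Writing $\pi$ as a separably (wildly) ramified covering $Z'\to Z$ — which is eligible — followed by a covering $W\to Z'$ of degree $p^j$ with purely inseparable reduction, the pullback of $\calE_Z$ to $Z'$ is a regular connection with exponent $e'A$ trivialized by $W\to Z'$; since the inseparable layer scales exponents by $p^j$, the exponent $p^j e'A$ of the trivial connection $\calE_W$ is weakly equivalent to the zero tuple (Theorem~\ref{T:CM}(b)) and hence, the zero tuple having non-Liouville differences, equivalent to it (Lemma~\ref{L:weakly equivalent to equivalent}); thus it is an integer tuple, so (as $A\in\ZZ_p^n$) $e'A\in\ZZ^n$, the pullback of $\calE_Z$ to $Z'$ is unipotent (Theorem~\ref{T:CM}(c)), and by observation (ii) it is trivial. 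Hence $\calE_Z$ is trivialized by the eligible covering $Z'\to Z$, whose pushforward of $\calO$ has slope-$0$ part equal to $\calO$ (Matsuda's theorem, Theorem~\ref{T:matsuda}); by observation (i) the regular connection $\calE_Z$ is a summand of a trivial connection, hence trivial. So $\calE$ is trivialized by $Z$, a covering of prime-to-$p$ boundary ramification, and the first case applies.

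I expect the main obstacle to be the third paragraph: controlling how the exponent — canonical only up to weak equivalence and a priori valued in $\ZZ_p^n$ — is constrained by an arbitrary finite \'etale cover, and in particular showing that a regular connection killed by a totally wildly ramified cover must be trivial. This draws on the finer structure theory (the slope decomposition, Theorem~\ref{T:christol-mebkhout}; Matsuda's theorem; the classification of eligible coverings and the behaviour of exponents under pullback, as in Chapters~12, 13 and 19 of \cite{kedlaya-book}), and one must take care that a general totally wildly ramified cover is a tower mixing separably-wild and inseparable layers, handled in turn as above.
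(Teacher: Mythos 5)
Your opening reduction to the regular case and the two preliminary observations are fine, but the structural claims about $Y$ in the second paragraph are a genuine gap. You assert that, after shrinking the annulus, enlarging $K$, and passing to a connected component, the cover is ``totally ramified at the boundary of a thin annulus, with boundary ramification index $e$,'' and you then factor it as a Kummer layer, a ``separably wild'' (hence eligible) layer, and a layer ``with purely inseparable reduction.'' None of this is justified. A general finite \'etale cover of a thin annulus need not be eligible, and a non-eligible cover carries no canonical reduction to a finite extension of $\kappa_K((\overline{t}))$ from which such a ramification index, or such a tame/separably-wild/inseparable tower, can be extracted. Establishing that covers of this kind are in fact eligible (after shrinking the annulus) is precisely the content of Corollary~\ref{C:eligible by pushforward}, which the paper deduces \emph{from} the corollary you are proving; so building this structure into the argument is circular, or at best requires an independent structure theorem for finite \'etale covers of thin annuli over a base field that is not assumed discretely valued --- a theorem you neither prove nor cite. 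All of the subsequent manipulations with exponents rest on this unproved description of $Y$.

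The paper's proof avoids this obstacle entirely. After reducing to $\calE$ regular, it replaces $Y$ by the monodromy cover of $\calE$ (and takes a Galois closure), so that $\pi_*\calO_Y$ lies in the Tannakian subcategory generated by $\calE$ and is therefore itself regular, regularity being stable under tensor products, duals and subquotients; the argument of \cite[Lemma~6.12]{kedlaya-simpleconn} is then applied to this regular Galois pushforward, never needing to understand the boundary behaviour of $Y$ directly. If you want to salvage your route, you would have to supply a self-contained proof that a connected finite \'etale cover which trivializes some regular solvable connection is, near the boundary and after base change, of the Kummer/Artin--Schreier form you assume.
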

\begin{proof}
Using Theorem~\ref{T:p-adic mono}, we may reduce to the case where $\calE$ is regular.
We may further assume that $\pi\colon Y \to A_K(\epsilon,1)$ is Galois and that $\pi_* \calO_Y$, as a connection on $A_K(\epsilon,1)$, is regular. 
We may now argue as in \cite[Lemma~6.12]{kedlaya-simpleconn} to deduce that $\calE$
becomes constant under pullback along a tame extension of $\kappa_K((\overline{t}))$.
\end{proof}

\begin{cor} \label{C:eligible by pushforward}
Let $\pi\colon Y \to A_K(\epsilon,1)$ be a finite \'etale cover such that $\pi_* \calO_Y$, as a connection on $A_K(\epsilon,1)$, is solvable. Then $\pi$ is an eligible cover.
\end{cor}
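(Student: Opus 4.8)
The plan is to show that the finite \'etale $\calO(A_K(\epsilon,1))$-algebra $\calO(Y)$ — equivalently, after passing to the colimit as $\epsilon \to 1^-$, a finite \'etale $\calR_K$-algebra — descends to a finite \'etale $\calR_K^{\inte}$-algebra. By definition this is exactly the assertion that $\pi$ is eligible. First I would observe that, by Lemma~\ref{L:henselian}, finite \'etale $\calR_K^{\inte}$-algebras are equivalent to finite \'etale $\kappa_K((\overline t))$-algebras; so it suffices to produce a finite \'etale $\kappa_K((\overline t))$-algebra whose base extension to $\calR_K$ recovers $\calO(Y)$. Equivalently, writing $S$ for the integral closure of $\calR_K^{\inte}$ in $\calO(Y)$, I want to show $S$ is finite \'etale over $\calR_K^{\inte}$ and $S \otimes_{\calR_K^{\inte}} \calR_K = \calO(Y)$.

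The key input is that $\calF := \pi_*\calO_Y$ is solvable as a connection on $A_K(\epsilon,1)$, hence lies in $\calC_K$. I would first dispose of the \emph{regular} case: if $\calF$ is regular, then so is every sub-connection, and I can run the argument exactly as in the proof of Corollary~\ref{C:quasiconstant from cover} (following \cite[Lemma~6.12]{kedlaya-simpleconn}) to see that $Y$, pulled back along a suitable tame extension of $\kappa_K((\overline t))$, becomes a disjoint union of copies of an annulus; tame extensions are manifestly eligible, and eligibility descends through the tame cover, so $\pi$ is eligible. For the general case, apply the $p$-adic local monodromy theorem (Theorem~\ref{T:p-adic mono}) to $\calF$: there is a finite separable $L/\kappa_K((\overline t))$ such that the pullback of $\calF$ along $L$ is regular. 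The pullback along $L$ is itself an eligible cover $Y_L \to Y_L' := A_{K'}(\epsilon',1)$ (an annulus over an unramified extension, possibly after enlarging $K$), and $\pi$ base-changed along $Y_L' \to A_K(\epsilon,1)$ is, up to the regular case just handled, eligible over $Y_L'$. Then I compose: $Y_L' \to A_K(\epsilon,1)$ is eligible, and eligibility is preserved under composition of finite \'etale covers because the relevant integral structures ($\calR^{\inte}$ of the base, of $Y_L'$, of the total cover) are compatible — each is the subring of functions $f$ with $\limsup_{\alpha\to1^-}|f|_\alpha \le 1$, and finiteness plus the henselian property of Lemma~\ref{L:henselian} propagate through the tower.

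The main obstacle I anticipate is the descent step: knowing that $\pi$ becomes eligible after the finite separable pullback along $L$, I must descend eligibility back down to $A_K(\epsilon,1)$ itself. Concretely, I have a finite \'etale $\calR_{K'}^{\inte}$-algebra refining $\calO(Y_L)$ over the cover corresponding to $L$, and I need the Galois (or at least the finite \'etale) descent datum to be defined over the integral subrings rather than just over $\calR_{K'}$. This should follow because the integral closure $S$ of $\calR_K^{\inte}$ in $\calO(Y)$ is automatically stable under any automorphisms and descent maps in sight (integral closure is functorial), and because $S \otimes_{\calR_K^{\inte}} \calR_{K'}^{\inte}$ is the integral closure of $\calR_{K'}^{\inte}$ in $\calO(Y) \otimes \calR_{K'}$ — the point where I must check that forming $\calR^{\inte}$ commutes with the relevant finite base changes, which is precisely where solvability (controlling the radii of convergence, hence keeping the cover "bounded" near the boundary) and Lemma~\ref{L:reduce mod to norm} do the work. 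Once $S$ is finite \'etale over $\calR_K^{\inte}$, base extension to $\calR_K$ recovers $\calO(Y)$ by comparing ranks, and we are done.
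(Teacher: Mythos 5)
Your proposal captures essentially the same idea as the paper's proof — reduce to the case where $\pi_*\calO_Y$ becomes a constant connection after pullback along an eligible cover, observe that $\pi$ then splits completely using horizontal sections, and descend — so the route is not genuinely different. The main structural inefficiency is that you re-derive the reduction (regular case via \cite[Lemma~6.12]{kedlaya-simpleconn}, then general case via Theorem~\ref{T:p-adic mono}) which the paper obtains in a single stroke by citing Corollary~\ref{C:quasiconstant from cover} applied to $\calE := \pi_*\calO_Y$, taking the trivializing cover in the hypothesis of that corollary to be the Galois closure of $Y$ itself; once the pushforward is constant, the horizontal sections of the finite \'etale algebra $\pi_*\calO_Y$ furnish a full set of orthogonal idempotents and $\pi$ splits into a disjoint union of annuli.

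You are right that descent of eligibility from the pullback along $L$ back to $A_K(\epsilon,1)$ is the subtle step; the paper's ``we may reduce to the case'' glosses over it as well. Your integral-closure strategy could be made to work, but it forces you to verify that forming $\calR^{\inte}$ commutes with finite \'etale base change, which is more work than needed. The cleaner observation is that, by Lemma~\ref{L:henselian}, the eligible covers form a sub-Galois-category of the Galois category of all finite \'etale covers of the boundary germ, with fundamental group $G_{\kappa_K((\overline{t}))}$. If $Y$ splits completely after pullback along a connected Galois eligible cover $Z$ corresponding to $L/\kappa_K((\overline{t}))$, the monodromy action on the geometric fiber of $Y$ dies on the open subgroup corresponding to $Z$, hence factors through $\Gal(L/\kappa_K((\overline{t})))$; the resulting $G_{\kappa_K((\overline{t}))}$-set gives an eligible cover with the same fiber as $Y$, which is therefore isomorphic to $Y$. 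In short: your plan is sound and has no mathematical gap, but it is considerably heavier than the paper's two-sentence argument, and the descent step is better handled by Galois-category yoga than by tracking integral closures.
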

\begin{proof}
By Corollary~\ref{C:quasiconstant from cover}, we may reduce to the case where $\pi_* \calO_Y$ is in fact constant. In this case, we may use the horizontal sections of $\pi_* \calO_Y$ to show that $\pi$ splits completely.
\end{proof}

As an application, we can recover the original formulation of the $p$-adic local monodromy theorem \cite[Theorem~20.1.4]{kedlaya-book}, but without the restriction that $K$ be discretely valued.
\begin{defn}
For $q$ a power of $p$,
a \emph{relative ($q$-power) Frobenius lift} on $A_K(\epsilon,1)$
is the composition of an endomorphism of $A_K(\epsilon,1)$ induced by an isometric endomorphism of $K$ with a $K$-linear map
$\varphi\colon A_K(\epsilon,1) \to A_K(\epsilon^{1/q}, 1)$ with the property that for some power $q$ of $p$, $|\varphi^*(t) - t^q|_\rho < 1$ for $\rho \in (0,1)$ sufficiently close to 1.

A \emph{relative ($q$-power) Frobenius structure} on an object $\calE \in \calC_K$ is an isomorphism $\varphi^* \calE \cong \calE$ for some relative ($q$-power) Frobenius lift $\varphi$ on $A_K(\epsilon,1)$.
Using the Taylor isomorphism as in \cite[Proposition~17.3.1]{kedlaya-book},
we may canonically transform relative Frobenius structures between different choices of $\varphi$.
\end{defn}

\begin{lemma} \label{L:regular Frobenius equals tame}
Any regular object of $\calC_K$ admitting a relative Frobenius structure becomes unipotent (i.e., a successive extension of trivial connections) after pullback along a tame extension of $\kappa_K((\overline{t}))$.
\end{lemma}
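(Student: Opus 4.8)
The plan is to reduce the statement to an application of the Christol--Mebkhout structure theory (Theorem~\ref{T:CM}), specifically to part (c), by showing that the zero $n$-tuple becomes an exponent after a suitable tame pullback. Since a regular object $\calE \in \calC_K$ has $\IR(\calE,\rho) = 1$ near $1^-$, Theorem~\ref{T:CM}(a) produces an exponent $A \in \ZZ_p^n$ for $\calE$ on some closed subinterval of positive length. The Frobenius structure is what will pin down the exponent: pulling back along the relative Frobenius lift $\varphi$ replaces the exponent $A$ by $qA$ (up to equivalence), because $\varphi^*t$ agrees with $t^q$ to the relevant precision and the substitution $t \mapsto t^q$ multiplies the ``residues'' governing the regular singularity by $q$. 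But $\varphi^* \calE \cong \calE$, so $A$ and $qA$ are exponents for the same connection on a common subinterval, hence are weakly equivalent by Theorem~\ref{T:CM}(b).

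\medskip

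First I would make precise the interaction of exponents with Frobenius pullback, citing the relevant computation in \cite[Chapter~13]{kedlaya-book}: if $A$ is an exponent for $\calE$ on $J$, then $qA$ is an exponent for $\varphi^*\calE$ on (a subinterval corresponding to) $J$. Combining with the isomorphism $\varphi^* \calE \cong \calE$ gives that $A$ and $qA$ are both exponents for $\calE$ on a closed subinterval of positive length, so they are weakly equivalent. Iterating, $A$ is weakly equivalent to $q^m A$ for all $m \geq 1$. The key elementary point is then that the only tuples $A \in \ZZ_p^n$ weakly equivalent to all their $q$-power multiples are, up to equivalence, tuples with entries in $\QQ \cap \ZZ_p$, in fact in $\ZZ_{(p)}$ — one extracts this by examining the defining inequality $|p^m \langle (A_i - (q^mA)_{\sigma_m(i)})/p^m\rangle| \le cm$ and using that $q$ is a power of $p$. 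In particular the differences of entries of $A$ lie in $\QQ$, hence are $p$-adic non-Liouville numbers, so by Lemma~\ref{L:weakly equivalent to equivalent} weak equivalence upgrades to equivalence, and $A$ is (equivalent to) a tuple of rational numbers whose denominators are prime to $p$.

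\medskip

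Next I would pass to a tame cover to kill the exponent. A tame extension of $\kappa_K((\overline{t}))$ of degree $e$ prime to $p$ corresponds, via the eligible covering $t \mapsto t^e$ (over a possibly unramified extension of $K$), to the substitution that multiplies exponents by $e$. Choosing $e$ to be a common denominator of the entries of $A$ — which is prime to $p$ by the previous paragraph — the pullback $\calE'$ of $\calE$ along this tame extension has $eA \in \ZZ^n$, hence $eA$ is equivalent to the zero $n$-tuple, as an exponent for $\calE'$ on some closed subinterval of positive length. By Corollary~\ref{C:extend exponent} (the zero tuple trivially has $p$-adic non-Liouville differences), the zero tuple is an exponent for $\calE'$ on all of the relevant interval, i.e.\ an exponent for $\calE' \in \calC_K$. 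Theorem~\ref{T:CM}(c) then gives that $\calE'$ is unipotent, which is the desired conclusion.

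\medskip

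I expect the main obstacle to be the bookkeeping in the first half: making rigorous the claim that Frobenius pullback multiplies exponents by $q$ and that tame pullback multiplies them by $e$, given that the excerpt deliberately suppresses the precise definition of exponents (deferring to \cite[Definition~13.5.2]{kedlaya-book}), and then extracting from iterated weak equivalence the rationality-with-prime-to-$p$-denominator statement. Some care is also needed because exponents are only defined on closed subintervals and a Frobenius lift shifts intervals ($\varphi: A_K(\epsilon,1) \to A_K(\epsilon^{1/q},1)$), so one must track that the subinterval of positive length on which the comparison takes place can be chosen inside the common domain; this is routine since we are free to shrink toward $1^-$. Everything else is a direct appeal to the cited results. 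I would also remark that the output is in fact unipotent, not merely regular, which is slightly stronger than what a naive application of Matsuda's theorem (Theorem~\ref{T:matsuda}) would give and is exactly what later arguments will want.
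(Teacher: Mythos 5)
Your argument reconstructs, essentially step by step, the proof of \cite[Proposition~13.6.2]{kedlaya-book} (Frobenius forces the exponent to be rational with denominator prime to $p$, then a tame pullback clears denominators and Theorem~\ref{T:CM}(c) gives unipotence), and that proposition is precisely what the paper invokes after reducing to $\varphi(t) = t^q$. So your approach matches the paper's; you have simply unpacked the cited result rather than citing it.
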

\begin{proof}
We may assume that the underlying relative Frobenius lift $\varphi$ has the form $\varphi(t) = t^q$. In this case, this becomes an application of the theory of $p$-adic exponents; see \cite[Proposition~13.6.2]{kedlaya-book}.
\end{proof}
\begin{cor} \label{C:unipotent pure slope} 
Suppose that $K$ is discretely valued.
Any regular object of $\calC_K$ admitting a relative Frobenius structure \emph{of pure slope}
becomes trivial (not just unipotent) after pullback along a tame extension of $\kappa_K((\overline{t}))$.
\end{cor}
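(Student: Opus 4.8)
The plan is to reduce to a unipotent connection via Lemma~\ref{L:regular Frobenius equals tame}, and then to show that the pure-slope hypothesis forces that unipotence to be trivial by killing the relevant extension classes. First I would apply Lemma~\ref{L:regular Frobenius equals tame}: after pullback along a suitable tame extension of $\kappa_K((\overline{t}))$, we may assume the underlying connection of $\calE$ is unipotent. Such a pullback is realized by a cover $A_K(\epsilon^{1/m},1)\to A_K(\epsilon,1)$, $u\mapsto u^m$ with $\gcd(m,p)=1$, along which a relative Frobenius lift extends (take $\varphi^*(u)=u^q$); since a finite tame base change of this kind does not change the Frobenius slopes, the pulled-back connection again carries a Frobenius structure of pure slope. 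It therefore suffices to prove: a unipotent object of $\calC_K$ equipped with a Frobenius structure of pure slope is trivial as a connection.

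Next I would pass to the canonical unipotent filtration $0=\calE_0\subset\calE_1\subset\cdots\subset\calE_N=\calE$, where $\calE_{i+1}/\calE_i$ is the maximal trivial subconnection of $\calE/\calE_i$. A relative Frobenius structure $\varphi^*\calE\to\calE$ is horizontal, hence carries horizontal sections to horizontal sections and preserves every $\calE_i$; so the filtration is Frobenius-stable, every graded piece is trivial as a connection, and --- because purity of slope passes to Frobenius sub- and quotient modules --- every graded piece is itself pure of the common slope $s$. Arguing by induction on $N$, it is enough to treat a short exact sequence $0\to\calF\to\calE\to\calG\to 0$ of objects of $\calC_K$ with Frobenius structure in which $\calF$ and $\calG$ are trivial as connections and $\calE$ is pure of slope $s$, and to show that it splits as an extension of connections.

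The extension class of such a sequence lies in $H^1_{\dR}(\calF\otimes\calG^\dual)$, and the functoriality of $\varphi^*$ together with the Frobenius structures on $\calF$, $\calE$, $\calG$ endow this group with a Frobenius operator of which the class of $\calE$ is a fixed point; the point is then that this operator has no nonzero fixed vector. Set $M=\calF\otimes\calG^\dual$; it is trivial as a connection and pure of Frobenius slope $s-s=0$. Using $H^1_{\dR}(\calR_K)=K\cdot\frac{dt}{t}$, we get $H^1_{\dR}(M)\cong H^0_{\dR}(M)\cdot\frac{dt}{t}$, and on this space the Frobenius acts by $c\mapsto q\,U\sigma(c)$, where $\sigma$ is the isometric endomorphism of $K$ underlying $\varphi$, $U\in\GL_m(K)$ (with $m=\rank M$) is the matrix of the Frobenius structure of $M$ in a basis of $H^0_{\dR}(M)$, and I use that $\varphi^*(\frac{dt}{t})$ differs from $q\,\frac{dt}{t}$ by an exact form: writing $\varphi^*(t)=t^q(1+\delta)$ with $|\delta|_\rho<1$ for $\rho$ near $1$, the discrepancy is $d\log(1+\delta)$ with $\log(1+\delta)\in\calR_K$. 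Since $M$ is pure of slope $0$, the Frobenius structure $v\mapsto U\sigma(v)$ preserves some norm on $H^0_{\dR}(M)$ (after a change of basis, $U$ and $U^{-1}$ have entries in the valuation ring of $K$), hence is an isometry; as $|q|<1$, the operator $c\mapsto q\,U\sigma(c)$ is strictly norm-decreasing, so its only fixed vector is $0$. Thus the extension class vanishes, the sequence splits, and the induction goes through.

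I expect the genuine work here to be modest. The only substantive inputs are the computation of $H^1_{\dR}$ of the Robba ring and --- from the theory of Frobenius modules over the Robba ring for $K$ discretely valued, which is precisely why that hypothesis is imposed --- the facts that being pure of slope $s$ is inherited by sub- and quotient modules (hence by the graded pieces of the unipotent filtration) and is preserved under tame base change. The step I anticipate being fussiest is the identification of the Frobenius operator on $H^1_{\dR}(\calF\otimes\calG^\dual)$ and the verification that the class of $\calE$ is a fixed point of it: one must keep careful track of the pullback $\varphi^*$, of the directions of the Frobenius structures on $\calF$ and $\calG$, and of the resulting twist by $\calG^\dual$.
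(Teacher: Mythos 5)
Your argument is correct but proceeds along a genuinely different route from the paper's. Both proofs begin with Lemma~\ref{L:regular Frobenius equals tame} to reduce to a unipotent object, but from there they diverge. The paper's proof is a direct matrix computation: it picks a basis on which $t\frac{d}{dt}$ acts by a nilpotent constant matrix $N$ (via \cite[Lemma~9.2.3]{kedlaya-book}), writes down the compatibility equation between $N$ and the Frobenius matrix $A$, deduces from the shape of this equation that $A$ is constant, invokes the Dieudonn\'e--Manin classification \cite[Theorem~14.6.3]{kedlaya-book} (after enlarging $K$ so the residue field is strongly difference-closed and using the pure-slope hypothesis) to make $A$ scalar, and then reads off $N = p\varphi_K^*(N) \Rightarrow N=0$ by isometry. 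Your proof instead runs an induction on the unipotent filtration and kills extension classes in $H^1_{\dR}$, using the fact that $\varphi^*(dt/t) \equiv q\,dt/t$ modulo exact forms to see that the Frobenius on $H^1_{\dR}(\calF\otimes\calG^\dual)$ is strictly norm-decreasing once $\calF\otimes\calG^\dual$ is known to be \'etale. The trade-off: your route buys a cleaner conceptual picture (the contraction in $H^1$ is more memorable than the matrix gymnastics), but it quietly leans on a heavier piece of slope theory, namely that purity of slope is inherited by saturated Frobenius sub- and quotient modules (which is what gets you from ``$\calE$ pure of slope $s$'' to ``each graded piece pure of slope $s$'' and then to ``$\calF\otimes\calG^\dual$ pure of slope $0$''); this is a true but nontrivial fact (it follows from purity being checkable after base change to $\tilde{\calR}$, where Dieudonn\'e--Manin makes the category of pure objects semisimple). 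The paper's proof consumes only the more elementary consequence of \cite[Theorem~14.6.3]{kedlaya-book} that a constant pure-slope Frobenius matrix can be scalarized, which is essentially the same input your step ``$U$ and $U^{-1}$ have entries in $\frako_K$'' needs. One small technical point you should tidy: the paper's definition of a relative Frobenius lift only asserts $|\varphi^*(t) - t^q|_\rho < 1$ near $\rho = 1$, which does not literally give $|\delta|_\rho < 1$ for $\delta := \varphi^*(t)/t^q - 1$; you should either observe that $|\varphi^*(t)-t^q|_\rho < \rho^q$ for $\rho$ close enough to $1$ (which does hold here since the lift has bounded coefficients), or simply remark that after transporting between Frobenius lifts via the Taylor isomorphism you may take $\varphi^*(t) = t^q$, as the paper does.
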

\begin{proof}
By Lemma~\ref{L:regular Frobenius equals tame}, we may start with a unipotent object $\calE$.
By \cite[Lemma~9.2.3]{kedlaya-book}, $\calE$ admits a basis $\be_1,\dots,\be_n$ on which $t \frac{d}{dt}$ acts via a nilpotent matrix $N$ over $K$.
Let $A$ be the matrix via which the Frobenius structure acts on the same basis; then
\[
NA + t\frac{d}{dt}(A) = p A \varphi_K^*(N).
\]
Write $A = \sum_i A_i t^i$, so that for each $i$ we have
\[
NA_i + i A_i = p A_i \varphi_K^* (N).
\]
The operator $A \mapsto NA - p A\varphi_K^*(N)$ is then nilpotent: applying it $2n-1$ times yields an expression in which every summand is either divisible by $N^n$ on the left or by $\varphi_K^*(N)^n$ on the right. We deduce that $A_i = 0$ for $i > 0$, and so 
\[
NA = p A \varphi_K^*(N).
\]
At this point we are free to enlarge $K$, so we may assume that its residue field is strongly difference-closed \cite[Remark~14.3.5]{kedlaya-book}. By the pure slope hypothesis, we may then rechoose the basis $\be_1,\dots,\be_n$ so that $A$ becomes a scalar matrix \cite[Theorem~14.6.3]{kedlaya-book}. The previous equation then becomes
$N = p \varphi_K^*(N)$; since $\varphi_K^*$ is an isometry, this forces $N = 0$
and thus proves the claim.
\end{proof}

\begin{theorem} \label{T:p-adic mono with Frob}
Every object of $\calC_K$ admitting a relative Frobenius structure becomes unipotent after pullback along some finite separable extension of $\kappa_K((\overline{t}))$.
\end{theorem}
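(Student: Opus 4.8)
The plan is to combine the $p$-adic local monodromy theorem (Theorem~\ref{T:p-adic mono}) with the Frobenius-theoretic input of Lemma~\ref{L:regular Frobenius equals tame}. First I would apply Theorem~\ref{T:p-adic mono} to $\calE$ to obtain a finite separable extension $L_1$ of $\kappa_K((\overline{t}))$ such that the pullback $\calE'$ of $\calE$ along $L_1$ is regular. The key point to check here is that $\calE'$ still carries a relative Frobenius structure: pulling back along an eligible cover associated to $L_1$ amounts (after increasing $\epsilon$ and replacing $K$ by an unramified extension with residue field the constant subfield of $L_1$) to viewing $\calE$ on an annulus over that unramified extension, and a relative Frobenius lift on the base annulus can be lifted to one on the cover compatibly, since the cover is defined over $\calR_K^{\inte}$ and Frobenius acts on this ring; one then transports the Frobenius structure via the Taylor isomorphism as in \cite[Proposition~17.3.1]{kedlaya-book}. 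So $\calE'$ is a regular object of $\calC_{K'}$ admitting a relative Frobenius structure.

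Next I would apply Lemma~\ref{L:regular Frobenius equals tame} to $\calE'$: it becomes unipotent after pullback along some tame extension $L_2$ of $\kappa_{K'}((\overline{u}))$, where $\overline{u}$ is a uniformizer for the residue field $L_1 = \kappa'((\overline{u}))$. Composing the two covers, the total extension $L$ of $\kappa_K((\overline{t}))$ through which we have passed is finite and separable (a tame extension of a finite separable extension is finite separable), and the pullback of $\calE$ along $L$ is unipotent. Throughout, base field extensions on $K$ are harmless: unipotence, like triviality, descends, and by the final clause of the eligibility discussion the residue-field adjustments can be realized by finite extensions of $K$.

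The main obstacle I anticipate is purely bookkeeping: verifying cleanly that the relative Frobenius structure survives the first pullback. One must be careful that the Frobenius lift $\varphi$ on $A_K(\epsilon,1)$ and the étale cover $Y \to A_K(\epsilon,1)$ corresponding to $L_1$ are compatible in the sense that $\varphi$ extends to $Y$ — this uses that $Y$ comes from a finite étale $\calR_K^{\inte}$-algebra (Lemma~\ref{L:henselian}) and that $\varphi^*$ preserves $\calR_K^{\inte}$, so by henselianity the cover $\varphi^* Y$ is canonically identified with $Y$ up to the residue-field-level Frobenius, which can be absorbed into the isometric endomorphism of $K'$ allowed in the definition of a relative Frobenius lift. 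Once that identification is in place, the isomorphism $\varphi^* \calE \cong \calE$ pulls back to the desired isomorphism on $\calE'$, and everything else is formal. Note also that, in contrast to Corollary~\ref{C:unipotent pure slope}, no pure-slope hypothesis or discreteness of the valuation is needed here, since we only claim unipotence rather than triviality.
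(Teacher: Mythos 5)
Your proposal is correct and is precisely the argument the paper has in mind; the paper's own proof is the one-line instruction to combine Theorem~\ref{T:p-adic mono} with Lemma~\ref{L:regular Frobenius equals tame}, and you have supplied the (routine but worth stating) bookkeeping that the Frobenius structure persists under pullback along the eligible cover. Your observation that no pure-slope hypothesis is needed, since only unipotence (not triviality) is claimed, is also correct and accounts for why the theorem does not require $K$ to be discretely valued.
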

\begin{proof}
Combine Theorem~\ref{T:p-adic mono} with Lemma~\ref{L:regular Frobenius equals tame}.
\end{proof}

\subsection{Wild monodromy representations}

We next move towards a reformulation of the local monodromy theorem in which we avoid explicitly referencing an eligible cover.

\begin{defn} \label{D:equivalence by slope}
For $\calE_1, \calE_2 \in \calC_K$, let $\calF$ be the regular component of $\calE_1^\dual \otimes \calE_2$. We say that $\calE_1 \sim \calE_2$ if the contractions
\begin{gather*}
\calF \otimes \calE_1 \to \calE_1^\dual \otimes \calE_2 \otimes \calE_1 
\cong (\calE_1^\dual \otimes \calE_1) \otimes \calE_2
\to \calE_2 \\
\calF^\dual \otimes \calE_2 \to \calE_2^\dual \otimes \calE_1 \otimes \calE_2 
\cong (\calE_2^\dual \otimes \calE_2) \otimes \calE_1
\to \calE_1
\end{gather*}
are surjective. 
\end{defn}

\begin{lemma}
In Definition~\ref{D:equivalence by slope}, the relation $\sim$ defines an equivalence relation
on isomorphism classes of $\calC_K$.
\end{lemma}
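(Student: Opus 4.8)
The plan is to set up a little notation that makes reflexivity and symmetry transparent, and then to prove transitivity using composition of homomorphisms. Extending the notation of Definition~\ref{D:equivalence by slope}, for $\calE_i,\calE_j\in\calC_K$ write $\calF_{ij}$ for the regular component of the internal hom $\calE_i^\dual\otimes\calE_j$, and let $c_{ij}\colon\calF_{ij}\otimes\calE_i\to\calE_j$ be the contraction obtained by including $\calF_{ij}\hookrightarrow\calE_i^\dual\otimes\calE_j$ and evaluating. I will use three facts from the Christol--Mebkhout slope theory recalled above: (i) the regular objects of $\calC_K$ are closed under $\otimes$, under $\dual$, and under subquotients; (ii) $\mathrm{Hom}(\calG,\calH)=0$ whenever $\calG$ is regular and $\calH$ is pure of positive slope, since the image of such a map is simultaneously regular and pure of positive slope; and (iii) since the intrinsic subsidiary radii are invariant under duality, dualizing the slope decomposition of Theorem~\ref{T:christol-mebkhout} gives $(\calE^\dual)_s\cong(\calE_s)^\dual$, whence $\calF_{ji}\cong\calF_{ij}^\dual$ as a direct summand of $\calE_j^\dual\otimes\calE_i\cong(\calE_i^\dual\otimes\calE_j)^\dual$, and under this identification $c_{ji}$ is precisely the second contraction $\calF_{ij}^\dual\otimes\calE_j\to\calE_i$ of Definition~\ref{D:equivalence by slope}. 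Thus $\calE_i\sim\calE_j$ if and only if $c_{ij}$ and $c_{ji}$ are both surjective, a condition manifestly symmetric in $i$ and $j$; this already gives \emph{symmetry}. Since the slope decomposition is canonical, $\calF_{ij}$ and $c_{ij}$ are functorial in the $\calE$'s, so $\sim$ is well defined on isomorphism classes.

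For \emph{reflexivity}, take $\calE_1=\calE_2=\calE$. The identity section $\mathrm{id}_\calE$ spans a trivial, hence regular, rank-one subobject of $\calE^\dual\otimes\calE$, which by (ii) lies in $\calF_{11}$; and $c_{11}$ sends $\mathrm{id}_\calE\otimes v$ to $v$, so $c_{11}$ is surjective and $\calE\sim\calE$.

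For \emph{transitivity}, suppose $\calE_1\sim\calE_2$ and $\calE_2\sim\calE_3$; I must show $c_{13}$ and $c_{31}$ are surjective. Let $N\subseteq\calE_1^\dual\otimes\calE_3$ be the image of the composition map $\calF_{12}\otimes\calF_{23}\to\calE_1^\dual\otimes\calE_3$ (pairing the $\calE_2$ with the $\calE_2^\dual$). By (i) the source is regular, so $N$ is regular, hence by (ii) $N\subseteq\calF_{13}$. Now tensor the surjection $c_{12}\colon\calF_{12}\otimes\calE_1\twoheadrightarrow\calE_2$ by $\calF_{23}$ and compose with $c_{23}\colon\calF_{23}\otimes\calE_2\twoheadrightarrow\calE_3$: this yields a surjection $\calF_{12}\otimes\calF_{23}\otimes\calE_1\twoheadrightarrow\calE_3$ which, by the compatibility of composition of homomorphisms with evaluation, factors as $\calF_{12}\otimes\calF_{23}\otimes\calE_1\to N\otimes\calE_1\to\calE_3$, the second arrow being the evaluation of $N\subseteq\calE_1^\dual\otimes\calE_3$. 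Hence $N\otimes\calE_1\to\calE_3$ is surjective; since $N\subseteq\calF_{13}$ this map is the restriction of $c_{13}$, so $c_{13}$ is surjective. The same argument applied to the surjections $\calF_{32}\otimes\calE_3\twoheadrightarrow\calE_2$ and $\calF_{21}\otimes\calE_2\twoheadrightarrow\calE_1$ (available since $\calE_2\sim\calE_3$ and $\calE_1\sim\calE_2$) shows $c_{31}$ is surjective. Therefore $\calE_1\sim\calE_3$.

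The only genuinely substantive ingredients are the slope-theoretic facts (i)--(iii): that the regular objects form a $\dual$-stable tensor subcategory of $\calC_K$ closed under subquotients, and that morphisms between pure objects of distinct slopes vanish. Granted these, the argument is formal, and I expect the only delicate part of a careful write-up to be keeping the canonical reorderings and the duality identifications of (iii) consistent when one checks that the maps $c_{ij}$ really coincide with the contractions of Definition~\ref{D:equivalence by slope} — there is no real mathematical obstacle beyond the slope formalism already in hand.
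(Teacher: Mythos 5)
Your proof is correct and follows essentially the same route as the paper's: both arguments observe that the evaluation map $(\calE_1^\dual\otimes\calE_2)\otimes(\calE_2^\dual\otimes\calE_3)\to\calE_1^\dual\otimes\calE_3$ carries the tensor product of regular components into the regular component (by the slope decomposition), and then factor the composite of the two surjective contractions through it to obtain surjectivity of the contraction for $\calF_{13}$. The extra notation $\calF_{ij}$, $c_{ij}$ and your explicit recording of the duality identification $\calF_{ji}\cong\calF_{ij}^\dual$ are just a more verbose packaging of what the paper calls ``manifestly symmetric''; mathematically the two proofs coincide.
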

\begin{proof}
If $\calE_1 \cong \calE_2$, then $\calF$ contains a trivial connection $\calG$ generated by the isomorphism (more precisely, by the horizontal section of $\calE_1^\dual \otimes \calE_2$ corresponding to the isomorphism). The contraction $\calG \otimes \calE_1 \to \calE_2$ is then an isomorphism, so
$\calE_1 \sim \calE_2$. In particular, $\sim$ defines a reflexive and manifestly symmetric relation; it thus remains to check transitivity.

To this end, suppose that $\calE_1 \sim \calE_2$ and $\calE_2 \sim \calE_3$. Let $\calG$ be the slope-$0$ component of $\calE_2^\dual \otimes \calE_3$
and let $\calH$ be the slope-0 component of 
$\calE_1^\dual \otimes \calE_3$. Then the contraction 
\[
(\calE_1^\dual \otimes \calE_2) \otimes (\calE_2^\dual \otimes \calE_3) \to \calE_1^\dual \otimes \calE_3
\]
carries $\calF \otimes \calG$ to $\calH$. Hence the map
\[
\calF \otimes \calG \otimes \calE_1 \to \calG \otimes \calE_2 \to \calE_3,
\]
which is written as the composition of two surjections, factors through
$\calH \otimes \calE_1 \to \calE_3$ and thus forces this map to also be surjective.
By similar logic, $\calH^\dual \otimes \calE_3 \to \calE_1$ is also surjective.
\end{proof}

\begin{remark}
For any $\calE \in \calC_K$ and any positive integer $n$, $\calE \sim \calE^{\oplus n}$.
From this we see that $\sim$ does not preserve rank.
\end{remark}

\begin{remark} \label{R:Robba condition via equivalence}
If $\calE_1 \sim \calE_2 \in \calC_K$ and $\calE_1$ is regular, then 
$\calE_2$ is a quotient of $\calF \otimes \calE_1$ and both of the factors in the tensor product are regular. Consequently, $\calE_2$ is also regular.
\end{remark}

\begin{lemma} \label{L:semisimplification equivalence}
For $\calE \in \calC_K$ with semisimplification $\calE^{\semis}$, 
we have $\calE \sim \calE^{\semis}$.
\end{lemma}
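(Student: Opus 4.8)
The plan is induction on $\rank \calE$ in the abelian category $\calC_K$, reducing everything to one short exact sequence. If $\rank \calE \leq 1$ (or $\calE = 0$), then $\calE$ is simple and $\calE = \calE^{\semis}$, so reflexivity of $\sim$ suffices. Otherwise pick a proper nonzero subobject to get an exact sequence $0 \to \calE_1 \xrightarrow{\iota} \calE \xrightarrow{\pi} \calE_2 \to 0$ with $\rank \calE_1, \rank \calE_2 < \rank \calE$ and $\calE^{\semis} \cong \calE_1^{\semis} \oplus \calE_2^{\semis}$. It therefore suffices to prove: (i) for any such exact sequence, $\calE \sim \calE_1 \oplus \calE_2$; and (ii) $\sim$ is compatible with finite direct sums, i.e.\ $\calE_1 \sim \calE_1'$ implies $\calE_1 \oplus \calG \sim \calE_1' \oplus \calG$ for any $\calG$. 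Granting these, $\calE \sim \calE_1 \oplus \calE_2 \sim \calE_1^{\semis} \oplus \calE_2^{\semis} = \calE^{\semis}$ follows from (i), the inductive hypothesis, (ii), and transitivity. Statement (ii) will be handled by the same summand bookkeeping as (i): one places the regular connection witnessing $\calE_1 \sim \calE_1'$ in the $(1,1)$-slot of the relevant tensor decomposition and $\calO \cdot \mathrm{id}_{\calG}$ in the $(2,2)$-slot.

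For (i), let $\calF$ be the regular component of $\calE^\dual \otimes (\calE_1 \oplus \calE_2)$. Since the slope decomposition of Theorem~\ref{T:christol-mebkhout} is a direct sum, $\calF = \calF_1 \oplus \calF_2$ with $\calF_j$ the regular component of $\calE^\dual \otimes \calE_j$, and likewise $\calF^\dual = \calF_1^\dual \oplus \calF_2^\dual$ with $\calF_j^\dual$ the regular component of $\calE \otimes \calE_j^\dual$ (duality preserves $\IR$, hence slopes). Both contractions of Definition~\ref{D:equivalence by slope} respect these decompositions: the first is $\bigoplus_j (\calF_j \otimes \calE \to \calE_j)$, and the image of the second is $\sum_j \mathrm{im}(\calF_j^\dual \otimes \calE_j \to \calE)$, since the evaluation pairing on $(\calE_1 \oplus \calE_2)^\dual \otimes (\calE_1 \oplus \calE_2)$ annihilates $\calE_j^\dual \otimes \calE_k$ for $j \neq k$. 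Two of the four resulting surjectivities are immediate: $\iota$ and $\pi$, being morphisms of connections, are horizontal global sections of $\calE \otimes \calE_1^\dual$ and $\calE_2 \otimes \calE^\dual$ generating trivial (hence regular) subconnections, so $\calO \cdot \pi \subseteq \calF_2$ yields $\mathrm{im}(\calF_2 \otimes \calE \to \calE_2) \supseteq \pi(\calE) = \calE_2$, and $\calO \cdot \iota \subseteq \calF_1^\dual$ yields $\mathrm{im}(\calF_1^\dual \otimes \calE_1 \to \calE) \supseteq \iota(\calE_1)$.

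The main obstacle is the remaining two surjectivities, $\calF_1 \otimes \calE \to \calE_1$ and $\calF_2^\dual \otimes \calE_2 \to \calE$, since there is no horizontal splitting of the exact sequence to exploit. The device is to compose with the surjections $\calE^\dual \otimes \calE_1 \twoheadrightarrow \calE_1^\dual \otimes \calE_1$ and $\calE \otimes \calE_2^\dual \twoheadrightarrow \calE_2 \otimes \calE_2^\dual$ induced by $\iota^\dual$ and $\pi$ (surjective because tensoring with a vector bundle is exact), which after passing to regular components give $\calF_1 \twoheadrightarrow (\calE_1^\dual \otimes \calE_1)_0$ and $\calF_2^\dual \twoheadrightarrow (\calE_2 \otimes \calE_2^\dual)_0$ — here one uses that forming the regular component is exact, the slope decomposition being functorial (a morphism between pure objects of distinct slopes vanishes). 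Each of these targets contains the line spanned by the identity endomorphism, and $\calO \cdot \mathrm{id}_{\calE_j} \otimes \calE_j \to \calE_j$ is already an isomorphism; running this through shows that the restriction of $\calF_1 \otimes \calE \to \calE_1$ along $\calE_1 \hookrightarrow \calE$ already surjects onto $\calE_1$, and that $\calF_2^\dual \otimes \calE_2 \to \calE \xrightarrow{\pi} \calE_2$ is onto, so $\mathrm{im}(\calF_2^\dual \otimes \calE_2 \to \calE)$ maps onto $\calE / \iota(\calE_1)$ and, together with $\iota(\calE_1)$, fills $\calE$. The key realization making this work is that one only ever needs surjectivity of maps of $\calO$-modules, never a global horizontal splitting, so everything can be routed through the regular part of the endomorphism bundle, where the identity section lives; this is also why, unlike the more obvious route of lifting the identity endomorphism of $\calE_1$ to a global section of $\calF_1$, the argument needs no vanishing theorem for coherent cohomology on the annulus.
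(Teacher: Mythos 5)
Your proof is correct, and its core mechanism is the same as the paper's: for a short exact sequence $0 \to \calE_1 \to \calE \to \calE_2 \to 0$, use exactness of the regular-component functor together with the identity (trace) sections of the regular endomorphism bundles to witness surjectivity of the two contractions. The bookkeeping is organized a bit differently, though. The paper checks $\calE_1 \oplus \calE_2 \sim \calE$, taking $\calF$ inside $(\calE_1\oplus\calE_2)^\dual\otimes\calE$; tensoring the original short exact sequence with $(\calE_1\oplus\calE_2)^\dual$ then exhibits $\calF$ directly as an extension whose bottom visibly contains the trace of $\mathrm{End}(\calE_1)$ (giving $\calE_1$ in the image) and whose top surjects onto the regular part of $\mathrm{End}(\calE_2)$ (giving surjectivity after post-composing with $\pi$), and the argument closes in two lines. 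You instead check $\calE \sim \calE_1 \oplus \calE_2$, so your $\calF$ splits as $\calF_1 \oplus \calF_2$ along the direct-sum slot, and the exact-sequence structure has to be pulled out of the $\calE^\dual$ slot by the surjections $\iota^\dual\otimes\mathrm{id}$ and $\pi\otimes\mathrm{id}$ onto the endomorphism bundles — this is the same move, reached by a slightly longer route. Finally, you run an explicit rank induction and isolate compatibility of $\sim$ with direct sums as a separate point (ii); the paper sidesteps this by applying the exact-sequence result iteratively along a filtration, i.e.\ at each step one takes the subobject $\calG_{i-1}\subset\calG_i\oplus\calG'$ with quotient $(\calG_i/\calG_{i-1})\oplus\calG'$, so the passenger summand $\calG'$ is carried along inside the exact sequence itself and no separate lemma is needed. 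Both routes are valid; the paper's is a bit more economical.
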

\begin{proof}
It suffices to check that for any exact sequence
\[
0 \to \calE_1 \to \calE \to \calE_2 \to 0
\]
we have $\calE \sim \calE_1 \otimes \calE_2$. Tensoring with $(\calE_1 \oplus \calE_2)^\dual$ yields another exact sequence
\[
0 \to (\calE_1^\dual \otimes \calE_1) \oplus (\calE_2^\dual \otimes \calE_1) \to (\calE_1 \oplus \calE_2)^\dual \otimes \calE \to (\calE_1^\dual \otimes \calE_2) \oplus (\calE_2^\dual \otimes \calE_2) \to 0.
\]
Let $\calF$ be the regular component of $(\calE_1 \oplus \calE_2)^\dual \otimes \calE$;
then $\calF$
admits the trace component of $\calE_1^\dual \otimes \calE_1$ as a subobject
and
the trace component of $\calE_2^\dual \otimes \calE_2$ as a quotient.
This implies that the image of $\calF \otimes (\calE_1 \oplus \calE_2) \to \calE$ 
contains $\calE_1$ and that the composition  $\calF \otimes (\calE_1 \oplus \calE_2) \to \calE \to \calE_2$ is surjective, 
so  $\calF \otimes (\calE_1 \oplus \calE_2) \to \calE$  is surjective.

Similarly,
$\calF^\dual$
admits the trace component of $\calE_1^\dual \otimes \calE_1$ as a quotient
and
the trace component of $\calE_2^\dual \otimes \calE_2$ as a subobject.
This implies that the image of $\calF^\dual \otimes \calE \to \calE_1 \oplus \calE_2$
contains $\calE_2$ and that the composition $\calF^\dual \otimes \calE \to \calE_1 \oplus \calE_2 \to \calE_1$ is surjective, so $\calF^\dual \otimes \calE \to \calE_1 \oplus \calE_2$ is also surjective.
\end{proof}

\begin{prop} \label{P:quasiconstant rep}
For any $\calE_1 \in \calC_K$, there exists a quasiconstant object $\calE_2 \in \calC_K$ 
such that $\calE_1 \sim \calE_2$.
\end{prop}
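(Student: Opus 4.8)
The plan is to combine the $p$-adic local monodromy theorem (Theorem~\ref{T:p-adic mono}) with two soft observations about the relation $\sim$ from Definition~\ref{D:equivalence by slope}. First, every nonzero regular object $\calE$ of $\calC_K$ satisfies $\calE \sim \mathbf{1}$, where $\mathbf{1}$ denotes the trivial rank-one connection: the regular component of $\calE^\dual \otimes \mathbf{1}$ is all of $\calE^\dual$ (since the dual of a regular object is regular), and the two contractions are a nonzero multiple of the trace $\calE^\dual \otimes \calE \to \mathbf{1}$ and an isomorphism $\calE \to \calE$. Second, $\sim$ is unchanged by twisting by a regular line bundle: for any regular rank-one $\calL \in \calC_K$ one has $\calE \sim \calL \otimes \calE$, witnessed by $\calL^\dual$, because the regular component of $\calE^\dual \otimes \calL \otimes \calE \cong \calL \otimes (\calE^\dual \otimes \calE)$ contains the summand $\calL$ (coming from the inclusion $\mathbf{1} \hookrightarrow \calE^\dual \otimes \calE$) and the two contractions restrict to isomorphisms on this summand. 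Since $\mathbf{1}$ is quasiconstant, the first observation disposes of the regular case; the role of the covering produced by Theorem~\ref{T:p-adic mono} will be to turn all of the slope of $\calE_1$ into a twist of this kind.

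First I would reduce to the case where $\calE_1$ is semisimple of pure slope $s$. Semisimplicity is achieved by Lemma~\ref{L:semisimplification equivalence}, once one checks that $\sim$ and quasiconstancy are each preserved under finite direct sums — quasiconstancy because one may pull back along the compositum of the relevant eligible coverings, and $\sim$ because a block-diagonal family of witnesses is again a witness. For the latter one uses that if $\calE$ and $\calE'$ have distinct slopes then $\calE^\dual \otimes \calE'$ has no slope-$0$ component, so that the regular component of $\calE_1^\dual \otimes \calE_2$ — and hence the surjectivity conditions defining $\sim$ — decompose over the slope decomposition; this simultaneously reduces the problem to a single slope $s$. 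If $s = 0$ then $\calE_1$ is regular and we are done.

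So assume $s > 0$. Applying Theorem~\ref{T:p-adic mono}, choose a finite Galois extension $L$ of $\kappa_K((\overline{t}))$, with corresponding eligible Galois covering $\pi\colon Y \to A_K(\epsilon,1)$ of group $G$, such that $\calG := \pi^*\calE_1$ is regular; after enlarging $K$ to a finite extension and increasing $\epsilon$ we may view $Y$ as an annulus over the new base field, with a compatible $G$-action. A Clifford-theoretic argument (using that $\pi$ is Galois and $\calE_1$ is semisimple) shows $\calG$ is semisimple, and since $\pi \circ g = \pi$ for $g \in G$ the $G$-action fixes the exponents of $\calG$. Hence the decomposition of the regular object $\calG$ according to the congruence classes $[a]$ modulo $\ZZ$ of its exponents — supplied by the Christol--Mebkhout theory of $p$-adic exponents underlying Theorem~\ref{T:CM} — is $G$-stable and descends to $\calE_1 = \bigoplus_{[a]} \calE_1^{[a]}$, each summand again of pure slope $s$; it suffices to treat each $\calE_1^{[a]}$. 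For a fixed class with representative $a$, the pullback $\pi^*\calE_1^{[a]}$ is regular, semisimple, and has all exponents in $a + \ZZ$, hence exponent equivalent to the constant tuple $(a,\dots,a)$; so $\pi^*\calE_1^{[a]} \otimes t^{-a}$ has exponent equivalent to $0$, is unipotent by Theorem~\ref{T:CM}(c), and (being semisimple) is trivial. It remains to replace $\calE_1^{[a]}$, up to $\sim$, by a regular rank-one twist $\calL \otimes \calE_1^{[a]}$ (defined over a further finite extension of $K$ if necessary) chosen so that $\pi^*\calL$ cancels the factor $t^a$ on $Y$; the pullback of $\calL \otimes \calE_1^{[a]}$ along $L$ is then trivial, so $\calL \otimes \calE_1^{[a]}$ is quasiconstant, and it is still $\sim \calE_1^{[a]}$ by the twisting observation. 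Reassembling over the classes $[a]$ — with all pieces trivialized by the single covering $\pi$ — yields the desired quasiconstant $\calE_2 \sim \calE_1$.

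I expect the main obstacle to be exactly the construction of this last twist: extracting, from the merely regular (not a priori trivial) pullback $\pi^*\calE_1$, a genuine regular rank-one object $\calL$ over $K$ whose pullback along $L$ is precisely $t^{-a}$, together with the compatibility needed for the descent. This is where the Christol--Mebkhout theory is essential, and one must navigate its $p$-adic Liouville phenomena (weak equivalence versus equivalence of exponents, cf.\ Lemma~\ref{L:weakly equivalent to equivalent}), both to make the congruence-class decomposition of $\calG$ well defined and $G$-stable and to guarantee that each piece is truly a twist of a unipotent object; one must also control the behaviour of $\pi$ near the boundary closely enough to realize the required twist (possibly after enlarging $K$). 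The Clifford-type semisimplicity of $\pi^*\calE_1$, and the reduction to $\kappa_K$ perfect, are secondary points each requiring a short separate argument.
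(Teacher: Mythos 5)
Your two soft observations (a nonzero regular object is $\sim \mathbf{1}$, and $\sim$ is stable under twist by a regular line bundle) are correct, and the reduction to pure-slope semisimple objects via Lemma~\ref{L:semisimplification equivalence} plus the slope decomposition is sound. But the route you then take is genuinely different from the paper's, and it is the step you yourself flag as the "main obstacle" that is in fact a real gap rather than a technicality to be navigated.

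Your plan requires decomposing the regular pullback $\calG = \pi^*\calE_1$ into pieces indexed by exponent classes modulo $\ZZ$, then twisting each piece by a rank-one object $t^{-a}$ to reduce its exponent to zero and invoke Theorem~\ref{T:CM}(c). The difficulty is that no such decomposition exists in general: the Christol--Mebkhout decomposition by exponents, and indeed the very well-definedness of "the exponent class mod $\ZZ$" of a piece, requires the $p$-adic non-Liouville differences hypothesis. That hypothesis is not available here — for an arbitrary $\calE_1 \in \calC_K$ the exponents of $\pi^*\calE_1$ are only controlled up to \emph{weak} equivalence (Theorem~\ref{T:CM}(b)), and weak equivalence classes do not give a direct sum decomposition, nor even a well-defined partition of the exponent tuple. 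Your argument also needs a $G$-stable decomposition that descends along $\pi$ and a rank-one $\calL$ over the base whose pullback is exactly $t^{-a}$; even in the tame case these require more than is available in the paper, and in the wild case the descent is not obviously meaningful. So as written the proof does not close.

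The paper avoids all of this by a softer and more economical device. After reducing to $\calE_1$ \emph{irreducible} (not merely pure-slope semisimple) via Lemma~\ref{L:semisimplification equivalence}, it takes $\calE' := \pi_* \mathbf{1}$, the pushforward of the trivial connection along the eligible cover from Theorem~\ref{T:p-adic mono}, which is automatically semisimple, self-dual, and quasiconstant. It then observes — by a generic-disc argument that uses only that $\pi^*\calE_1$ is regular, and no exponent theory at all — that the regular component of $\calE_1^\dual \otimes \calE'$ is nonzero; hence some irreducible summand $\calE_2$ of $\calE'$ has nonzero regular component $\calF$ in $\calE_1^\dual \otimes \calE_2$. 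The key simplification is that because both $\calE_1$ and $\calE_2$ are irreducible, the two contractions $\calF \otimes \calE_1 \to \calE_2$ and $\calF^\dual \otimes \calE_2 \to \calE_1$, being nonzero with irreducible targets, are automatically surjective. Your approach tries to build a quasiconstant object whose pullback exactly matches $\calE_1$'s (and thereby runs into Liouville phenomena); the paper only needs a quasiconstant object that is \emph{linked} to $\calE_1$ by a nonzero regular map, which is much cheaper. If you want to salvage your outline, the fix is precisely to replace the exponent-matching step with the pushforward-and-irreducibility trick.
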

\begin{proof}
Suppose first that $\kappa_K$ is perfect.
By Lemma~\ref{L:semisimplification equivalence}, we may assume that $\calE_1$ is irreducible.
Apply Theorem~\ref{T:p-adic mono} to choose a finite  separable extension $L$ of $\kappa_K((\overline{t}))$
such that the pullback of $\calE_1$ along $L$ is regular.
Let $\calE'$ be the pushforward of the trivial connection from $L$ to $\kappa_K((\overline{t}))$;
note that $\calE'$ is isomorphic to its own dual.
The regular component of $\calE_1^\dual \otimes \calE'$ is nonzero:
for a generic disc $D$ of radius sufficiently close to $1$, 
$\calE_1^\dual$ is trivial on the pullback of $D$, and any nonzero horizontal section
defines a horizontal section of $\calE_1^\dual \otimes \calE'$.
Since $\calE'$ is semisimple, there is an irreducible subobject $\calE_2$ of $\calE'$
such that the regular component $\calF$ of $\calE_1^\dual \otimes \calE_2$ is nonzero.
Since the maps $\calF \otimes \calE_1 \to \calE_2$, $\calF^\dual \otimes \calE_2 \to \calE_1$
are nonzero and their targets are irreducible, both maps are surjective.

Suppose now that $\kappa_K$ is general.
We may then apply the previous argument after replacing $K$ with a suitable finite extension $K'$. We may then taking the resulting object $\calE_2$, take its pushforward from $K'$ to $K$, then choose an irreducible constituent of the result to achieve the desired goal.
\end{proof}

\begin{lemma}
For $\calE_1, \calE_2 \in \calC_K$ quasiconstant and irreducible, if $\calE_1 \sim \calE_2$, then the wild monodromy representations associated to $\calE_1$ and $\calE_2$ are isomorphic.
\end{lemma}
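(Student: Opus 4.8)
The plan is to pass to monodromy representations, rewrite the relation $\sim$ as a statement about representations of the wild inertia group, and then apply Clifford theory. First I would reduce to the case that $\kappa_K$ is perfect: the wild monodromy representation is in any case defined via such a base extension, and both the hypothesis $\calE_1 \sim \calE_2$ and the conclusion are stable under base extension on $K$ — the regular component of $\calE_1^\dual \otimes \calE_2$ is cut out by the intrinsic radii, which are base-extension invariant, and surjectivity of the contractions persists under faithfully flat base extension — while irreducibility of the $\calE_i$ is likewise preserved, since passing to the perfect closure of $\kappa_K$ does not change the absolute Galois group of $\kappa_K((\overline{t}))$. So assume $\kappa_K$ perfect. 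Fix a finite Galois extension $L$ of $\kappa_K((\overline{t}))$ and a finite unramified extension of $K$, over which $\calE_1$, $\calE_2$, and the (necessarily quasiconstant) regular component $\calF$ of $\calE_1^\dual \otimes \calE_2$ all become trivial; write $I := I_{\kappa_K((\overline{t}))}$, $W := W_{\kappa_K((\overline{t}))}$, and $V_1, V_2$ for the monodromy representations — finite-image $\overline{K}$-representations of $I$ in which $W$ acts through a finite $p$-group and $I/W$ through a finite cyclic quotient.

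Next I would translate the relation $\sim$. The monodromy construction defines a faithful exact tensor functor on quasiconstant objects which, by Theorem~\ref{T:matsuda} (applied to $\calE_1^\dual \otimes \calE_2$) together with semisimplicity of representations of a $p$-group in characteristic $0$, sends the regular component of an object to the subrepresentation on which $W$ acts trivially, and under which the monodromy representation of an irreducible object is absolutely irreducible. Under this functor the monodromy representation of $\calF$ is $\operatorname{Hom}_{\overline{K}[W]}(V_1,V_2)$ with its residual $I/W$-action, and the two contractions of Definition~\ref{D:equivalence by slope} become the evaluation maps
\[
\operatorname{Hom}_{\overline{K}[W]}(V_1,V_2)\otimes_{\overline{K}} V_1 \to V_2, \qquad \operatorname{Hom}_{\overline{K}[W]}(V_2,V_1)\otimes_{\overline{K}} V_2 \to V_1 .
\]
By faithfulness and exactness, $\calE_1 \sim \calE_2$ is equivalent to surjectivity of both of these; and since all $W$-representations in sight are semisimple, this surjectivity says exactly that $V_1|_W$ and $V_2|_W$ have the same set of irreducible constituents.

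Finally I would bring in irreducibility. As $\calE_1$ and $\calE_2$ are irreducible, $V_1$ and $V_2$ are absolutely irreducible representations of $I$, so — $W$ being normal in $I$ with finite cyclic quotient, and $\overline{K}$ being algebraically closed of characteristic $0$, whence $H^2$ of a cyclic group with values in the divisible group $\overline{K}^\times$ vanishes — Clifford theory shows that each $V_i|_W$ is multiplicity free with set of irreducible constituents a single $I$-orbit. Two representations of $W$ of this shape with the same constituent set are isomorphic, so $V_1|_W \cong V_2|_W$; this is the claimed isomorphism of wild monodromy representations.

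The main obstacle I expect is the translation in the second paragraph: one has to check carefully that the monodromy representation of the regular component really is the space of $W$-equivariant homomorphisms and that the two contractions go over to evaluation maps, and one relies on the standard but not purely formal facts that the monodromy functor is faithful and exact and that an irreducible quasiconstant object has scalar endomorphism algebra, so that its monodromy representation is absolutely irreducible. Irreducibility itself enters only through Clifford theory and the procyclicity of the tame quotient, to upgrade ``same constituents'' to the multiplicity-free single-orbit structure; without it one gets only equality of constituent sets, which is strictly weaker, as $\calE \sim \calE^{\oplus n}$ shows.
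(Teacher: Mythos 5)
Your proposal is correct and follows essentially the same route as the paper: translate via Matsuda's theorem (Theorem~\ref{T:matsuda}) into a statement about wild monodromy representations, then finish with Clifford-type representation theory using irreducibility of the $\calE_i$ and the (pro)cyclicity of the tame quotient $I/W$. The paper's proof is terser in two respects: it uses only that the regular component $\calF$ of $\calE_1^\dual\otimes\calE_2$ is nonzero (so $\operatorname{Hom}_W(\rho_1,\rho_2)\neq 0$, already enough since each $\rho_i$ is multiplicity-free with constituents a single $I$-orbit), rather than the full surjectivity of both contractions; and it compresses the final Clifford-theoretic step into the sentence ``This implies the desired isomorphism,'' which is precisely the argument you make explicit.
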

\begin{proof}
We may assume that $\kappa_K$ is perfect.
Let $\rho_1$ and $\rho_2$ be the wild monodromy representations associated to $\calE_1$ and $\calE_2$. By hypothesis,
$\calE_1^\dual \otimes \calE_2$ contains a nonzero regular component;
by Theorem~\ref{T:matsuda}, this implies that $\rho_1^\dual \otimes \rho_2$ contains a trivial subrepresentation. This implies the desired isomorphism.
\end{proof}

\begin{lemma} \label{L:rank}
Suppose that $\calE \in \calC_K$ is quasiconstant and remains irreducible after pullback along any tame extension of $\kappa_K((\overline{t}))$.
Then for any $\calF \in \calC_K$ with $\calE \sim \calF$, 
for $\calF_0$ the regular component of $\calE^\dual \otimes \calF$,
the map $\calF_0 \otimes \calE \to \calF$ is an isomorphism.
In particular, $(\rank \calF)/(\rank \calE) = \rank \calF_0$ is a positive integer.
\end{lemma}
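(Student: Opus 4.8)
The plan is to exploit the irreducibility hypothesis on $\calE$ together with Schur-type arguments on the wild monodromy representation. First I would reduce to the case where $\kappa_K$ is perfect, since the formation of slope components, the relation $\sim$, and the quantities in the statement all commute with base extension on $K$. By Proposition~\ref{P:quasiconstant rep} applied to $\calF$ (and using Lemma~\ref{L:semisimplification equivalence} to pass to semisimplifications as needed), I may as well assume $\calF$ is also quasiconstant; in any case, once $\calE$ is quasiconstant the regular component $\calF_0$ of $\calE^\dual \otimes \calF$ is itself quasiconstant, so everything in sight has a wild monodromy representation in the sense of Definition~\ref{D:wild mono1}.

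Next, I would analyze the contraction map $\calF_0 \otimes \calE \to \calF$, which is surjective by the definition of $\sim$ (Definition~\ref{D:equivalence by slope}). The strategy is to show it is also injective by a rank count, which reduces to showing $\rank \calF_0 \cdot \rank \calE = \rank \calF$, equivalently that $\calF_0 \otimes \calE \to \calF$ has no kernel. For this I would pass to wild monodromy representations: writing $\rho_{\calE}$, $\rho_{\calF}$, $\rho_{\calF_0}$ for the respective representations of $W_{\kappa_K((\overline{t}))}$ (the last being the trivial representation tensored with a finite-image representation, i.e.\ factoring through a finite quotient since $\calF_0$ is regular—by Theorem~\ref{T:matsuda}, a regular quasiconstant object has wild monodromy trivial), the surjection $\calF_0 \otimes \calE \to \calF$ induces a surjection $\rho_{\calF_0} \otimes \rho_{\calE} \to \rho_{\calF}$ of representations. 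The hypothesis that $\calE$ stays irreducible under every tame pullback translates, via Matsuda's theorem, into the statement that $\rho_{\calE}$ restricted to $W_{\kappa_K((\overline{t}))}$ remains irreducible—indeed tame pullbacks correspond exactly to passing to finite-index subgroups of $I_{\kappa_K((\overline{t}))}$ containing $W_{\kappa_K((\overline{t}))}$, so irreducibility after all tame pullbacks is precisely irreducibility of the restriction to wild inertia.

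Then I would invoke a Schur-type lemma: since $\rho_{\calF_0}$ is a sum of trivial characters of $W_{\kappa_K((\overline{t}))}$ (as $\calF_0$ is regular) and $\rho_{\calE}$ is irreducible, the wild monodromy of $\calF_0 \otimes \calE$ is a direct sum of $\rank \calF_0$ copies of $\rho_{\calE}$; comparing isotypic components, the surjection to $\rho_{\calF}$ forces $\rho_{\calF} \cong \rho_{\calE}^{\oplus k}$ with $k \le \rank \calF_0$, and the kernel of $\calF_0 \otimes \calE \to \calF$, being a subobject whose wild monodromy is a sub of $\rho_{\calE}^{\oplus \rank \calF_0}$, must itself have wild monodromy $\rho_{\calE}^{\oplus(\rank \calF_0 - k)}$. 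To finish I need to promote these statements about wild monodromy representations back to statements about objects of $\calC_K$: the point is that once we have pulled back along a common finite separable extension trivializing everything in sight, the objects $\calF_0 \otimes \calE$, $\calF$, and the kernel all become trivial connections, and the contraction map becomes a map of trivial connections determined by its effect on horizontal sections, which is exactly the map of monodromy representations. So injectivity of $\calF_0 \otimes \calE \to \calF$ as connections follows from injectivity on horizontal sections after this pullback, and the latter follows because a surjection between semisimple $W$-representations with the source having each isotypic piece of the same multiplicity as... — more cleanly, I would argue that $\Hom(\calE, \calF) \otimes$ (appropriate regular twist) computes $\calF_0$ and that the natural evaluation $\calF_0 \otimes \calE \to \calF$ is injective because any kernel would give, after the trivializing pullback, a $W$-subrepresentation of $\rho_{\calE}^{\oplus \rank \calF_0}$ mapping to zero, contradicting that $\calF_0$ was defined as the \emph{full} regular component of $\calE^\dual \otimes \calF$ (so $\Hom$-dimension is exactly $\rank \calF_0$, and the evaluation map from a space of maps tensored with the source to the target is injective when the source is irreducible). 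The main obstacle I anticipate is the bookkeeping in this last step: carefully identifying $\rank \calF_0$ with $\dim_{\overline K} \Hom_{W}(\rho_{\calE}, \rho_{\calF})$ and then citing (or re-proving in this setting) the elementary fact that for an irreducible $\rho_{\calE}$ the canonical map $\Hom_W(\rho_{\calE}, \rho_{\calF}) \otimes \rho_{\calE} \to \rho_{\calF}$ is injective—this is standard linear algebra over $\overline K$, but transporting it faithfully through the dictionary between quasiconstant objects of $\calC_K$ and their wild monodromy representations (where tensor, dual, regular components, and Hom must all be matched up correctly, and where one must be careful that "regular component" on the $\calC_K$ side corresponds to "trivial-on-$W$ part" and not merely "tame part") is where the care is needed.
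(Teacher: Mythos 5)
The proposal has a genuine gap, and it is not a matter of bookkeeping. Your strategy requires $\calF$ (and hence $\calF_0$) to be accessible to the dictionary with wild monodromy representations, i.e., to be quasiconstant. You try to arrange this by citing Proposition~\ref{P:quasiconstant rep} to ``assume $\calF$ is also quasiconstant,'' but that proposition only produces a quasiconstant object in the same $\sim$-class as $\calF$; it does not change $\calF$ itself, and the lemma is a statement about the specific map $\calF_0 \otimes \calE \to \calF$ for the given $\calF$. Without that reduction, $\calF_0$ is only known to be regular, and regular objects of $\calC_K$ are in general \emph{not} quasiconstant: they need not become trivial after any finite separable extension of $\kappa_K((\overline{t}))$ (already visible when $\calE$ is the trivial rank-1 object, in which case $\calF_0=\calF$ is an arbitrary regular object, possibly with Liouville exponents). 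So the step ``pull back along a common finite separable extension trivializing everything in sight'' is not available, and there is no $W$-representation attached to $\calF_0$ to run the Schur argument on. (You also cannot appeal to Definition~\ref{D:wild inertia rep}, since that construction is logically downstream of this very lemma.)

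The paper sidesteps this by never analyzing $\calF_0$ directly. It first observes that the irreducibility hypothesis makes the regular component of $\calE^\dual \otimes \calE$ equal to the trace component, which reduces the problem to \emph{exhibiting} an isomorphism $\calF \cong \calF_0' \otimes \calE$ with $\calF_0'$ merely regular, since then $\calF_0'$ is automatically identified with the regular component of $\calE^\dual \otimes \calF$. It then produces such an isomorphism by induction on the degree of the wildly ramified cover trivializing $\calE$, peeling off a cyclic degree-$p$ subextension and splitting into the two Clifford-theoretic cases (the pullback of $\calE$ remains irreducible, or it does not), with a rank inequality closing the argument in each case. Your Schur-lemma intuition is exactly the right intuition for \emph{why} the statement should be true, and your translation of the irreducibility hypothesis into irreducibility of $\rho_\calE|_W$ is correct; but to make it a proof you would need the descent in the last step to be replaced by something that does not presuppose trivializability of $\calF_0$, which is essentially what the paper's induction accomplishes.
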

\begin{proof}
We may assume that $\kappa_K$ is perfect.
Our hypothesis on $\calE$ ensures that the regular component of $\calE^\dual \otimes \calE$ coincides with the trace component.
Consequently, it suffices to show that there exists some regular object $\calF_0$ for which there exists an isomorphism $\calF_0 \otimes \calE \to \calF$, as then we can recover the description of $\calF_0$ as the regular component of $\calE^\dual \otimes \calF$.

We may assume without loss of generality that there is a finite separable, totally tamely ramified 
extension $L$ of $\kappa_K((\overline{t}))$ such that the pullback of $\calE$ along $L$ is trivial. We induct on the degree of this extension, the case of the trivial extension being evident (as in this case $\calE$ is itself of rank 1).

By standard group theory (e.g., \cite[Exercise~3.2]{kedlaya-book}), we can find a subextension $L_0$ of $L/\kappa_K((\overline{t}))$ which is cyclic of degree $p$.
Let $f^*$ and $f_*$ be the pullback and pushforward functors for this extension.
By the induction hypothesis, we have an isomorphism $f^* \calF \cong \calF'_0 \otimes \calE'$
for some quasiconstant $\calE'$  and some regular object $\calF'_0$ (in the appropriate category); we may also identify $\calF'_0$ with the regular component of $(\calE')^\dual \otimes f^* \calF$. In particular, we have $\rank \calF = (\rank \calF'_0)(\rank \calE')$.
The following cases are mutually exclusive and exhaustive.
\begin{itemize}
\item
The wild monodromy representation of $\calE$ remains irreducible upon restriction to $L$.
In this case, $\calE' \cong f^* \calE$ and $\calF'_0$ contains $f^* \calF_0$. 
We thus have
\[
 (\rank \calF_0)(\rank \calE) \leq (\rank \calF'_0)(\rank \calE') = \rank \calF.
\]

\item
The wild monodromy representation of $\calE$ becomes reducible upon restriction to $L$.
In this case, $\calE \cong f_* \calE'$. In particular, if we choose a rank-1 quasiconstant object $\calL \in \calC_K$ corresponding to a nontrivial character of $\Gal(L_0/\kappa_K((\overline{t})))$, then $\bigoplus_{i=0}^{p-1} (\calL^{\otimes i} \otimes \calF_0)$
embeds into $\calE^\dual \otimes \calF$ and its pullback is contained in $\calF'_0$. We thus have
\[ (\rank \calF_0)(\rank \calE) \leq (\frac{1}{p} \rank \calF'_0)(p \rank \calE') = \rank \calF.
\]
\end{itemize}
In both cases, we deduce that the surjective map $\calF_0 \otimes \calE \to \calF$ must be an isomorphism.
\end{proof}

\begin{defn} \label{D:wild inertia rep}
Using Theorem~\ref{T:p-adic mono}, we may associate a \emph{wild monodromy representation}
to any $\calE \in \calC_K$ as follows. 
For $\calE$ irreducible, apply Proposition~\ref{P:quasiconstant rep} to choose a quasiconstant object $\calF \in \calC_K$
with $\calE \sim \calF$. We then include the wild monodromy representation associated to $\calF$ as a summand with multiplicity $(\rank \calE)/(\rank \calF)$, which is an integer by Remark~\ref{L:rank}.

By the same token, there exists a unique (up to isomorphism) quasiconstant object $\calF \in \calC_K$ such that $\calE$ and $\calF$ have isomorphic wild monodromy representations. We call $\calF$ a \emph{quasiconstant model} of $\calE$.
\end{defn}

\begin{remark}
Lemma~\ref{L:rank} gives us a way to view objects of $\calC_K$, up to a tame base extension,
as tensor products of regular objects with quasiconstant objects; this can be thought of as a loose $p$-adic analogue of the Turrittin--Levelt--Hukuhara decomposition theorem \cite[Theorem~7.5.1]{kedlaya-book}. Unfortunately, it seems extremely difficult to further analyze the structure of regular objects more closely without extra hypotheses (like the existence of a Frobenius structure). For some partial results, see the discussion of $p$-adic exponents in \cite[Chapter~13]{kedlaya-book}.
\end{remark}

\section{Relative connections on relative annuli}
\label{sec:relative conn}

In this section, we consider a relative version of the previous situation. Notably, we consider relative annuli over a base space $X$ and vector bundles with $\calO_X$-linear connections, rather than $K$-linear connections; in particular, we are still only differentiating in one direction, so integrability will play no role.

\begin{hypothesis}
Throughout \S\ref{sec:relative conn}, let $X$ be an adic space over $K$.
Let $I \subset (0, +\infty)$ be an interval (which if unspecified may be open, closed, or half-open).
Let $\calE$ be a vector bundle of rank $n$
on $X \times_K A_K[I]$ equipped with an (integrable) $\calO_X$-linear connection.
For $x \in X$, we write $\calE_x$ for the pullback connection on 
\[
x \times_K A_K[I] := \Spa(\calH(x), \calH(x)^\circ) \times_K A_K[I] \cong A_{\calH(x)}[I];
\]
by definition this depends only on the underlying point of $\overline{X}$.
\end{hypothesis}

\begin{remark}
Unless otherwise specified, we do not require $X$ to be locally tft (topologically of finite type).
We do rely on the fact that $X$ is covered by the spectra of \emph{sheafy} Huber rings,
but even this hypothesis could be relaxed by being more careful in formulation of some statements.
\end{remark}

\subsection{Relative connections}
\label{sec:relative}

We start with some relative versions of results from \cite[Chapter~11]{kedlaya-book}.

\begin{defn}
Suppose that $X$ is affinoid and $I$ is closed. A function $f\colon X \times I \to \RR$ is \emph{strongly subharmonic} if locally on $X$ and $I$,
it can be written in the form
\[
(x,\rho) \mapsto \max\{a_i |t_i|_{x,\rho}\colon i=1,\dots,n\}
\]
for some $a_i \in \RR_{\geq 0}$ and some $t_i \in \calO(X \times_K A_K[I])$, where $|t_i|_{x,\rho}$ denotes the $\rho$-Gauss norm on the restriction of $t_i$ to $x \times_K A_K[I]$;
any such function factors through a continuous function $\overline{X} \times I \to \RR$.
(To define a \emph{subharmonic} function we should allow certain limits; we will not need to clarify this here.)

The key example of this definition is the following. Let $P(T) \in \calO(X \times_K A_K[I])[T]$ be a monic polynomial and fix $i \in \{1,\dots,\deg P\}$. For $x \in X, \rho \in I$, 
let $\eta_{x,\rho}$ denote the $\rho$-Gauss point in $x \times_K A_K[I]$,
and let $F_i(x,\rho)$ be the absolute value of the product of the $i$ largest roots of $P$ in the field $\calH(\eta_{x,\rho})$. Then $F_i\colon X \times I \to \RR$ is computed by the Newton polygon of $P$ and thus is strongly subharmonic.
\end{defn}

\begin{lemma} \label{L:locally free}
Suppose that $X$ is affinoid and $I$ is closed,
and let $\calE$ be a vector bundle on $X \times_K A_K[I]$.
Then for every $x \in X$, there exists a partially proper open neighborhood $U$ of $X$ 
such that the restriction of $\calE$ to $U \times_K A_K[I]$ is free. More precisely,
given any basis of $x \times_K A_K[I]$, the supremum norm on $\Gamma(x \times_K A_K[I], \calE)$ defined by this basis coincides with the supremum norm defined by some basis of $U \times_K A_K[I]$ for some $U$.
\end{lemma}
\begin{proof}
We may assume from the outset that $x$ is a height-1 point.
In the case where $X$ is a point, the statement of the lemma follows from the fact that
$\calO(A_K[I])$ is a principal ideal domain
(e.g., see \cite[Proposition~8.3.2]{kedlaya-book}).
We reduce the general case to this case as follows.

Put $R := \calO(X \times_K A_K[I])$ and let $R_x$ be the colimit of $\calO(U \times_K A_K[I])$ over all partially proper open neighborhoods $U$ of $x$ in $X$.
Since $\calO_{X,x}$ has dense image in $\calH(x)$ and $\calH(x)[t^{\pm}]$ is dense in $\calO(x \times_K A_K[I])$,
the map $R_x \to \calO(x \times_K A_K[I])$ has dense image. 

We next show that every element of $R_x$ that maps to a unit in 
$\calO(x \times_K A_K[I])$ is itself a unit (so in particular the kernel of the map is contained in the Jacobson radical of $R_x$).
Suppose that $f \in R_x$ maps to a unit in $\calO(x \times_K A_K[I])$. Since $R_x$ has dense image in $\calO(x \times_K A_K[I])$, we may choose $g \in R_x$ which is a sufficiently good approximation of the inverse of $f$ in $\calO(x \times_K A_K[I])$ so as to ensure that $|fg-1| < 1$ in $\calO(x \times_K A_K[I])$.
Write $fg = 1 + \sum_i h_i t^i$ where $h_i \in \calO(U)$ for some partially proper open neighborhood $U$ of $x$ in $X$ (chosen independently of $i$).
Choose any $\epsilon  \in (0,1)$; then there exists a finite subset $S$ of $\ZZ$ such that for all $i \notin S$, $|h_i t^i| \leq \epsilon$ in $\calO(U \times_K A_K[I])$. Meanwhile, for each $i \in S$, we can find a partially proper open neighborhood $U_i$ of $x$ in $U$ such that $|h_i t^i| \leq \epsilon$ in $\calO(U_i \times_K A_K[I])$. Now $|fg-1| \leq \epsilon$ in $\calO(U' \times_K A_K[I])$
for $U' = \bigcap_i U_i$, so $fg$ is invertible in $R_x$ (as then is $f$).

This then implies that if $a_1,\dots,a_m \in R_x$ generate the unit ideal in $\calO(x \times_K A_K[I])$, then they also generate the unit ideal in $R_x$. Namely, if $b_1,\dots,b_m \in \calO(x \times_K A_K[I])$ satisfy $a_1 b_1 + \cdots + a_m b_m = 1$, then
for any $\epsilon \in (0,1)$
we may approximate the $b_i$ with elements $b'_i \in R_x$ in such a way that $|a_1 b'_1 + \cdots + a_m b'_m - 1| \leq \epsilon$ in $\calO(x \times_K A_K[I])$. Hence
$a_1 b'_1 + \cdots + a_m b'_m$ is an element of $R_x$ mapping to a unit in $\calO(x \times_K A_K[I])$, and hence is already a unit in $R_x$.

We now return to the original question.
Since $X \times_K A_K[I]$ is connected whenever $X$ is, we may reduce to the case where $\calE$ has constant rank $n$.
Choose generators $\bv_1,\dots,\bv_m$ of  $M := \Gamma(X \times_K A_K[I], \calE)$ as an $R$-module
and let $\be_1,\dots,\be_n$ be a basis of $\Gamma(x \times_K A_K[I], \calE) = M \widehat{\otimes}_{\calO(X)} \calH(x)$ as an $\calO(x \times_K A_K[I])$-module.
We can then write $\be_j = \sum_i A_{ij} \bv_i$ 
and $\bv_j = \sum_i B_{ij} \be_i$ for some $A_{ij}, B_{ij} \in \calO(x \times_K A_K[I])$. 
Write $A_{ij} = C_{ij} + D_{ij}$ where $C_{ij}$ has entries in $R_x$, $D$ has entries in $\calO(x \times_K A_K[I])$, and $|D| < |B|^{-1}$
(where the norm of a matrix is the supremum of the norm of its entries). Set 
\[
\be'_j := \sum_i C_{ij} \bv_i = \be_j - \sum_i D_{ij} \bv_i =  \sum_i (1-BD)_{ij} \be_i.
\]
Since $|BD| < 1$, $1-BD$ is an invertible matrix over $\calO(x \times_K A_K[I])$, so $\be'_1,\dots,\be'_n$ also form a basis of 
$\Gamma(x \times_K A_K[I], \calE)$. Now  the maximal minors of the matrix $C$ generate the unit ideal in $\calO(x \times_K A_K[I])$, and hence also in $R_x$ by the previous paragraph. Hence $\be'_1,\dots,\be'_n$ form a basis of $M \otimes_R R_x$, yielding the desired result.
\end{proof}

\begin{remark}
A theorem of Bartenwerfer \cite{bartenwerfer} implies that the analogue of
Lemma~\ref{L:locally free} holds for vector bundles on $X \times_K A_K[I_1] \times_K \cdots \times_K A_K[I_n]$ for any closed intervals $I_1,\dots,I_n$. More precisely, Bartenwerfer's theorem is stated in the language of rigid analytic geometry, and so implicitly assumes that $X$ is tft; however, we may again argue as in the proof of Lemma~\ref{L:locally free} to bootstrap from the case where $X$ is a point to the general case without the tft hypothesis.
\end{remark}

\begin{lemma} \label{L:relative continuous}
For $i=1,\dots,n$, for any $\epsilon > 0$, the function $F_i\colon \overline{X} \times I \to \RR$ given by
\[
F_i(x, \rho) = \sum_{j=1}^i \max\{\epsilon, -\log s_i(\calE_x, \rho)\}
\]
is strongly subharmonic.
\end{lemma}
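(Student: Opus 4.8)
The plan is to reduce, using the locality built into the definition, to an explicit computation with the connection matrices on a small affinoid neighborhood, and then to promote a pointwise limit superior into a locally finite maximum of the elementary functions allowed in that definition. First I would reduce to the situation in which $X$ is affinoid and $I$ is closed (strong subharmonicity being local on $\overline{X}$ and on $I$) and, by Lemma~\ref{L:locally free}, in which $\calE$ is free on $X\times_K A_K[I]$. Fix a basis; let $N_k\in M_n(\calO(X\times_K A_K[I]))$ be the matrix of the operator $\tfrac{1}{k!}\theta^{[k]}$, where $\theta=t\tfrac{d}{dt}$ and $\theta^{[k]}=\theta(\theta-1)\cdots(\theta-k+1)$ (equivalently, $N_k$ is $t^k$ times the matrix of $\tfrac{1}{k!}(\tfrac{d}{dt})^k$ in this basis). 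Since the entries of $N_k$, and of its exterior powers $\wedge^m N_k$ (the matrices of $m\times m$ minors), lie in $\calO(X\times_K A_K[I])$, the Gauss norms $(x,\rho)\mapsto|N_k|_{x,\rho}$, $(x,\rho)\mapsto|\wedge^m N_k|_{x,\rho}$ and the functions built from them are of the elementary shape admitted in the definition of a strongly subharmonic function. The essential input from \cite{kedlaya-book} — the Newton-polygon machinery of Chapters~6 and~11 that also underlies Theorem~\ref{T:convexity} — is the spectral formula expressing, for each $m$, the quantity $\log\bigl(s_1(\calE_x,\rho)\cdots s_m(\calE_x,\rho)\bigr)^{-1}$ as
\[
\max\Bigl\{0,\ \limsup_{k\to\infty}\tfrac1k\log\bigl|\wedge^m N_k\bigr|_{x,\rho}+c_m\Bigr\},
\]
where $c_m$ is an explicit constant depending only on $m$ and $p$.

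Next I would rewrite $F_i$. Because $s_1(\calE_x,\rho)\le\cdots\le s_n(\calE_x,\rho)$, the numbers $b_j:=-\log s_j(\calE_x,\rho)$ are nonnegative and nonincreasing in $j$, and for any such sequence one has the elementary identity $\sum_{j=1}^i\max\{\epsilon,b_j\}=\max_{0\le m\le i}\bigl(\sum_{j=1}^m b_j+(i-m)\epsilon\bigr)$; hence
\[
F_i(x,\rho)=\max_{0\le m\le i}\Bigl((i-m)\epsilon+\log\bigl(s_1(\calE_x,\rho)\cdots s_m(\calE_x,\rho)\bigr)^{-1}\Bigr).
\]
Using the spectral formula, together with the stability of strong subharmonicity under finite maxima and under adding nonnegative constants, matters reduce to the following: locally on $\overline{X}\times I$, the limit superior appearing in each of the finitely many terms may be replaced by $\max_{1\le k\le K}$ for a single cutoff $K$ valid on a whole neighborhood.

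This finiteness is the crux, and here the $\epsilon$-truncation is what saves us. For each $m\ge1$, on the locus where $\log\bigl(s_1(\calE_x,\rho)\cdots s_m(\calE_x,\rho)\bigr)^{-1}\le m\epsilon$ the $m$-th term is $\le i\epsilon$, hence is dominated by the baseline term ($m=0$) and may be dropped; on the complementary locus $s_1(\calE_x,\rho)\cdots s_m(\calE_x,\rho)$ is bounded away from $1$, and there I would invoke the standard principle that a generic radius of convergence bounded away from $1$ is computed by a bounded number of terms of the limit superior — concretely, by passing to Frobenius antecedents (see \cite{kedlaya-book}) one brings the relevant intrinsic radius into a range in which a one- or two-term formula holds. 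The real work is to make the resulting cutoff $K$ uniform over the affinoid $X$ and the closed interval $I$ simultaneously; I would obtain this from quasicompactness of $X$ together with the fiberwise piecewise-linear structure furnished by Theorem~\ref{T:convexity}. Granting the uniform cutoff, $F_i$ is displayed as a finite maximum of strongly subharmonic building blocks (which in particular recovers the continuity implicit in the name of the lemma), and the remaining verifications are routine manipulations of Gauss norms.
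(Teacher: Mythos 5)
Your overall strategy is in the same spirit as the paper's: localize on $\overline{X}\times I$, use Lemma~\ref{L:locally free} to pass to a free module, and then express the (truncated) sums of log-radii through Gauss norms of matrix entries, which are precisely the allowed building blocks of a strongly subharmonic function. The rewriting $\sum_{j\le i}\max\{\epsilon,b_j\}=\max_{0\le m\le i}\bigl((i-m)\epsilon+\sum_{j\le m}b_j\bigr)$ is correct and useful. Where the proposal goes soft is exactly where you flag it: the passage from the spectral $\limsup$ to a formula of the required elementary shape, uniformly over a neighborhood.

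Two specific issues. First, your choice of Frobenius tool is off. You propose Frobenius antecedents on the locus where $s_1\cdots s_m$ is bounded away from $1$, but antecedents exist only when the intrinsic radius exceeds $p^{-1/(p-1)}$, and applying them replaces $t$ by a $p$-th root, so the resulting matrices no longer live in $\calO(X\times_K A_K[I])$; making that uniform in $x$ is not easier than the original problem. The cleaner move, and the one the paper makes, is to apply Frobenius \emph{pushforwards} (\cite[Theorem~10.5.1]{kedlaya-book}) at the outset to reduce to $\epsilon>\frac{1}{p-1}\log p$. After that reduction the $\epsilon$-truncation discards every subsidiary radius outside the ``visible'' range, and for visible radii the relevant quantity is computed by a \emph{one-step} formula from the connection matrix (no $\limsup$ and no unbounded cutoff $K$ is needed). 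Second, the uniformity in $x$ is not supplied by quasicompactness plus Theorem~\ref{T:convexity}, as you suggest; it is supplied by the existence of a ``good basis'' at a chosen center $(x_0,\rho_0)$ in the sense of the proof of \cite[Lemma~11.5.1]{kedlaya-book}, which (together with \cite[Theorem~6.7.4]{kedlaya-book}) makes the one-step formula valid on a whole neighborhood of $(x_0,\rho_0)$. Your proposal correctly isolates where the difficulty lies, but the mechanism you offer for the uniform cutoff is a gap; replacing antecedents by pushforwards and invoking the good-basis argument closes it along the lines of the actual proof.
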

\begin{proof}
We argue as in \cite[Theorem~11.3.2]{kedlaya-book}.
Since the claim is local on $\overline{X}$ and $I$, we may work locally around some $x_0 \in X$, $\rho_0 \in I$;
to begin with, we may thus assume that $X$ is affinoid and $I$ is closed.
Using Frobenius pushforwards \cite[Theorem~10.5.1]{kedlaya-book}, we may reduce to the case
where $\epsilon > \frac{1}{p-1} \log p$.

Using Lemma~\ref{L:locally free}, we may further reduce to the case where
$\calE$ admits a basis whose restriction to the $\rho_0$-Gauss point over $x_0$ is 
a ``good basis'' in the sense of the proof of \cite[Lemma~11.5.1]{kedlaya-book};
that is, the matrix of action $N$ of $\frac{d}{dt}$ has the property that its singular values and its norms of eigenvalues match. By continuity, by shrinking $X$ and $I$ we may ensure that neither the singular values nor the norms of eigenvalues vary too much over $\overline{X} \times I$. By then applying \cite[Theorem~6.7.4]{kedlaya-book} as in the proof of \cite[Lemma~11.5.1]{kedlaya-book}, we deduce that the function $F_i(x, \rho)$ is computed
by the Newton polygon of a certain polynomial over $\calO(X \times_K A_K[I])$, namely the characteristic polynomial of $N$.
\end{proof}

\begin{cor} \label{C:IR relative continuous}
For $i=1,\dots,n$, the function $\overline{X} \times I \to \RR$ given by
\[
(x, \rho) \mapsto \sum_{j=1}^i -\log s_i(\calE_x,\rho)
\]
is continuous. In particular, the function $\overline{X} \times I \to \RR$ taking $(x, \rho)$ to $\IR(\calE_x, \rho)$ is continuous.
\end{cor}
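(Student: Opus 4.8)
The plan is to deduce Corollary~\ref{C:IR relative continuous} directly from Lemma~\ref{L:relative continuous} by letting $\epsilon$ shrink to $0$. For fixed $i$, consider the functions $F_i^{(\epsilon)}(x,\rho) = \sum_{j=1}^i \max\{\epsilon, -\log s_j(\calE_x,\rho)\}$ of Lemma~\ref{L:relative continuous}, each of which is strongly subharmonic, hence in particular continuous on $\overline{X} \times I$. (One should note in passing that the index in the displayed formula of Lemma~\ref{L:relative continuous} ought to run with $j$ rather than being pinned at $i$; I will read it that way.) As $\epsilon \to 0^+$, each $F_i^{(\epsilon)}$ decreases monotonically to the target function $G_i(x,\rho) := \sum_{j=1}^i -\log s_j(\calE_x,\rho)$, since $s_j(\calE_x,\rho) \in (0,1]$ forces $-\log s_j \geq 0$, so $\max\{\epsilon,-\log s_j\} \downarrow -\log s_j$ pointwise.

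Next I would upgrade this pointwise monotone convergence to local uniform convergence, which then gives continuity of the limit. The key observation is that $0 \le F_i^{(\epsilon)} - G_i \le i\epsilon$ pointwise: on the set where $-\log s_j \ge \epsilon$ the $j$-th summand contributes nothing to the difference, and where $-\log s_j < \epsilon$ it contributes at most $\epsilon$. Thus $F_i^{(\epsilon)} \to G_i$ \emph{uniformly} on all of $\overline{X}\times I$, not merely locally. A uniform limit of continuous functions is continuous, so $G_i$ is continuous on $\overline{X}\times I$ for each $i=1,\dots,n$.

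Finally, to get the statement about $\IR$, recall that $s_1(\calE_x,\rho) = \IR(\calE_x,\rho)$, so $-\log \IR(\calE_x,\rho) = G_1(x,\rho)$ is continuous, and hence $(x,\rho)\mapsto \IR(\calE_x,\rho)$ is continuous (being the composition with the continuous map $r \mapsto e^{-r}$, noting $G_1 \geq 0$). Strictly one might also extract continuity of each individual $-\log s_j$ by taking successive differences $G_j - G_{j-1}$, but that is not needed for the statement as phrased.

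I do not expect a serious obstacle here: the entire content is the elementary estimate $0 \le F_i^{(\epsilon)} - G_i \le i\epsilon$ that converts the family in Lemma~\ref{L:relative continuous} into a uniformly convergent sequence. The only mild subtlety — more bookkeeping than difficulty — is making sure the subharmonicity of $F_i^{(\epsilon)}$ is invoked only through the continuity it entails (subharmonic functions are continuous, or at least the strongly subharmonic ones built from finitely many Gauss norms manifestly are), since the corollary asserts only continuity and not any convexity or subharmonicity of the limit.
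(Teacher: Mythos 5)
Your argument is correct and follows essentially the same route as the paper: the paper's proof is the one-liner ``apply Lemma~\ref{L:relative continuous} for every $\epsilon>0$'' (after reducing to $X$ affinoid, $I$ closed, which you use implicitly since the lemma requires it), and your uniform estimate $0\le F_i^{(\epsilon)}-G_i\le i\epsilon$ is precisely the elementary observation that makes that one-liner work. You also correctly flag the $s_i$/$s_j$ index typo and note that $-\log s_j \geq 0$ (since each $s_j\in(0,1]$) is what makes the per-term bound hold, which are the only points that need care.
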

\begin{proof}
Since the claim is local on $X$ and $I$, we may reduce to the case where $X$ is affinoid and $I$ is closed.
By applying Lemma~\ref{L:relative continuous} for every $\epsilon > 0$, we may deduce the claim.
\end{proof}

\begin{cor} \label{C:relative continuous}
Suppose that $X$ is affinoid and let $S$ be a \emph{boundary} of $X$ in the sense of  \cite[\S~2.4]{berkovich1}. 
Then for $x \in X$, $\rho \in I$, 
\[
\IR(\calE_x, \rho) \geq \inf_{y \in S} \{\IR(\calE_y, \rho)\}.
\]
\end{cor}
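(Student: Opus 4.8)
The plan is to reduce the inequality to a statement about the strongly subharmonic function $F_i$ (for $i=1$) from Lemma~\ref{L:relative continuous}, and then invoke the maximum modulus principle for such functions on an affinoid space with respect to a boundary. First I would fix $\rho \in I$ and work with the slice $X \times \{\rho\}$; by Corollary~\ref{C:IR relative continuous} the function $g\colon \overline{X} \to \RR$ given by $g(x) = -\log \IR(\calE_x, \rho) = -\log s_1(\calE_x,\rho) \geq 0$ is continuous. The desired inequality $\IR(\calE_x,\rho) \geq \inf_{y \in S}\IR(\calE_y,\rho)$ is exactly the assertion that $g(x) \leq \sup_{y \in S} g(y)$, i.e.\ that $g$ attains its supremum on the boundary $S$.

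Next I would argue that $g$ is (strongly) subharmonic on $\overline{X}$, so that the maximum principle applies. The point is that $\max\{\epsilon, g(x)\}$ is the restriction to the $\rho$-slice of the strongly subharmonic function $F_1$ of Lemma~\ref{L:relative continuous}, uniformly in $\epsilon > 0$; since a pointwise decreasing limit (over $\epsilon \to 0^+$) of strongly subharmonic functions is subharmonic in the sense alluded to after the definition (this is exactly why the definition allows such limits), $g$ is subharmonic on the slice. Here I am using that restricting a strongly subharmonic function on $X \times I$ to a fixed $\rho$ yields a subharmonic function on $X$ in Berkovich's sense, which follows because the local expression $\max_i a_i |t_i|_{x,\rho}$ becomes $\max_i a_i |t_i|_x$ for the restricted functions $t_i|_{x \times \eta_\rho} \in \calO(X)$ after the appropriate identification.

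Then I would invoke the maximum modulus principle of \cite[\S~2.4]{berkovich1}: a subharmonic (in particular, a $\max$ of absolute values of analytic functions) function on an affinoid space $X$ attains its maximum on any boundary $S$ of $X$. Strictly speaking Berkovich's statement is for $|f|$ with $f \in \calO(X)$, but it extends immediately to finite maxima $\max_i a_i |f_i|$ and then to decreasing limits thereof, which is precisely the class of subharmonic functions; applying this to $g$ (or, if one prefers to stay within the strongly subharmonic class, to $\max\{\epsilon,g\}$ for each $\epsilon$ and then letting $\epsilon \to 0^+$) yields $g(x) \leq \sup_{y\in S} g(y)$ for all $x \in X$, which is the claim.

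The main obstacle I anticipate is bookkeeping around the notion of ``boundary'' and the precise form of the maximum principle being quoted: one must check that Berkovich's maximum modulus principle is available in exactly the generality needed here (an arbitrary affinoid adic space over $K$, not necessarily tft, and the class of functions $\max_i a_i|f_i|$ rather than a single $|f|$), and that the restriction-to-a-slice operation genuinely lands a strongly subharmonic function on $X \times I$ in the subharmonic class on $X$. Both are essentially formal once the definitions are unwound, but they are the steps where the argument could go wrong if the cited results are stated in insufficient generality; in that case one would instead prove the needed maximum principle directly from the fact that $x \mapsto |f|_x$ for $f \in \calO(X)$ satisfies the boundary-maximum property by definition of a boundary, and that finite maxima and decreasing limits preserve this property.
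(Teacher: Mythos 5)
Your proposal is correct and matches the route the paper intends (the paper's own proof is the one-liner ``immediate from Lemma~\ref{L:relative continuous}''): restrict to the $\rho$-slice, use that $\max\{\epsilon,-\log s_1(\calE_{\bullet},\rho)\}$ is strongly subharmonic, invoke the maximum modulus property of a boundary for functions of the form $\max_i a_i|t_i|$, and let $\epsilon\to 0^+$. One small point worth flagging: the assertion that a \emph{decreasing} limit of functions satisfying the boundary-maximum property again satisfies it is not automatic in general; the cleaner version, which you also give in the parenthetical, is to apply the maximum principle to $\max\{\epsilon,g\}$ for each fixed $\epsilon>0$ and pass to the limit only in the resulting scalar inequality $g(x)\leq\max\{\epsilon,\sup_{y\in S}g(y)\}$, using $g\geq 0$. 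With that phrasing the argument is airtight and identical in substance to what the paper has in mind.
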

\begin{proof}
This is immediate from Lemma~\ref{L:relative continuous}.
\end{proof}

\subsection{Decompositions of relative connections}

We continue with some relative versions of results from \cite[Chapter~12]{kedlaya-book}.

\begin{defn}
For $\rho > 0$, let $F_\rho$ be the field of analytic elements over $K$ in the variable $t$,
i.e., the completion of $K(t)$ for the multiplicative extension of the $\rho$-Gauss norm on $K[t]$ \cite[Definition~9.4.1]{kedlaya-book}. We view $F_\rho$ as a differential field for the continuous derivation $\frac{d}{dt}$.

For $X$ affinoid, we view $\calO(X) \widehat{\otimes}_K F_\rho$ as a differential ring for the derivation $\frac{d}{dt}$. Note that for any interval $I$ containing $\rho$, for any morphism $Y \to X$ such that
$\calO(X) \to \calO(Y)$ is injective, within $\calO(Y) \widehat{\otimes}_K F_\rho$
we have the equality
\begin{equation} \label{eq:intersection annulus with fiber}
(\calO(X) \widehat{\otimes}_K F_\rho) \cap (\calO(Y \times_K A_K(I)) = \calO(X \times_K A_K(I)) 
\end{equation}
\end{defn}

\begin{lemma} \label{L:global slope decomposition analytic elements}
Suppose that $X$ is affinoid. Let $M$ be a finite projective differential module of rank $n$ over $\calO(X) \widehat{\otimes}_K F_\rho$ for the derivation $\frac{d}{dt}$.
Suppose that for some $i < n$, we have
 $s_i(M_x, \rho) > s_{i+1}(M_x,\rho)$ for all $x \in X$.
Then there exists a unique direct sum decomposition $M = M_1 \oplus M_2$
of differential modules with $\rank M_1 = i$ such that for all $x \in \overline{X}$,
\begin{align*}
s_j(M_{1,x}, \rho) &= s_j(M_x, \rho) \qquad (j=1,\dots,i) \\
s_j(M_{2,x}, \rho) &= s_{j-i}(M_x, \rho) \qquad (j=i+1,\dots,n).
\end{align*}
\end{lemma}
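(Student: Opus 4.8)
The plan is to build the decomposition by descent from the situation over a single point, using the corresponding non-relative statement (a theorem of Christol–Mebkhout–Dwork–Robba, here \cite[Chapter~12]{kedlaya-book}) as a black box, and then to glue the fiberwise decompositions into a global one. First I would invoke the non-relative decomposition theorem over each completed residue field $\calH(x)$: since $s_i(M_x,\rho) > s_{i+1}(M_x,\rho)$, there is a unique direct sum decomposition $M_x = M_{1,x} \oplus M_{2,x}$ of differential modules over $\calH(x)\widehat{\otimes}_K F_\rho$ with $\rank M_{1,x} = i$, realizing the top $i$ subsidiary radii in the first factor and the rest in the second; uniqueness is the key point, since it is what will allow these fiberwise pieces to be independent of auxiliary choices and hence to glue. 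The decomposition is cut out by an idempotent $e_x$ in the differential endomorphism ring $\End(M_x)$, and the gap hypothesis guarantees $e_x$ is the unique differential idempotent of rank $i$ with the prescribed radii behavior; equivalently, $e_x$ is obtained as a convergent sum (a ``spectral projector'') built from the action of $\frac{d}{dt}$ on $\End(M_x)$, exactly as in the proof of the non-relative result.

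Next I would promote this to a global idempotent. The module $\End(M)$ is a finite projective differential module over $\calO(X)\widehat{\otimes}_K F_\rho$, and by Corollary~\ref{C:IR relative continuous} the functions $x \mapsto s_j(M_x,\rho)$ are continuous on $\overline{X}$; combined with the gap hypothesis this gives, locally on $X$, a uniform spectral gap $s_i(M_x,\rho) \geq (1+\delta)\, s_{i+1}(M_x,\rho)$ for some $\delta > 0$. This uniformity is what lets me run the spectral-projector construction with $\calO(X)$-coefficients rather than merely fiberwise: the same series that produces $e_x$ converges in $\End(M)$ to an idempotent $e$ whose specialization at each $x$ is $e_x$. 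Concretely, after shrinking $X$ to an affinoid on which $\calE$ (and hence $M$) is free (Lemma~\ref{L:locally free}), one writes the twisted connection operator on $\End(M)$, observes that its ``Dwork operator'' has a block structure becoming visible after passing to a suitable Frobenius pushforward \cite[Theorem~10.5.1]{kedlaya-book} (which one may use to reduce to the case where the relevant radius is close to $1$), and then the projector onto the generalized eigenspace corresponding to the top radii is given by a convergent contour-integral-type formula with coefficients in $\calO(X)$. Setting $M_1 = eM$, $M_2 = (1-e)M$ then gives a decomposition of differential modules over $\calO(X)\widehat\otimes_K F_\rho$, and by construction $M_{1,x} = e_x M_x$, $M_{2,x} = (1-e_x)M_x$, so the prescribed identities on subsidiary radii hold for every $x \in \overline{X}$. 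Finally, uniqueness of $e$ is inherited from the fiberwise uniqueness: any differential idempotent $e'$ of rank $i$ with the stated radii behavior satisfies $e'_x = e_x$ for all $x$, and since $\calO(X)\widehat\otimes_K F_\rho$ injects into the product of its residue-field completions when $X$ is affinoid (here one uses that $\calO(X)$ is reduced, or more precisely applies the separatedness built into \eqref{eq:intersection annulus with fiber}), this forces $e' = e$; the local decompositions then glue to a global one over $X$ by uniqueness.

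The main obstacle I anticipate is making the spectral projector converge with $\calO(X)$-coefficients rather than just fiberwise: the non-relative argument in \cite[Chapter~12]{kedlaya-book} controls the relevant series via a spectral radius estimate on the Dwork operator acting on $\End(M_x)$, and to relativize it one needs these estimates to be uniform in $x$ and to interact well with the (possibly non-discretely-valued) coefficient ring $\calO(X)\widehat\otimes_K F_\rho$. The continuity input from Corollary~\ref{C:IR relative continuous} handles the uniformity of the gap, but one must still check that the normed-ring structure of $\calO(X)\widehat\otimes_K F_\rho$ is good enough for the operator-theoretic estimates (completeness, the behavior of the derivation, and the decomposition into a direct sum of closed submodules). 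I expect this to go through essentially verbatim after reducing via Frobenius pushforward to the case of radius near $1$ and working in a basis adapted to the Gauss point over a chosen $x_0$, much as in the proof of Lemma~\ref{L:relative continuous}, but it is the step that requires genuine care rather than formal bookkeeping.
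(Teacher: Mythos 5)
Your approach is genuinely different from the paper's, and it has a real gap that you yourself flag but do not close.

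The paper sidesteps the issue of constructing a global idempotent entirely. After the same initial reductions you propose (work locally near a point $x_0 \in X$, use Frobenius descendants to enlarge the radius past $p^{-1/(p-1)}$, use Corollary~\ref{C:IR relative continuous} to make this uniform near $x_0$), the paper observes that the \emph{colimit} of $\calO(U) \widehat\otimes_K F_\rho$ over neighborhoods $U$ of $x_0$ is a field. Over that field one can invoke the cyclic vector theorem \cite[Theorem~5.4.2]{kedlaya-book}, so that after shrinking $X$ once more the module $M$ has a cyclic vector, the differential operator is encoded by a monic twisted polynomial, and the desired decomposition drops out of Newton-polygon factorization of twisted polynomials \cite[Theorem~2.2.2]{kedlaya-book}, exactly as in the proof of the visible decomposition theorem \cite[Theorem~6.6.1]{kedlaya-book}. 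No spectral projector or iterative construction is needed; the base ring is reduced to a field at the crucial moment, and the whole difficulty you identify evaporates.

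By contrast, your proof hinges on ``the same series that produces $e_x$ converges in $\End(M)$,'' but no such series is exhibited, and the ``contour-integral-type formula'' you gesture at has no direct $p$-adic analogue here. The non-relative decomposition in \cite[Chapter~12]{kedlaya-book} (which you cite as the black box) is itself proven by the cyclic-vector/twisted-polynomial route, not by a projector formula, so there is no ready-made series to relativize. A projector-style argument could probably be made to work via a Newton-type iteration on an approximate block decomposition of the connection matrix after Frobenius pushforward, with the uniform gap from Corollary~\ref{C:IR relative continuous} controlling convergence over $\calO(X)\widehat\otimes_K F_\rho$ — but that is a substantial piece of analysis that your sketch defers rather than carries out. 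As written, your proposal is incomplete at precisely the step you single out as the ``main obstacle,'' whereas the paper's route avoids that obstacle by the colimit-to-a-field observation, which is the one idea your proposal is missing.

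Two smaller remarks. First, your uniqueness argument via injectivity of $\calO(X)\widehat\otimes_K F_\rho$ into the product over residue fields is reasonable but should be stated with more care; the paper does not need to argue separately, since uniqueness is built into the twisted-polynomial factorization. Second, in the statement and your argument the hypothesis $s_i(M_x,\rho) > s_{i+1}(M_x,\rho)$ is fiberwise; what makes the proof go (both yours and the paper's) is the continuity from Corollary~\ref{C:IR relative continuous}, so that locally near $x_0$ one gets a uniform gap — you correctly identify this, and this part of your proposal matches the paper.
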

\begin{proof}
We may check the claim locally around some $x_0 \in X$. Using Frobenius descendants as in the proof of 
\cite[Lemma~12.3.2]{kedlaya-book}, we may reduce to the case where 
$s_i(M_{x_0}, \rho) > p^{-1/(p-1)}$;
by Corollary~\ref{C:IR relative continuous}, we may shrink $X$ to further ensure that
$s_i(M_x, \rho) > p^{-1/(p-1)}$ for all $x \in X$.

Note that the colimit
of $\calO(U) \widehat{\otimes}_K F_\rho$ over all neighborhoods $U$ of $x_0$ in $X$ is a local ring with residue field $\calH(x_0) \widehat{\otimes}_K F$ (i.e., the field of analytic elements over $\calH(x_0)$ in the variable $t$).
We may thus apply \cite[Theorem~5.4.2]{kedlaya-book} over the residue field and then Nakayama's lemma; in other words,
after shrinking $X$ we may assume that $M$ admits a cyclic vector $\bv$. Put $D = \frac{d}{dt}$
and write $D^n(\bv) = a_0 \bv + a_1 D(\bv) + \cdots + a_{n-1} D^{n-1}(\bv)$ with 
$a_0,\dots,a_{n-1} \in \calO(X) \widehat{\otimes}_K F_\rho$.
We may now obtain the desired factorization by applying \cite[Theorem~2.2.2]{kedlaya-book} 
to the polynomial $T^n - a_0 - a_1 T - \dots - a_{n-1} T^{n-1}$ in the twisted polynomial ring
over $\calO(X) \widehat{\otimes}_K F_\rho$,
as in the proof of \cite[Theorem~6.6.1]{kedlaya-book}.
\end{proof}

\begin{lemma} \label{L:global slope decomposition}
Suppose that $I$ is open and that for some $i<n$, the following conditions hold for each $x \in \overline{X}$.
\begin{enumerate}
\item[(a)]
The function $r \mapsto \log s_1(\calE_x, e^{-r}) + \cdots +\log s_i(\calE_x, e^{-r})$ is affine for 
$r \in -\log I$.
\item[(b)]
We have $s_i(\calE_x, \rho) > s_{i+1}(\calE_x,\rho)$ for $\rho \in I$.
\end{enumerate}
Then there exists a unique direct sum decomposition $\calE = \calE_1 \oplus \calE_2$
of connections with $\rank \calE_1 = i$ such that for all $x \in \overline{X}$, $\rho \in I$,
\begin{align*}
s_j(\calE_{1,x}, \rho) &= s_j(\calE_x, \rho) \qquad (j=1,\dots,i) \\
s_j(\calE_{2,x}, \rho) &= s_{j-i}(\calE_x, \rho) \qquad (j=i+1,\dots,n).
\end{align*}
\end{lemma}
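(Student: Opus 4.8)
The plan is to reduce the global statement to the local-in-$\rho$ statement already available over $F_\rho$ (Lemma~\ref{L:global slope decomposition analytic elements}) and then glue the pieces along overlapping subintervals. First I would fix a point $x_0 \in X$ and a radius $\rho_0 \in I$; since the desired decomposition is unique once it exists, it suffices to produce it locally on $X$ and on $I$ and then patch by uniqueness. Shrinking $X$ to an affinoid neighborhood of $x_0$ and $I$ to a small closed subinterval $J = [\alpha, \beta]$ around $\rho_0$, hypothesis (b) together with Corollary~\ref{C:IR relative continuous} (applied to the continuous functions $\sum_{j=1}^i -\log s_j(\calE_x,\rho)$ and $\sum_{j=1}^{i+1} -\log s_j(\calE_x,\rho)$) guarantees a uniform gap $s_i(\calE_x,\rho) \geq s_{i+1}(\calE_x,\rho) + \delta$ for all $x$ in the shrunken $X$ and all $\rho \in J$. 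By Lemma~\ref{L:locally free} we may also assume $\calE$ is free on $X \times_K A_K[J]$.

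Next, for a single radius $\rho$ in the interior of $J$, base-change $\calE$ along $\calO(X \times_K A_K(J)) \to \calO(X) \widehat{\otimes}_K F_\rho$ to obtain a finite projective differential module $M^{(\rho)}$ for $\tfrac{d}{dt}$, and apply Lemma~\ref{L:global slope decomposition analytic elements} to get a decomposition $M^{(\rho)} = M_1^{(\rho)} \oplus M_2^{(\rho)}$ with the prescribed subsidiary radii fiberwise. The key point is that this decomposition, a priori defined only over $\calO(X) \widehat{\otimes}_K F_\rho$, actually descends to $\calO(X \times_K A_K(J'))$ for a suitable open subinterval $J' \ni \rho$: the idempotent $e^{(\rho)} \in \mathrm{End}(M^{(\rho)})$ cutting out $M_1^{(\rho)}$ is horizontal, hence (by the slope gap, via the convexity estimates of Theorem~\ref{T:convexity} applied to $\mathrm{End}(\calE)$ as in \cite[Chapter~12]{kedlaya-book}) its entries have intrinsic radius of convergence $1$ on a neighborhood of $\eta_\rho$, so they extend to horizontal — in particular analytic — sections over some $A_K(J')$; formally one uses the intersection identity \eqref{eq:intersection annulus with fiber} with $Y$ a point dominating $X$ to see that a section of $\calO(X) \widehat{\otimes}_K F_\rho$ which restricts to something in $\calO(Y \times_K A_K(J'))$ lies in $\calO(X \times_K A_K(J'))$. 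This yields a decomposition $\calE|_{X \times_K A_K(J')} = \calE_1^{(\rho)} \oplus \calE_2^{(\rho)}$ with the asserted subsidiary radii on $J'$, using hypothesis (a) to ensure the rank-$i$ summand is the one with the top $i$ radii uniformly (the affineness rules out crossing of the partial-sum functions inside $J$).

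Finally, cover $J$ (or all of $I$, after the local-on-$X$ reduction) by finitely many such open subintervals $J'$; on each overlap, the two decompositions agree by the uniqueness clause of Lemma~\ref{L:global slope decomposition analytic elements} (evaluated at any radius in the overlap, since an idempotent in $\mathrm{End}(\calE)$ is determined by its restriction to $\calO(X)\widehat{\otimes}_K F_\rho$ for a single $\rho$, again via \eqref{eq:intersection annulus with fiber}), so they glue to a global decomposition over $X \times_K A_K(I)$; uniqueness on $X$ then lets us unshrink the base. I expect the main obstacle to be the descent step: showing that the fiberwise/analytic-elements idempotent genuinely extends to an analytic idempotent over a full subannulus $A_K(J')$, i.e.\ controlling the radius of convergence of the horizontal idempotent uniformly in $x \in X$. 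This is where one must combine the uniform slope gap (Corollary~\ref{C:IR relative continuous}) with the relative convexity input (Lemma~\ref{L:relative continuous}) and the principal-ideal/cyclic-vector technology already used in the proof of Lemma~\ref{L:global slope decomposition analytic elements}; the rest is formal gluing via uniqueness.
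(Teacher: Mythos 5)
Your proposal has the right skeleton — apply Lemma~\ref{L:global slope decomposition analytic elements} over $\calO(X)\widehat{\otimes}_K F_\rho$ and then transfer the resulting idempotent to the relative annulus ring via the intersection identity \eqref{eq:intersection annulus with fiber} — but there is a genuine gap at the transfer step, and it is precisely the step you yourself flag as the ``main obstacle.'' You try to extend the idempotent from $\calO(X)\widehat{\otimes}_K F_\rho$ to $\calO(X\times_K A_K(J'))$ for a small subinterval $J'$ by saying that, since the idempotent is horizontal and $\mathrm{End}(\calE)$ has intrinsic radius $1$ near $\eta_\rho$, its ``entries'' extend analytically. This does not follow: intrinsic radius $1$ controls horizontal sections on the \emph{generic} disc around $\eta_\rho$, not on a subannulus $A_K(J')$, and a horizontal element of $\calO(X)\widehat{\otimes}_K F_\rho$ has no a priori reason to be free of the non-annular singularities that $F_\rho$ permits. (Also, ``intrinsic radius of convergence of the entries'' is not a well-posed notion; intrinsic radius is an invariant of the differential module, not of a section.) You acknowledge that this is where one must combine several ingredients, but you do not actually supply the argument, which is exactly the nontrivial content.

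The paper's proof avoids this entirely by invoking the one-variable decomposition theorem over an open annulus, \cite[Theorem~12.5.2]{kedlaya-book}, which directly gives the decomposition (hence the idempotent) fiberwise over $x\times_K A_K(I)$ for each $x$ — over the \emph{full} interval $I$, not just a small $J'$. This is a substantive input from the one-variable theory, and you never cite it. Once you have the idempotent both over $\calO(X)\widehat{\otimes}_K F_\rho$ (from Lemma~\ref{L:global slope decomposition analytic elements}) and over $\calO(Y\times_K A_K(I))$ for a suitable $Y$ with $\calO(X)\hookrightarrow\calO(Y)$, the intersection identity gives it over $\calO(X\times_K A_K(I))$ in one step, with no shrinking of $I$ and no gluing. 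Two smaller remarks: your phrase ``$Y$ a point dominating $X$'' is not meaningful — a single point cannot dominate $X$, and what \eqref{eq:intersection annulus with fiber} requires is any morphism $Y\to X$ with $\calO(X)\to\calO(Y)$ injective (e.g., a disjoint union of points, whose fibers carry the one-variable decomposition); and your invocation of hypothesis~(a) is in the right spirit, but its real role is already packaged into \cite[Theorem~12.5.2]{kedlaya-book}, which is where the affineness of the partial-sum function is used.
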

\begin{proof}
By \cite[Theorem~12.5.2]{kedlaya-book}, the claim holds when $X = \{x\}$;
combining this with Lemma~\ref{L:global slope decomposition analytic elements} using \eqref{eq:intersection annulus with fiber} yields the claim.
\end{proof}

\subsection{Regular relative connections}

We continue with some relative versions of results from \cite[Chapter~13]{kedlaya-book}.

\begin{defn}
Suppose that $I$ is open. We say that $\calE$ is \emph{regular} if $\calE_x$ is regular for each $x \in X$.
\end{defn}

\begin{example} \label{exa:variable exponent}
Let $R$ be the ring of continuous functions $\ZZ_p^n \to K$, equipped with the compact-open topology, and put $X := \Spa(R,R^\circ)$. 
The connected components of $X$ correspond to elements of $\ZZ_p^n$.

Let $\iota_1,\dots,\iota_n \in R$ be the elements corresponding to the projection maps $\ZZ_p^n \to \ZZ_p \to K$. Let $\calE$ be the connection on $X \times_K A_K(0,1)$ which is free on the basis $\be_1,\dots,\be_n$ for which
\[
\nabla(\be_i) = \be_i \otimes \iota_i \frac{dt}{t};
\] 
then $\calE$ is regular \cite[Example~9.5.2]{kedlaya-book}.
For $x \in X$ in the connected component corresponding to $A \in \ZZ_p^n$,
$A$ is an exponent for $\calE_x$.
\end{example}

Example~\ref{exa:variable exponent} does not rule out the possibility that one can find constant exponents for regular connections whenever $X$ is connected. We make a more modest conjecture here.
\begin{conj} \label{conj:finite Shilov boundary exponent}
Suppose that $I$ is open, $X$ is connected and affinoid with a finite Shilov boundary (the latter condition is automatic for $X$ a classical affinoid),
and $\calE$ is regular of rank $n$.
Suppose that there exist some $x \in X$, some closed subinterval $J$ of $I$ of positive length,
and some $A \in \ZZ_p^n$ with $p$-adic non-Liouville differences such that $A$ is an exponent for $\calE_x$ on $J$. Then for all $x \in X$, $A$ is an exponent for $\calE_x$.
\end{conj}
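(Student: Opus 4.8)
The plan is to fix the distinguished point $x_0 \in X$ supplied by the hypothesis (so $A$ is an exponent for $\calE_{x_0}$ on $J$, hence, by Corollary~\ref{C:extend exponent} together with the non-Liouville hypothesis, an exponent for $\calE_{x_0}$ on every closed subinterval of $I$), and then to prove that the locus $Z$ of points $x$ at which $A$ is an exponent for $\calE_x$ is open and closed. Since $\calE_x$ depends only on the image of $x$ in $\overline{X}$, we may regard $Z$ as a subset of $\overline{X}$, which is connected because $X$ is; as $x_0 \in Z$, this would force $Z = X$. It is harmless to fix once and for all a single closed subinterval $J' \subseteq I$ of positive length: by Corollary~\ref{C:extend exponent} again, membership in $Z$ is equivalent to asking that $A$ be an exponent for $\calE_x$ on $J'$, so the whole question takes place on the fixed relative annulus $X \times_K A_K[J']$.

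The heart of the matter is a \emph{relative exponent} statement: near any point $x_1 \in X$, after using Lemma~\ref{L:locally free} to assume $\calE$ is free and Corollary~\ref{C:IR relative continuous} to assume $\IR(\calE_x,\rho)$ is uniformly close to $1$ for $\rho \in J'$, there should be a single tuple $B \in \ZZ_p^n$ which is an exponent for $\calE_x$ for \emph{all} $x$ in a neighborhood. When $x_1$ is a Shilov point of $X$, the colimit of $\calO(U) \widehat{\otimes}_K F_\rho$ over neighborhoods $U$ of $x_1$ is a valued field — exactly the mechanism exploited in the proof of Lemma~\ref{L:global slope decomposition analytic elements} — so the Christol--Mebkhout theory of $p$-adic exponents (\cite[Chapter~13]{kedlaya-book}) applies verbatim over that field and produces a single exponent class, which then specializes to an exponent for each $\calE_x$ with $x$ in the tube of $x_1$. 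Since $X$ has only finitely many Shilov points and is connected, one would propagate this local constancy across the (connected) incidence pattern of the Shilov tubes; on a fiber lying simultaneously in the tube of $x_0$ and in an adjacent tube, Theorem~\ref{T:CM}(b) forces the propagated class to be weakly equivalent to $A$, whence, by the non-Liouville hypothesis and Lemma~\ref{L:weakly equivalent to equivalent}, equivalent to $A$. For a point $x_1$ that is not Shilov, one would instead control the fibral exponent from the Shilov boundary, using Corollary~\ref{C:relative continuous} as a maximum principle for the subsidiary radii. Openness and closedness of $Z$ both follow from the resulting local-constancy statement.

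The main obstacle is precisely the relative exponent statement invoked above. The theory of $p$-adic exponents is delicate even over a valued field: the very definition involves non-canonical choices, and the convergence estimate built into weak equivalence (the bound by $cm$) is not visibly uniform in a parameter, so no relative version is presently available. The finite-Shilov-boundary hypothesis is included exactly to make the reduction to the valued-field case plausible, by allowing one to argue tube-by-tube over a semistable formal model and climb its dual graph; but carrying this out rigorously — and in particular ruling out a uniform failure of the non-Liouville bounds as $x$ varies — is the real difficulty, which is why the statement is offered here only as a conjecture.
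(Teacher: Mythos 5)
You have correctly noted that this statement is labeled as a conjecture in the paper and is not proved there; the paper establishes it only in the special cases of $X$ having a one-point boundary (Corollary~\ref{C:move to boundary1}), $X$ a finite \'etale cover of a closed annulus (Lemma~\ref{L:move to boundary annulus}), and $X$ tft (Theorem~\ref{T:unipotent spread}). Your outline also does not constitute a proof, and you correctly flag the ``relative exponent statement'' as the missing ingredient; that self-assessment is accurate.

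Comparing your sketch with the paper's partial results: the propagation mechanism is the same (the bridge lemma, Lemma~\ref{L:bridge}, i.e.\ Theorem~\ref{T:CM}(b) plus the non-Liouville hypothesis via Lemma~\ref{L:weakly equivalent to equivalent}), and both arguments try to control the situation from the Shilov boundary after reducing to a free $\calE$ and a fixed closed subinterval. The real divergence is how to extract a single exponent simultaneously over a family of fibers. You propose passing to the colimit valued field $\varinjlim_U \calO(U)\widehat{\otimes}_K F_\rho$ over neighborhoods of a Shilov point and applying the Christol--Mebkhout theory ``verbatim'' there; this is the mechanism that succeeds for slope decompositions (Lemma~\ref{L:global slope decomposition analytic elements}), but it does not transfer formally to exponents, because exponent classes are produced by a non-canonical limiting process and there is no evident specialization map from an exponent over that colimit field to exponents of the fibers $\calE_x$. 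The paper's actual technical engine is Lemma~\ref{L:projection to uniform}: a bounded $K$-Banach functional $\lambda:\calO(X)\to V$, submetric at each point of a chosen subset $S$ of $X$, pins down a tuple $A\in\ZZ_p^n$ simultaneously for all $x\in S$ by tracking the constant terms of the matrices $S_{m,A}$ from the proof of \cite[Theorem~13.5.5]{kedlaya-book}. This replaces your colimit-field step with a concrete finite-stage selection and is what makes the one-point-boundary and closed-annulus cases go through; for tft spaces the propagation is then done by joining points with curves rather than by analyzing the incidence of Shilov tubes directly. The uniformity difficulty you raise at the end --- controlling non-Liouville bounds as the parameter varies --- is precisely what remains unresolved beyond these special cases, and why the statement is offered as a conjecture.
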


We prove some partial results towards Conjecture~\ref{conj:finite Shilov boundary exponent}.
A key point is the following ``bridge lemma''.
\begin{lemma} \label{L:bridge}
Suppose that $I$ is open and $\calE$ is regular of rank $n$.
Suppose that for some $x,y \in X$ and some closed subinterval $J$ of $I$ with nonempty interior,
\begin{enumerate}
\item[(a)]
there exists $A \in \ZZ_p^n$ with $p$-adic non-Liouville differences which is an exponent for $\calE_x$ on $J$;
\item[(b)]
there exists $B \in \ZZ_p^n$ which is an exponent for both $\calE_x$ and $\calE_y$ on $J$.
\end{enumerate}
Then $A$ is an exponent for both $\calE_x$ and $\calE_y$.
\end{lemma}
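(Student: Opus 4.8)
The plan is to treat $B$ as a ``bridge'' connecting the two fibers: the entire argument takes place at the level of tuples in $\ZZ_p^n$, and the geometry of $X$ enters only through the identification of $\calE_x$ and $\calE_y$ as regular connections of rank $n$ over the nonarchimedean fields $\calH(x)$ and $\calH(y)$, which are of mixed characteristics $(0,p)$, so that all the results of \S\ref{subsec:regular conn} apply verbatim with $K$ replaced by $\calH(x)$ or $\calH(y)$. Observe first that the assertion about $\calE_x$ is essentially automatic: $A$ has $p$-adic non-Liouville differences and, by hypothesis~(a), is an exponent for $\calE_x$ on the closed subinterval $J$ of $I$ (which has positive length since it has nonempty interior), so Corollary~\ref{C:extend exponent} already shows that $A$ is an exponent for $\calE_x$. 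The substance of the lemma is the transfer to $\calE_y$.

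To carry this out, I would first work entirely in the fiber at $x$. Since $A$ and $B$ are both exponents for the regular connection $\calE_x$ on the positive-length interval $J$, Theorem~\ref{T:CM}(b) shows that $A$ and $B$ are weakly equivalent; because $A$ has $p$-adic non-Liouville differences, Lemma~\ref{L:weakly equivalent to equivalent} (equivalently, the final clause of Theorem~\ref{T:CM}(b)) promotes this to an honest equivalence of $A$ and $B$ under $\ZZ \wr S_n$. Now I would cross the bridge to the fiber at $y$: by hypothesis~(b), $B$ is an exponent for $\calE_y$ on $J$, and equivalence of tuples preserves the property of being an exponent on $J$ (first bullet of the Remark following the definition of exponents), so $A$ is an exponent for $\calE_y$ on $J$ as well. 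Applying Corollary~\ref{C:extend exponent} once more, now to the regular connection $\calE_y$, yields that $A$ is an exponent for $\calE_y$, which completes the proof.

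I do not expect a serious obstacle: the argument is a short chain through Theorem~\ref{T:CM}(b), Lemma~\ref{L:weakly equivalent to equivalent}, the equivalence-invariance of exponents, and Corollary~\ref{C:extend exponent}. The one point that deserves a moment's attention is the reduction from the fiberwise connections to the statements of \S\ref{subsec:regular conn}, which are formulated over the ambient field: this is legitimate because regularity of $\calE_x$ and $\calE_y$ is precisely the running hypothesis, and the non-Liouville condition on the differences of $A$ is a property of the tuple $A$ alone and hence independent of the base field over which one works.
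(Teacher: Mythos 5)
Your proof is correct and follows the same route as the paper's own argument: apply Theorem~\ref{T:CM}(b) at the fiber over $x$ to get weak equivalence of $A$ and $B$, upgrade to equivalence using the non-Liouville hypothesis via Lemma~\ref{L:weakly equivalent to equivalent}, transfer across to $\calE_y$ on $J$ by equivalence-invariance of exponents, and then invoke Corollary~\ref{C:extend exponent} on both fibers. The only cosmetic difference is that you dispatch the claim for $\calE_x$ directly from hypothesis~(a) before crossing to $y$, whereas the paper handles both fibers in one final appeal to Corollary~\ref{C:extend exponent}.
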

\begin{proof}
By Theorem~\ref{T:CM}(b), $A$ and $B$ are weakly equivalent, and hence equivalent since
$A$ has $p$-adic non-Liouville differences.
Hence $A$ is also an exponent for $\calE_{y}$ on $J$.
By Corollary~\ref{C:extend exponent}, $A$ is an exponent for both $\calE_x$ and $\calE_{y}$.
\end{proof}

\begin{lemma} \label{L:projection to uniform}
Suppose that $I$ is open, $X$ is connected and affinoid,
and $\calE$ is regular of rank $n$.
Let $S$ be a subset of $X$ for which there exists a bounded map $\lambda\colon \calO(X) \to V$ of $K$-Banach spaces with $1 \notin \ker(\lambda)$ which is submetric with respect to each element of $S$.
Then for every closed subinterval $J$ of $I$ such that $\Gamma(X \times_K A_K[J], \calE)$ is a free
module over $\calO(X \times_K A_K[J])$,
there exists $A \in \ZZ_p^n$ such that for all $x \in S$, 
$A$ is an exponent for $\calE_x$ on $J$.
\end{lemma}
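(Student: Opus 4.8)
The plan is to rerun, relative to the base $X$, the Christol--Mebkhout construction that produces exponents (the argument behind Theorem~\ref{T:CM}(a), i.e.\ \cite[Theorem~13.5.5]{kedlaya-book}), keeping track of Gauss-norm estimates, and to invoke the functional $\lambda$ at the single place where one must upgrade fibrewise control to control that is uniform over $S$. Note that the ``bridge lemma'', Lemma~\ref{L:bridge}, together with Corollary~\ref{C:extend exponent}, would suffice if we knew that one of the relevant exponents had $p$-adic non-Liouville differences; since we are not given this, a construction more or less from scratch seems unavoidable.

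First I would make the usual reductions: the assertion is local on $X$, so shrink to assume $X$ affinoid; by hypothesis $M:=\Gamma(X\times_K A_K[J],\calE)$ is then free over $R:=\calO(X\times_K A_K[J])=\calO(X)\,\widehat{\otimes}_K\,\calO(A_K[J])$, so fix a basis and let $N\in M_n(R)$ be the matrix of $t\frac{d}{dt}$. If needed, a Frobenius-pushforward reduction as in the proof of Lemma~\ref{L:relative continuous}, combined with Corollary~\ref{C:IR relative continuous}, makes the regularity of $\calE$ effective uniformly over $\overline{X}\times J$. The functional $\lambda$ induces a bounded, $\calO(A_K[J])$-linear, $\frac{d}{dt}$-equivariant map $\lambda_J:=\lambda\,\widehat{\otimes}\,\mathrm{id}\colon R\to V\,\widehat{\otimes}_K\,\calO(A_K[J])$ with $\lambda_J(1)\ne 0$; submetricity of $\lambda$ at each $x\in S$ gives $|\lambda_J(g)|_{V,\rho}\le|g|_{x,\rho}$ for all $g\in R$, $x\in S$, $\rho\in J$, so that every $g\in\calO(X)$ with $\lambda(g)\ne 0$ obeys the \emph{uniform} lower bound $|g|_{x,\rho}\ge|\lambda(g)|_V>0$ over all $x\in S$ and $\rho\in J$. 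This is the one external input that the fibrewise theory lacks.

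With this in place, I would carry out the construction at level $m$ over $R$: finitely many spectral splittings of residue matrices by their classes modulo $p^m$, followed by shearing transformations normalising the $t$-dependent part modulo $p^m$, all of which make sense over $R$ once we invert a single element $g_m\in\calO(X)$ assembling the finitely many non-resonance denominators. The output is a matrix $R_m\in M_n(\calO(X)[1/g_m])$ and an error term whose Gauss norm at any Gauss point is $\le c_1\,|g_m|^{-c_2}|p|^m$ (with $c_1,c_2$ depending only on $n$ and $J$), such that for every point $y$ with $g_m(y)\ne 0$ the exponent of $\calE_y$ on $J$ is, modulo $p^m$ and up to $\ZZ\wr S_n$, the reduction of the spectrum of $R_m(y)$. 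Assuming $\lambda(g_m)\ne 0$, this applies at every $x\in S$; the error term there is $\le c_3|p|^m$ uniformly in $x\in S$ by the lower bound on $|g_m|_x$, and, since $R_m$ is a single matrix over $\calO(X)[1/g_m]$ and regularity (via the error bound) forces its spectrum modulo $p^m$ to lie in $(\ZZ/p^m\ZZ)^n$ at each point of $S$, the resulting class $A^{(m)}\in(\ZZ/p^m\ZZ)^n/(\ZZ\wr S_n)$ does not depend on $x\in S$. The classes $A^{(m)}$ are compatible in $m$ (this is already true fibrewise, at any one $x\in S$), hence lift to a single $A\in\ZZ_p^n$, which is then an exponent for $\calE_x$ on $J$ for every $x\in S$. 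The degenerate case $J=\{\rho\}$ is handled identically, working over $\calO(X)\,\widehat{\otimes}_K\,F_\rho$ and using \eqref{eq:intersection annulus with fiber}.

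The hard part is the coherence of the construction: one must check that the \emph{same} spectral splittings and non-resonance denominators work over $R$, not merely fibrewise over each $\calH(x)$, and---crucially---that the resulting $g_m$ can be kept outside $\ker\lambda$. Here the hypothesis $1\notin\ker\lambda$ is precisely the available foothold: it limits how much of $\calO(X)$ the functional can annihilate, and combined with the freedom to rescale the non-resonance factors and with the fact that the exponent itself is independent of these choices, it should force a compatible global choice. I expect essentially all of the difficulty of the lemma to be concentrated in this bookkeeping; the remaining estimates are a relative transcription of \cite[\S13.4--13.5]{kedlaya-book} with the uniform Robba bounds coming from the Frobenius reduction.
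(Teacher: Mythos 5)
Your proposal takes a genuinely different route and has a real gap. You organize the construction around spectral splittings of the residue matrix modulo $p^m$, shearing transformations, and inverting a product $g_m\in\calO(X)$ of non-resonance denominators, and you yourself flag the unresolved step: keeping $g_m$ out of $\ker\lambda$. This is not bookkeeping; it is precisely where the Christol--Mebkhout theory diverges from the Levelt--Turrittin template. Without $p$-adic non-Liouville hypotheses there is no clean spectral separation of the residue matrix, the denominators you would invert can be uncontrollably small (and are exactly what go wrong at Liouville configurations), and your later assertion that the classes $A^{(m)}$ are compatible ``already fibrewise'' is also doubtful: absent non-Liouville hypotheses the fibrewise exponent is well defined only up to weak equivalence, not up to $\ZZ\wr S_n$, so there is no canonical fibrewise chain to compare against.

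The paper sidesteps all of this by re-using the actual mechanism of the proof of Theorem~13.5.5 in Kedlaya's book. That proof manufactures matrices $S_{m,A}$ directly over $\calO(X\times_K A_K[J])$, with \emph{no} localization, depending only on $A\bmod p^m$, satisfying $\det(S_{0,A})=1$ together with an additive recursion relating the determinant at one level to a finite sum over the $p^n$ coset refinements at the next level. The hypothesis $1\notin\ker\lambda$ is the seed of a greedy choice: by the ultrametric inequality applied to this recursion, one chooses a compatible chain $A^{(0)},A^{(1)},\dots$ keeping $|\lambda(\text{constant term of }\det(S_{m,A^{(m)}}))|$ bounded away from $0$, and submetricity of $\lambda$ at each $x\in S$ transfers this to a uniform lower bound on $|\text{constant term of }\det(S_{m,A})|_x$, which is exactly the criterion from 13.5.5 for $A:=\lim_m A^{(m)}$ to be an exponent for $\calE_x$ on $J$. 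In short, the hypothesis on $\lambda$ enters once, as the starting value of a chain, and not as a constraint on denominators you have to dodge.
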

\begin{proof}
This follows from a careful reading of the proof of \cite[Theorem~13.5.5]{kedlaya-book}.
By hypothesis, $\calE$ admits a basis $\be_1,\dots,\be_n$ on $X \times_K A_K[J]$.
For $m=0,1,\dots$ and $A \in \ZZ_p^n$, define the matrices $S_{m,A}$ over $\calO(X \times_K A_K[J])$
as in the proof of \cite[Theorem~13.5.5]{kedlaya-book}; note that $S_{m,A}$ depends only on the class
of $A$ in $(\ZZ/p^m \ZZ)^n$.
These matrices have the property that $\det(S_{0,A}) = 1$ and
\[
\det(S_{m+1,A}) = \sum_B \det(S_{m,B})
\]
where $B$ runs over a set of coset representatives for $A + p^{m+1} \ZZ_p^n$ in $A + p^m \ZZ_p^n$.
In particular, we may choose $A$ so that the constant terms of $\det(S_{m,A})$ have images in $V$ whose
norms are bounded away from 0. By our assumption on $S$, we deduce that 
$A$ is an exponent for $\calE_x$ on $J$ for each $x \in S$.
\end{proof}

\begin{lemma} \label{L:move to boundary1}
Suppose that $I$ is open, $X$ is connected and affinoid with a one-point boundary $\{x_0\}$,
and $\calE$ is regular of rank $n$ and admits a basis.
Then for every closed subinterval $J$ of $I$ and every $x \in X$,
there exists $A \in \ZZ_p^n$ such that $A$ is an exponent for both $\calE_{x_0}$ and $\calE_x$ on $J$.
\end{lemma}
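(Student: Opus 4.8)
The plan is to combine Lemma~\ref{L:projection to uniform} with Lemma~\ref{L:bridge}, using the one-point boundary to force a single uniform exponent that sees both $x_0$ and an arbitrary point $x$. First I would fix a closed subinterval $J$ of $I$ and a point $x \in X$. Since $\{x_0\}$ is a boundary of $X$ in Berkovich's sense, for every $f \in \calO(X)$ we have $|f(x)| \leq |f|_{x_0}$; equivalently, evaluation at $x$ defines a bounded $K$-linear map $\lambda_x: \calO(X) \to \calH(x)$ which is submetric for $x_0$ (that is the defining property of a boundary), and trivially submetric for $x$ itself. Thus $\lambda_x$ is submetric with respect to each element of the set $S := \{x_0, x\}$, and $1 \notin \ker(\lambda_x)$. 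We are given that $\calE$ admits a basis on $X \times_K A_K[J]$, so the freeness hypothesis of Lemma~\ref{L:projection to uniform} is satisfied. Applying that lemma, we obtain a single tuple $A \in \ZZ_p^n$ which is an exponent for $\calE_{x_0}$ on $J$ and simultaneously an exponent for $\calE_x$ on $J$. That is exactly the conclusion sought, with no need to invoke Lemma~\ref{L:bridge} or non-Liouville hypotheses at all.

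Let me double-check the applicability of Lemma~\ref{L:projection to uniform}: it requires $X$ connected and affinoid (given), $I$ open (given), $\calE$ regular (given), a bounded $K$-Banach map $\lambda: \calO(X) \to V$ with $1 \notin \ker(\lambda)$ that is submetric for each element of $S$ (constructed above, with $V = \calH(x)$), and freeness of $\Gamma(X \times_K A_K[J], \calE)$ over $\calO(X \times_K A_K[J])$ (given, since $\calE$ admits a basis). All hypotheses hold, so the output tuple $A$ works for every point of $S = \{x_0, x\}$.

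The only subtle point — and the step I expect to require the most care — is verifying that evaluation at an arbitrary point $x$, composed appropriately, really is submetric for the boundary point $x_0$. This is precisely the content of $\{x_0\}$ being a boundary: by \cite[\S~2.4]{berkovich1}, for a Berkovich boundary $S$ one has $|f(x)| \leq \sup_{y \in S}|f|_y$ for all $x \in X$ and all $f$ in the relevant Banach algebra, and when $S = \{x_0\}$ this reads $|f(x)| \leq |f|_{x_0}$. Hence the supremum (that is, spectral) seminorm on $\calO(X)$ is computed at $x_0$, and evaluation at any $x$ factors through a submetric map relative to the $x_0$-norm. Since we may take $V$ to be the completed residue field $\calH(x)$ with its own norm, $\lambda_x$ is tautologically submetric for $x$ as well, and it does not kill the unit. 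This verification, together with citing the Berkovich boundary formalism, is the whole substance of the argument; everything else is a direct appeal to the two preceding lemmas.
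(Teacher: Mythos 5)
Your proof is correct and is essentially identical to the paper's: both apply Lemma~\ref{L:projection to uniform} with $S = \{x_0, x\}$ and $\lambda$ the evaluation map $\calO(X) \to \calH(x)$, using the one-point boundary to get submetricity at $x_0$. The paper states this in a single sentence; your version just spells out the verification that the boundary property is exactly what makes $\lambda$ submetric for $x_0$.
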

\begin{proof}
Apply Lemma~\ref{L:projection to uniform} with $S = \{x_0, x\}$ and $\lambda\colon \calO(X) \to \calH(x)$ the natural ring homomorphism.
\end{proof}

\begin{cor} \label{C:move to boundary1}
Conjecture~\ref{conj:finite Shilov boundary exponent} holds in all cases where $X$ is connected and affinoid with a one-point boundary $\{x_0\}$ (but is not necessarily tft) and $\calE$ admits a basis.
\end{cor}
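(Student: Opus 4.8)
The plan is to treat the one-point boundary $\{x_0\}$ as a hub through which the exponent $A$ is transported from the given point to an arbitrary point, using Lemma~\ref{L:move to boundary1} to produce common exponents ``at the boundary'' and the bridge lemma (Lemma~\ref{L:bridge}) to promote the preferred tuple $A$ across each such comparison. At the outset I would observe that, by Corollary~\ref{C:extend exponent} applied to the regular connection $\calE_x$ and the non-Liouville tuple $A$, it suffices to exhibit, for each $x \in X$, a single closed subinterval $J \subseteq I$ of positive length on which $A$ is an exponent for $\calE_x$; and $J$ may be fixed once and for all, independently of $x$, enlarging the subinterval supplied by the hypothesis so that it has nonempty interior while keeping $A$ an exponent for $\calE_{x_1}$ on $J$, where $x_1 \in X$ is the point given in the hypothesis. (By Corollary~\ref{C:extend exponent} again, $A$ is in fact an exponent for $\calE_{x_1}$ on all of $I$, so there is no loss.)

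Now fix an arbitrary $x \in X$. First apply Lemma~\ref{L:move to boundary1} with the point $x_1$ to obtain $B_1 \in \ZZ_p^n$ that is an exponent for both $\calE_{x_0}$ and $\calE_{x_1}$ on $J$. Feeding this into Lemma~\ref{L:bridge} — with $A$ as the distinguished (non-Liouville) exponent for $\calE_{x_1}$ on $J$, $B_1$ as the common exponent for $\calE_{x_1}$ and $\calE_{x_0}$ on $J$, and $x_1, x_0$ in the roles of the two points — yields that $A$ is an exponent for $\calE_{x_0}$ (and for $\calE_{x_1}$). Next apply Lemma~\ref{L:move to boundary1} again, this time with the point $x$, to obtain $B_2 \in \ZZ_p^n$ that is an exponent for both $\calE_{x_0}$ and $\calE_x$ on $J$; a second application of Lemma~\ref{L:bridge}, now with $A$ as the distinguished exponent for $\calE_{x_0}$ on $J$, $B_2$ as the common exponent, and $x_0, x$ in the roles of the two points, shows that $A$ is an exponent for $\calE_x$ on $J$. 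As explained above, this gives the conclusion for $\calE_x$, and hence — $x$ being arbitrary — the full statement of Conjecture~\ref{conj:finite Shilov boundary exponent} in this case.

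I expect no serious obstacle: essentially all the content has been packaged into the preceding results, with the one-point-boundary hypothesis entering only through Lemma~\ref{L:move to boundary1} (and, behind it, Lemma~\ref{L:projection to uniform} together with the careful reading of the proof of \cite[Theorem~13.5.5]{kedlaya-book}, which is what allows a common exponent to be read off along a submetric projection to the residue field of a point). The only care needed in the present argument is bookkeeping — checking that at each invocation of Lemma~\ref{L:bridge} the distinguished tuple $A$ is the one carrying the $p$-adic non-Liouville differences, and that we never require an exponent valid on all of $I$ before appealing to Corollary~\ref{C:extend exponent} to spread the exponent on $J$ over $I$.
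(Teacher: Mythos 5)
Your proof is correct and takes essentially the same route as the paper: apply Lemma~\ref{L:move to boundary1} with the hypothesis point to transport $A$ to the boundary point $x_0$ via Lemma~\ref{L:bridge}, then repeat with $x_0$ and an arbitrary point. The final appeal to Corollary~\ref{C:extend exponent} is redundant (Lemma~\ref{L:bridge} already concludes that $A$ is an exponent on all of $I$), but this is harmless.
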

\begin{proof}
With notation as in the hypotheses of Conjecture~\ref{conj:finite Shilov boundary exponent},
we may apply Lemma~\ref{L:move to boundary1}
to produce $B \in \ZZ_p^n$ such that $B$ is an exponent for both $\calE_{x_0}$ and $\calE_x$ on $J$.
By Lemma~\ref{L:bridge}, $A$ is an exponent for $\calE_{x_0}$.

For any closed subinterval $J'$ of $I$ and any $x' \in X$, 
we may apply Lemma~\ref{L:move to boundary1} again to 
to produce $B' \in \ZZ_p^n$ such that $B'$ is an exponent for both $\calE_{x_0}$ and $\calE_{x'}$ on $J$.
By Lemma~\ref{L:bridge}, $A$ is an exponent for $\calE_{x'}$.
\end{proof}

\begin{lemma} \label{L:move to boundary annulus}
Conjecture~\ref{conj:finite Shilov boundary exponent} holds in all cases where $X$ is a finite \'etale cover of a closed annulus over $K$ and $\calE$ admits a basis.
\end{lemma}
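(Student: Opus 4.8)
The plan is to reduce to the case where $X$ is \emph{itself} an annulus over a finite extension of $K$, and then to propagate the exponent $A$ first onto the skeleton of $X$ and then across all of $X$ by repeated use of the bridge lemma (Lemma~\ref{L:bridge}), following the pattern of Corollary~\ref{C:move to boundary1}. To set this up I would appeal to the structure theory of connected finite \'etale covers of a closed annulus. We may assume the base annulus $A_K[\epsilon_0,\epsilon_1]$ has positive length, since if $\epsilon_0=\epsilon_1$ then the Shilov boundary of $X$ — the preimage of the single Shilov point of $A_K[\epsilon_0,\epsilon_1]$ under a finite morphism, which is a single point because a connected finite \'etale cover of a circle does not split over the circle's Shilov point — has one element, and Corollary~\ref{C:move to boundary1} applies directly. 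When $\epsilon_0<\epsilon_1$, the cover $X$ is isomorphic as an adic space over $K$ to an annulus $A_{K'}[\epsilon_0',\epsilon_1']$ for some finite extension $K'$ of $K$ and some $\epsilon_0'<\epsilon_1'$, in such a way that, for the respective coordinates $t$ on $A_K[\epsilon_0,\epsilon_1]$ and $s$ on $X$, the quantity $|t(x)|$ depends only on $|s(x)|$; in particular $X$ is connected and affinoid over $K$, and $\Gamma(X\times_K A_K[J],\calE)$ is free over $\calO(X\times_K A_K[J])$ for each closed subinterval $J\subseteq I$ since $\calE$ admits a basis.

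Next I would produce the relevant ``projection'' data. For $x\in X$ write $\sigma(x)$ for the Gauss point of $X$ of $s$-radius $|s(x)|$, that is, the unique point of $X$ over the Gauss point of $A_K[\epsilon_0,\epsilon_1]$ of $t$-radius $|t(x)|$. Expanding $g\in\calO(X)$ as a Laurent series in $s$ gives $|g|_x\le |g|_{\sigma(x)}$, so the evaluation map $\calO(X)\to\calH(x)$ is submetric with respect to both $x$ and $\sigma(x)$ and does not kill $1$; by Lemma~\ref{L:projection to uniform} applied with $S=\{x,\sigma(x)\}$ there is a tuple $B_x\in\ZZ_p^n$ that is an exponent for both $\calE_x$ and $\calE_{\sigma(x)}$ on $J$. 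Likewise, the $K$-linear functional $\mu\colon\calO(X)\to K'$ sending $\sum_i b_i s^i$ to its constant term $b_0$ is bounded, does not kill $1$, and is submetric with respect to the Gauss point of $X$ of every $s$-radius, hence with respect to every $\sigma(x)$; by Lemma~\ref{L:projection to uniform} applied with $S$ the set of all these Gauss points there is a single $B\in\ZZ_p^n$ that is an exponent for $\calE_{\sigma(x)}$ on $J$ for every $x\in X$.

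To finish, start from the hypothesis that $A$, which has $p$-adic non-Liouville differences, is an exponent for $\calE_{x_0}$ on $J$ for some $x_0\in X$. Apply Lemma~\ref{L:bridge} to the pair $(x_0,\sigma(x_0))$ with auxiliary tuple $B_{x_0}$ to see that $A$ is an exponent for $\calE_{\sigma(x_0)}$; then to the pair $(\sigma(x_0),\sigma(x))$ with auxiliary tuple $B$ to see that $A$ is an exponent for $\calE_{\sigma(x)}$ for every $x\in X$; then to the pair $(\sigma(x),x)$ with auxiliary tuple $B_x$ to conclude that $A$ is an exponent for $\calE_x$, which is the assertion. The genuinely nontrivial ingredient is the structural identification in the first paragraph of a connected finite \'etale cover of a positive-length closed annulus with an annulus over a finite extension of $K$; granting that, everything else is a routine variant of the one-point-boundary case treated in Corollary~\ref{C:move to boundary1}.
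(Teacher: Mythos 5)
Your argument hinges on the claim in the first paragraph that a connected finite \'etale cover $X$ of a closed annulus $A_K[\epsilon_0,\epsilon_1]$ with $\epsilon_0<\epsilon_1$ is isomorphic as an adic space over $K$ to an annulus $A_{K'}[\epsilon_0',\epsilon_1']$ for some finite extension $K'$ of $K$. This is false in mixed residue characteristic: wild covers can have more than two Shilov boundary points. For a concrete example, take $K$ containing $\zeta_p$ and a uniformizer $\varpi$ with $|\varpi|=\rho$, set $\pi=\zeta_p-1$, and consider the Kummer cover
\[
Y:\quad y^p = 1+\pi^p\,\varpi^k t^{-k}
\]
of $A=A_K[\rho,1]$ for some $k\geq 1$ with $p\nmid k$. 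This is finite \'etale (the right side is a unit), and connected: over the inner Gauss point $\eta_\rho$ one has $|\pi^p\varpi^k t^{-k}|_{\eta_\rho}=|\pi^p|$, the reduction is the connected Artin--Schreier cover $\overline{z}^p-\overline{z}=\overline{t}^{-k}$, and connectedness of $Y$ then follows since $f^{-1}(\eta_\rho)$ is a single point and $f$ is open and closed. But over the outer Gauss point $\eta_1$ one has $|\pi^p\varpi^k t^{-k}|_{\eta_1}=|\pi^p|\,\rho^k<|p|^{p/(p-1)}$, so the binomial series for the $p$-th root converges in $\calH(\eta_1)$ and the fiber over $\eta_1$ consists of $p$ distinct points. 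Thus $Y$ has at least $p+1\geq 3$ Shilov boundary points and cannot be an annulus over any finite extension of $K$. All the subsequent steps of your proof — the global coordinate $s$, the skeleton map $\sigma$, the Laurent expansion giving $|g|_x\leq|g|_{\sigma(x)}$, the constant-coefficient functional $\mu$ — depend on this false identification, so the argument does not go through as written.

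The paper's own proof sidesteps this by never invoking a coordinate on $X$: it takes the functional $\lambda$ on $\calO(X)$ to be the composition of the trace map $\calO(X)\to\calO(A_K[J])$ along the finite \'etale covering with the constant-coefficient functional on the annulus. That composite is $K$-linear, bounded, sends $1$ to the degree of the cover (hence nonzero in characteristic $0$), and is submetric with respect to the relevant skeleton points without any need for $X$ to be an annulus. The remainder of your argument — producing a single $B$ uniformly over the skeleton via Lemma~\ref{L:projection to uniform}, then chaining Lemma~\ref{L:bridge} through the skeleton and back down to arbitrary points via the one-point-boundary case Corollary~\ref{C:move to boundary1} — does match the paper's strategy and would be salvageable once the functional is replaced by the trace composite. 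So the gap is localized but genuine: you need the trace, not a structural classification of the cover.
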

\begin{proof}
Let $S$ be the skeleton of $X$.
In case $X = A_K[J]$ for some $J$, then the constant coefficient map defines a bounded $K$-linear functional on $\calO(X)$ which is submetric with respect to every $x \in S$; in the general case,
we obtain such a functional by first taking the trace down to an annulus and then multiplying by a suitably small scalar. 
We may thus apply Lemma~\ref{L:projection to uniform} to deduce that for every closed subinterval $J$ of $I$,
there exists $B \in \ZZ_p^n$ such that for all $x \in X$, 
$B$ is an exponent for $\calE_x$ on $I$.

Now set notation as in the hypotheses of Conjecture~\ref{conj:finite Shilov boundary exponent}.
Then there exists a unique point $x_0 \in S$ which dominates $x$, and Corollary~\ref{C:move to boundary1} implies that $A$ is an exponent for $\calE_{x_0}$. 
By Lemma~\ref{L:bridge}, $A$ is an exponent for $\calE_{y}$ for each $y \in S$.
By Corollary~\ref{C:move to boundary1} once more, $A$ is an exponent for $\calE_y$ for each $y \in X$.
\end{proof}

\begin{theorem} \label{T:unipotent spread}
Conjecture~\ref{conj:finite Shilov boundary exponent} holds in all cases where $X$ is tft and $\calE$ admits a basis.
\end{theorem}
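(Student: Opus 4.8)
The plan is to bootstrap from the two special cases already established — a one-point boundary (Corollary~\ref{C:move to boundary1}) and a finite \'etale cover of a closed annulus (Lemma~\ref{L:move to boundary annulus}) — to an arbitrary tft affinoid $X$ by a dévissage on the dimension of $X$, using a fibration of $X$ by relative curves. Concretely, set notation as in Conjecture~\ref{conj:finite Shilov boundary exponent}, so we are given $x \in X$, a closed subinterval $J_0 \subset I$ of positive length, and $A \in \ZZ_p^n$ with $p$-adic non-Liouville differences which is an exponent for $\calE_x$ on $J_0$. The goal is to show $A$ is an exponent for $\calE_{x'}$ for every $x' \in X$ (on every closed subinterval of $I$; by Corollary~\ref{C:extend exponent} it is enough to do this on a single $J$ of positive length, shrinking to $J_0$ as needed). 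The key reduction: it suffices to produce, for each $x' \in X$, some $B \in \ZZ_p^n$ which is an exponent for both $\calE_x$ and $\calE_{x'}$ on a common closed subinterval of positive length, for then Lemma~\ref{L:bridge} upgrades $A$ to an exponent for $\calE_{x'}$.

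First I would reduce to the case $\dim X = 1$ (handled by Lemma~\ref{L:move to boundary annulus} after a Noether normalization, since a smooth affine curve is, up to shrinking and a finite \'etale map, a cover of an annulus or disc) by induction on $\dim X$. For the inductive step, given $x, x' \in X$ with $X$ tft of dimension $d$, I would connect $x$ to $x'$ through a chain of one-dimensional slices: after possibly replacing $X$ by a connected open affinoid containing both points (and using that a tft affinoid space is connected by chains of irreducible curves through any two points, by a Bertini-type argument), I may assume $X$ is irreducible and find an integral closed subvariety $C \subset X$ of dimension $1$ passing through $x$, and similarly near $x'$; the real work is to arrange that consecutive curves in the chain share a point. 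Restricting $\calE$ to $C \times_K A_K[I]$ and normalizing $C$, the one-dimensional case (via Lemma~\ref{L:move to boundary annulus} and Lemma~\ref{L:bridge}) propagates the exponent $A$ along each curve once it is known at one point of that curve; chaining the curves propagates it across $X$. One must be mildly careful that $\calE$ restricted to a slice still admits a basis — this follows from Lemma~\ref{L:locally free} after shrinking — and that "non-Liouville differences'' is preserved, which is automatic as $A$ does not change.

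Alternatively, and perhaps more cleanly, I would invoke Lemma~\ref{L:projection to uniform} directly: cover $X$ by connected affinoids each admitting a bounded functional $\lambda\colon \calO(X)\to V$ that is submetric with respect to a suitable "test set'' $S$ containing enough points — for a tft affinoid one can take $S$ to be the Shilov boundary together with any single additional point, using that the supremum norm on a reduced tft affinoid is attained on the (finite) Shilov boundary, so the evaluation-at-a-Shilov-point maps are submetric. Lemma~\ref{L:projection to uniform} then yields a single $B \in \ZZ_p^n$ that is simultaneously an exponent for $\calE_{x_0}$ and $\calE_{x'}$ on $J$ for $x_0$ in the Shilov boundary and $x'$ arbitrary; combined with the one-dimensional analysis linking the given point $x$ to a Shilov point (travel along a curve through both, using Lemma~\ref{L:move to boundary annulus}), Lemma~\ref{L:bridge} finishes the argument. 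I expect the main obstacle to be the geometric input: arranging a chain of one-dimensional slices connecting two arbitrary points of a tft affinoid on which $\calE$ restricts nicely, or equivalently verifying the submetric hypothesis of Lemma~\ref{L:projection to uniform} in enough generality. The differential-equations content is entirely absorbed into Lemmas~\ref{L:bridge}, \ref{L:projection to uniform}, \ref{L:move to boundary annulus} and Corollary~\ref{C:move to boundary1}; what remains is a standard, if slightly delicate, piece of rigid-analytic geometry.
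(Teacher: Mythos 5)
Your first (main) plan is essentially the paper's argument: reduce to dimension $1$ by joining two points of a tft affinoid with a curve (in the paper, a single curve suffices rather than a chain), then handle the curve case by moving along the skeleton via Lemma~\ref{L:move to boundary annulus}, with Lemma~\ref{L:bridge} and Corollary~\ref{C:extend exponent} doing the bookkeeping. Two points you pass over that the paper handles explicitly: one first extends $K$ so that it is algebraically closed (both to make points $K$-rational and to make the curve-joining work), and one cannot directly run the curve argument between two non-classical points of $X$ — the paper uses Corollary~\ref{C:move to boundary1} twice, once at the start to pass from the given $x$ to a nearby rational point, and once at the end to descend from $L$-rational points $y$ (for varying complete $L/K$) to an arbitrary point of $X$ by choosing a small neighborhood of which $y$ is the one-point boundary. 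These base-extension and ``classical point vs.\ general point'' reductions are where most of the care is concentrated and should not be left implicit.

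Your alternative route via Lemma~\ref{L:projection to uniform} does not work as stated. The lemma requires a single bounded functional $\lambda$ that is submetric with respect to \emph{every} element of $S$; if $S$ is the Shilov boundary $\{x_1,\dots,x_r\}$ together with one further point $x'$, the natural candidate (evaluation at $x'$) is submetric with respect to $x'$ and satisfies $|f(x')|\le \sup_i|f(x_i)|$, but this gives no control on the individual $|f(x_i)|$, so submetricity with respect to each $x_i$ fails whenever $r>1$. This is exactly why Corollary~\ref{C:move to boundary1} is stated for a \emph{one-point} boundary and why the paper must go through the geometry of curves rather than a direct functional-analytic argument.
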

\begin{proof}
We may assume that $K$ is algebraically closed.
We first treat the case where $X$ is of dimension 1; by pulling back from $X$ to its normalization, we may also assume that $X$ is smooth.
Using Corollary~\ref{C:move to boundary1} and a further base extension, we may reduce to the case where $x$ is a $K$-rational point. There is a unique point $x_1$ in the skeleton of $S$ which dominates $x$;
by Corollary~\ref{C:move to boundary1}, $A$ is an exponent for $\calE_{x_1}$. By repeated application of
Lemma~\ref{L:move to boundary annulus}, we deduce that $A$ is an exponent for $\calE_y$ for every $y$ in the skeleton of $S$. By Corollary~\ref{C:move to boundary1} again, we deduce the same for every $y \in X$.

In the general case, we may again assume that $x$ is a $K$-rational point.
For every complete extension $L$ of $K$ and every point $y \in X(L)$,
we may join $x$ and $y$ with a chain of curves in $X \times_K L$
(it suffices to check this when $x$ and $y$ map to the same affinoid subspace of $X$,
in which case we can find a one-dimensional complete intersection in $X$ containing both $x$ and $y$)
and then apply the previous argument to deduce that $A$ is an exponent for $\calE_y$. We may then use Corollary~\ref{C:move to boundary1} again to deduce the claim for arbitrary $y \in X$.
\end{proof}

\subsection{Solvable relative connections}

We next introduce the relative analogue of the category $\calC_K$.

\begin{defn}
For $X$ quasicompact, let 
$\calC_X$ be the 2-colimit over all $\epsilon > 0$ 
of the category of $\calO_X$-linear connections $\calE$ on
$X \times_K A_K(\epsilon,1)$ for which $\calE_x$ is solvable for each $x \in X$.
For general $X$, we define the corresponding category by glueing; the point is that an object of $\calC_X$ need not be defined over $X \times_K A_K(\epsilon,1)$ for any particular value of $\epsilon$.

The construction of $\calC_X$ is contravariantly functorial in $X$; in particular, for each $x \in X$
we have a pullback functor $\calC_X \to \calC_x$. We say that $\calE \in \calC_X$ is \emph{regular}
if $b(\calE_x) = 0$ for all $x \in X$.
\end{defn}

\begin{lemma} \label{L:Frobenius to solvable}
Assume that $X$ is quasicompact and that $I = (\epsilon,1)$ for some $\epsilon \in (0,1)$.
Let $q$ be a power of $p$.
Let $\varphi\colon X \times_K A_K[I^{1/q}] \to X \times_K A_K[I]$ be a (not necessarily $K$-linear) morphism
such that $|\varphi(t)-t^q|_\rho < 1$ for $\rho \in (\epsilon^{1/q},1)$,
and suppose that there exists an isomorphism $\varphi^*\calE \cong \calE$ of $\calO_X$-linear connections over $X \times_K A_K[I^{1/q}]$.
Then $\calE \in \calC_X$.
\end{lemma}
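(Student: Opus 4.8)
The statement is fiberwise in $X$, so the plan is to reduce to the classical fact that a connection on an annulus over a nonarchimedean field of mixed characteristic $(0,p)$ which admits a Frobenius structure is automatically solvable at the boundary.

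I would first fix $x \in X$ and equip $\calE_x$ with a Frobenius structure. The morphism $\varphi$ is compatible with the projections to $X$ — this is forced by the requirement that $\varphi^*\calE$ again be an $\calO_X$-linear connection, and $\varphi$ covers a (possibly nontrivial) isometric endomorphism of the base lifting a Frobenius on $K$, exactly as in the absolute definition of a Frobenius lift. Restricting over $x$, we obtain a $q$-power Frobenius lift $\varphi_x \colon x \times_K A_K[I^{1/q}] \to x \times_K A_K[I]$ of annuli over $\calH(x)$; the inequality $|\varphi^*(t)-t^q|_\rho<1$ descends to the corresponding inequality for $\varphi_x$ since $|\cdot|_{x,\rho} \le |\cdot|_\rho$. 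Restricting the given isomorphism to the fiber over $x$ equips $\calE_x$ with a $q$-power Frobenius structure.

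It then remains to recall that $\calE_x$ is therefore solvable, i.e. $\IR(\calE_x,\rho)\to 1$ as $\rho\to 1^-$; this is part of the standard theory of Frobenius structures, \cite[Chapter~17]{kedlaya-book}. For completeness, the mechanism is the Frobenius pullback estimate \cite[Chapter~10]{kedlaya-book}: with $\omega = p^{-1/(p-1)}$, one has, for any connection $\calF$ on $x\times_K A_K[I]$ and any $\rho$ with $\rho^q\in I$, that $\IR(\varphi_x^*\calF,\rho)=\IR(\calF,\rho^q)^{1/q}$ once $\IR(\calF,\rho^q)>\omega^q$, and $\IR(\varphi_x^*\calF,\rho)=\omega$ otherwise; in particular $\IR(\varphi_x^*\calF,\rho)\ge\omega$ always. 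Applying this and using the Frobenius structure, we get $\IR(\calE_y,\rho)\ge\omega$ for every $y\in X$ and every $\rho$ with $\rho^q\in I$; feeding this uniform bound back into the pullback estimate (so that every subsequent step lands in the regime $\IR>\omega^q$) yields, by induction on $k$, that $\IR(\calE_x,\rho)\ge\omega^{1/q^k}$ for all $\rho$ in a suitable left-neighborhood of $1$, and since $\omega^{1/q^k}\to 1$ we conclude $\IR(\calE_x,\rho)\to 1$. As this holds for every $x\in X$, we get $\calE\in\calC_X$.

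The argument is essentially formal; the only point that needs genuine care is the verification that $\varphi$ restricts to each fiber as a Frobenius lift, which is precisely why it matters that $\varphi^*\calE$ is required to be $\calO_X$-linear. One could alternatively try to organize the Frobenius–radius estimate relatively, using Corollary~\ref{C:IR relative continuous} to supply continuity of $(x,\rho)\mapsto\IR(\calE_x,\rho)$, but this would require tracking how $\varphi$ moves points of $X$ in the base direction, which the fiberwise argument avoids.
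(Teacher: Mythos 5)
The key gap is your claim that $\varphi$ ``is compatible with the projections to $X$,'' which you then use to restrict $\varphi$ to a map of annuli over a single fiber $x$. This is not forced by the hypotheses and is false in the intended applications. The hypothesis only requires $\varphi^* \calE$ to be an $\calO_X$-linear connection; this is consistent with $\varphi$ covering a \emph{nontrivial} endomorphism $\psi\colon X \to X$ (for instance, in Definition~\ref{D:andreatta-brinon} the Frobenius $\varphi$ on the imperfect period ring $\mathbf{C}_\psi$ acts nontrivially on the subring $R = \calO(X')$). When $\psi \neq \mathrm{id}$, restricting the isomorphism $\varphi^*\calE \cong \calE$ over a fiber gives $\varphi_x^* \bigl(\calE_{\psi(x)}\bigr) \cong \calE_x$ with $\psi(x) \neq x$, so $\calE_x$ does \emph{not} acquire a Frobenius structure, and your fiberwise reduction fails. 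You even say at the end that the fiberwise argument ``avoids'' tracking how $\varphi$ moves points of $X$ --- but that movement is exactly what breaks the argument.

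This is also visible in the fact that your proof never uses the quasicompactness of $X$, which is an explicit hypothesis of the lemma. The paper's proof circumvents the issue by defining the global quantity $\IR(\calE, \rho) := \inf_{x \in X} \IR(\calE_x, \rho)$; this infimum is invariant under replacing $x$ by $\psi(x)$, so the Frobenius pullback estimate of \cite[Theorem~17.2.1]{kedlaya-book} applies directly to $\IR(\calE, \rho)$ without needing $\varphi$ to fix fibers. Quasicompactness (together with Corollary~\ref{C:IR relative continuous}) is then needed to ensure that $\IR(\calE, \rho) > 0$, so that iterating the pullback estimate drives the infimum to $1$; one then returns to the individual fibers via the pointwise bound $\IR(\calE_x, \rho) \geq \IR(\calE, \rho)$ and Theorem~\ref{T:convexity}. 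To repair your argument, you should replace the fiberwise Frobenius structure with this infimum device.
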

\begin{proof}
Define $\IR(\calE, \rho) = \inf_{x \in X} \{\IR(\calE_x, \rho)\}$.
As in \cite[Theorem~17.2.1]{kedlaya-book}, we have
\[
\IR(\calE, \rho^{1/q}) \geq \min\{\IR(\calE, \rho)^{1/q}, q \IR(\calE, \rho)\}.
\]
It follows that for any $\rho \in (0,1)$, $\IR(\calE, \rho^{1/q^n}) \to 1^-$ as $n \to \infty$.
For each $x \in X$, we have $\IR(\calE_x, \rho^{1/q^m}) \to 1^-$ as $m \to \infty$;
we may then apply Theorem~\ref{T:convexity} to deduce that $\calE_x$ is solvable.
\end{proof}

\begin{hypothesis} \label{H:solvable relative}
For the remainder of \S\ref{sec:relative conn}, assume that $\calE \in \calC_X$.
\end{hypothesis}

\begin{lemma} \label{L:sup function}
Suppose that $X$ is quasicompact.
For $i=1,\dots,n$, the following statements hold.
\begin{enumerate}
\item[(a)]
The quantity
\[
F_i(r) := \sup\left\{ \sum_{j=1}^i -\log s_i(\calE_x, e^{-r})\colon x \in X\right\}
\]
is finite.
\item[(b)]
The function $F_i\colon (0, -\log \epsilon] \to \RR$ is convex with nonnegative slopes and tends to $0$ as $r \to 0^+$.
\item[(c)]
The function $x \mapsto b_i(\calE_x)$ assumes only finitely many distinct values.
\end{enumerate}
\end{lemma}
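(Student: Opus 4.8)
The plan is to derive all three parts from the fiberwise convexity theorem (Theorem~\ref{T:convexity}) and the relative continuity statement (Corollary~\ref{C:IR relative continuous}), with the quasicompactness of $\overline{X}$ as the only genuinely ``relative'' input. First I would fix $\epsilon \in (0,1)$ and a representative of $\calE$ as a connection on $X \times_K A_K[\epsilon,1)$ (using that $X$ is quasicompact and $\calC_X$ is a $2$-colimit over $\epsilon$), and set $g_x(r) := \sum_{j=1}^{i} -\log s_j(\calE_x, e^{-r})$ for $r \in (0, -\log\epsilon]$, so that $F_i = \sup_{x \in X} g_x$; recall that $g_x$ depends only on the image of $x$ in $\overline{X}$. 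For (a), fix $r$ and put $\rho = e^{-r} \in [\epsilon, 1)$: by Corollary~\ref{C:IR relative continuous} the map $x \mapsto g_x(r)$ is continuous on $\overline{X}$, and $\overline{X}$ is quasicompact (being a continuous image of the quasicompact space $X$), so a continuous real-valued function on it has bounded image; hence $F_i(r) < \infty$.

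For (b), the convexity of $F_i$ is immediate: each $g_x$ is convex by Theorem~\ref{T:convexity}, and a pointwise supremum of convex functions is convex on the locus where it is finite, which by (a) is all of $(0, -\log\epsilon]$. For the behaviour as $r \to 0^+$ I would first record the fiberwise statement: solvability of $\calE_x$ gives $s_1(\calE_x, \rho) = \IR(\calE_x, \rho) \to 1$ as $\rho \to 1^-$, and since $s_1 \le s_j \le 1$ this forces $s_j(\calE_x, \rho) \to 1$ for all $j \le i$, so $g_x(r) \to 0$ as $r \to 0^+$ for each fixed $x$. The main obstacle is upgrading this to uniformity in $x$; the device is that a convex function lies below its chords. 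Writing $r_0 := -\log\epsilon$, for $0 < \delta < r < r_0$ convexity of $g_x$ on $[\delta, r_0]$ gives $g_x(r) \le \frac{r_0 - r}{r_0 - \delta} g_x(\delta) + \frac{r - \delta}{r_0 - \delta} g_x(r_0)$; letting $\delta \to 0^+$ and using $g_x(\delta) \to 0$ together with $g_x(r_0) \le F_i(r_0)$ yields $g_x(r) \le \frac{r}{r_0} F_i(r_0)$ uniformly in $x$, hence $0 \le F_i(r) \le \frac{r}{r_0} F_i(r_0) \to 0$. Finally, a nonnegative convex function whose limit at the left endpoint is $0$ is nondecreasing (a negative one-sided derivative at an interior point would, via the subgradient inequality, force the function to stay bounded below by a positive constant near the endpoint), which is exactly the assertion that the slopes of $F_i$ are nonnegative.

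For (c), the same two estimates pin down $\beta_x^{(i)} := b_1(\calE_x) + \cdots + b_i(\calE_x)$, which by Theorem~\ref{T:convexity} lies in $\frac{1}{n!}\ZZ_{\ge 0}$ and is the slope of $g_x$ as $r \to 0^+$; indeed $g_x$ is piecewise affine by Theorem~\ref{T:convexity}, hence affine with slope $\beta_x^{(i)}$ near $0^+$, and since $g_x(r) \to 0$ there we have $g_x(r) = \beta_x^{(i)} r$ on a neighbourhood of $0$. Convexity then gives $g_x(r) \ge \beta_x^{(i)} r$ for all $r \in (0, r_0]$; taking $r = r_0$ and using $g_x(r_0) \le F_i(r_0)$ gives $\beta_x^{(i)} \le F_i(r_0)/r_0$, a bound independent of $x$. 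A bounded subset of $\frac{1}{n!}\ZZ_{\ge 0}$ is finite, so $x \mapsto \beta_x^{(i)}$ takes only finitely many values; applying this for $i$ and $i-1$ and writing $b_i(\calE_x) = \beta_x^{(i)} - \beta_x^{(i-1)}$ (with $\beta_x^{(0)} := 0$) shows the same for $x \mapsto b_i(\calE_x)$. Beyond the uniformity step in (b), I expect this part to be routine bookkeeping.
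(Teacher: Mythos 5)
Your argument is correct and follows essentially the same route as the paper: obtain finiteness of the supremum from the relative continuity results and quasicompactness of $\overline{X}$, use the chord inequality $g_x(r) \le \tfrac{r}{r_0} g_x(r_0) \le \tfrac{r}{r_0} F_i(r_0)$ to get uniform decay as $r \to 0^+$, and then bound the limiting slope by $F_i(r_0)/r_0$ to conclude finiteness of the set of values in $\tfrac{1}{n!}\ZZ_{\geq 0}$. The one place you are slightly more careful than the published text is in justifying why the slopes are nonnegative: the paper attributes this directly to Theorem~\ref{T:convexity}, which only gives convexity with slopes in $\tfrac{1}{n!}\ZZ$, whereas the nonnegativity really comes (as you say) from $g_x \ge 0$ together with $g_x(r) \to 0$ at the left endpoint; this is a worthwhile clarification.
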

\begin{proof}
We deduce (a) directly from Lemma~\ref{L:relative continuous}.
To deduce (b), view $F_i$ as the supremum of the functions $r \mapsto F_i(x, r) := \sum_{j=1}^i - \log s_i(\calE_x, e^{-r})$ over all $x \in X$.
 By Theorem~\ref{T:convexity}, each of these functions is convex with nonnegative slopes, as then is $F_i$. Since $F_i(x, r) \to 0$ as $r \to 0^+$ (by our assumption that $\calE$ is solvable), we also have 
\[
F_i(x,r) \leq \frac{r}{-\log \epsilon} F_i(x, -\log \epsilon) \leq \frac{r}{-\log \epsilon} F_i(-\log \epsilon),
\]
from which we conclude that $F_i(r) \to 0$ as $r \to 0^+$.
Similarly, $F_i(-\log \epsilon)/(-\log \epsilon)$ is an upper bound on the least slope of $F_i(r,x)$, so the function $x \mapsto b_1(\calE_x) + \cdots + b_i(\calE_x)$ assumes only finitely many distinct values; this yields (c).
\end{proof}

\begin{theorem} \label{T:break semicontinuity}
The following statements hold.
\begin{enumerate}
\item[(a)]
For $i=1,\dots,n$, the function $x \mapsto b_1(\calE_x) + \cdots + b_i(\calE_x)$ on $\overline{X}$ is upper semicontinuous.
In particular, $\{x \in X\colon b(\calE_x) = 0\}$ is open and partially proper.
\item[(b)]
Suppose that $X$ is affinoid and $S$ is a closed subset of $\overline{X}$ which is a boundary.
Then the supremum of $b(\calE_x)$ over all $x \in \overline{X}$ is achieved by some $x \in S$.
\item[(c)]
Suppose that $\calE$ is regular. 
Then for some $\epsilon' \in [\epsilon, 1)$,
we have $\IR(\calE_x, \rho) = 1$ for all $x \in X, \rho \in [\epsilon',1)$.
\end{enumerate}
\end{theorem}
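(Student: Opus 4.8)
The plan is to deduce all three parts from the continuity and convexity results already in hand, namely Corollary~\ref{C:IR relative continuous}, Corollary~\ref{C:relative continuous}, Theorem~\ref{T:convexity}, and Lemma~\ref{L:sup function}.

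For part (a), since the claim is local on $X$ and the formation of $b_i$ commutes with restriction, I may assume $X$ is affinoid. The key observation is that for $c \in \frac{1}{n!}\ZZ_{\geq 0}$, Lemma~\ref{L:criterion for big slope} characterizes the condition $b_1(\calE_x) + \cdots + b_i(\calE_x) > c$ by the closed inequality $s_1(\calE_x,\rho)\cdots s_i(\calE_x,\rho) \leq \rho^{c+1/n!}$ holding for all $\rho$ in the relevant interval. Taking logarithms, this says $\sum_{j=1}^{i}-\log s_j(\calE_x,\rho) \geq (c+\frac{1}{n!})(-\log\rho)$ for all $\rho$; by Corollary~\ref{C:IR relative continuous} the left-hand side is continuous in $x$ for each fixed $\rho$, so the set where this holds for all $\rho$ in a closed interval is closed in $\overline{X}$. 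Hence $\{x : b_1(\calE_x)+\cdots+b_i(\calE_x) > c\}$ is closed, which is precisely upper semicontinuity of the (finitely-valued, by Lemma~\ref{L:sup function}(c)) function $x \mapsto \sum_{j=1}^i b_j(\calE_x)$. Taking $i=1$, $c=0$ shows $\{x : b(\calE_x)=0\}$ is open; partial properness follows since upper semicontinuity on $\overline{X}$ means the complement is closed and stable under specialization/generization appropriately, or more directly because the condition is detected fiberwise on $\overline{X}$.

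For part (b), I would combine part (a) with Corollary~\ref{C:relative continuous}. Let $\beta = \sup_{x \in \overline{X}} b(\calE_x)$, which is a finite maximum actually attained by Lemma~\ref{L:sup function}(c) (the function takes only finitely many values), say at some $x_1 \in \overline{X}$. By Lemma~\ref{L:criterion for big slope} applied just below $\beta$, we have $s_1(\calE_{x_1},\rho) \leq \rho^{\beta'+1/n!}$ for all $\rho$ near $1$ where $\beta'$ is the next value below $\beta$ in the finite value set; more cleanly, pick $c$ with $\beta > c \geq \beta'$ so that $b(\calE_{x_1}) > c$ but $b(\calE_x) \leq c$ for all $x$ not achieving the supremum. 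Then $\IR(\calE_{x_1},\rho)$ is strictly smaller near $1$ than $\IR(\calE_y,\rho)$ for non-maximal $y$, forcing $\inf_{y \in S}\IR(\calE_y,\rho) \leq \IR(\calE_{x_1},\rho)$ via Corollary~\ref{C:relative continuous} applied to $x_1$; this inequality can only hold if $S$ contains a point whose slope is also $\beta$. The cleanest route: Corollary~\ref{C:relative continuous} gives $\IR(\calE_{x_1},\rho) \geq \inf_{y\in S}\IR(\calE_y,\rho)$, so for $\rho$ close to $1$, some $y_\rho \in S$ satisfies $\IR(\calE_{y_\rho},\rho) \leq \IR(\calE_{x_1},\rho) = \rho^{\beta}$ (up to the slope formula), and since $S$ realizes only finitely many slope values, a fixed $y \in S$ works for a sequence $\rho \to 1$, giving $b(\calE_y) \geq \beta$, hence equality.

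For part (c), assume $\calE$ is regular, so $b(\calE_x) = 0$ for all $x$. By Lemma~\ref{L:sup function}(a),(b) the function $F_1(r) = \sup_x\{-\log s_1(\calE_x,e^{-r})\}$ is finite, convex, nonnegative, and tends to $0$ as $r \to 0^+$; moreover its slope at $0^+$ is $\sup_x b(\calE_x) = 0$. A nonnegative convex function on $(0,-\log\epsilon]$ with right-derivative $0$ at the origin and limit $0$ there must vanish identically on some initial interval $(0, -\log\epsilon']$ — otherwise convexity plus nonnegativity would force a positive slope at $0^+$. (Here I am using that a convex function which is $0$ at an accumulation point approaching from one side and has zero one-sided derivative there is $0$ on a neighborhood; this is a one-variable convexity fact.) Hence $-\log s_1(\calE_x,\rho) = 0$, i.e.\ $\IR(\calE_x,\rho) = s_1(\calE_x,\rho) = 1$, for all $x \in X$ and all $\rho \in [\epsilon',1)$, which is the assertion. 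The main obstacle is part (c), specifically justifying rigorously that $F_1$ vanishes on an initial interval rather than merely having zero slope at the endpoint; I would spell this out by noting that if $F_1(r_0) > 0$ for some small $r_0$, convexity gives $F_1(r) \geq \frac{r}{r_0}F_1(r_0)$ for $r \in (0,r_0]$ (using $F_1 \to 0$), contradicting $F_1(r)/r \to 0$ — but $F_1(r)/r \to 0$ is exactly the statement that the slope at $0^+$ is $0$, which is $\sup_x b(\calE_x) = 0$, and I should confirm this interchange of supremum and slope, which follows from the uniform bound in the proof of Lemma~\ref{L:sup function}(b).
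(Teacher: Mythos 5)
Your argument for part (a) is correct and is essentially the paper's argument: reinterpret $b_1(\calE_x)+\cdots+b_i(\calE_x) > c$ via Lemma~\ref{L:criterion for big slope} as a family of closed conditions indexed by $\rho$, then invoke Corollary~\ref{C:IR relative continuous}.

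Parts (b) and (c), however, each have a real gap.

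For (b), the step ``since $S$ realizes only finitely many slope values, a fixed $y\in S$ works for a sequence $\rho\to 1$'' does not follow. Finiteness of the value set of $x\mapsto b(\calE_x)$ does not give a uniform $\rho_0$ such that the slope formula $\IR(\calE_y,\rho)=\rho^{b(\calE_y)}$ holds for all $\rho\in[\rho_0,1)$ and all $y$ simultaneously --- the threshold where the slope becomes terminal depends on $y$. So knowing that infinitely many of the $y_\rho$ share a slope value does not let you replace them by a single $y$. What actually closes the argument is compactness of $S$: define $S_\rho = \{y\in S : s_1(\calE_y,\rho)\leq\rho^\beta\}$; each $S_\rho$ is closed and nonempty (nonemptiness is exactly your application of Corollary~\ref{C:relative continuous} plus convexity giving $s_1(\calE_{x_1},\rho)\leq\rho^\beta$), and the $S_\rho$ are nested as $\rho\to 1^-$ because for a convex $f$ with $f(0^+)=0$ the chord-slope function $f(r)/r$ is monotone. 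A point in $\bigcap_\rho S_\rho$ then has $b(\calE_y)\geq\beta$. This is the paper's argument; your approach via finite slope sets is not a substitute for it.

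For (c), the one-variable convexity fact you invoke is false as stated: $f(r)=r^2$ is nonnegative, convex, tends to $0$, and has right-derivative $0$ at the origin, yet is nowhere zero on $(0,1]$. The underlying problem is that you cannot interchange supremum and slope: the slope at $0^+$ of $F_1(r)=\sup_x F_1(x,r)$ can strictly exceed $\sup_x b(\calE_x)$ (consider, schematically, $F_1(x,r)=\max(0,r-1/x)$ over a noncompact index set --- each term has slope $0$ at $0$ but the sup is $r$). Your appeal to the uniform bound in Lemma~\ref{L:sup function}(b) only shows the slope of $F_1$ at $0^+$ is finite, not that it is $0$. The correct argument, as in the paper, is to localize around a point $x_0$, use regularity of $\calE_{x_0}$ to fix $\epsilon_1$ with $s_1(\calE_{x_0},\rho)=1$ on $[\epsilon_1,1)$, shrink $X$ so that $s_1(\calE_x,\epsilon_1)>\delta$ for some $\delta\in(\epsilon_1^{1/n!},1)$ via Corollary~\ref{C:IR relative continuous}, and then exploit the fact that the fiberwise slope functions take values in $\frac{1}{n!}\ZZ$: a convex increasing function with slopes in $\frac{1}{n!}\ZZ_{\geq 0}$, vanishing at $0$ and smaller than $-\log\delta$ at $-\log\epsilon_1$, must be identically zero on $[0,-\log\epsilon_1 + n!\log\delta]$. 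This discrete quantization of the slopes is essential and is precisely what is missing from your argument.
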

\begin{proof}
We may assume throughout that $X$ is affinoid.
To prove (a), it will suffice to show that for each $c \in \frac{1}{n!} \ZZ$,
the set $U$ of $x \in \overline{X}$ for which $b_1(\calE_x) + \cdots + b_i(\calE_x) > c$ is closed.
By Lemma~\ref{L:criterion for big slope}, $U$ can be viewed as an intersection of sets,
one defined by the inequality in \eqref{eq:criterion for big slope} for each $\rho$;
by Corollary~\ref{C:IR relative continuous}, each of those inequalities defines a closed condition.

We next address (b). By Lemma~\ref{L:sup function}, the supremum $b$ is achieved by some
$x_0 \in X$. 
For $\rho \in [\epsilon,1)$, let $S_\rho$ be the subset of $x \in S$ such that $s_1(\calE_x, \rho) \geq \rho^b$; by comparison with $x_0$, we see that $S_\rho$ is nonempty for all $\rho$.
On the other hand, by convexity (Theorem~\ref{T:convexity}) we have $S_\rho \subseteq S_{\rho'}$
whenever $\rho \in [\rho',1)$. Since $\overline{X}$ is compact, we conclude that $\bigcap_\rho S_\rho$ is nonempty, and any $x$ in the intersection has the property that $b(\calE_x) = b$.

To prove (c), it suffices to check that for $x_0 \in X$, the claim holds after replacing $X$ with some neighborhood of $x_0$.
Since $\calE_{x_0}$ is regular, we may choose $\epsilon_1 \in [\epsilon,1)$ such that
$s_1(\calE_{x_0}, \rho) = 1$ for $\rho \in [\epsilon_1, 1)$.
Choose some $\delta \in (\epsilon_1^{1/n!},1)$;
by Corollary~\ref{C:IR relative continuous},
we may replace $X$ with a neighborhood of $x_0$ so as 
to ensure that $s_1(\calE_x, \epsilon_1) > \delta$ for all
$x \in X$.
By Theorem~\ref{T:convexity},
for each $x \in X$ the function $r \mapsto -\log s_1(\calE_x, e^{-r})$
is continuous and convex with slopes in $\frac{1}{n!} \ZZ$;
in particular, none of these slopes lie in the interval $(0, 1/n!)$. 
Consequently, the claim holds for $\epsilon' = \epsilon_1/\delta^{n!}$.
\end{proof}

\subsection{The relative monodromy theorem}

We finally formulate the relative version of the local monodromy theorem.

\begin{defn}
We say that a finite \'etale cover $Y$ of $X \times_K A_K(\epsilon,1)$ is \emph{eligible}
if for each $x \in X$, the pullback cover of $x \times_K A_K(\epsilon,1)$ is eligible.
\end{defn}

\begin{lemma} \label{L:spread eligible cover}
Choose $x_0 \in X$ and let $Y_0$ be an eligible finite \'etale cover of $x_0 \times_K A_K(\epsilon,1)$. Then there exist a neighborhood $U$ of $x_0$ in $X$, a value $\epsilon' \in (\epsilon,1)$, and an eligible finite \'etale cover $Y$ of $U \times_K A_K(\epsilon',1)$
whose restriction to $x_0 \times_K A_K(\epsilon',1)$ is isomorphic to $Y_0 \times_{A_K(\epsilon,1)} A_K(\epsilon',1)$.
\end{lemma}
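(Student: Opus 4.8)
The plan is to realize $Y_0$ near the boundary by an explicit monic polynomial with Laurent-polynomial coefficients, spread out its finitely many coefficients over a neighborhood of $x_0$, and then shrink that neighborhood enough to pin down the behavior of the discriminant near $|t|=1$ \emph{uniformly}. First I would reduce to a polynomial. By Lemma~\ref{L:henselian}, the eligible cover $Y_0$ corresponds to a finite \'etale algebra $B_0$ over $\calR^{\inte}_{\calH(x_0)}$, which is the unique lift of its reduction $\overline{B}_0$, a finite \'etale algebra over $\kappa_{\calH(x_0)}((\overline{t}))$. Since that field is infinite, $\overline{B}_0$ is monogenic, say $\overline{B}_0 \cong \kappa_{\calH(x_0)}((\overline{t}))[z]/(\overline{P})$ with $\overline{P}$ monic and separable; replacing the Laurent-series coefficients of $\overline{P}$ by sufficiently close Laurent polynomials and invoking Krasner's lemma, I may arrange $\overline{P} \in \kappa_{\calH(x_0)}[\overline{t},\overline{t}^{-1}][z]$ without changing $\overline{B}_0$. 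Lifting the coefficients of $\overline{P}$ term by term to the valuation ring of $\calH(x_0)$ gives a monic $Q_0 \in \calO_{\calH(x_0)}[t,t^{-1}][z]$ whose discriminant reduces to $\mathrm{disc}(\overline{P}) \neq 0$; hence $\mathrm{disc}(Q_0)$ is a unit in $\calR^{\inte}_{\calH(x_0)}$, so $\calR^{\inte}_{\calH(x_0)}[z]/(Q_0) \cong B_0$, and moreover $|\mathrm{disc}(Q_0)|_{\eta_\alpha} = \alpha^{k_0}$ for $\alpha$ near $1$ and some fixed $k_0 \in \ZZ$. Choosing $\epsilon_0 \in (\epsilon,1)$ so that this holds on all of $(\epsilon_0,1)$, the function $\mathrm{disc}(Q_0)$ is nonvanishing on $x_0 \times_K A_K(\epsilon_0,1)$, so $\{Q_0 = 0\}$ is finite \'etale there and, inducing the $\calR^{\inte}_{\calH(x_0)}$-algebra $B_0$, is isomorphic to $Y_0 \times_{A_K(\epsilon,1)} A_K(\epsilon_0,1)$.

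Next I would spread out. Choose an affinoid $\Spa(A,A^+) \subseteq X$ containing $x_0$; since $\calH(x_0)$ is the completion of $\Frac(A/\mathrm{supp}(x_0))$, after passing to a rational neighborhood $U$ of $x_0$ each coefficient of $Q_0$ can be approximated, as closely as desired with respect to $x_0$, by an element of $\calO(U)^+[t,t^{-1}]$. Assembling these yields a monic $P \in \calO(U)^+[t,t^{-1}][z] \subseteq \calO(U \times_K A_K(\epsilon,1))[z]$ with $P_{x_0} \equiv Q_0$ modulo the maximal ideal of $\calR^{\inte}_{\calH(x_0)}$ and $\sup_{\alpha \in (\epsilon_0,1)} |P_{x_0}-Q_0|_{\eta_\alpha}$ smaller than $\inf_{\alpha \in (\epsilon_0,1)} |\mathrm{disc}(Q_0)|_{\eta_\alpha}$ (the latter is a positive constant, and the former is finite since $P$ and $Q_0$ have bounded degree in $t$). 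Then $\mathrm{disc}(P_{x_0})$ and $\mathrm{disc}(Q_0)$ have the same Gauss norm on $(\epsilon_0,1)$, so $\{P_{x_0}=0\}$ is still finite \'etale over $x_0 \times_K A_K(\epsilon_0,1)$, and since it induces the same $\calR^{\inte}_{\calH(x_0)}$-algebra as $\{Q_0=0\}$ it is isomorphic there to $Y_0 \times_{A_K(\epsilon,1)} A_K(\epsilon_0,1)$.

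Finally I would shrink $U$ so that $Y := \{P = 0\}$ becomes finite \'etale over $U \times_K A_K(\epsilon',1)$ for a \emph{uniform} $\epsilon' \in (\epsilon,1)$ and fiberwise eligible. Write $\mathrm{disc}(P) = \sum_j \Delta_j t^j$ (a finite sum, with $\Delta_j \in \calO(U)^+$ since $P$ has power-bounded coefficients); at $x_0$ we have $|\Delta_{k_0}(x_0)| = 1$ and $|\Delta_j(x_0)| < 1$ for $j < k_0$. Now $\{|\Delta_{k_0}| \geq 1\}$ is a rational subset of $U$, and for each of the finitely many $j < k_0$ the locus where $|\Delta_j|$ is dominated by a fixed power of a topologically nilpotent unit of $A$ is a rational subset; intersecting these, and using that $k_0$ and the degree range of $\mathrm{disc}(P)$ in $t$ do not depend on $x$, one obtains a neighborhood of $x_0$ on which $|\mathrm{disc}(P_x)|_{\eta_\alpha} = \alpha^{k_0}$ for every $x$ and every $\alpha$ in a common interval $(\epsilon',1)$. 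Hence $t^{-k_0}\mathrm{disc}(P)$ has constant Gauss norm $1$ on $U \times_K A_K(\epsilon',1)$, so $\mathrm{disc}(P)$ is a unit there and $Y$ is a finite \'etale cover of $U \times_K A_K(\epsilon',1)$; and for each $x \in U$ the element $\mathrm{disc}(P_x)$ lies in $\calR^{\inte}_{\calH(x)}$ with nonzero reduction in $\kappa_{\calH(x)}((\overline{t}))$, so $\calR^{\inte}_{\calH(x)}[z]/(P_x)$ is finite \'etale over $\calR^{\inte}_{\calH(x)}$ and $Y_x$ is eligible. The restriction of $Y$ over $x_0$ is $\{P_{x_0}=0\} \cong Y_0 \times_{A_K(\epsilon,1)} A_K(\epsilon',1)$ by the second step. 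I expect the main obstacle to be precisely this last uniformity: a priori the radius beyond which $\{P_x = 0\}$ is \'etale could tend to $1$ as $x \to x_0$, and ruling that out requires the quantitative shrinking by rational subsets described above rather than mere semicontinuity arguments.
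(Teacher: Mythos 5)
Your proof is correct in outline but takes a genuinely different route from the paper's. The paper proceeds by induction on the degree of the extension $L/\kappa_{\calH(x_0)}((\overline{t}))$, using the standard structure theory of local fields to reduce to three explicit cases (unramified, Kummer, and Artin--Schreier of degree $p$), in each of which a lift to a neighborhood of $x_0$ is immediate from the explicit defining equation; the remark about Huber versus Berkovich spaces is attached to the wild case, where the neighborhood produced is not partially proper. Your approach instead presents the cover globally by a single monic polynomial with Laurent-polynomial coefficients (using monogenicity of finite \'etale algebras over the infinite field $\kappa_{\calH(x_0)}((\overline{t}))$ plus Krasner's lemma to truncate to Laurent polynomials), spreads the finitely many coefficients over a rational neighborhood, and controls the discriminant via Gauss norms. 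What your approach buys is uniformity: one avoids the d\'evissage into three cases and obtains a single quantitative criterion. What the paper's approach buys is structural transparency: the three cases correspond to the layers of ramification, and the lift is tautological in each case. Both routes implicitly use the same device of non--partially-proper rational subsets to pin down behavior near $|t|=1$; in your version this appears via the Laurent conditions on the $\Delta_j$. A few of your steps deserve slightly more care if written out in full — in particular, the approximation of elements of $\calO_{\calH(x_0)}$ by elements of $\calO(U)^+$ requires passing through $\Frac(A/\mathrm{supp}(x_0))$ and shrinking to a rational domain where the denominator is bounded below, and the bound $|\mathrm{disc}(P_{x_0})-\mathrm{disc}(Q_0)|_{\eta_\alpha} \leq |P_{x_0}-Q_0|_{\eta_\alpha}$ needs the observation that the coefficients of $Q_0$ stay bounded in $\eta_\alpha$-norm uniformly for $\alpha$ near $1$ — but these are fixable technicalities, not gaps in the argument.
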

\begin{proof}
We may assume at once that $Y_0$ is connected.
Let $L$ be the finite \'etale extension of $\kappa_{\calH(x_0)}((\overline{t}))$ giving rise to $Y \times_X x_0$. By induction on the degree of $L$ over $\kappa_{\calH(x_0)}((\overline{t}))$,
it suffices to treat the cases where this extension is unramified, totally tamely ramified, or wild and cyclic of degree $p$.

\begin{itemize}
\item
In the unramified case, $L$ is generated by an unramified extension of $\kappa_{\calH(x_0)}$. 
We may lift this to an unramified extension of $\calH(x_0)$ and then spread it out over an open (and partially proper) neighborhood of $x_0$.

\item
In the tamely ramified case, $L$ is a Kummer extension, which we may again lift.

\item
In the wild cyclic case, $L$ is generated by an element $z$ for which $z^p-z \in t^{-1} \kappa_{\calH(x_0)}[t^{-1}]$. By rescaling $t$ suitably, we may force this polynomial to have coefficients in the valuation subring of $\kappa_{\calH(x_0)}$; we may then again lift.
(Note that this case does not yield a partially proper neighborhood; this is why we use Huber spaces instead of Berkovich spaces throughout.)\qedhere
\end{itemize}
\end{proof}

\begin{defn}\label{D:quasiconstant model}
For $\calE \in \calC_X$ (per Hypothesis~\ref{H:solvable relative}),
a \emph{quasiconstant model} of $\calE$ is an object $\calF \in \calC_X$
such that:
\begin{itemize}
\item[(a)]
$\calF_x$ is a quasiconstant model of $\calE_x$ for each $x \in X$;
\item[(b)]
for some $\epsilon \in (0,1)$,
there exists an eligible finite \'etale cover of $X \times_K A_K(\epsilon,1)$ 
which at each $x \in X$ specializes to a cover which makes $\calF_x$ trivial (after suitable
extension of the constant field).
\end{itemize}
\end{defn}

\begin{remark}
If $X$ is smooth over $K$, then condition (b) in Definition~\ref{D:quasiconstant model} allows us to promote $\calF$ to a $K$-linear integrable connection
on $X \times_K A_K(\epsilon,1)$. One consequence of this is that if $\calF'$ is another quasiconstant model, then the local horizontal sections of $\calF^\dual \otimes \calF'$ form an integrable connection on $X$. In particular, if $X$ is affinoid then $\calF$ and $\calF'$ are isomorphic as relative connections, but \emph{not canonically}.
\end{remark}

\begin{theorem} \label{T:quasiconstant pullback}
For $\calE \in \calC_X$, for each $x_0 \in X$, 
there exist an affinoid subspace $U$ of $X$ containing $x_0$
and a quasiconstant model of $\calE|_U \in \calC_U$.
\end{theorem}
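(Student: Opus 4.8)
The plan is to build the quasiconstant model by first reducing to a single fiber, constructing the eligible cover and the regular ``twist'' there, and then spreading both out over a neighborhood. First I would apply Theorem~\ref{T:p-adic mono} at the base point: pick an eligible finite \'etale cover $Z_0 \to x_0 \times_K A_K(\epsilon,1)$ along which $\calE_{x_0}$ becomes regular, hence (after a tame twist if needed) we may extract from $\calE_{x_0}$ a quasiconstant model $\calF_{x_0}$ in the sense of Definition~\ref{D:wild inertia rep}, together with the explicit trivializing cover furnished by Definition~\ref{D:quasiconstant model}(b). Then I would invoke Lemma~\ref{L:spread eligible cover} to spread this cover out: there is a neighborhood $U$ of $x_0$, a value $\epsilon' \in (\epsilon,1)$, and an eligible finite \'etale cover $Y \to U \times_K A_K(\epsilon',1)$ restricting to the chosen cover over $x_0$. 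This handles condition (b) of Definition~\ref{D:quasiconstant model}; the remaining work is to produce an actual object $\calF \in \calC_U$ trivialized (fiberwise, up to constant-field extension) by $Y$, whose fibers are the correct quasiconstant models.

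For the construction of $\calF$ itself, the idea is that once the trivializing cover $Y$ is fixed, a candidate quasiconstant model is governed by a representation of the relevant (relative) inertia: concretely, after pullback to $Y$ the connection $\calE|_U$ should become regular, and the descent data along $\Gal(Y/(U \times A))$ records a representation; I would define $\calF$ to be the connection associated, via descent along $Y$, to the semisimplification (or more precisely the quasiconstant part dictated by the wild monodromy, as in Definition~\ref{D:wild inertia rep}) of that representation, twisted down. The key inputs are: Theorem~\ref{T:break semicontinuity}(a), which guarantees that $\{x : b(\calE_x) = 0\}$ is open, so that after pulling back along $Y$ and shrinking $U$ the pullback is fiberwise regular; and Theorem~\ref{T:break semicontinuity}(c), which then upgrades fiberwise regularity to $\IR(\calE_x,\rho)=1$ uniformly for $\rho$ near $1$, so that the descent takes place in an honest relative category $\calC_U$ rather than only fiber by fiber. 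Once $\calF$ is built this way, checking Definition~\ref{D:quasiconstant model}(a)---that $\calF_x$ is a quasiconstant model of $\calE_x$ for every $x \in U$---follows because the construction commutes with the pullback functors $\calC_U \to \calC_x$, together with the characterization of the quasiconstant model via the wild monodromy representation (Definition~\ref{D:wild inertia rep}) and its compatibility with base extension.

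The main obstacle I expect is precisely the passage from a fiberwise statement to a statement about an object of $\calC_U$: a priori Theorem~\ref{T:p-adic mono} and Proposition~\ref{P:quasiconstant rep} only produce the cover and the quasiconstant model one fiber at a time, with no uniformity in $x$, and the tame twist needed to pass from ``regular'' to ``trivial'' may also vary. Overcoming this requires (i) using Lemma~\ref{L:spread eligible cover} to make the \emph{cover} uniform over a neighborhood, which is exactly why that lemma was proved, and (ii) using Theorem~\ref{T:break semicontinuity}(c) to make the regularity of the pullback \emph{uniform} in $\rho$, so that Lemma~\ref{L:global slope decomposition} (or simply the definition of $\calC_U$) applies to realize the regular pullback and its descent inside a single relative connection over $U \times_K A_K(\epsilon'',1)$ for some $\epsilon''$. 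A secondary technical point is that eligibility of $Y$ at every $x \in U$ (not just at $x_0$) is part of the conclusion of Lemma~\ref{L:spread eligible cover}, so condition (b) of Definition~\ref{D:quasiconstant model} is automatic once $\calF$ is constructed. Shrinking $U$ to an affinoid neighborhood of $x_0$ at the end is harmless since all the conditions imposed are either open or stable under passing to affinoid subspaces.
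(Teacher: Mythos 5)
Your proposal follows essentially the same skeleton as the paper: apply Theorem~\ref{T:p-adic mono} at the single fiber $x_0$ to get an eligible cover; spread that cover to a neighborhood via Lemma~\ref{L:spread eligible cover}; pull back $\calE$ along the spread cover; and use Theorem~\ref{T:break semicontinuity} to shrink to a neighborhood on which the pullback is fiberwise regular. These are precisely the three ingredients in the paper's (quite terse) proof.

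Where your write-up is imprecise is in the passage from the spread-out cover to the object $\calF$ itself. You assert that spreading out the cover already ``handles condition (b) of Definition~\ref{D:quasiconstant model},'' but condition (b) asks for a cover which specializes at each $x$ to one trivializing $\calF_x$, and at that point in your argument $\calF$ has not yet been constructed; the cover $Y$ as built only makes each $\calE_x$ \emph{regular}, not $\calF_x$ \emph{trivial}, and these are not the same thing. Likewise, ``the descent data along $\Gal(Y/(U\times A))$ records a representation'' is not accurate as stated: the descent data lives on $\pi^*\calE$, which after pullback to $Y$ is only regular and not constant, so it does not directly furnish a Galois representation on a fixed vector space to twist by. Extracting the wild monodromy representation requires the machinery of Proposition~\ref{P:quasiconstant rep} and Lemma~\ref{L:rank} (the $\sim$-equivalence and regular components), together with an argument that the resulting representation is locally constant in $x$ so that a single descent datum suffices over a connected $U$; your sketch elides this. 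The paper sidesteps the issue by simply observing that once the pullback is fiberwise regular the trivial connection serves as its quasiconstant model, and leaves the translation back to $\calE|_U$ implicit --- so the places where your sketch is vague are essentially the places where the paper itself is terse, and your identification of the structure and the key lemmas is correct.
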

\begin{proof}
By Theorem~\ref{T:p-adic mono}, the claim holds when $X = \{x_0\}$.
To prove the general case, we may spread out the resulting eligible cover using Lemma~\ref{L:spread eligible cover} and then pull back
to reduce to the case where $\calE_{x_0}$ is regular.
In this case, Theorem~\ref{T:break semicontinuity} implies that
there exists some choice of $U$ for which $\calE_x$ is regular for all $x \in U$;
this proves the claim.
\end{proof}

\begin{cor} \label{C:regular from fibers}
If $\calE_x$ is regular for each $x$ in some dense subset of $X$, then $\calE$ is regular.
\end{cor}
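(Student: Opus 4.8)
The plan is to reduce, by passing to a quasiconstant model, to a statement about a single finite \'etale cover of the relative annulus, at which point the regular locus becomes visibly \emph{closed}. First I would fix $x_0\in X$ and aim to show that $\calE_{x_0}$ is regular. Theorem~\ref{T:break semicontinuity}(a) only tells us the regular locus is \emph{open}, and the complement of a closed nowhere-dense set need not be closed, so openness plus density is not enough on its own; instead I would invoke Theorem~\ref{T:quasiconstant pullback} to obtain an affinoid neighborhood $U\ni x_0$ together with a quasiconstant model $\calF$ of $\calE|_U$. An open subspace inherits a dense subset of regular fibres, and for each $x$ the object $\calF_x$ is the quasiconstant model of $\calE_x$; since $\calE_x$ is regular if and only if its quasiconstant model is the trivial connection, the problem becomes: $\calF\in\calC_U$ is a quasiconstant model, $\calF_x$ is trivial for all $x$ in a dense subset $D$ of $U$, and we must show that every $\calF_x$ is trivial.

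Next I would exploit Definition~\ref{D:quasiconstant model}(b): there is an eligible finite \'etale cover $\pi\colon Y\to U\times_K A_K(\epsilon,1)$ whose fibrewise pullbacks trivialize $\calF$. Arranging $\pi$ to be Galois with group $G$ and (after a harmless finite \'etale base change on $U$, or after shrinking as in Lemma~\ref{L:spread eligible cover}) taking $Y$ connected, the pullback $\pi^*\calF$ becomes the trivial relative connection $\calO_Y^{\,n}$, and the descent data presenting $\calF$ as $(\pi^*\calF)^G$ give a representation $\rho\colon G\to\GL_n\big(\Gamma(Y,\calO_Y)^{\nabla}\big)=\GL_n(\calO(U))$ whose fibre $\rho_x\colon G\to\GL_n(\calH(x))$ is the monodromy representation of $\calF_x$.

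Now $\calF_x$ is trivial if and only if $\rho_x(g)$ is the identity for every $g\in G$ (an object trivialized by the Galois cover $\pi_x$ is trivial precisely when its descent datum is trivial), i.e.\ if and only if $x$ lies in the common zero locus $Z$ of the finitely many functions in $\calO(U)$ given by the entries of $\rho(g)-1$ as $g$ ranges over $G$. This locus $Z$ is closed, and it contains the dense subset $D$; hence $Z=U$. Therefore every $\calF_x$ is trivial, so every $\calE_x$ is regular; as $x_0$ was arbitrary, $\calE$ is regular.

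The crux — the one point I would expect to require genuine care — is the passage from the fibrewise triviality of $\pi^*\calF$, as literally provided by Definition~\ref{D:quasiconstant model}(b), to its triviality as a \emph{relative} connection over $Y$, so that the descent representation $\rho$ is defined over $\calO(U)$ and the triviality locus is cut out by honest analytic functions on $U$; this is where one uses that a quasiconstant model carries no ``variable exponent'' moduli (cf.\ the Remark after Definition~\ref{D:quasiconstant model}) together with the spreading-out of Lemma~\ref{L:spread eligible cover}. The remaining verifications are routine: that $\calE_x$ is regular exactly when its quasiconstant model is trivial (reduce to perfect residue field, where it follows from Theorem~\ref{T:matsuda}), and that the reduction to $X$ affinoid is legitimate even when $X$ is not tft.
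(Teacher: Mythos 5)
Your first step---reducing locally to a quasiconstant model via Theorem~\ref{T:quasiconstant pullback}---agrees with the paper's proof, but what you do next contains a genuine gap that you flag but do not close, and the paper takes a different route precisely to avoid it. Definition~\ref{D:quasiconstant model}(b) only asserts that each \emph{fibre} $(\pi^*\calF)_x$ is trivial; it does not give you $\pi^*\calF$ trivial as a relative connection over $Y$ with a globally defined horizontal basis, and deducing that from fibrewise triviality is a coherence/constancy statement of essentially the same flavour as the corollary you are trying to prove. The references you lean on do not supply it: the Remark after Definition~\ref{D:quasiconstant model} presupposes $X$ smooth (not assumed here) and in any case asserts rather than proves the gluing, while Lemma~\ref{L:spread eligible cover} spreads out the cover, not the horizontal sections. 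There is also a logically prior problem: you need the dense set $D$ to lie in the \emph{triviality} locus of $\calF$, but the hypothesis only gives that $\calE_x$ (hence $\calF_x$) is \emph{regular} on $D$, and a regular quasiconstant object need not be trivial---a rank-$1$ object with exponent in $\ZZ[1/N]\cap\ZZ_p\setminus\ZZ$ is regular, quasiconstant, and nontrivial. This gap would disappear if the cover were totally wildly ramified (so that the tame quotient of its Galois group is trivial), but Theorem~\ref{T:quasiconstant pullback} does not by itself put you in that situation.

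The paper sidesteps both issues at once by inducting on the degree of the eligible cover to reduce to a $\ZZ/p\ZZ$-extension (which is totally wild) and then to rank~$1$, where the connection has an explicit Artin--Schreier normal form $\nabla(\bv)=\pi\sum_{i=1}^m f_i\, t^{-i-1}\,dt$ with $f_i\in\calO(X)$, $|f_m|=1$ (here $\pi$ denotes a fixed $(p-1)$-st root of $-p$). After a flat base change to force $p\nmid m$, the nonempty open locus $\{|f_m|_x\geq 1\}$ satisfies $b_1(\calE_x)=m>0$, contradicting density of the regular locus. This argument reads the slope directly off the coefficients $f_i$ and never needs a global horizontal basis or the regular-versus-trivial distinction; the reduction to a rank-$1$, $\ZZ/p\ZZ$-cover setting is the manoeuvre your sketch is missing.
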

\begin{proof}
We may work locally on $X$, so by Theorem~\ref{T:quasiconstant pullback} we may assume that
$\calE$ admits a quasiconstant model, and then replace $\calE$ with said model. By induction on the minimum degree of the eligible finite cover, we may further reduce to the case that $\calE$ becomes fiberwise constant after a $\ZZ/p\ZZ$-extension. In this setting, $\calE$ decomposes as a direct sum of line bundles, so we may assume that $n = 1$. 
We may assume that $K$ contains an element $\pi$ with $\pi^{p-1} = -p$.

Suppose by way of contradiction that $\calE$ is not regular. Then $\calE$ is free on a single generator $\bv$ satisfying $\nabla(\bv) = \pi \sum_{i=1}^m f_i t^{-i-1}\,dt$ where $f_i \in \calO(X)$, $|f_i| \leq 1$ for all $i$, and $|f_m| = 1$. (The eligible cover is then the Artin--Schreier cover $z^p - z = \sum_i \overline{f}_i t^i$.) By replacing $X$ with a suitable finite flat cover of $X$, we may further ensure that $m$ is not divisible by $p$.
Now let $U$ be the set of $x \in X$ for which $|f_m|_x \geq 1$; then $U$ is an open subset of $X$ with the property that $b_1(\calE_x) = m$ for all $x \in U$. However, this contradicts the hypothesis that $\calE_x$ is regular for each $x$ in some dense subset of $X$.
\end{proof}

We illustrate the limitations of Theorem~\ref{T:quasiconstant pullback} with an example derived from the classical Bessel equation.
\begin{example}
Assume that $p \neq 2$ and that $K$ contains an element $\pi$ with $\pi^{p-1} = -p$.
Let $\calE_0$ be 
the $K$-linear connection on $X \times_K A_K(0,1)$ defined by the matrix
\[
\begin{pmatrix}
0 & t^{-1}\,dt \\
\pi^2 t^{-2}\,dt & 0 \end{pmatrix}.
\]
As explained in \cite[Example~20.2.1]{kedlaya-book} (following \cite[Example~6.2.6]{tsuzuki}), this becomes regular after pullback along the extension
\[
L_0 = \kappa_K((\overline{t}))[\overline{t}^{1/2}, z]/(z^p - z - \overline{t}^{-1/2}).
\]
Now let $X$ be the closed unit disc in the coordinate $x$ and let $\calE$ be the pullback of $\calE_0$ along the map $(x,t) \mapsto x^{-1} t$. Let $\eta \in X$ be the Gauss point; then
$\calE_\eta$ becomes regular after pullback along the extension
\[
L = \kappa_K(\overline{x})((\overline{t}))[\overline{x}^{1/2} \overline{t}^{1/2}, z]/(z^p - z - \overline{x}^{1/2} \overline{t}^{-1/2}),
\]
which does not extend to a finite \'etale cover of $\kappa_K[x]((t))$.

Note that in this example, the relative connection does not extend to a $K$-linear connection unless we allow a logarithmic singularity along $x=0$. It may be that one can obtain better results for $K$-linear connections; we will not pursue this point here.
\end{example}

\section{Applications}

We conclude with some applications of the relative $p$-adic local monodromy theorem.

\subsection{Semistable reduction for isocrystals}

\begin{defn} \label{D:regular integrable}
Let $W$ be an adic space over $K$ with a one-point boundary $\eta$.
Consider the space $X := W \times_K A_K[\epsilon,1)$; without referring to the first projection map, we cannot describe the fibers of $X$ over points of $W$. However, the fiber $X_\eta$ can be described using only the second projection: for each $\rho \in [\epsilon,1)$, the fiber of $t_\rho$ for the second projection admits a unique one-point boundary, and this point belongs to $X_\eta$.

Now suppose that $W$ is smooth over $K$ and $\calE$ is an integrable $K$-linear connection on $X$. Then for any choice of the first projection $X \to W$, we may view $\calE$ as an object of $\calC_W$ and ask whether or not the resulting object is regular. By Theorem~\ref{T:break semicontinuity} this depends only on the restriction to $X_\eta$,
where the condition can be phrased in a manner that does not refer to the first projection:
namely, we want that for $\rho$ sufficiently close to 1, the restriction of the connection to the disc $|t-t_\rho| < \rho$ is isomorphic to the fiber at $t_\rho$ (further equipped with
the action of derivations that kill $t$).
Consequently, the regularity property is independent of the choice of the first projection.
\end{defn}

\begin{defn}
We may associate to every smooth scheme $X$ over $k$ the category of \emph{overconvergent isocrystals on $X$ with coefficients in $K$},
e.g., by considering crystals on the overconvergent site of Le Stum \cite{lestum}. 
To make this concrete, consider an open affine subscheme $U$ of $X$ 
and an open immersion $U \to \overline{U}$ of affine $k$-schemes whose complement $Z$ is the smooth integral $k$-scheme cut out by some $t \in \overline{U}$.
Fix a smooth formal scheme $P$ over $\frako_K$ lifting $Z$ and let $W$ be the Raynaud generic fiber of $P$. Then an overconvergent isocrystal $\calE$ on $X$ with coefficients in $K$ restricts 
to an integrable $K$-linear connection on $W \times_K A_K[\epsilon,1)$ for some $\epsilon \in (0,1)$. Using Definition~\ref{D:regular integrable}, we may define
the condition that $\calE$ is \emph{regular along $Z$}. Note that we may keep track of the choice of $Z$ in terms of its associated divisorial valuation on the function field $k(U)$.
\end{defn}

\begin{theorem} \label{T:global semistable}
Let $X$ be a smooth scheme over $k := \kappa_K$.
Let $\calE$ be an overconvergent isocrystal on $X$ with coefficients in $K$.
Then there exist a dominant, generically finite morphism $f\colon Y \to X$ such that for any dominant morphism $g\colon U \to Y$ with $U$
smooth and any open immersion $j\colon U \to \overline{U}$ with $\overline{U}$ smooth over $k$ and 
$\overline{U} \setminus U$ a strict normal crossings divisor, $(g \circ f)^* \calE$ is regular along each component of $\overline{U} \setminus Y$.
\end{theorem}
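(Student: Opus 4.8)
The plan is to reduce the global statement to the local relative monodromy theorem (Theorem~\ref{T:quasiconstant pullback}) by a noetherian induction on $\dim X$, exactly along the lines of the strategy that makes the semistable reduction theorem of \cite{kedlaya-semi4} accessible — but now using the relativized machinery to handle the ``spreading out'' step uniformly. First I would replace $X$ by its function field's generic point: by Theorem~\ref{T:quasiconstant pullback} applied to a suitable boundary (Berkovich/adic) point lying over the generic point, there is a finite separable extension of the relevant $\kappa((\overline{t}))$ — equivalently, after a generically finite cover $f\colon Y\to X$ — along which the isocrystal becomes regular along the divisorial valuation attached to the generic point of each component of $\overline{U}\setminus U$. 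The content of Definition~\ref{D:regular integrable} is precisely that this regularity is intrinsic to the restriction to the fiber $X_\eta$ over the one-point boundary, hence does not depend on the choice of first projection; so it suffices to arrange regularity for \emph{one} convenient boundary model and then propagate.

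The key steps, in order, are: (1) reduce to $k$ algebraically closed and to $X$ affine and smooth; (2) treat the case $\dim X = 1$ first, where $\overline{U}\setminus U$ is a finite set of closed points, and apply Theorem~\ref{T:quasiconstant pullback} at each of these points to the object of $\calC_W$ obtained from a formal lift $P$ of (a neighborhood of) the point — here Theorem~\ref{T:break semicontinuity}(a) guarantees that once the fiber over the boundary point is regular, regularity holds in a neighborhood; (3) for $\dim X = d > 1$, choose a divisor component $Z$ of $\overline{U}\setminus U$, lift a Zariski-dense open of $Z$ to a smooth formal scheme $P$ with Raynaud generic fiber $W$ (a smooth rigid space of dimension $d-1$), and regard $\calE$ as an object of $\calC_W$; apply Corollary~\ref{C:regular from fibers} together with the inductive hypothesis on the $(d-1)$-dimensional base $W$ to get regularity along $Z$ after a generically finite cover; (4) iterate over the finitely many components, composing the generically finite covers, and finally invoke the independence of the choice of compactification and first projection (Definition~\ref{D:regular integrable}) plus the functoriality of $\calC_{(-)}$ under the dominant morphism $g\colon U\to Y$ to conclude that $(g\circ f)^*\calE$ is regular along each component of $\overline{U}\setminus Y$ for \emph{any} allowable $(U,\overline{U})$.

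I expect the main obstacle to be step (3)–(4): making the passage between the ``isocrystal'' language (which lives on $k$-schemes and their formal/rigid lifts) and the ``object of $\calC_X$'' language (which lives on adic spaces and relative annuli) genuinely compatible with \emph{composition} of generically finite covers, so that a cover trivializing the monodromy along one component does not destroy the regularity already achieved along another. Concretely, one must check that after pulling back along a generically finite $f\colon Y\to X$, the residual singularities along the other components can again be put into strict-normal-crossings position (by a further modification of $\overline{U}$) without reintroducing irregularity — this is where Theorem~\ref{T:break semicontinuity}(c) (regularity is an open, partially proper condition detectable near the boundary) and Corollary~\ref{C:regular from fibers} (regularity spreads from a dense set of fibers) do the real work. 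The geometry of simultaneously resolving and the bookkeeping of valuations (tracking each component via its divisorial valuation on $k(U)$, as the theorem statement emphasizes) is the part that requires genuine care; once that is in place, the local monodromy input is entirely supplied by Theorem~\ref{T:quasiconstant pullback} and its corollaries.
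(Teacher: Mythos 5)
Your proposal diverges substantially from the paper's argument, and it contains genuine gaps that would derail it. The paper does not perform a noetherian induction on $\dim X$ with a component-by-component treatment. Instead it reduces (via \cite{kedlaya-etale}) to the case $X = \AAA^n$, blows up $\PP^n$ at a point to obtain a $\PP^1$-fibration over $\PP^{n-1}$, hence a $K$-linear integrable connection on an annulus bundle over $\PP^{n-1,\ana}_K$, then applies Theorem~\ref{T:quasiconstant pullback} and uses the \emph{quasicompactness} of $\PP^{n-1,\ana}_K$ to extract a finite collection of eligible covers $U_i \times_K A_K[\epsilon,1)$. Crucially, it then passes to Raynaud's admissible blowup model to interpret the $U_i$ as opens $\Spec R_i$ of a special fiber $P$, identifies each eligible cover with a finite \'etale cover of $R_i((t))$, and applies the Katz--Gabber construction to realize these as finite \'etale covers of $\Spec R_i[t^{-1}]$, yielding the scheme-theoretic dominant generically finite $f$.

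The main gaps in your version are the following. First, your inductive hypothesis quantifies over smooth schemes over $k$ of lower dimension, but in step (3) you want to apply it to the Raynaud generic fiber $W$, which is a rigid analytic space over $K$, not a $k$-scheme; these are different categories, and the induction does not close. Second, you never supply the mechanism for converting an eligible finite \'etale cover of an adic annulus bundle into a scheme-theoretic generically finite morphism $f\colon Y \to X$; this is precisely the Raynaud/Katz--Gabber step that the paper carries out, and without it the alleged cover $f$ does not exist as a morphism of $k$-schemes. Third, you fix one compactification $\overline{U}$ and iterate over its finitely many boundary components, but the theorem requires a single $f$ working for \emph{every} allowable $(U,\overline{U})$, i.e.\ for every divisorial valuation that can appear; iterating over one set of components cannot deliver this, whereas the paper's use of the quasicompact analytic space $\PP^{n-1,\ana}_K$ sweeps up all the relevant valuations simultaneously. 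Fourth, Corollary~\ref{C:regular from fibers} only \emph{spreads} regularity from a dense set of fibers; it cannot be used in step (3) to \emph{achieve} regularity along $Z$ (that is the role of Theorem~\ref{T:quasiconstant pullback}). You have correctly identified that the local relative monodromy theorem is the engine and that Definition~\ref{D:regular integrable} makes regularity intrinsic, but the global scaffolding you propose would not assemble into a proof.
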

\begin{proof}
We may check the claim after replacing $X$ with an open dense subscheme or pushing forward along a finite \'etale morphism. Since $X$ is covered by open dense subschemes each of which can be written as a finite \'etale cover of an affine space \cite{kedlaya-etale}, we may reduce to the case $X = \AAA^n$. If we blow up $\PP^n$ at a point in $X$, we obtain a $\PP^1$-fibration $Y$ over $\PP^{n-1}$; we now obtain from $\calE$ a $K$-linear integrable connection on an annulus bundle $\tilde{Y}$ over $\PP^{n-1,\ana}_K$.

We now apply Theorem~\ref{T:quasiconstant pullback}; this yields a collection of eligible finite covers, each defined over $U_i \times_K A_K[\epsilon,1)$ for some open subset $U_i$ of $\PP^{n-1,\ana}_K$ and some $\epsilon \in (0,1)$. Passing to Raynaud's point of view, the covering of $\PP^{n-1,\ana}_K$ by the $U_i$ arises from a Zariski covering of some admissible blowup of the formal completion of $\PP^{n-1}_{\mathfrak{o}_K}$ along $\PP^{n-1}_k$.
Let $P$ be the special fiber of the blowup, so that each $U_i$ corresponds to some open affine subset $\Spec R_i$ of $P$, and the eligible cover associated to $U_i$ corresponds to a finite \'etale cover of $R_i((t))$; using the Katz--Gabber construction
\cite{katz-gabber} we may realize the latter as the pullback of a finite \'etale cover $g_i\colon V_i \to \Spec R_i[t^{-1}]$. We now construct $f$ by choosing a generically finite map to $Y \times_P \PP^{n-1}_k$ dominating each $g_i$; 
this has the desired effect.
\end{proof}

\begin{remark} \label{R:recover semistable reduction}
Theorem~\ref{T:global semistable} gives a new proof of the semistable reduction theorem for overconvergent $F$-isocrystals as stated in \cite[Theorem~2.4.4]{kedlaya-semi4}.
In that statement, $K$ is assumed to be discretely valued and $\calE$ is assumed to carry a Frobenius structure.

Let us spell out in more detail how this implication works.
In Theorem~\ref{T:global semistable}, by de Jong's alterations theorem \cite{dejong-alterations} the map $f$ can be chosen so that $Y$ itself admits a good compactification $\overline{Y}$. x
For each boundary component $Z$ of $\overline{Y} \setminus Y$, Theorem~\ref{T:p-adic mono with Frob} implies that after replacing $f$ with a further tame covering, $f^* \calE$ has unipotent monodromy along each $Z$.
We may then apply \cite[Proposition~6.3.2]{kedlaya-semi1}
to obtain an extension of $f^* \calE$ to a convergent log-$F$-isocrystal with nilpotent residues on $\overline{Y}$ for the canonical logarithmic structure defined by $Z$.

The method of proof of Theorem~\ref{T:global semistable} is flexible enough to make it easily adaptable to related situations. We leave such adaptations to the interested reader.
\end{remark}

The following question arises if one tries to prove Theorem~\ref{T:global semistable} without reduction to the case of projective space.
\begin{conj} \label{conj:multiple lmt}
Let $\calE$ be an integrable $K$-linear connection on $A_K[\epsilon,1)^m$ for some positive integer $m$ and some $\epsilon \in (0,1)$. Then for some $\epsilon' \in [\epsilon,1)$, there exists some finite \'etale covering $Y \to A_K[\epsilon',1)^m$ such that for every nonarchimedean field $L$ and every morphism $A_L[\epsilon'',1) \to Y \times_K L$ of adic spaces over $L$, the pullback of $\calE$ to $A_L[\epsilon'',1)$ is regular as an object of $\calC_L$.
More precisely, $Y \to A_K[\epsilon',1)^m$ is ``eligible'' in the sense of being induced by a finite \'etale ring extension of the bounded subring of $\calO(A_K[\epsilon',1)^m)$.
\end{conj}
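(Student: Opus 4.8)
The plan is to reduce to the one-variable local monodromy theorem one coordinate at a time. First, it suffices to produce a single eligible cover $Y \to A_K[\epsilon',1)^m$ over which $\calE$ becomes regular in each of the $m$ coordinate directions --- that is, for each $j$, the pullback of $\calE$ to $Y$ is regular when regarded, via the $j$-th coordinate, as a relative connection over the remaining $m-1$ coordinates in the sense of \S\ref{sec:relative conn}. Granting this, a several-variable estimate on generic radii of convergence (in the spirit of \cite[Chapter~11]{kedlaya-book}) forces $\IR(\bullet,\rho)\to 1$ along every one-dimensional probe $A_L[\epsilon'',1)\to Y\times_K L$, which is the asserted regularity in $\calC_L$. (As the conclusion already presupposes, we assume $\calE$ is solvable along each coordinate; this holds automatically for connections arising from overconvergent isocrystals, the case relevant to Theorem~\ref{T:global semistable}.)

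Fix the $t_m$-direction and regard $A_K[\epsilon,1)^m$ as the relative annulus in $t_m$ over the base $B := A_K[\epsilon,1)^{m-1}$, so that $\calE \in \calC_B$. Over each affinoid piece $B_\delta := A_K[\epsilon,\delta]^{m-1}$ of the exhaustion $\delta \to 1^-$ of $B$, Theorem~\ref{T:quasiconstant pullback} (whose proof invokes the spreading-out Lemma~\ref{L:spread eligible cover}) provides, on an affinoid neighborhood of each point, a quasiconstant model of $\calE$ together with an eligible cover in the $t_m$-direction along which $\calE$ becomes fiberwise regular (its fiberwise wild monodromy being killed). By quasicompactness finitely many such neighborhoods cover $B_\delta$; over overlaps one passes to the Galois closure of the compositum of the relevant covers, which preserves fiberwise regularity in $t_m$ (regularity of a connection on an annulus being stable under pullback along eligible covers), so the corresponding finite \'etale algebras over the relative bounded Robba ring $\calO(B_\delta)\,\widehat{\otimes}_K\,\calR^{\inte}_K$ agree on overlaps and glue. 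Theorem~\ref{T:break semicontinuity}(c) then upgrades fiberwise regularity to the uniform statement $\IR(\calE_x,\rho_m)=1$ for $\rho_m$ near $1$. Letting $\delta\to 1^-$, one would like to bound the degrees of these covers using the finiteness of the possible break data (Lemma~\ref{L:sup function}, Theorem~\ref{T:break semicontinuity}(a)) and extract a single eligible cover $Z$ over $A_K[\epsilon_1,1)^{m-1}\times_K A_K[\epsilon_1,1)$ valid uniformly in $\delta$. Since an eligible cover of an annulus is itself an annulus after a finite (unramified, up to enlarging $K$) coordinate change, $Z$ is then, locally on its base and after enlarging $K$, again a polyannulus of dimension $m$ carrying a connection regular in the $t_m$-direction.

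One now repeats this construction for $t_{m-1}$, then $t_{m-2}$, and so on. At each stage one must check that arranging regularity in the direction under consideration does not destroy the regularity already obtained in the directions treated earlier; this holds because the covers used are eligible and the ramification they introduce in one coordinate is invisible to the generic radius of convergence measured in another. After $m$ passes one obtains a composite eligible cover $Y \to A_K[\epsilon',1)^m$ over which $\calE$ is regular in all $m$ coordinate directions; eligibility is stable under composition of covers and base change, so $Y$ is of the required form.

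The main obstacle is the passage, in the second paragraph, from the covers supplied point-by-point by Theorem~\ref{T:quasiconstant pullback} to a single finite \'etale cover of the entire product. The base $B = A_K[\epsilon,1)^{m-1}$ is not quasicompact, and, by the upper semicontinuity of break data (Theorem~\ref{T:break semicontinuity}(a)), the wild monodromy of $\calE_x$ in the $t_m$-direction can grow as $x$ moves toward the missing boundary radii of $B$; a priori, then, no finite \'etale cover of bounded degree need work at every $x$, and neither need the domain of definition of such covers stay away from $\rho_m = 1$. Overcoming this requires a genuinely uniform, stratified understanding of how the local monodromy varies over $B$ --- a several-variable refinement of the one-variable theory (strengthening Theorem~\ref{T:break semicontinuity} and Theorem~\ref{T:unipotent spread}) that is not yet available, which is why the statement is only a conjecture. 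A Frobenius structure would change the situation by supplying the missing rigidity through Drinfeld-lemma techniques, but the conjecture deliberately imposes none.
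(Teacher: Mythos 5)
This statement is a conjecture in the paper; no proof is offered, and the only thing the surrounding text establishes is that a relative version would follow from it by imitating the proof of Theorem~\ref{T:quasiconstant pullback}. Your sketch therefore cannot be measured against a proof in the text, and your ultimate verdict --- that the natural approach runs into a genuine obstruction --- matches the paper's own implicit assessment in leaving this open.

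Your identification of the fatal obstacle is correct. Theorem~\ref{T:quasiconstant pullback} produces eligible covers only locally on the base; the base $B = A_K[\epsilon,1)^{m-1}$ in your inductive step is not quasicompact; and Theorem~\ref{T:break semicontinuity}(a) gives only upper semicontinuity, so nothing in the paper prevents the break data --- hence the degree and domain of the needed eligible cover --- from degenerating as $x$ approaches the missing outer boundary of $B$. As you observe, the Drinfeld-lemma results of \S\ref{subsec:Drinfeld lemma} avoid exactly this by feeding the partial Frobenius structures into the convexity-plus-periodicity argument of Lemma~\ref{L:constant breaks}, which forces the break data to be constant; the conjecture drops that hypothesis, and that is why it is open. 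Two secondary gestures in the sketch would also need genuine arguments: (i) that regularity along the $m$ coordinate directions forces regularity along every probe $A_L[\epsilon'',1) \to Y \times_K L$ is not an immediate consequence of Lemma~\ref{L:multi-convexity}, since intrinsic radii do not transform simply under reparametrization and a general probe need not be coordinate-aligned; and (ii) passing to Galois closures of composita enlarges the local covers but does not by itself make them agree on overlaps, so even the glueing over a fixed quasicompact $B_\delta$ needs more. But neither of these is the sticking point, which you have diagnosed correctly.
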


\begin{remark}
By imitating the proof of Theorem~\ref{T:quasiconstant pullback},
one can deduce from Conjecture~\ref{conj:multiple lmt} a corresponding relative version: for $X$ an adic space over $K$ and $\calE$ an integrable $\calO_X$-linear connection on $X \times_K A_K[\epsilon,1)^m$, locally on $X$ there is an ``eligible'' finite \'etale covering $Y\to X \times_K A_K[\epsilon',1)^m$ with a similar effect.
\end{remark}

\subsection{de Rham local systems}

We spell out some remarks which are left implicit in \cite[\S 7]{kedlaya-liu2}.
\begin{remark}
Let $L$ be a complete discretely valued field of mixed characteristic $(0,p)$
with perfect residue field. We may use Fontaine's period rings to define the conditions \emph{crystalline} and \emph{semistable} on a continuous representation of $G_L$ on a finite-dimensional $\QQ_p$-vector space.
However, we use the term \emph{log-crystalline} in place of \emph{semistable} so as to avoid confusion with the semistable condition on vector bundles on the Fargues--Fontaine curve.
\end{remark}

\begin{defn}
For any perfectoid field $L$ of characteristic 0, let $\FaFo_L$ denote the Fargues--Fontaine curve (with coefficients in $\QQ_p$) associated to the field $L$; it admits a distinguished point with residue field $L$, which we call the \emph{de Rham point}.

There is a functorial ``Narasimhan--Seshadri'' correspondence between continuous representations of $G_L$ on finite-dimensional $\QQ_p$-vector spaces and vector bundles on $\FaFo_L$ which are semistable of degree 0. More generally, vector bundles which are semistable of degree $r/s$, where $r/s \in \QQ$ is written in lowest terms, correspond to continuous representations of $G_L$ on finite-dimensional $\QQ_{p^s}$-vector spaces.

A \emph{modification} of vector bundles on $\FaFo_L$ consists of a pair of vector bundles $V,V'$ together with a meromorphic map $V \to V'$ which is an isomorphism away from the de Rham point.
\end{defn}

\begin{defn}
For any nonarchimedean field $L$ containing $\QQ_p$, we may define $\FaFo_L$ in the category of diamonds, and define the category of vector bundles on it by descent from the case of a perfectoid field. We then obtain a fully faithful functor from the category of continuous representations of $G_L$ on finite-dimensional $\QQ_p$-vector spaces to the category of vector bundles on $\FaFo_L$, whose essential image consists of bundles whose pullbacks to the Fargues--Fontaine curve over any perfectoid field containing $L$ are semistable of degree 0.

Let $L$ be a complete discretely valued field of mixed characteristic $(0,p)$
with perfect residue field.
By the Beauville--Laszlo theorem, a representation of $G_L$ is de Rham if and only it corresponds to a vector bundle $V$ admitting a
modification $V \to V'$
such that the pullback of $V'$ to the formal completion at the de Rham point is trivial as a $G_L$-representation. This modification is unique if it exists; we call it the \emph{canonical modification} of $V$.
\end{defn}

\begin{lemma} \label{L:Berger log-crystalline}
Let $L$ be a complete discretely valued field of mixed characteristic $(0,p)$
with perfect residue field.
Let $\rho$ be a de Rham representation of $G_L$, let $V$ be the corresponding vector bundle on $\FaFo_L$, and let $V \to V'$ be the canonical modification.
Then $\rho$ is log-crystalline if and only if each successive quotient of the Harder--Narasimhan filtration of $V'$ corresponds to a trivial $G_L$-representation.
\end{lemma}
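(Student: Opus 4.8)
The plan is to reduce the statement to the classical theorem of Berger \cite{berger-mono} that a de Rham representation of $G_L$ is potentially log-crystalline, combined with a descent argument that tracks the Harder--Narasimhan filtration of $V'$ through a finite extension.

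First I would set up the dictionary: $\rho$ log-crystalline means (after the usual unramified twist) that $D_{\mathrm{st}}(\rho)$ has full dimension, which in the Fargues--Fontaine language says that the canonical modification $V'$ of $V$ is a \emph{trivial} bundle, i.e. $\calO_{\FaFo_L}^{\oplus n}$ with the trivial $G_L$-structure; equivalently each graded piece of its Harder--Narasimhan filtration is trivial of slope $0$. So the ``only if'' direction is essentially a tautology once the dictionary is in place. The content is the ``if'' direction: assuming each $\mathrm{gr}$ of the HN filtration of $V'$ corresponds to a trivial $G_L$-representation, we must show $V'$ itself is trivial, i.e. the filtration splits $G_L$-equivariantly.

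For the ``if'' direction I would argue as follows. By Berger's theorem there is a finite Galois extension $M/L$ such that $\rho|_{G_M}$ is log-crystalline; pulling back along $\FaFo_M \to \FaFo_L$, the modification $V \to V'$ base-changes to the canonical modification over $M$ (uniqueness of the canonical modification, plus the fact that ``trivial at the de Rham point'' is insensitive to the base change since the residue field there is $M\supseteq L$ already — actually one checks the de Rham point of $\FaFo_L$ has residue field $L$, so one must be slightly careful and instead use that $\rho|_{G_M}$ de Rham gives a canonical modification whose pullback property matches). Then $V'|_{\FaFo_M}$ is semistable of degree $0$ with \emph{trivial} HN filtration, hence is itself a trivial bundle with trivial $G_M$-action; thus the graded pieces $\mathrm{gr}^i V'$, after restriction to $G_M$, are already trivial, and the extensions between them over $\FaFo_M$ are split. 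It remains to descend the splitting from $G_M$ to $G_L$: the obstruction lies in $H^1(\Gal(M/L), \mathrm{Hom}(\mathrm{gr}^j,\mathrm{gr}^i))$ for $i<j$, and here one uses that each $\mathrm{gr}$ \emph{as a $G_L$-representation} is trivial, so these Hom-groups are honest $\QQ_p$-vector spaces with trivial $\Gal(M/L)$-action, whence $H^1$ vanishes and the filtration splits $G_L$-equivariantly. Therefore $V' \cong \bigoplus_i \mathrm{gr}^i V'$ is trivial as a $G_L$-bundle, so $\rho$ is log-crystalline.

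The main obstacle I anticipate is making the Beauville--Laszlo/canonical-modification formalism behave well under the finite base change $M/L$ — specifically, verifying that the canonical modification commutes with pullback along $\FaFo_M \to \FaFo_L$ and that ``the pullback of $V'$ to the formal completion at the de Rham point is trivial as a $G_M$-representation'' is the correct translation of ``$\rho|_{G_M}$ is de Rham with $D_{\mathrm{st}}$ of full rank''. Once that compatibility is nailed down, the descent of the HN filtration via vanishing of Galois $H^1$ on a trivial module is routine. A secondary subtlety is that semistable of degree $0$ plus trivial graded pieces forces the bundle to be a successive \emph{self-extension} of $\calO$; over $\FaFo_M$ such extensions are classified by $H^1(\FaFo_M,\calO)=0$, so the bundle is trivial as an $\calO$-module, but one still needs the $G_M$-structure to be trivial, which is where the hypothesis that the graded pieces correspond to trivial \emph{representations} (not merely trivial bundles) does the work.
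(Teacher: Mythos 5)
Your proposal hinges on a dictionary that is incorrect, and this breaks both directions of the argument. You assert that $\rho$ log-crystalline is equivalent to $V'$ being a \emph{trivial} bundle $\calO_{\FaFo_L}^{\oplus n}$, equivalently that every Harder--Narasimhan graded piece of $V'$ has slope $0$. That is not what the lemma says and it is not true. The HN slopes of $V'$ record the Newton slopes of Frobenius on the relevant $\varphi$-module, and these are typically nonzero even when $\rho$ is crystalline: for instance, for $\rho = \QQ_p(1)$ one has $V' \cong \calO(1)$, a non-trivial bundle. The lemma's condition is that each $\mathrm{gr}^i V'$, which is a semistable bundle of some slope $r_i/s_i$ and hence corresponds to a representation of $G_L$ on a $\QQ_{p^{s_i}}$-vector space, corresponds to the \emph{trivial} such representation; this says nothing about the slopes being zero. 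Consequently your reduction of the ``only if'' direction to a tautology already fails, and in the ``if'' direction the claim that $V'|_{\FaFo_M}$ ``is semistable of degree $0$ with trivial HN filtration'' once $\rho|_{G_M}$ is log-crystalline is also false --- base change along $\FaFo_M \to \FaFo_L$ does not flatten the HN slopes of $V'$, so the subsequent splitting and Galois-descent steps do not get off the ground.

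Beyond the gap, the route is different from the paper's, and the paper's route is what actually makes the HN filtration interact with the log-crystalline condition. The paper passes from $V'$ to its $(\varphi,\Gamma)$-module $M$ over the Robba ring $\calR_{K_0}$, uses the de Rham hypothesis to remove the singularities of the connection coming from $\Lie \Gamma$, and then invokes Berger's Th\'eor\`eme 3.6 together with Lemma~\ref{L:unipotent to log} to identify unipotence of that connection with the log-crystalline condition. The HN filtration of $V'$ enters precisely as the Frobenius slope filtration of $M$, and Corollary~\ref{C:unipotent pure slope} (a regular connection with a pure-slope Frobenius structure becomes trivial, not merely unipotent, after tame pullback) converts unipotence of the full connection into triviality of the connection on each slope-graded piece, which is the triviality-of-the-$G_L$-representation condition on each HN piece. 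If you want to repair your approach you would need, at a minimum, a correct translation between ``log-crystalline'' and data on $V'$, and that translation inevitably goes through the $\varphi$-slope filtration rather than through triviality of $V'$ as a bundle; at that point you are essentially reconstructing the paper's Robba-ring argument rather than replacing it with a Galois-descent argument.
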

\begin{proof}
We deduce this from \cite[Th\'eor\`eme~3.6]{berger-mono} as follows.
Let $K_0$ be the maximal absolutely unramified subfield of $L(\mu_{p^\infty})$.
Let $M$ be the $(\varphi, \Gamma)$-module over the Robba ring 
\[
\calR_{K_0} = \varinjlim_{\epsilon \to 1^-} \calO(A_{K_0}[\epsilon,1))
\]
associated to $V'$. From the construction of the canonical modification, we see that the
action of $\Gamma$ induces an action of its Lie algebra with removable singularities at the zero locus of $t$. By \cite[Th\'eor\`eme~3.6]{berger-mono} and Lemma~\ref{L:unipotent to log}, this connection is unipotent if and only if $\rho$ is log-crystalline and the space of logarithmic horizontal sections carries a trivial $\Gamma_L$-action. Note that the connection on $M$ carries an action of Frobenius whose slope filtration corresponds to the Harder--Narasimhan filtration of $V'$; by Corollary~\ref{C:unipotent pure slope} the connection is unipotent if and only if the induced connection on each successive quotient of the slope filtration is trivial. This yields the desired result.
\end{proof}

\begin{defn} \label{D:andreatta-brinon}
Assume that $K$ is discretely valued and that $\kappa_K$ is perfect.
Let $X$ be an affinoid  space $X$ over $K$
which admits an \'etale morphism $\pi\colon X \to \GG_m^k$. 
For simplicity, we assume that $X$ is a finite \'etale cover of a rational subspace of some polycircle $|T_i| = 1$; the point is that this holds locally on $X$. (Namely, we may start with $X$ finite \'etale over a rational subspace of a polyannulus $\alpha_i \leq |T_i| \leq \beta_i$.
We then rescale to make $\beta_i < 1$ for all $i$; this polyannulus is then isomorphic via $T_i \mapsto T_i+1$ to a rational subspace of the polydisc $|T_i-1| \leq \beta_i$, which is contained in the polycircle $|T_i| = 1$.)

Let $K_\infty$ be the completion of $K(\mu_{p^\infty})$, which is a perfectoid field.
Let $X_n$ be the space obtained from $X$ by pulling back along the $p^n$-power map on $\GG_m^k$,
then base extending from $K$ to $K(\mu_{p^n})$. Let $\psi$ be the tower formed by the spaces $X_n$;
this is a \emph{relative toric tower} in the sense of \cite[\S 7.2]{kedlaya-liu2}.
In particular, it admits a tilde-inverse limit $\tilde{X}_\psi$ which is a perfectoid space;
this space (resp. its tilt) is a finite \'etale cover of a rational subspace of the perfectoid polycircle $|U_i| =1$ over $K_\infty$ (resp. over $K_\infty^\flat$).
The tower $\psi$ is Galois with Galois group $\Gamma = \ZZ_p^k \rtimes \Gamma_0$ where $\Gamma_0 = \Gal(K(\mu_{p^\infty})/K) \subseteq \ZZ_p^\times$, so $\Gamma$ acts on $\tilde{X}_\psi$; we fix the splitting of the semidirect product for which $\Gamma_0$ fixes the $U_i$.

Let $L$ be a $\QQ_p$-local system (for the pro-\'etale topology) on $X$.
We may then associate to $L$ a vector bundle $V$ on the relative Fargues--Fontaine curve
$\FaFo_{\tilde{X}_\psi}$ which is fiberwise (over $\tilde{X}_\psi$) semistable of degree 0
\cite[Theorem~9.3.13]{kedlaya-liu1}; since $V$ is functorial in $L$, it inherits an action of $\Gamma$.
The vector bundle $V$ can also be interpreted as a 
$(\varphi, \Gamma)$-module $\tilde{M}$ over the perfect period ring $\tilde{\mathbf{C}}_\psi$;
using the decompletion process described in \cite[\S 5.7]{kedlaya-liu2}, we may descend $\tilde{M}$ canonically to a $(\varphi, \Gamma)$-module $M$ over the imperfect period ring $\mathbf{C}_\psi$. 
(This generalizes the setting considered by Andreatta--Brinon \cite{andreatta-brinon}.)

To make this more explicit, let $K_0$ be the maximal unramified subextension of $K(\mu_{p^\infty})$. We claim that for some $\rho \in (0,1)$, for $r \in \QQ \cap (0,1)$ sufficiently close to 1,
there exist an affinoid space $X'$ over $K_0$
such that $\mathbf{C}^{[r/q,s]}_\psi = \calO(X' \times_{K_0} A_{K_0}[\rho^r, \rho^{r/q}])$
and $\varphi$ induces a map from $X' \times_{K_0} A_{K_0}[\rho^{r/q},\rho^{r/q^2}]$ to $X' \times_{K_0} A_{K_0}[\rho^{r},\rho^{r/q}]$.
To see this, we may first check the claim explicitly when $X$ is the polyannulus $|T_i| =  1$,
in which we may take $X'$ to also be a polycircle;
then observe that the claim is preserved by replacing $X$ with a rational localization or a finite \'etale covering.

To make this more precise, let $\psi_0$ be the tower consisting of the points $\Spa(K(\mu_{p^n}), K(\mu_{p^n})^\circ)$.
In case $K = \QQ_p$, the ring $\mathbf{C}_{\psi_0}$ is a ring of Laurent series in the variable $\pi$ and the action of $\varphi$ and $\Gamma$
is given by
\[
\varphi(1+\pi) = (1+\pi)^p, \qquad \gamma(1 + \pi) = (1+\pi)^{\gamma} \quad (\gamma \in \ZZ_p^\times).
\]
In general this ring is a subring of $\mathbf{C}_{\psi_0}$; in particular, $\mathbf{C}_{\psi_0}$ and $\mathbf{C}_{\psi}$ contain the element $t = \log (1+\pi)$
on which $\ZZ_p^\times$ acts via $\gamma(t) = \gamma t$ (i.e., via the cyclotomic character on $\Gamma_0$).

The action of $\Gamma$ on $\mathbf{C}_\psi$ induces an action of the Lie algebra $\Lie \Gamma$ by derivations. In particular, using the aforementioned splitting $\Gamma_0 \to \Gamma$, we obtain an action of $\Lie \Gamma_0$. The subring $R$ of $\mathbf{C}_\psi$ which is killed by this action is an affinoid algebra over $K_0$; put $X' := \Spa(R,R^\circ)$.

The action of the Lie algebra of $\Gamma$ provides $M$ with a $K_0$-linear connection,
compatible with $\varphi$, which is singular along the zero locus of $t \in \mathbf{C}_\psi$.
Notably, this connection is \emph{not integrable} because $\Gamma$ is not commutative.
Nonetheless, we may restrict to $\Lie \Gamma_0$ to get a singular $\calO_{X'}$-linear connection $\calE$ on $X' \times_{K_0} A_{K_0}[\rho^{r}, 1)$.
\end{defn}

\begin{theorem} \label{T:relative de Rham}
Assume that $K$ is discretely valued and that $\kappa_K$ is perfect.
Let $L$ be a de Rham $\QQ_p$-local system on a smooth (in particular tft) adic space $X$ over $K$.
Then for every $x \in X$, there exist an open neighborhood $U$ of $x$ in $X$ and a finite \'etale covering $V \to U$ such that the pullback of $L$ to every classical point of $V$ is log-crystalline (semistable).
\end{theorem}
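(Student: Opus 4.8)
The plan is to run Berger's argument, in the form of Lemma~\ref{L:Berger log-crystalline}, fiberwise over a neighborhood of $x$, and to use the relative theory of \S\ref{sec:relative conn} to make the required \'etale cover uniform. First I would pass to a local model: since $X$ is smooth and tft, after shrinking $X$ to an affinoid neighborhood of $x$ I may assume it is a finite \'etale cover of a rational subspace of a polyannulus, as in Definition~\ref{D:andreatta-brinon}. Forming the relative toric tower $\psi$, the perfectoid space $\tilde{X}_\psi$, and --- via \cite[Theorem~9.3.13]{kedlaya-liu1} applied to $L$ and the decompletion of \cite[\S 5.7]{kedlaya-liu2} --- the $(\varphi,\Gamma)$-module $M$ over $\mathbf{C}_\psi$ with its $\Lie\Gamma$-connection, I obtain by restriction to $\Lie\Gamma_0$ the singular $\calO_{X'}$-linear connection of Definition~\ref{D:andreatta-brinon}, together with a relative Frobenius structure.

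The crux is to extract regularity from the de Rham hypothesis. Since $L$ is de Rham, its fiber at each classical point of $X$ is de Rham (see \cite{liu-zhu}), and $L$ admits a relative canonical modification $M \to M'$ whose formation commutes with passage to fibers; this is where relative $p$-adic Hodge theory (\cite{kedlaya-liu1,kedlaya-liu2}) does the real work. The $\Lie\Gamma_0$-connection on $M'$ has removable singularities along the zero locus of $t$ and so defines an honest relative connection $\calE'$, which lies in $\calC_{X'}$ because it carries a relative Frobenius structure (Lemma~\ref{L:Frobenius to solvable}) and whose fiber at a classical point $x'$ is exactly the connection attached to the de Rham representation $L_{x'}$ in the proof of Lemma~\ref{L:Berger log-crystalline}. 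The single-representation input of Berger (\cite[Th\'eor\`eme~3.6]{berger-mono}) then makes $\calE'_{x'}$ regular for each classical $x'$, and since these are dense, Corollary~\ref{C:regular from fibers} shows $\calE'$ is regular as an object of $\calC_{X'}$. (The de Rham hypothesis thus supplies the regularity directly, so one needs only the regular-case relative results below, not the full relative monodromy theorem, Theorem~\ref{T:quasiconstant pullback}.)

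Now $\calE'$ is a regular object of $\calC_{X'}$ carrying a relative Frobenius structure, so fiberwise Lemma~\ref{L:regular Frobenius equals tame} makes $\calE'_{x'}$ unipotent after pullback along a tame extension of $\kappa_{\calH(x')}((\overline{t}))$. To make one such extension serve over a whole neighborhood of $x$, I would use that the Frobenius structure confines the relevant $p$-adic exponent to $\frac{1}{m}\ZZ^n$ for an $m$ independent of $x'$ (as in the proof of Lemma~\ref{L:regular Frobenius equals tame}), while Theorem~\ref{T:unipotent spread} forces this exponent to be constant over $X'$; hence, after one eligible cover of bounded degree in the cyclotomic direction --- an unramified extension composed with the Kummer cover $\overline{t}\mapsto\overline{t}^{1/m}$, spread out where necessary by Lemma~\ref{L:spread eligible cover} --- Theorem~\ref{T:CM}(c) makes the pullback of $\calE'$ unipotent on every fiber. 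Passing this cover of the relative annulus through the toric-tower dictionary of Definition~\ref{D:andreatta-brinon} produces a finite \'etale cover $V\to U$ of a neighborhood $U$ of $x$ in $X$; for every classical point $x''$ of $V$, lying over $x'\in U$, the connection attached to $L_{x''}=L_{x'}|_{G_{\calH(x'')}}$ is the corresponding pullback of $\calE'_{x'}$, hence unipotent, so the forward implication in Lemma~\ref{L:Berger log-crystalline} (applied over $\calH(x'')$) shows that $L_{x''}$ is log-crystalline.

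The main obstacle I anticipate is the interface between relative $p$-adic Hodge theory and the differential-equation machinery, concentrated in the second and third steps: (i) constructing the relative canonical modification $M\to M'$ and checking that its $\Lie\Gamma_0$-connection is a genuine relative connection, carrying a relative Frobenius structure and having the expected fibers (this is where the de Rham hypothesis is truly consumed, and it rests on \cite{kedlaya-liu1,kedlaya-liu2} rather than on \S\ref{sec:relative conn}); and (ii) the bookkeeping in the toric-tower dictionary that converts an eligible cover of the relative annulus in the cyclotomic direction into a finite \'etale cover $V\to U$ of $X$, together with the verification that pulling $L$ back along $V$ is compatible with pulling $\calE'$ back along the cover. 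By contrast, the uniformity argument itself is routine once Theorem~\ref{T:unipotent spread} and Lemma~\ref{L:spread eligible cover} are available, provided one takes care that the Frobenius structure on $\calE'$ is genuinely relative.
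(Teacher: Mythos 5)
Your plan runs parallel to the paper's until the key reduction, where it breaks down. The claim that ``Berger's theorem makes $\calE'_{x'}$ regular for each classical $x'$,'' and the consequent assertion that one can bypass Theorem~\ref{T:quasiconstant pullback}, is not correct. The de Rham hypothesis supplies \emph{solvability}, not regularity: it ensures that the $\Lie\Gamma_0$-action on $M'$ has removable singularities along $\{t=0\}$, so that $\calE'$ is an honest $(\varphi,\nabla)$-module over a relative Robba ring and hence (by Lemma~\ref{L:Frobenius to solvable}) lies in $\calC_{X'}$. It does not force $\IR(\calE'_{x'},\rho)=1$. Indeed, if the fiber representation $L_{x'}$ restricted to inertia has nontrivial wild ramification --- say it is a wildly ramified Artin character, which is de Rham but not log-crystalline and becomes so only after a wildly ramified extension --- then the attached connection has nonzero slopes and is therefore not regular. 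Berger's Th\'eor\`eme~3.6 is precisely an application of the one-variable local monodromy theorem to such a non-regular solvable connection; it produces an eligible cover that renders the connection unipotent, it does not assert the connection is already regular. Consequently Corollary~\ref{C:regular from fibers} is not applicable with the premise you supply, and the step that was supposed to make the argument shorter is the one that fails.

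The paper's proof handles this by invoking Theorem~\ref{T:quasiconstant pullback} (the relative $p$-adic local monodromy theorem) to pass to an eligible finite \'etale cover, locally on $X'$, after which $\calE'$ becomes regular; only then does the regular-plus-Frobenius argument with Theorem~\ref{T:unipotent spread} and Lemma~\ref{L:unipotent to log} take over. That theorem is not optional here --- it is the relative input that replaces the single-representation invocation of Crew's conjecture inside Berger's proof. Your subsequent steps (uniformity of the tame part via bounded-denominator exponents and Theorem~\ref{T:unipotent spread}, spreading the cover via Lemma~\ref{L:spread eligible cover}, translating back through the toric tower, and closing with Lemma~\ref{L:Berger log-crystalline}) are in the right spirit and track the paper's conclusion reasonably well, but they presuppose regularity that has not yet been established. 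Inserting Theorem~\ref{T:quasiconstant pullback} at the point where you currently invoke Corollary~\ref{C:regular from fibers} would repair the argument and essentially recover the paper's proof.
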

\begin{proof}
Since the claim is local at $x$,
we may assume that $X$ is affinoid and admits an \'etale morphism $\pi\colon X \to \GG_m^k$ which factors as a composition of rational localizations, finite \'etale morphisms,
and the embedding of the polycircle $|T_i| = 1$.
We may then set notation as in Definition~\ref{D:andreatta-brinon}.

As in Berger's construction \cite{berger-mono}, the de Rham condition ensures that the singularities of $\calE$ are removable. Let $\calE'$ be the $\calO_{X'}$-linear connection obtained by removing the singularities;
by Lemma~\ref{L:Frobenius to solvable} this connection is solvable.
By applying Theorem~\ref{T:quasiconstant pullback} to $\calE'$, we may reduce
to the case where this connection is regular.
In this case, the restriction of the connection to any classical point of $X'$ admits a relative Frobenius structure and so by Lemma~\ref{L:regular Frobenius equals tame} is unipotent. By Theorem~\ref{T:unipotent spread}, the restriction of $\calE'$ to any point of $X'$ is unipotent. By Lemma~\ref{L:unipotent to log}, the horizontal sections of $\calE'[\log t]$ themselves form a vector bundle on $X'$ carrying an integrable connection induced by the action of $\Lie \ker(\Gamma \to \Gamma_0)$; we may thus deduce that $\calE'$ is globally unipotent, that is, $\calE'$ admits a filtration whose successive quotients are constant. On each of these successive quotients, $\Gamma_0$ acts through some finite quotient. By adjoining a suitable $p$-power root of unity to $K$, we can ensure that these actions of $\Gamma_0$ are in fact trivial;
Lemma~\ref{L:Berger log-crystalline} now implies that the pullback of $L$ to every classical point of $V$ is log-crystalline.
\end{proof}

\begin{remark}
We remind the reader that notwithstanding the last clause of \cite[Remark~1.4]{liu-zhu},
Theorem~\ref{T:relative de Rham} cannot be upgraded to assert that
the covering $V \to U$ is the base extension from $K$ to some finite extension;
see \cite{lawrence-li} or \cite[Example~7.8]{guo-yang}.

We also recall from the introduction that \cite[Remark~1.4]{liu-zhu} asked for something stronger than
Theorem~\ref{T:relative de Rham}: does there exist a single finite \'etale cover $Y \to X$ such that the pullback of $L$ to every classical point of $Y$ is log-crystalline? Resolving this question using our techniques would require establishing some canonicality for the local coverings so that they can be glued together.
\end{remark}

\begin{remark}
The conclusion of Theorem~\ref{T:relative de Rham} becomes much more powerful if we combine it with a recent theorem of Guo--Yang \cite[Theorem~1.1]{guo-yang}.
That result says that if $X$ admits a $p$-adic integral model $\mathfrak{X}$ and $L$ is a $\ZZ_p$-local system on $X$ whose pullback of $L$ to every classical point of $X$ is crystalline (resp. log-crystalline), then $L$ is in fact crystalline (resp. log-crystalline) with respect to the model $\mathfrak{X}$.
In other words, if we start with a de Rham $\QQ_p$-local system on $X$, then after pulling back along a suitable \'etale cover (to apply Theorem~\ref{T:relative de Rham} and also to choose local $\ZZ_p$-lattices), we obtain log-crystallinity with respect to any integral models that we can write down. Moreover, log-crystallinity with respect to a single integral model implies the same with respect to any other model.

Interestingly, the argument of Guo--Yang itself uses a statement to the effect that ``relative de Rham implies relative potentially log-crystalline'', but with respect to a ramified cover (see \cite[\S 6]{guo-yang}).
\end{remark}

\begin{remark} \label{R:not etale}
The proof of Theorem~\ref{T:relative de Rham} also applies in case we start with a vector bundle on the relative Fargues--Fontaine curve over $X$ in the sense of \cite{kedlaya-liu1}; such a bundle corresponds to a $\QQ_p$-local system if and only it is fiberwise semistable of degree $0$.
However, we do not know if there is an analogue of the result of Guo--Yang in that setting; it is not even immediately obvious what the statement should say.
\end{remark}

\begin{remark}
Xin Tong suggests that it should be possible to adapt the proof of Theorem~\ref{T:relative de Rham} to handle arithmetic families of Galois representations valued in an affinoid algebra $A$ over $\QQ_p$, as in \cite{kedlaya-liu-families}, or even $A$-vector bundles on the relative Fargues--Fontaine curve over $X$, as in \cite{kedlaya-pottharst}.
The tricky part is to formulate the correct conclusion; we do not attempt to do this here.
\end{remark}

\subsection{Drinfeld's lemma}
\label{subsec:Drinfeld lemma}

The following arguments are similar to \cite[\S 7]{kedlaya-simpleconn}.

\begin{lemma} \label{L:multi-convexity}
Let $X$ be the subset of the analytic $m$-space over $K$ in the variables $T_1,\dots,T_m$ defined by the condition
$(\log |T_1|,\dots,\log |T_m|) \in U$ for some convex subset $U$ of $\RR^m$,
then set notation as in \S\ref{sec:relative}.
Let $\eta_{\rho_1,\dots,\rho_m} \in X$ be the Gauss point $|T_i| = e^{\rho_i}$.
Let $\calE$ be an $\calO_X$-linear connection of rank $n$ on $X \times_K A_K[I]$ for some $I$.
Define the functions $F_1,\dots,F_n\colon U \times I \to \RR$ by the formula 
\[
F_i(r_1,\dots,r_m,r) = -\log s_1(\calE_x, e^{-r}) - \cdots -\log s_i(\calE_x, e^{-r}),
\qquad 
x = \eta_{e^{-r_1},\dots,e^{-r_m}}.
\]
Then the functions $F_1,\dots,F_d$ are convex.
\end{lemma}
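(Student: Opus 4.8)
The plan is to bootstrap from the strong subharmonicity already proved in Lemma~\ref{L:relative continuous}, exploiting the fact that on a polyannulus the Gauss points are parametrized exactly by the tuple of logarithms of their radii, and that a strongly subharmonic function on a polyannulus is, in those coordinates, convex.

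First I would reduce to a local statement. Each $F_i$ is continuous by Corollary~\ref{C:IR relative continuous}, and a continuous function on a convex subset of a Euclidean space is convex as soon as it is convex on a neighbourhood of every point; so it suffices to prove convexity of each $F_i$ near each point of its domain. We may therefore shrink, replacing $U$ by a small closed box $\prod_{j=1}^m[c_j^-,c_j^+]$ inside it and $I$ by a closed subinterval $J$, so that $X$ becomes the closed polyannulus $\{e^{-c_j^+}\le|T_j|\le e^{-c_j^-}\}$. This is an affinoid adic space over $K$ whose points of Gauss type are precisely the $\eta_{\rho_1,\dots,\rho_m}$ with $(-\log\rho_1,\dots,-\log\rho_m)$ ranging over the box, compatibly with the convention $x=\eta_{e^{-r_1},\dots,e^{-r_m}}$ of the statement.

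On such an $X$, write a general element $f\in\calO(X\times_K A_K[J])$ as $f=\sum_{\alpha\in\ZZ^m,\,\beta\in\ZZ}c_{\alpha,\beta}\,T^\alpha t^\beta$. Then for $x=\eta_{e^{-r_1},\dots,e^{-r_m}}$,
\[
|f|_{x,\,e^{-r}}\;=\;\sup_{\alpha,\beta}\ |c_{\alpha,\beta}|\,e^{-(\alpha_1r_1+\cdots+\alpha_mr_m+\beta r)},
\]
which is the exponential of a supremum of affine functions of $(r_1,\dots,r_m,r)$, hence convex in $(r_1,\dots,r_m,r)$. Consequently any strongly subharmonic function on $X\times J$, being locally a maximum of finitely many nonnegative multiples of such Gauss norms, is locally convex in $(r_1,\dots,r_m,r)$, and therefore convex on the whole box (it is continuous on a convex domain). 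This is where the convexity hypothesis on $U$ enters, and where it is essential that $X$ be (locally) a polyannulus rather than an arbitrary adic space. Now fix $\epsilon>0$: by Lemma~\ref{L:relative continuous} the function $(x,\rho)\mapsto\sum_{j=1}^i\max\{\epsilon,-\log s_j(\calE_x,\rho)\}$ is strongly subharmonic, hence convex in $(r_1,\dots,r_m,r)$ by the previous sentence. Since $s_j(\calE_x,\rho)\le 1$ always, letting $\epsilon\to 0^+$ these functions decrease pointwise to $\sum_{j=1}^i-\log s_j(\calE_x,e^{-r})=F_i$; a pointwise limit of convex functions is convex, so $F_i$ is convex on the box. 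Undoing the localization completes the proof.

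I do not expect a substantive obstacle, as essentially all of the analytic content sits in Lemma~\ref{L:relative continuous}. The two points that require a little care are the local-to-global principle for convexity --- which is precisely why the continuity statement of Corollary~\ref{C:IR relative continuous} is needed --- and the elementary but convention-sensitive identification, on a polyannulus, of strongly subharmonic functions with functions that are convex in the logarithmic radii; once these are in hand the argument is purely formal.
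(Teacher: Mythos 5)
Your proof is correct, and it is essentially the same approach as the paper, which disposes of the lemma in a single line as ``Immediate from Lemma~\ref{L:relative continuous}.'' The content you supply --- localize to an affinoid polyannulus times a closed interval, observe that Gauss norms there are pointwise suprema of exponentials of affine functions of the logarithmic radii (hence convex in those coordinates), conclude that a strongly subharmonic function is convex in the same coordinates, invoke Lemma~\ref{L:relative continuous}, and then send $\epsilon\to 0^+$ using $-\log s_j\ge 0$ together with the stability of convexity under pointwise limits --- is exactly the chain of observations that makes the implication ``immediate.'' The local-to-global step via Corollary~\ref{C:IR relative continuous} is sound as well (and in fact local convexity alone already forces global convexity on a convex domain by restricting to segments, so continuity is not strictly needed there).
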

\begin{proof}
Immediate from Lemma~\ref{L:relative continuous}.
Alternatively, see \cite{kedlaya-xiao} for a slightly different approach to a similar result.
\end{proof}

\begin{defn}
Let $X$ be an adic space over $K$.
Choose a positive integer $m$ and a value $\epsilon \in (0,1)$.
By a \emph{partial relative Frobenius system}, we will mean a tuple $(\varphi_1,\dots,\varphi_m)$ where $\varphi_i\colon A_K(\epsilon,1)^m \to A_K(\epsilon,1)^m$ 
is a 
be the ``partial relative Frobenius'' map given by composing some isometric endomorphism of $K$ with a $K$-linear substitution of the form $t \mapsto t^q + u$
where $|u|_1 < 1$.
\end{defn}

\begin{lemma} \label{L:constant breaks}
Set $X := A_K(\epsilon,1)^m$ for some positive integer $m$ and some $\epsilon > 0$.
Let $\calE$ be an object of $\calC_X$ of rank $n$ such that for $i=1,\dots,m$,
there exists an isomorphism $\varphi_i^* \calE \cong \calE$ where $\varphi_i$ is the map induced by a relative Frobenius lift on the $i$-th factor of $X$.
\begin{enumerate}
\item[(a)]
The functions $b_i(\calE, \bullet)\colon X \to \RR$ are constant for $i=1,\dots,n$.
\item[(b)]
There is a direct sum decomposition of $\calE$ that specializes to the slope decomposition of $\calE_x$ for each $x \in X$.
\end{enumerate}
\end{lemma}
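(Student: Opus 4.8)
The plan is to derive both parts from the single numerical fact that $\varphi_i^*\calE\cong\calE$ imposes on the fibers, together with the observation that the \emph{forward} dynamics of the maps $\varphi_1,\dots,\varphi_m$ on the base $X=A_K(\epsilon,1)^m$ is absorbing into a quasicompact region; this will let results that are only available over a quasicompact base (Lemma~\ref{L:sup function}, Theorem~\ref{T:break semicontinuity}, and the globalization of a uniform subinterval as in the proof of Theorem~\ref{T:break semicontinuity}(c)) be propagated to all of the noncompact $X$. The numerical input: since $\varphi_i$ acts only on the base and fixes the coordinate $t$ of the fiberwise annulus, for each $x\in X$ the fiber $(\varphi_i^*\calE)_x$ is the base extension of $\calE_{\varphi_i(x)}$ from $\calH(\varphi_i(x))$ to $\calH(x)$; as the intrinsic subsidiary radii are invariant under extension of the constant field, the isomorphism $\varphi_i^*\calE\cong\calE$ forces $s_j(\calE_x,\rho)=s_j(\calE_{\varphi_i(x)},\rho)$ for all $j$ and all $\rho$, hence $b_j(\calE_x)=b_j(\calE_{\varphi_i(x)})$. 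Moreover each $\varphi_i$ contracts $|t_i|$ toward a fixed interior radius (the substitution $t\mapsto t^q+u$ with $|u|_1<1$ sends the $i$-th Gauss radius $\rho$ to $\max\{\rho^q,|u|_\rho\}$), so after applying the $\varphi_i$'s sufficiently often any point of $X$ is carried into a fixed closed polyannulus $\Omega\subset X$.

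For (a): on the quasicompact $\Omega$, Lemma~\ref{L:multi-convexity} shows that $\vec r\mapsto b_1(\calE_{\eta_{e^{-\vec r}}})+\cdots+b_i(\calE_{\eta_{e^{-\vec r}}})$ is convex — being the pointwise limit as $r\to0^+$ of the convex functions $\vec r\mapsto r^{-1}\bigl(-\sum_{j\le i}\log s_j(\calE_{\eta_{e^{-\vec r}}},e^{-r})\bigr)$ — hence continuous, while Lemma~\ref{L:sup function}(c) shows it takes only finitely many values; a continuous function with finite image on a connected set is constant, so this break sum is constant on the Gauss points of $\Omega$. Upper semicontinuity (Theorem~\ref{T:break semicontinuity}(a)) together with the density of rigid points — whose break sums also equal this constant, since deforming a small polydisc around a rigid point to its enclosing Gauss disc interpolates to an honest Gauss point and the break sum is constant along the deformation by the same convexity argument — gives the reverse inequality, and Theorem~\ref{T:break semicontinuity}(b), whose proof carries over verbatim to the partial sums $b_1+\cdots+b_i$, gives the forward one; so $b_1(\calE_x)+\cdots+b_i(\calE_x)$ is constant on all of $\Omega$. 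Transporting along the $\varphi_i$'s, it is then constant on $X$; differencing in $i$ yields (a).

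For (b): part (a) tells us that the multiset of fiberwise slopes $t_1<\cdots<t_k$, with multiplicities $m_1,\dots,m_k$, is independent of $x$. Running the argument of Theorem~\ref{T:break semicontinuity}(c) with the constant slope $t_\ell$ in place of $0$ produces, over the quasicompact $\Omega$, a single subinterval $I=(\epsilon',1)$ on which, for every $x\in\Omega$, each break sum $\sum_{j\le i}\log s_j(\calE_x,e^{-r})$ is affine and, at each index $i$ of the form $m_1+\cdots+m_\ell$, the subsidiary radii $s_i(\calE_x,\rho)$ and $s_{i+1}(\calE_x,\rho)$ remain distinct; transporting along the $\varphi_i$'s, the same holds for every $x\in X$. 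Applying Lemma~\ref{L:global slope decomposition} at $i=m_1$, then to the resulting summand at $i=m_2$, and so on, produces a direct sum decomposition $\calE=\bigoplus_{\ell=1}^k\calE_{t_\ell}$ in which $\calE_{t_\ell}$ is fiberwise of pure slope $t_\ell$; by the uniqueness in Theorem~\ref{T:christol-mebkhout}, $(\calE_{t_\ell})_x$ is the slope-$t_\ell$ component of $\calE_x$ for every $x$, which is (b).

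The main obstacle is the globalization step: one must know that the forward orbits of the \emph{full} tuple $(\varphi_1,\dots,\varphi_m)$ absorb \emph{every} point of $\overline X$ — not just the Gauss or classical points — into the quasicompact $\Omega$, so that all the uniform assertions available over $\Omega$ transport. This is exactly where the full set of partial Frobenius structures is used (a single Frobenius, or fewer than $m$ of them, would not suffice), and it is the analogue here of the topological input to Drinfeld's lemma, as in \cite{kedlaya-xu,kedlaya-dl-isocrystals}; setting it up carefully in the adic-space language — in particular for the non-classical Huber points — is the delicate point, and is the reason the statement is confined to $X=A_K(\epsilon,1)^m$ rather than an arbitrary base.
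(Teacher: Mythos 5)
Your proposal is correct in broad outline but takes a genuinely different route from the paper at the crucial step, and has a couple of soft spots worth flagging.

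The paper's key move is much simpler than yours. Working directly on the whole base $X$, it observes that the functions $F_i(r_1,\dots,r_m,r)$ are jointly convex (Lemma~\ref{L:multi-convexity}), and that the partial Frobenius isomorphisms force the \emph{multiplicative periodicity} $F_i(r_1,\dots,qr_j,\dots,r_m,r)=F_i(r_1,\dots,r_j,\dots,r_m,r)$ in each base variable. A convex function of one variable on an open interval satisfying $F(qr)=F(r)$ has a zero of its (nondecreasing) derivative at some point of each interval $(r,qr)$; taking $r$ both small and large pins the derivative to zero everywhere, so $F_i$ is constant in each $r_j$ and hence depends only on $r$. This gives constancy at Gauss points immediately, with no quasicompactness hypothesis on the base. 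You instead pass to an absorbed quasicompact polyannulus $\Omega$, invoke Lemma~\ref{L:sup function}(c) to get finitely many values, invoke convexity for continuity, and use connectedness. That does work for the Gauss points, but it imports the quasicompactness machinery unnecessarily and then requires transporting back along the Frobenius dynamics. The periodicity argument is the cleaner and more robust route, and it is why the paper needs no absorption step.

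Two concrete issues in your writeup. First, your description of the dynamics (``$\varphi_i$ contracts $|t_i|$ toward a fixed interior radius'') is not quite right: $\rho\mapsto\rho^q$ has no interior fixed point, and forward orbits leave $X$ through its inner boundary after finitely many steps; what is true, and what you actually use, is that every point's forward orbit lands in a fixed closed polyannulus $\Omega$ at some step before exiting, which is fine but should be stated that way. Second, your argument that \emph{rigid} points also attain the constant — ``deforming a small polydisc around a rigid point to its enclosing Gauss disc interpolates to an honest Gauss point and the break sum is constant along the deformation by the same convexity argument'' — does not follow from Lemma~\ref{L:multi-convexity} as stated, since that lemma concerns Gauss points of the \emph{polyannulus} (parametrized by the radii of the coordinates), not Gauss points of polydiscs inside a fiber over one of them; there is no convex structure among the latter supplied by Lemma~\ref{L:multi-convexity}, so ``the same convexity argument'' is not available there. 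The paper's route to non-Gauss points (apply Lemma~\ref{L:relative continuous} once more, after having pinned down the $F_i$ on Gauss points) avoids this issue. Your part (b) argument (adapt the proof of Theorem~\ref{T:break semicontinuity}(c), then apply Lemma~\ref{L:global slope decomposition} successively at the break indices) is essentially sound, though again the paper derives the hypotheses of Lemma~\ref{L:global slope decomposition} more directly from the stronger conclusion $s_i(\calE_x,\rho)=\rho^{b_i}$ for all $x$ and all $\rho$ in a uniform interval, obtained in the course of proving (a).
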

\begin{proof}
By choosing an affinoid subspace of $X$ whose Frobenius translates cover $X$,
we may find a single $\epsilon' \in (0,1)$ such that
$\calE$ can be realized as an $\calO_X$-linear connection on $X \times_K A_K(\epsilon',1)$.
By increasing $\epsilon'$, we can ensure that for some $x_0 \in X$,
we have $s_i(\calE_{x_0}, \rho) = \rho^{b_i(\calE_{x_0})}$ for $\rho \in (\epsilon', 1)$.

For each $\rho \in (\epsilon',1)$, Lemma~\ref{L:multi-convexity} implies that the functions
$F_1,\dots,F_n$ are convex on $(0, -\log \epsilon)^m \times (0, -\log \epsilon')$. 
On the other hand, the partial Frobenius structures implies that these functions are periodic on $(0, -\log \epsilon)^m$. Consequently, each $F_i$ depends only on the last argument.
By Lemma~\ref{L:relative continuous} again, we deduce further that
$-\log s_1(\calE_x, e^{-r}) - \cdots -\log s_i(\calE_x, e^{-r})$ is independent of $x \in X$
whether or not $x$ is a Gauss point.
Combining this with the previous paragraph, we deduce that $s_i(\calE_{x}, \rho) = \rho^{b_i(\calE_{x_0})}$ for $x \in X, \rho \in (\epsilon, 1)$.
This immediately yields (a); we deduce (b) by appealing to Lemma~\ref{L:global slope decomposition}.
\end{proof}

\begin{theorem} \label{T:drinfeld relative p-adic mono}
With notation as in Lemma~\ref{L:constant breaks},
for some $\epsilon' \in (0,1)$
there exists an eligible finite \'etale cover $Y$ of $A_K(\epsilon', 1)$ such that the pullback of $\calE$ along $X \times_K Y \to X \times_K A_K(\epsilon',1)$
is regular.
\end{theorem}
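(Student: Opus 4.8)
The plan is to apply the non-relative local monodromy theorem (Theorem~\ref{T:p-adic mono}) at a single, suitably chosen fibre in order to manufacture a candidate eligible cover of the $t$-annulus, and then to invoke the rigidity of the break functions under a full set of partial Frobenius structures (Lemma~\ref{L:constant breaks}) to upgrade ``regular at that one fibre'' to ``regular at every fibre''. Note that \emph{a priori} the relative theorem (Theorem~\ref{T:quasiconstant pullback}) only gives eligible covers locally on the base and depending on the base; the point of the argument is that the extra partial Frobenii force a single cover of $A_K(\epsilon',1)$, independent of $X$, to work everywhere.

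After a harmless base extension I would replace $K$ by a complete algebraically closed extension $\Omega$ (so $\kappa_\Omega$ is perfect) to which each $\varphi_i$ extends, and choose an $\Omega$-rational point $x_0\in X_\Omega=A_\Omega(\epsilon,1)^m$; such a point exists because $|\Omega^\times|$ is divisible. Realize $\calE$ on $X\times_K A_K(\epsilon_1,1)$ for a single $\epsilon_1$. Since $\kappa_{\calH(x_0)}=\kappa_\Omega$ is perfect, Theorem~\ref{T:p-adic mono} applies to $\calE_{x_0}\in\calC_\Omega$ and produces a finite separable extension $M$ of $\kappa_\Omega((\overline t))$ after pullback along which $\calE_{x_0}$ is regular. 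By a standard spreading-out argument $M$ is already defined over $\kappa((\overline t))$ for some finite separable $\kappa/\kappa_K$, and hence corresponds to an eligible cover $Y\to A_K(\epsilon',1)$ over $K$ for suitable $\epsilon'\in[\epsilon_1,1)$. Let $\calE_Y$ be the pullback of $\calE$ along $X\times_K Y\to X\times_K A_K(\epsilon',1)$.

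The decisive observation is that $\calE_Y$ still satisfies the hypotheses of Lemma~\ref{L:constant breaks}. Indeed $Y$ lies entirely in the $t$-direction, whereas each $\varphi_i$ acts on the $i$-th factor of $X$ and fixes $t$; so, after a further harmless finite unramified enlargement of the coefficient field, $\varphi_i$ lifts to an endomorphism of $X\times_K Y$ lying over $\varphi_i\times\mathrm{id}$, and pulling back the isomorphism $\varphi_i^*\calE\cong\calE$ yields $\varphi_i^*\calE_Y\cong\calE_Y$. Moreover $\calE_Y$ is fibrewise solvable because intrinsic radii of convergence are well behaved under pullback along eligible covers. Hence Lemma~\ref{L:constant breaks}(a) applies and the break functions $x\mapsto b_i((\calE_Y)_x)$ are constant on the base. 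The fibre of $\calE_Y$ over $x_0$ (equivalently, over its image in the base) is canonically the pullback of $\calE_{x_0}$ along $M$ — here one uses that $\kappa_{\calH(x_0)}$ agrees with the constant field over which $M$ is realized — and is therefore regular, so all of its break invariants vanish; by constancy the same holds at every fibre, i.e.\ $\calE_Y$ is regular. Descending from $\Omega$ back to $K$ is harmless since regularity of a connection on an annulus is insensitive to extension of the coefficient field. This is exactly the assertion.

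I expect the main obstacle to be precisely the bookkeeping of the third paragraph: verifying that the full complement of partial Frobenius structures genuinely survives the pullback along $Y$ (which is what forces the auxiliary, but harmless, enlargements of the base field), that $\calE_Y$ remains an object of $\calC$, and that the chosen fibre of $\calE_Y$ is honestly identified with the regular pullback produced by Theorem~\ref{T:p-adic mono}. Once these compatibilities are pinned down, Lemma~\ref{L:constant breaks} supplies the entire content of the theorem.
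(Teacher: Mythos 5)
Your proof follows the same two-step plan as the paper's: apply Theorem~\ref{T:p-adic mono} at a single fibre $x_0$ to produce a candidate eligible cover, then invoke Lemma~\ref{L:constant breaks} to propagate regularity from that fibre to all of $X$. The paper's proof is essentially a two-sentence pointer to this outline, and your additional bookkeeping (base extension to $\Omega$, verification that the partial Frobenius structures lift across the pullback to $X \times_K Y$, identification of the fibre of $\calE_Y$ at $x_0$ with the pullback of $\calE_{x_0}$ along $M$) is a welcome expansion rather than a departure.

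The one step you gloss over too quickly is the descent of $M$ from $\kappa_\Omega((\overline t))$ to $\kappa((\overline t))$ for some finite $\kappa/\kappa_K$. Since $x_0$ is merely an $\Omega$-rational point, the fibre $\calE_{x_0}$ has coefficients in $\Omega$, and the extension $M$ produced by Theorem~\ref{T:p-adic mono} is described a priori by elements of $\kappa_\Omega((\overline t))$ whose Laurent coefficients could generate an \emph{infinite} algebraic extension of $\kappa_K$; the phrase ``standard spreading-out'' does not dispose of this. What saves the argument is the structure theory of finite separable extensions of $\kappa_\Omega((\overline t))$ with $\kappa_\Omega$ algebraically closed: such an extension is totally ramified, its tame part is a single Kummer extension $\overline t^{1/n}$, and its wild part is cut out by Artin--Schreier--Witt equations whose defining classes admit representatives that are Laurent \emph{polynomials} in $\overline t^{-1}$; the finitely many coefficients involved then generate a finite sub-extension of $\kappa_\Omega = \overline{\kappa_K}$. (When $\kappa_K$ is imperfect, one additionally absorbs the inseparable part into $K$ as in the paper's discussion of eligible covers.) This is a genuine input beyond spreading-out and should be stated; the paper's proof leaves it implicit as well, so your instinct to flag the bookkeeping as the main obstacle is accurate.
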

\begin{proof}
By applying Theorem~\ref{T:p-adic mono} at some single point $x_0$ of $X$, we obtain a cover that makes $\calE_{x_0}$ regular.
Then Lemma~\ref{L:constant breaks} implies that this cover makes every $\calE_x$ regular for all $x \in X$.
\end{proof}

\begin{cor} \label{C:drinfeld relative p-adic mono relative}
Let $X$ be an adic space over $K$ and put $X' := X \times_K A_K(\epsilon,1)^m$.
Let $\calE$ be an object of $\calC_{X'}$ of rank $n$ such that for $i=1,\dots,m$,
there exists an isomorphism $\varphi_i^* \calE \cong \calE$ where $\varphi_i$ is the map induced by a relative Frobenius lift on the $i$-th factor of $A_K(\epsilon,1)^m$.
Then for each $x_0 \in X$, there exist an open neighborhood $U$ of $x_0$ in $X$,
a value $\epsilon' \in (0,1)$,
and an eligible finite \'etale cover $Y$ of $U \times_K A_K(\epsilon',1)$ such that 
the pullback of $\calE$ from $X' \times_K A_K(\epsilon',1) \cong 
X \times_K A_K(\epsilon',1) \times A_K(\epsilon,1)^m$ to $Y \times_K A_K(\epsilon,1)^m$
is regular.
\end{cor}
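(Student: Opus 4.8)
The plan is to follow the proof of Theorem~\ref{T:quasiconstant pullback} closely, substituting Theorem~\ref{T:drinfeld relative p-adic mono} for Theorem~\ref{T:p-adic mono} and using Lemma~\ref{L:constant breaks} to push the spreading-out step through over a polyannulus fibre rather than a point. Since the assertion is local on $X$, I would first assume $X$ affinoid. Then I would apply Theorem~\ref{T:drinfeld relative p-adic mono} to the restriction $\calE_{x_0}$, viewed as an object of $\calC_{x_0 \times_K A_K(\epsilon,1)^m}$ over the coefficient field $\calH(x_0)$ (if $\kappa_{\calH(x_0)}$ is not perfect, first pass to a finite extension and descend afterwards, as elsewhere in the paper): since the isomorphisms $\varphi_i^*\calE \cong \calE$ restrict to the fibre over $x_0$, this produces a value $\epsilon'_0 \in (\epsilon,1)$ and an eligible finite \'etale cover $Y_0$ of $x_0 \times_K A_K(\epsilon'_0,1)$ whose base change along the $m$ Frobenius factors makes $\calE_{x_0}$ regular.

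Next I would spread $Y_0$ out: by Lemma~\ref{L:spread eligible cover} there exist an open neighborhood $U$ of $x_0$ in $X$, a value $\epsilon' \in (\epsilon'_0,1)$, and an eligible finite \'etale cover $Y$ of $U \times_K A_K(\epsilon',1)$ whose restriction over $x_0$ is the base change of $Y_0$. Let $\widetilde{\calE}$ be the pullback of $\calE$ along $Y \times_K A_K(\epsilon,1)^m \to (U \times_K A_K(\epsilon',1)) \times_K A_K(\epsilon,1)^m \subseteq X' \times_K A_K(\epsilon',1)$. Because pullback along an eligible finite \'etale cover does not disturb fibrewise solvability and leaves the $m$ Frobenius factors untouched, $\widetilde{\calE}$ is again an object of $\calC_{U \times_K A_K(\epsilon,1)^m}$ carrying the $m$ partial Frobenius structures, and its restriction to the fibre over $x_0$ is regular by the previous paragraph. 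It then remains to shrink $U$ so that $\widetilde{\calE}$ is regular over all of $B := U \times_K A_K(\epsilon,1)^m$.

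This last point is where a little more than a transcription of the earlier proof is needed, because the fibre $x_0 \times_K A_K(\epsilon,1)^m$ is not quasicompact. By Theorem~\ref{T:break semicontinuity}(a) the set $W := \{z \in \overline{B} : b(\widetilde{\calE}_z) = 0\}$ is open, and by construction it contains the whole fibre $\overline{B}_{x_0}$. For each $u \in \overline{U}$, the restriction of $\widetilde{\calE}$ to $u \times_K A_K(\epsilon,1)^m$ is an object of $\calC_{A_{\calH(u)}(\epsilon,1)^m}$ which inherits the $m$ partial Frobenius structures, so Lemma~\ref{L:constant breaks}(a) shows that $z \mapsto b(\widetilde{\calE}_z)$ is constant on the fibre $\overline{B}_u$; hence $\overline{B}_u$ meets $W$ if and only if $\overline{B}_u \subseteq W$. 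Evaluating break functions along the relative Gauss point $\eta_{\rho_1,\dots,\rho_m}$ (for some fixed $\rho_i \in (\epsilon,1)$), which varies continuously with $u$, and invoking Theorem~\ref{T:break semicontinuity}(a) once more, we find that the set $\{u \in \overline{U} : \overline{B}_u \subseteq W\}$ is open in $\overline{U}$ and contains $x_0$. Replacing $U$ by an affinoid neighborhood of $x_0$ mapping into this set, and $Y$ by its restriction over the smaller $U$, makes $\widetilde{\calE}$ regular, which is the assertion.

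The main obstacle is precisely this last spreading step: one must upgrade ``regular on the entire fibre over $x_0$'' to ``regular over a tube about $x_0$'', and this succeeds only because the partial Frobenius structures force the break function to be constant along the $A_K(\epsilon,1)^m$ directions (Lemma~\ref{L:constant breaks}), reducing regularity on a fibre to a single-point condition whose openness is then supplied by upper semicontinuity of breaks. The remaining points — that pullback along the eligible cover $Y$ preserves fibrewise solvability and the partial Frobenius structures, and the minor residue-field bookkeeping when invoking Theorem~\ref{T:drinfeld relative p-adic mono} at $x_0$ — are routine given the apparatus already in place.
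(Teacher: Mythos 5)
Your proof is correct and follows the paper's intended approach, which is stated there only as a one-line combination of Theorem~\ref{T:drinfeld relative p-adic mono}, Lemma~\ref{L:spread eligible cover}, and Theorem~\ref{T:break semicontinuity}. The extra step you supply — invoking Lemma~\ref{L:constant breaks}(a) so that regularity along the non-quasicompact fibre $\{x_0\}\times_K A_K(\epsilon,1)^m$ can be tested at a single Gauss point, and then using the continuity of the Gauss section plus upper semicontinuity of breaks to shrink $U$ — is exactly the point the paper leaves implicit, and your treatment of it is correct.
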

\begin{proof}
This follows immediately by combining Theorem~\ref{T:drinfeld relative p-adic mono},
Lemma~\ref{L:spread eligible cover}, and Theorem~\ref{T:break semicontinuity}.
\end{proof}

Note that in the previous results, we have products of $m+1$ annuli with one factor being treated separately. If we symmetrize the hypothesis, we end up with a stronger conclusion.
For clarity, we first state a restricted version of the result and one of its corollaries, then immediately state and prove the full result.

\begin{theorem} \label{T:Drinfeld lemma form of local monodromy}
Let $\calE$ be a vector bundle on $A_K(\epsilon,1)^m$ for some $\epsilon \in (0,1)$ equipped with an integrable $K$-linear connection.
Suppose that for $i=1,\dots,m$, $\calE$ is isomorphic to its pullback along the map on $A_K(\epsilon,1)^m$ induced by a partial Frobenius lift on the $i$-th factor. 
Then for some $\epsilon' \in (\epsilon,1)$, there exists a finite eligible cover $Y$ of $A_K(\epsilon',1)$ such that the pullback of $\calE$ to $Y \times_K \cdots \times_K Y$ is unipotent (as a connection).
\end{theorem}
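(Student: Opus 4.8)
The plan is to induct on $m$, using the relative monodromy theorem to strip off one annulus factor at a time. The base case $m=1$ is exactly Theorem~\ref{T:p-adic mono with Frob}, since a partial Frobenius lift is in particular a relative Frobenius lift (by Lemma~\ref{L:reduce mod to norm}(b)), and pullback along a finite separable extension of $\kappa_K((\overline{t}))$ is by definition pullback along an eligible cover of some $A_K(\epsilon',1)$. I would assume throughout that $\kappa_K$ is perfect, the general case following by base extension (harmless, as unipotence and triviality of connections, and the relevant eligible covers, are insensitive to enlarging the constant field).

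For the inductive step, I would write $A_K(\epsilon,1)^m = B \times_K A_K(\epsilon,1)$ with $B := A_K(\epsilon,1)^{m-1}$ the product of the first $m-1$ factors, and regard $\calE$ as a relative connection over $B$ via the last projection; by Lemma~\ref{L:Frobenius to solvable} applied to the partial Frobenius lift on the last factor, $\calE \in \calC_B$, and it retains the partial Frobenius structures $\varphi_1,\dots,\varphi_{m-1}$ on $B$ together with a ``vertical'' Frobenius structure coming from $\varphi_m$. Theorem~\ref{T:drinfeld relative p-adic mono}, applied using $\varphi_1,\dots,\varphi_{m-1}$, then produces an eligible cover $Y_1$ of $A_K(\epsilon',1)$ --- defined over $K$ and uniform over $B$ --- after pullback along which $\calE$ is fiberwise regular over $B$; taking $Y_1$ to be the minimal such cover makes it canonical, hence preserved by the vertical Frobenius, so the pulled-back connection is regular over $B$ and still carries a vertical Frobenius structure as well as $\varphi_1,\dots,\varphi_{m-1}$. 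The theory of $p$-adic exponents (as in Lemma~\ref{L:regular Frobenius equals tame}) then shows that a regular object of $\calC_{\calH(x)}$ carrying a relative Frobenius structure acquires a zero exponent (up to equivalence) after pullback along a Kummer cover $\overline{t}\mapsto\overline{t}^{1/e}$ for some integer $e$ prime to $p$ that may be taken uniform over $B$; pulling back further along this eligible $K$-cover $Y_2$ and applying Theorem~\ref{T:CM}(c), I would obtain an eligible cover $Y$ over $K$ such that $\calE|_{B\times_K Y}$ is fiberwise unipotent over $B$ while still carrying $\varphi_1,\dots,\varphi_{m-1}$.

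To finish, Lemma~\ref{L:unipotent to log} applied fiberwise --- together with a flat base-change argument as in the proof of Theorem~\ref{T:relative de Rham} --- shows that the sheaf $\calH$ of horizontal sections of $\calE[\log t]$ in the last direction is a rank-$n$ vector bundle on $B$ with an integrable connection and with the induced partial Frobenius structures $\varphi_1,\dots,\varphi_{m-1}$, and that $\calE|_{B\times_K Y}$ is unipotent as a connection if and only if $\calH$ is unipotent as a connection on $B = A_K(\epsilon',1)^{m-1}$. Since $\calH$ is fiberwise solvable in each of its $m-1$ annulus directions (again by Lemma~\ref{L:Frobenius to solvable}), the induction hypothesis furnishes an eligible cover $Y'$ of an annulus over $K$ such that the pullback of $\calH$ to the $(m-1)$-fold product $Y'\times_K\cdots\times_K Y'$ is unipotent. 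Choosing an eligible $K$-cover $Y''$ of $A_K(\epsilon'',1)$ dominating both $Y$ and $Y'$ (at the level of residue fields, a common finite separable extension of $\kappa_K((\overline{t}))$ containing those defining $Y$ and $Y'$) and pulling $\calE$ back along $Y''$ in all $m$ directions, its $\log$-horizontal sheaf in the last direction becomes the pullback of $\calH$ along $Y''$ in the other $m-1$ directions, hence unipotent; thus $\calE$ itself becomes unipotent, as required.

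I expect the main obstacle to be the construction of the $K$-rational cover $Y$ in the second paragraph: the relative monodromy theorem is precisely what allows the wild ramification to be trivialized by a cover defined over $K$ rather than merely over a base extension, but combining it with the bookkeeping of the remaining Frobenius structures (notably the canonicity of the minimal regular-making cover) and with the $p$-adic exponents theory for the residual tame part will require care. A secondary technical point is the passage from fiberwise unipotence to the global conclusion, which rests on the coherence of $\calH$ and hence on Bartenwerfer-type freeness results for vector bundles on products of annuli (cf.\ the remark following Lemma~\ref{L:locally free}).
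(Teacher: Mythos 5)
Your proof is correct in outline but takes a genuinely different route. The paper deduces this theorem as a special case of its relative version, Theorem~\ref{T:Drinfeld lemma form of local monodromy relative}, whose proof treats all $m$ directions symmetrically: for each $i$, it produces a cover $Y_i$ making $\calE$ fiberwise unipotent in direction $i$ (via Corollary~\ref{C:drinfeld relative p-adic mono relative} and Theorem~\ref{T:p-adic mono with Frob}), then takes a single $Y$ dominating all of the $Y_i$. You instead induct on $m$, stripping off the last annulus via the log-horizontal sheaf $\calH$ and invoking the inductive hypothesis on $\calH$. Both arguments draw on the same core ingredients (Lemma~\ref{L:Frobenius to solvable}, Theorem~\ref{T:drinfeld relative p-adic mono}, and the Frobenius trick underlying Theorem~\ref{T:p-adic mono with Frob}), and the overall effort is comparable. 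Your version makes explicit the passage from ``fiberwise unipotent in every direction'' to ``globally unipotent'', which the paper's proof compresses into its final sentence; you do so with the same $\calH$-device the paper itself uses in the proof of Theorem~\ref{T:relative de Rham}. Conversely, the paper's symmetric treatment delivers the relative version over an arbitrary base $X$ at no extra cost, whereas your induction is tailored to the absolute case and would have to be rerun relatively. Two steps in your sketch warrant more care than you give them: the claim that the minimal regular-making cover $Y_1$ is canonical and hence stable under the vertical Frobenius (the paper's Theorem~\ref{T:p-adic mono with Frob} has the same implicit dependence), and the assertion that the Kummer degree $e$ can be taken uniform over $B$. Both are true, resting respectively on the uniqueness of the cover cutting out the wild monodromy and on the constancy of exponents along partial Frobenius orbits, but you gesture at them rather than proving them.
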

\begin{proof}
This is a special case of Theorem~\ref{T:Drinfeld lemma form of local monodromy relative} below.
\end{proof}
\begin{cor}
With notation as in Theorem~\ref{T:Drinfeld lemma form of local monodromy},
suppose further that $\calE$ is irreducible.
Then there exist quasiconstant objects $\calF_1,\dots,\calF_m \in \calC_K$ 
such that $\calE \cong \calF_1 \boxtimes \cdots \boxtimes \calF_m$.
\end{cor}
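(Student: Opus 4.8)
The plan is to reduce, via Theorem~\ref{T:Drinfeld lemma form of local monodromy}, to a situation where $\calE$ becomes unipotent on a product of eligible covers, translate that situation into linear algebra by passing to horizontal sections after adjoining logarithms, use the structure of irreducible modules over a tensor product of finite-dimensional algebras to peel off one annulus factor at a time, and finally dispose of the resulting one-variable factors by the $m=1$ case. First I would make the usual base-field reductions: the existence of a decomposition $\calE\cong\calF_1\boxtimes\cdots\boxtimes\calF_m$ and the quasiconstancy of each factor are both insensitive to a finite extension of $K$ (the decomposition descends up to the standard twisting ambiguity, which is absorbed by passing to pushforwards and irreducible constituents as in the proof of Proposition~\ref{P:quasiconstant rep}), and any object in play may be realized over a discretely valued coefficient field as in the proof of Theorem~\ref{T:matsuda}; so I may assume $K$ is discretely valued with $\kappa_K$ perfect, and after one more finite extension that $K$ is a splitting field for a certain finite group $H$ and that the eligible cover produced below is an honest annulus over $K$. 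Applying Theorem~\ref{T:Drinfeld lemma form of local monodromy} then yields, for a suitable $\epsilon'$, a finite eligible cover $\pi\colon Y\to A_K(\epsilon',1)$, which I may take Galois with group $H$, such that the pullback $N$ of $\calE$ along $\pi^{\times m}\colon Y\times_K\cdots\times_K Y\to A_K(\epsilon',1)^m$ ($m$ factors) is unipotent; here $\pi^{\times m}$ is finite \'etale Galois with group $G=H^m$, and its source is connected after a further harmless extension of $K$.

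Next comes the linear-algebra step. By the evident $m$-variable analogue of Lemma~\ref{L:unipotent to log}, unipotence of $N$ means $\Gamma(N)\otimes\calO[\log t_1,\dots,\log t_m]$ admits a horizontal basis; let $W$ be the $K$-span of its horizontal sections, so $\dim_K W=n=\rank\calE$. On $W$ we have the pairwise commuting nilpotent residue operators $N_i:=t_i\,\partial/\partial t_i$; an action of $G=H^m$, the $i$-th copy of $H$ acting through the cover of the $i$-th factor and commuting with every $N_j$; and, because the partial Frobenius structures pull back and preserve unipotence, operators $\Phi_1,\dots,\Phi_m$. Any two operators carrying distinct indices commute, so if $\mathcal{B}_i$ denotes the finite-dimensional $K$-algebra generated by the $i$-th copy of $H$ together with $N_i$, the algebra generated by $G$ and the $N_i$ is $\mathcal{B}_1\otimes_K\cdots\otimes_K\mathcal{B}_m$. \'Etale descent along $\pi^{\times m}$ combined with Lemma~\ref{L:unipotent to log} identifies the category of connections on $A_K(\epsilon',1)^m$ that become unipotent on the product of covers with the category of $\mathcal{B}_1\otimes\cdots\otimes\mathcal{B}_m$-modules, monoidally for $\boxtimes$; under this identification subconnections of $\calE$ match submodules of $W$, so irreducibility of $\calE$ forces $W$ to be irreducible over $\mathcal{B}_1\otimes\cdots\otimes\mathcal{B}_m$. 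As $K$ splits each $\mathcal{B}_i$, such a module is an external tensor product $W\cong W_1\otimes_K\cdots\otimes_K W_m$ with $W_i$ irreducible over $\mathcal{B}_i$, and by the double-centralizer theorem and Schur's lemma each $\Phi_i$ descends to $W_i$. Translating back, $\calE\cong\calF_1\boxtimes\cdots\boxtimes\calF_m$ where $\calF_i$ is an irreducible connection on $A_K(\epsilon',1)$ that becomes unipotent after pullback along $Y$ and carries a Frobenius structure, and hence lies in $\calC_K$ by Lemma~\ref{L:Frobenius to solvable}.

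It then remains to prove the case $m=1$: an irreducible $\calF\in\calC_K$ carrying a Frobenius structure is quasiconstant. Being irreducible as a connection, $\calF$ has trivial (canonical, hence $\nabla$-stable) Frobenius slope filtration, so its Frobenius structure is of pure slope. By the structure theory of the local monodromy theorem (Lemma~\ref{L:rank} and the remark following it), after pulling back along a suitable tame extension of $\kappa_K((\overline{t}))$ we may write $\calF\cong\calF_0\otimes\calG$ with $\calG$ quasiconstant and $\calF_0$ regular; choosing $\calG$ compatibly with the Frobenius structure endows $\calF_0$ with a Frobenius structure, of pure slope since $\calF$ is. Applying Corollary~\ref{C:unipotent pure slope} (over a discretely valued realization of the data), $\calF_0$ becomes trivial after a further tame pullback, hence is quasiconstant by Corollary~\ref{C:quasiconstant from cover}; since a tensor product of quasiconstant objects becomes trivial after pullback along a common finite separable extension, $\calF$ is quasiconstant, and therefore so is each $\calF_i$.

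The main obstacle I expect lies in this last step: tracking the Frobenius structure through the tame base change required to apply Lemma~\ref{L:rank}, and in particular choosing the quasiconstant model $\calG$ compatibly with it, which requires paying attention to the Frobenius orbits of the summands that appear after the tame pullback. A secondary point, routine but needing care, is making the ``\'etale descent plus logarithm'' equivalence of categories of the second paragraph fully precise, including its monoidality with respect to $\boxtimes$ and the matching of subconnections with submodules.
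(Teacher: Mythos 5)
Your route is genuinely different from the paper's, which disposes of the corollary in two lines by pure adjunction: unipotence of the pullback to $Y\times_K\cdots\times_K Y$ gives a nonzero map $\pi^*\calE\to\calO$, hence by adjunction a nonzero map $\calE\to\pi_*\calO_{Y^m}=(\pi_*\calO_Y)^{\boxtimes m}$; since $\pi_*\calO_Y$ is semisimple and quasiconstant and $\calE$ is irreducible, $\calE$ embeds into (hence equals an irreducible constituent of) one of the box products $\calF_{i_1}\boxtimes\cdots\boxtimes\calF_{i_m}$, which already has the required form. You instead package the subcategory of objects trivialized on $Y^m$ as modules over $\mathcal{B}_1\otimes_K\cdots\otimes_K\mathcal{B}_m$ and invoke the factorization of irreducibles over split tensor products of finite-dimensional algebras. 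This is a perfectly reasonable strategy and, once the equivalence of categories is pinned down (you rightly flag this), leads to the same conclusion; the paper's adjunction is simply much shorter and never needs the explicit module description.

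Two concrete issues worth noting. First, your assertion that the $i$-th copy of $H$ commutes with $N_i$ is not obviously right when the eligible cover is wildly ramified: the monodromy operator is only defined up to the choice of logarithmic coordinate, and the Galois action twists it. Fortunately you only need commutativity across \emph{distinct} indices for $\mathcal{B}_1\otimes_K\cdots\otimes_K\mathcal{B}_m$ to be the tensor product (with each $\mathcal{B}_i$ allowed to be a skew product rather than $K[H]\otimes K[N_i]$), so this is a fixable imprecision, not a fatal flaw — but as written the claim is too strong. Second, and more seriously, your $m=1$ endgame is both over-engineered and not a clean application of the cited lemma: Lemma~\ref{L:rank} requires the quasiconstant tensor factor to remain irreducible after \emph{every} tame pullback, a hypothesis your $\calG$ need not satisfy, and tracking the Frobenius structure through the tame base change is exactly the obstacle you flag. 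You can avoid all of this: by the point in your argument where you need it you already know each $\calF_i$ is irreducible in $\calC_K$ and that $\pi^*\calF_i$ is unipotent, so $\mathrm{Hom}(\calO_Y,\pi^*\calF_i)\neq 0$, and by adjunction $\mathrm{Hom}(\pi_*\calO_Y,\calF_i)\neq 0$; since $\pi_*\calO_Y$ is semisimple and quasiconstant and $\calF_i$ is irreducible, $\calF_i$ is a direct summand of $\pi_*\calO_Y$, hence quasiconstant. This is a one-variable instance of the paper's adjunction trick and makes no use of Frobenius, slope filtrations, or Lemma~\ref{L:rank}.
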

\begin{proof}
The pushforward of the trivial connection from $Y$ to $A_K(\epsilon',1)$ is semisimple,
so it decomposes as a direct sum $\bigoplus_i \calF_i$ of irreducible objects in $\calC_K$. By 
Theorem~\ref{T:Drinfeld lemma form of local monodromy} plus adjunction, $\calE$ admits a nonzero map
to the $m$-th box power of $\bigoplus_i \calF_i$; this proves the claim.
\end{proof}

\begin{theorem} \label{T:Drinfeld lemma form of local monodromy relative}
Let $X$ be an adic space over $K$.
Let $\calE$ be a vector bundle on $X \times_K A_K(\epsilon,1)^m$ for some $\epsilon \in (0,1)$ equipped with an integrable $\calO_X$-linear connection.
Suppose that for $i=1,\dots,m$, $\calE$ is isomorphic to its pullback along the map on $X \times_K A_K(\epsilon,1)^m$ induced by a partial Frobenius lift on the $i$-th factor of $A_K(\epsilon,1)^m$. 
Then for each $x_0 \in X$, there exist an open neighborhood $U$ of $x_0$, some $\epsilon' \in (\epsilon,1)$, and a finite eligible cover $Y$ of $U \times_K A_K(\epsilon',1)$ such that the pullback of $\calE$ from
\[
X \times_K A_K(\epsilon,1)^m \cong (X \times_K A_K(\epsilon,1)) \times_X \cdots \times_X (X \times_K A_K(\epsilon,1))
\]
to $Y \times_X \cdots \times_X Y$ is unipotent.
\end{theorem}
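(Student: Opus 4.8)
\textit{Overview.} The plan is to prove this in three stages. First use the partial Frobenius structures to reduce, after an eligible cover, to the case where $\calE$ is regular in each of the $m$ annulus directions; then use the Frobenius structures again to reduce further to the case where $\calE$ is (relatively) unipotent in each direction; and finally observe that a connection on $A_K(\epsilon',1)^m$ which is unipotent in each of the $m$ coordinate directions is unipotent. Throughout one freely shrinks $X$ around $x_0$, raises the lower radius, and pulls back along eligible covers, keeping track that at each stage the cover can be arranged of the form $Y\times_X\cdots\times_X Y$ for a single eligible cover $Y$ of (the shrunk base)$\,\times_K A_K(\epsilon',1)$, and that the partial Frobenius structures persist. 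By Lemma~\ref{L:Frobenius to solvable} (applied to any one $\varphi_i$, checked locally where $X$ is quasicompact) we have $\calE\in\calC_X$ to begin with. The special case $X=\Spa(K,\calO_K)$ is Theorem~\ref{T:Drinfeld lemma form of local monodromy}.

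\textit{Stage 1.} For each $i$ in turn, regroup the other $m-1$ annulus factors together with $X$ into a base and apply Corollary~\ref{C:drinfeld relative p-adic mono relative} with the $i$-th annulus as the distinguished factor and the $\varphi_j$ ($j\ne i$) as the partial Frobenius lifts: this yields, locally on $X$, an eligible cover after which $\calE$ becomes regular in the $i$-th direction. Regularity in the $i$-th direction is preserved under pullback along covers in the other directions (the $i$-th fibre is only base-extended along an isomorphism, and regularity is insensitive to base extension on the coefficient field by \cite[Remark~9.4.9]{kedlaya-book}), and the partial Frobenius structures in the untouched directions persist; the cover itself may be taken $\varphi_i$-stable since the break decomposition of Theorem~\ref{T:christol-mebkhout} is Frobenius-stable and the cover we need is the one killing its wild part. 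Intersecting the resulting neighborhoods, passing to a common lower radius, and taking a common eligible refinement $Y$ of the $m$ covers produced, we may assume after pullback to $Y\times_X\cdots\times_X Y$ that $\calE$ is regular in each of the $m$ directions and still carries partial Frobenius structures $\varphi_1,\dots,\varphi_m$.

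\textit{Stage 2 (the crux).} Fix $i$ and view $\calE$ as a regular object of $\calC_{B_i}$, where $B_i$ is the product of $X$ with the remaining $m-1$ annuli, equipped with the relative Frobenius structure $\varphi_i$. By Lemma~\ref{L:regular Frobenius equals tame} applied over each point of $B_i$, $\calE$ becomes unipotent in the $i$-th direction after a tame cover $t_i\mapsto t_i^{1/e}$; the point is that the degree $e$ needed is \emph{uniform} in the base point. Indeed, from the theory of $p$-adic exponents one has a (uniform) exponent $A\in\ZZ_p^n$ for $\calE$ in the $i$-th direction, the requisite matrices detecting it being defined over $\calO(B_i)$ as in the proof of Lemma~\ref{L:projection to uniform}; the Frobenius isomorphism $\varphi_i^*\calE\cong\calE$ forces $A$ and $q_iA$ to be exponents of the same object, hence weakly equivalent, and hence equivalent because a regular Frobenius module has non-Liouville exponent differences (Lemma~\ref{L:weakly equivalent to equivalent}); iterating the resulting relation $A_l=q_iA_{\sigma(l)}+(\text{integer})$ over a period of $\sigma$ gives $(q_i^{n!}-1)A\in\ZZ^n$, with $q_i^{n!}-1$ coprime to $p$. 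Thus a single tame cover of degree $e=\operatorname{lcm}_i(q_i^{n!}-1)$, applied in all directions, makes $\calE$ unipotent in the $i$-th direction over every base point; by the relative analogue of Lemma~\ref{L:unipotent to log} together with cohomology-and-base-change (Grauert), the sheaf of $\nabla_i$-horizontal sections of $\calE[\log t_i]$ is then locally free of rank $n$, so $\calE$ is \emph{relatively} unipotent in the $i$-th direction. Absorbing this tame cover into $Y$, we reduce to $\calE$ relatively unipotent in each of the $m$ annulus directions. This uniformity step is the one genuine obstacle: for a base that is not topologically of finite type one cannot invoke the spreading-out result Theorem~\ref{T:unipotent spread}, and the needed uniformity must instead be extracted from the interaction of $\varphi_i$ with $p$-adic exponents.

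\textit{Stage 3.} It remains to show that a connection on $U\times_K A_K(\epsilon',1)^m$ which is relatively unipotent in each coordinate direction is unipotent; we induct on $m$, the case $m=0$ being trivial. The subsheaf $\calE':=\calO\cdot\ker(\nabla_1)\subseteq\calE$ is stable under every $\nabla_j$ and, by relative unipotence in the first direction, is a subbundle whose first-direction connection is trivial, hence the pullback of a connection on $U\times_K A_K(\epsilon',1)^{m-1}$; both $\calE'$ and $\calE/\calE'$, being subquotients of $\calE$, are again relatively unipotent in each direction, while the first-direction unipotence length of $\calE/\calE'$ has strictly dropped. Iterating, one gets a filtration of $\calE$ by $\nabla_\bullet$-stable subbundles whose graded pieces are pullbacks of connections on the $(m-1)$-fold annulus that are relatively unipotent in the remaining $m-1$ directions; these are unipotent by the inductive hypothesis, and an iterated extension of unipotent connections is unipotent. (Alternatively one iterates the relative form of Lemma~\ref{L:unipotent to log}, exactly as in the proof of Theorem~\ref{T:relative de Rham}.) Tracking the stages, the composite eligible cover $Y$ has the asserted property.
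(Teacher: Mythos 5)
Your Stage~1 together with the first half of Stage~2 reproduces the paper's proof exactly: for each $i$, regroup so that the $i$-th annulus is the fiber direction, invoke Lemma~\ref{L:Frobenius to solvable} to get $\calE\in\calC_{X_i}$, apply Corollary~\ref{C:drinfeld relative p-adic mono relative} (with the other $m-1$ partial Frobenii) to produce a cover $Y_i$ making $\calE$ regular in the $i$-th direction, quote Theorem~\ref{T:p-adic mono with Frob} to upgrade regular to fiberwise unipotent, and finally dominate the $Y_i$ by a single $Y$. The paper's proof stops there, with the closing ``we obtain the desired conclusion,'' so your Stages~2--3 make explicit two points it leaves to the reader: that the tame cover upgrading ``regular'' to ``unipotent'' can be taken uniform over the base point (so that a single $Y$ suffices), and that fiberwise unipotence in each of the $m$ directions actually yields unipotence of the pulled-back connection on $Y\times_X\cdots\times_X Y$. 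You are right to flag both, and the induction on $m$ in Stage~3 is the correct shape of argument for the second.

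The one step that is not rigorous as written is the appeal to Grauert/cohomology-and-base-change in Stage~2. The map $B_i\times_K A_K(\epsilon',1)\to B_i$ is not proper (and $B_i$ is generally not even locally tft), so Grauert does not apply. The assertion you actually need --- that the $\nabla_i$-horizontal sections of $\calE[\log t_i]$ form a locally free $\calO_{B_i}$-module of rank $n$, equivalently that the kernel subsheaves you filter by in Stage~3 are genuine subbundles of constant rank --- requires a direct argument. Once you have a uniform zero exponent in the $i$-th direction (which the Frobenius-stability of exponents does give), the natural route is to run the construction underlying Lemma~\ref{L:unipotent to log} and Theorem~\ref{T:CM}(c) relatively over $B_i$, much as the proof of Theorem~\ref{T:relative de Rham} does in an analogous situation, rather than to quote a base-change theorem. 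Modulo replacing the Grauert citation with such an argument, the proposal follows the paper's approach and correctly supplies the details the paper compresses into its final sentence.
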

\begin{proof}
For each $i$, write $X \times_K A_K(\epsilon,1)^m$ as a product $X_i \times_K A_K(\epsilon,1)$ in which $X_i \cong X \times_K A_K(\epsilon,1)^{m-1}$ omits the $i$-th factor of $A_K(\epsilon,1)^m$.
By Lemma~\ref{L:Frobenius to solvable} (using the $i$-th partial Frobenius structure), $\calE \in \calC_{X_i}$
and so we may apply Corollary~\ref{C:drinfeld relative p-adic mono relative} (using the other partial Frobenius structures) to obtain
(after shrinking $X$ towards $x_0$) a cover $Y_i$ of $X \times_K A_K(\epsilon,1)$ that makes $\calE$ regular in the $i$-th direction.
By Theorem~\ref{T:p-adic mono with Frob} (using the $i$-th partial Frobenius structure), each fiber can be made unipotent by a further tame cover; by Theorem~\ref{T:unipotent spread}, the latter can be chosen uniformly. 
Now choose a single $Y$ that dominates each of the $Y_i$;
pulling back along $Y$ makes $\calE$ fiberwise unipotent in each direction, and we may argue as in the proof of Theorem~\ref{T:relative de Rham} to upgrade ``fiberwise unipotent'' to ``unipotent''.
\end{proof}

\begin{remark}
Theorem~\ref{T:Drinfeld lemma form of local monodromy} can be used
to establish a local analogue of Drinfeld's lemma for overconvergent $F$-isocrystals. See \cite{kedlaya-xu}.
\end{remark}

\begin{remark} \label{R:multivariate Cst}
Brinon, Chiarellotto, and Mazzari \cite{bcm} have introduced the analogue of the de Rham period ring for representations of the group $G_{K, \Delta}$, and established an analogue of Berger's construction of the associated differential equation. This produces a connection to which Theorem~\ref{T:Drinfeld lemma form of local monodromy} may be applied; it should be possible to deduce from this that any de Rham representation of $G_{K,\Delta}$ is potentially semistable.
\end{remark}


\begin{thebibliography}{99}
\bibitem{abe-companion}
T. Abe, Langlands correspondence for isocrystals and existence of
crystalline companion for curves, \textit{J. Amer. Math. Soc.} \textbf{31} (2018), 921--1057.

\bibitem{andre-mono}
Y. Andr\'e, Filtrations de type Hasse-Arf et monodromie $p$-adique,
\textit{Invent. Math.} \textbf{148} (2002), 285--317.

\bibitem{andreatta-brinon}
F. Andreatta and O. Brinon, Surconvergence des repr\'esentations $p$-adiques: le cas relatif,
\textit{Ast\'erisque} \textbf{319} (2008), 39--116.

\bibitem{bartenwerfer}
W. Bartenwerfer, $k$-holomorphe Vektorraumb\"undel auf offenen Polyzylindern,
\textit{J. reine angew Math.} \textbf{326} (1981), 214--220.

\bibitem{berger-mono}
L. Berger, Repr\'esentations $p$-adiques et \'equations diff\'erentielles,
\textit{Invent. Math.} \textbf{148} (2002), 219--284.

\bibitem{berkovich1}
V. Berkovich, \textit{Spectral Theory and Analytic Geometry over Non-Archimedean Fields},
Math. Surveys and Monographs 33, Amer. Math. Soc., 1990.

\bibitem{berkovich2}
V. Berkovich, \'Etale cohomology for non-Archimedean analytic spaces,
\textit{Publ. Math. IH\'ES} \textbf{78} (1993), 5--161.

\bibitem{berkovich-contract}
V.G. Berkovich, Smooth $p$-adic analytic spaces are locally contractible,
\textit{Invent. Math.} \textbf{137} (1999), 1--84.

\bibitem{berthelot-mem}
P. Berthelot, G\'eom\'etrie rigide et cohomologie des vari\'et\'es alg\'ebriques de caract\'eristique $p$, in
Introductions aux cohomologies $p$-adiques (Luminy, 1982), 
\textit{M\'em. Soc. Math. France (N.S.)} \textbf{23} (1986), 7--32.

\bibitem{bcm}
O. Brinon, B. Chiarellotto, and N. Mazzari, Multivariable de Rham representations, Sen theory and $p$-adic differential equations, \textit{Math. Res. Letters} \textbf{31} (2024), 25--90.

\bibitem{dejong}
A.J. de Jong, \'Etale fundamental groups of non-archimedean analytic spaces,
\textit{Compos. Math.} \textbf{97} (1995), 89--118.

\bibitem{dejong-alterations}
A.J. de Jong, Smoothness, semi-stability, and alterations,
\textit{Publ. Math. IH\'ES} \textbf{83} (1996), 51--93.

\bibitem{guo-yang}
H. Guo and Z. Yang, Pointwise criteria of $p$-adic local systems,
arXiv:2409.19742v1 (2024).

\bibitem{sousperfectoid}
D. Hansen and K.S. Kedlaya, Sheafiness criteria for Huber rings, preprint
available at \url{https://kskedlaya.org}.

\bibitem{huber-book}
R. Huber, \textit{\'Etale Cohomology of Rigid Analytic Varieties and Adic Spaces},
Aspects of Math. 30, Springer Fachmedien, Wiesbaden, 1996.

\bibitem{katz-gabber}
N.M. Katz, Local-to-global extensions of representations of fundamental groups,
\textit{Ann. Inst. Fourier} \textbf{36} (1986), 69--106.

\bibitem{kedlaya-locmono}
K.S. Kedlaya, A $p$-adic local monodromy theorem,
\textit{Annals of Math.} \textbf{160} (2004), 93--184.

\bibitem{kedlaya-etale}
K.S. Kedlaya,
More \'etale covers of affine spaces in positive characteristic,
\textit{J. Alg. Geom.} \textbf{14} (2005), 187--192.

\bibitem{kedlaya-semi1}
K.S. Kedlaya, Semistable reduction for overconvergent $F$-isocrystals, I: Unipotence and logarithmic extensions, \textit{Compos. Math.} 143 (2007), 1164--1212;
errata, \cite{shiho-log}.

\bibitem{kedlaya-semi4}
K.S. Kedlaya, Semistable reduction for overconvergent F-isocrystals, 
IV: Local semistable reduction at nonmonomial valuations, 
\textit{Compos. Math.} \textbf{147} (2011), 467--523;
erratum, \cite[Remark~4.6.7]{kedlaya-connections}.

\bibitem{kedlaya-connections}
K.S. Kedlaya,
Local and global structure of connections on nonarchimedean curves, 
\textit{Compos. Math.} \textbf{151} (2015), 1096--1156;
errata (with Atsushi Shiho), \textit{ibid.} 153 (2017), 2658--2665. 

\bibitem{kedlaya-reified}
K.S. Kedlaya, Reified valuations and adic spectra, \textit{Res. Number Theory} \textbf{1} (2015), 1--42. 

\bibitem{kedlaya-aws}
K.S. Kedlaya, Sheaves, stacks, and shtukas, in
\textit{Perfectoid Spaces: Lectures from the 2017 Arizona Winter School},
Math. Surveys and Monographs 242, American Mathematical Society, 2019.

\bibitem{kedlaya-simpleconn}
K.S. Kedlaya, Simple connectivity of Fargues-Fontaine curves, 
\textit{Annales Henri Lebesgue} \textbf{4} (2021), 1203--1233. 

\bibitem{kedlaya-book}
K.S. Kedlaya, \textit{$p$-adic Differential Equations}, 2nd edition, Cambridge University Press, Cambridge, 2022.

\bibitem{kedlaya-dl-isocrystals}
K.S. Kedlaya, Drinfeld's lemma for $F$-isocrystals, I, 
\textit{Intl. Math. Res. Notices} (2024), article ID rnae039. 

\bibitem{kedlaya-liu-families}
K.S. Kedlaya and R. Liu, On families of $(\varphi, \Gamma)$-modules, 
\textit{Algebra and Number Theory} \textbf{4} (2010), 943--967. 

\bibitem{kedlaya-liu1}
K.S. Kedlaya and R. Liu, Relative $p$-adic Hodge theory: Foundations,
\textit{Astérisque} \textbf{371} (2015), 239 pages. 

\bibitem{kedlaya-liu2}
K.S. Kedlaya and R. Liu, Relative $p$-adic Hodge theory, II: Imperfect period rings,
arXiv:1602.06899v3 (2019).

\bibitem{kedlaya-pottharst}
K.S. Kedlaya and J. Pottharst, On categories of $(\varphi, \Gamma)$-modules,
in \textit{Algebraic Geometry: Salt Lake City 2015, Part 2}, Proc. Symp. Pure Math. 97, Amer. Math. Soc., Providence, 2018, 281--304. 

\bibitem{kedlaya-xiao}
K.S. Kedlaya and L. Xiao,  Differential modules on p-adic polyannuli (with Liang Xiao), 
\textit{J. Inst. Math. Jussieu} \textbf{9} (2010), 155--201; erratum, \textit{ibid.} \textbf{9} (2010), 669--671. 

\bibitem{lawrence-li}
B. Lawrence and S. Li, A counterexample to an optimistic guess about \'etale local systems, \textit{C.R. Acad. Sci. Paris} \textbf{359} (2021), 923--924.

\bibitem{lestum}
B. Le Stum, \textit{Rigid Cohomology}, Cambridge Tracts in Math. 172, Cambridge University Press, 2007.

\bibitem{kedlaya-xu}
K.S. Kedlaya and D. Xu, Drinfeld's lemma for $F$-isocrystals, II: Tannakian approach, 
\textit{Compos. Math.} \textbf{160} (2024), 90--119.

\bibitem{liu-zhu}
R. Liu and X. Zhu, Rigidity and a Riemann--Hilbert correspondence for $p$-adic local systems,
\textit{Invent. Math.} \textbf{2017} (207), 291--343.

\bibitem{mebkhout-mono}
Z. Mebkhout, Analogue $p$-adique du th\'eor\`eme du Turrittin
et le th\'eor\`eme de la monodromie $p$-adique, \textit{Invent. Math.}
\textbf{148} (2002), 319--351.

\bibitem{shiho-log}
A. Shiho, On logarithmic extension of overconvergent isocrystals,
\textit{Math. Ann.} \textbf{348} (2010), 467--512.

\bibitem{shimizu}
K. Shimizu, A $p$-adic monodromy theorem for de Rham local systems, 
\textit{Compos. Math.} \textbf{158} (2022), 2157–-2205.

\bibitem{tsuzuki}
N. Tsuzuki, Slope filtration of quasi-unipotent overconvergent $F$-isocrystals,
\textit{Ann. Inst. Fourier} \textbf{48} (1998), 379--412.

\end{thebibliography}
\end{document}